\DeclareSymbolFont{cyrletters}{OT2}{wncyr}{m}{n}
\DeclareMathSymbol{\Sha}{\mathalpha}{cyrletters}{"58}
\numberwithin{equation}{section}
\theoremstyle{plain}
\newtheorem{theorem}[equation]{Theorem}
\newtheorem{proposition}[equation]{Proposition}
\newtheorem{lemma}[equation]{Lemma}
\newtheorem{corollary}[equation]{Corollary}
\newtheorem*{MT}{Main Theorem}
\newtheorem*{SC}{Subgroup Existence Criterion}
\newtheorem*{GMT}{Geometric Main Theorem}
\theoremstyle{definition}
\newtheorem{definition}[equation]{Definition}
\newtheorem{algorithm}[equation]{Algorithm}
\theoremstyle{remark}
\newtheorem{remark}[equation]{Remark}
\newtheorem{example}[equation]{Example}
\def\defi{\textsf}
\def\ext{\!\mid\!}
\def\epsilon{\varepsilon}
\def\rho{\varrho}
\def\theta{\vartheta}
\def\phi{\varphi}
\def\Magma{\textsc{Magma}}
\def\loccit{\textit{loc.\ cit.}}
\DeclareMathOperator{\Aut}{Aut}
\DeclareMathOperator{\chr}{char}
\DeclareMathOperator{\Div}{div}
\DeclareMathOperator{\End}{End}
\DeclareMathOperator{\Gal}{Gal}
\DeclareMathOperator{\GL}{GL}
\DeclareMathOperator{\Hom}{Hom}
\DeclareMathOperator{\im}{im}
\DeclareMathOperator{\Jac}{Jac}
\DeclareMathOperator{\Kum}{Kum}
\DeclareMathOperator{\leg}{leg}
\DeclareMathOperator{\Nm}{Nm}
\DeclareMathOperator{\NS}{NS}
\DeclareMathOperator{\Pic}{Pic}
\DeclareMathOperator{\red}{red}
\DeclareMathOperator{\SL}{SL}
\DeclareMathOperator{\Sp}{Sp}
\DeclareMathOperator{\Stab}{Stab}
\DeclareMathOperator{\Sym}{Sym}
\def\C{\mathbb{C}}
\def\F{\mathbb{F}}
\def\P{\mathbb{P}}
\def\Q{\mathbb{Q}}
\def\Z{\mathbb{Z}}
\def\kbar{\overline{k}}
\def\Qbar{\overline{\Q}}
\def\CC{\mathbb{C}}
\def\FF{\mathbb{F}}
\def\PP{\mathbb{P}}
\def\QQ{\mathbb{Q}}
\def\Bc{\mathcal{B}}
\def\Gc{\mathcal{G}}
\def\Pc{\mathcal{P}}
\def\Qc{\mathcal{Q}}
\def\Rc{\mathcal{R}}
\def\Tc{\mathcal{T}}
\newcommand{\dual}{\operatorname{t}}
\begin{document}

\title{Gluing curves of genus 1 and 2 along their 2-torsion}
\date{\today}

\begin{abstract}
  Let $X$ (resp.\ $Y$) be a curve of genus $1$ (resp.\ $2$) over a base field
  $k$ whose characteristic does not equal $2$. We give criteria for the
  existence of a curve $Z$ over $k$ whose Jacobian is up to twist
  $(2,2,2)$-isogenous to the products of the Jacobians of $X$ and $Y$.
  Moreover, we give algorithms to construct the curve $Z$ once equations for
  $X$ and $Y$ are given. The first of these is based on interpolation methods
  involving numerical results over $\C$ that are proved to be correct over
  general fields a posteriori, whereas the second involves the use of
  hyperplane sections of the Kummer variety of $Y$ whose desingularization is
  isomorphic to $X$. As an application, we find a twist of a Jacobian over $\Q$
  that admits a rational $70$-torsion point.
\end{abstract}

\thanks{The first and third author were supported by the
  Juniorprofessuren-Programm ``Endomorphismen algebraischer Kurven'' of the
  Science Ministry of Baden-Württemberg. The second author was supported by the
  Simons Collaboration in Arithmetic Geometry, Number Theory, and Computation
  via Simons Foundation grant 550033. We thank Christophe Ritzenthaler for his
  insightful advice that led to the results in Section \ref{sec:kummer},
  Davide Lombardo for sharing the argument in the proof of Theorem
  \ref{thm:embedding}, and finally the anonymous referee for his or her many
concrete suggestions for improvement.}

\author[Hanselman]{Jeroen Hanselman}
\address{
  Jeroen Hanselman,
  Institut für Reine Mathematik,
  Universität Ulm,
  Helmholtzstrasse 18,
  89081 Ulm,
  Germany
}
\email{jeroen.hanselman@uni-ulm.de}

\author[Schiavone]{Sam Schiavone}
\address{
  Sam Schiavone,
  Massachusetts Institute of Technology,
  Department of Mathematics,
  77 Massachusetts Ave.,
  Bldg. 2-336
  Cambridge,
  MA 02139,
  United States of America
}
\email{sschiavo@mit.edu}

\author[Sijsling]{Jeroen Sijsling}
\address{
  Jeroen Sijsling,
  Institut für Reine Mathematik,
  Universität Ulm,
  Helmholtzstrasse 18,
  89081 Ulm,
  Germany
}
\email{jeroen.sijsling@uni-ulm.de}

\subjclass[2010]{14H40, 14H25; 14H30, 14H45, 14H50, 14K20, 14K30}
\keywords{Gluing, Jacobians, isogenies, explicit aspects}

\maketitle

\addtocontents{toc}{\protect\setcounter{tocdepth}{1}}
\section*{Introduction}\label{sec:introduction}

One of the most fundamental properties of abelian varieties is their unique decomposition up to isogeny, also called Poincaré's Complete Reducibility Theorem \cite[5.3.7]{birkenhake-lange}: An abelian variety $A$ over a field $k$ is isogenous to a product
\begin{equation}\label{eq:isogdec}
  A \sim B_1^{e_1} \times \dots \times B_n^{e_n}
\end{equation}
where the abelian varieties $B_i$ are simple and pairwise non-isogenous over $k$, and this decomposition is unique in the sense that up to reordering the isogeny classes of the abelian varieties $B_i$ and the corresponding exponents $e_i$ are uniquely determined.

When $A = \Jac (Z)$ is the Jacobian of a curve of small genus, then there exist algorithms after \cite{cmsv-endos} to calculate the decomposition \eqref{eq:isogdec} over the base field $k$ in terms of the Jacobians of curves over small extensions of $k$ whenever possible. The decomposition of the Jacobian of a curve of genus $2$ is also discussed in \cite{kuhn}. Similarly, when $A = \Jac (Z)$ is the Jacobian of a curve of genus $3$ that admits a degree-$2$ map $Z \to X$ to a genus-$1$ curve $X$, then the results in \cite{ritzenthaler-romagny} furnish an explicit description of the complementary part $B$ in the decomposition $A \sim \Jac (X) \times B$ in terms of the Jacobian of a genus-$2$ curve $Y$.

This article aims to develop algorithms for the converse construction, that is, to produce an abelian variety $A$ given factors $B_i$ as in \eqref{eq:isogdec}. When $A = \Jac (Z)$ and $B_i = \Jac (X_i)$, we also call the curve $Z$ a \defi{gluing} of the curves $X_i$.

\subsection*{Previous work}

Gluing elliptic curves $E_1$ and $E_2$ to a genus-$2$ curve $Z$ was first studied in the seminal article \cite{frey-kani} by Frey and Kani, where explicit criteria for the existence of $Z$ given $E_1$ and $E_2$ are given. In fact, \cite{frey-kani} proves a more precise criterion, in that they also fix a degree $n$ and realize $Z$ as a degree-$n$ cover of both $E_1$ and $E_2$. Similarly, Howe--Lepr\'{e}vost--Poonen \cite{howe-leprevost-poonen} use plane quartic curves with defining equation in the Ciani standard form
\begin{equation}\label{eq:ciani}
  Z : a x^4 + b y^4 + c z^4 + d x^2 y^2 + e y^2 z^2 + f z^2 x^2 = 0
\end{equation}
to realize three given elliptic curves $E_1, E_2, E_3$ over $k$ as factors of the Jacobian of $Z$. 

Finally, \cite{cms} realizes the Jacobian of a given genus-2 curve over $\C$ as part of the Jacobian of a smooth plane quartic, using modular methods. In the more recent preprint \cite{cms2} a pencil of plane, bielliptic genus-three (and their unramified double coverings by canonical genus-five curves) is constructed such that the Prym variety of its general member is two-isogenous to the Jacobian of a generic genus-two curve. Like the results in \cite{cms} that are discussed below, those in \cite{cms2} use that $k = \C$ and do not treat the case of an arithmetic base field.

\subsection*{Results in this paper}

This paper considers the problem of gluing two curves of genus $1$ and $2$ to a curve of genus $3$ along their $2$-torsion over a given base field. More precisely, our main theorem, proved in Section \ref{sec:interpolation}, is as follows.

\begin{MT}
  Let $k$ be a field whose characteristic does not equal $2$, and let $X$ and $Y$ be curves of genus $1$ and $2$ over $k$. Then Algorithm \ref{alg:gluek} returns all isomorphism classes of pairs $(Z, \mu)$ over $k$, where $Z$ is a smooth plane quartic curve over $k$ and where $\mu$ is a class in $k^* / (k^*)^2$ with the property that there exists a $k$-rational quotient map
  \begin{equation}\label{eq:ourstuffspec}
    \Jac (X) \times \Jac (Y) \to \mu \ast \Jac (Z)
  \end{equation}
  by a symplectic subgroup of $(\Jac (X) \times \Jac (Y)) [2]$. In particular, we have an isogeny $\Jac (X) \times \Jac (Y) \sim \mu \ast \Jac (Z)$ over $k$. Here $\mu \ast \Jac (Z)$ denotes the quadratic twist of $\Jac (Z)$ with respect to the automorphism $-1$ and the quadratic extension $k (\sqrt{\mu})$ of $k$.
\end{MT}

The alternative Algorithm \ref{alg:AlgGl} in Section \ref{sec:kummer} also determines equations for curves $Z$ gluing $X$ and $Y$, but these may require a further base extension.

Our results function over any base field $k$ of characteristic not equal to $2$, not necessarily algebraically closed, which makes them relevant in the broader arithmetic-geometric context. Moreover, they allow one to specify both curves $X$ and $Y$, whereas for the previous results \cite{cms} over the special base field $k = \C$ only $Y$ could be specified. We mention here that there is also a short and simple construction of $Z$ over a general base field $k$ once only $Y$ is specified: It is given in \cite[\S 2.2]{hanselman-thesis} and involves the parametrization of a certain conic admitting a $k$-rational point. Similarly, the case where the glued curve $Z$ is hyperelliptic will be dealt with in \cite{hanselman-hyp}. This article restricts its consideration to the more involved case where both $X$ and $Y$ are specified and where $Z$ is smooth plane quartic curve.

\subsection*{Applications}

Being able to work with decompositions \eqref{eq:isogdec} over arbitrary base fields, which our Main Theorem contributes to, is of arithmetic importance, for example when describing $L$-functions: If an abelian surface over $\Q$ splits up to isogeny as $A \sim E_1 \times E_2$ over $\Q$, then we have $L (A, s) = L (E_1, s) L (E_2, s)$, and the modular properties of $A$ can be reduced to those of $E_1$ and $E_2$. This description extends to situations where the decomposition requires an extension of the base field. For a systematic exploration this topic for abelian surfaces, we refer to \cite{potmod}.

Another application is in the context of Sato--Tate groups, and more specifically around the recent classification \cite{fks} in genus $3$. While many Sato--Tate groups in \emph{loc.\ cit.}\ can be realized in a trivial way by abelian threefolds of the form $\Jac (X) \times \Jac (Y)$, it remains a challenge to see if the same can be achieved when considering Jacobians $\Jac (Z)$ of curves $Z$ of genus $3$. The current work allows one to approach the latter problem by starting from suitable products $\Jac (X) \times \Jac (Y)$ and attempting to find a corresponding gluing as described in the Main Theorem (and in the next subsection).

Finally, over $k = \C$, the type of decomposition that we reconstruct is important in the context of certain integrable systems, see \cite{enolski}.

\subsection*{Outline}

We now give a more precise description of our methods, as well as of some important intermediate results on the way to the Main Theorem. We will specify our curves $X$ and $Y$ of genus $1$ and $2$ by equations
\begin{equation}\label{eq:pxintro}
  X : y^2 = p_X (x)
\end{equation}
and
\begin{equation}\label{eq:pyintro}
  Y : y^2 = p_Y (x)
\end{equation}
over $k$. While general curves over $k$ of genus $1$ do not allow a defining equation \eqref{eq:pxintro}, we may reduce to this case, since our constructions only involve the Jacobian of $X$, which can always be realized through such an equation.

For an isogeny \eqref{eq:ourstuffspec} to exist, we need a criterion for the existence of a maximal isotropic subgroup $G$ of $(\Jac (X) \times \Jac (Y))[2]$ that is indecomposable (in the sense of not being a product of subgroups of $\Jac (X)$ and $\Jac (Y)$). This is furnished by Theorem \ref{thm:critres}, which is the following:
\begin{SC}
  Let $X$ and $Y$ be as above. There exists an indecomposable maximal isotropic subgroup $G$ of $(\Jac (X) \times \Jac (Y))[2]$ that is defined over $k$ if and only if
  \begin{enumerate}
    \item $p_Y$ admits a quadratic factor $q_Y$ over $k$;
    \item For the complementary factor $r_Y = p_Y / q_Y$ we have that the cubic resolvents $\rho (p_X)$ and $\rho (r_Y)$ have isomorphic splitting fields over $k$.
  \end{enumerate}
\end{SC}
Note that this criterion allows one to work with simpler splitting fields than those defined by the factors $p_X$ and $r_Y$ themselves.

The existence of such a subgroup $G$ does not always guarantee that the corresponding quotient $(\Jac (X) \times \Jac (Y)) / G$ is the twist of the Jacobian of a plane quartic. Indeed, the polarization on said quotient may be decomposable, or give rise to a hyperelliptic curve. Generically, however, a quotient by a non-decomposable group $G$ is isomorphic as a principally polarized abelian variety to a quadratic twist $\mu \ast \Jac (Z)$ of a Jacobian of a plane quartic curve $Z$, as defined in the Main Theorem. Taking a twist is indepensable, as over an arithmetic base field most principally polarized abelian threefolds are not Jacobians --- for a geometric description of this so-called \defi{Serre obstruction}, see for example the main result in \cite{beauville-ritzenthaler} quoted below as Theorem \ref{thm:brtwists}.

In Section \ref{sec:interpolation}, we proceed to find an expression for the curve $Z$ in terms of the data in the Subgroup Existence Criterion whenever possible. The resulting curve $Z$ will admit a homogeneous ternary quartic equation of the form
\begin{equation}
  Z : G (x^2, y, z) = 0
\end{equation}
from which $\Jac (X)$ can be recovered as the Jacobian of the quotient by the involution $(x, y, z) \mapsto (-x, y, z)$. Note that there is no direct map $Z \to Y$ in general, nor need it exist over the algebraic closure $\kbar$. In fact, Proposition \ref{prop:hyp} shows that this can only happen when $Z$ is hyperelliptic, a case that we excluded from consideration. Therefore we cannot use constructions that only involve covers of curves.

Section \ref{sec:interpolation} takes the following indirect route to constructing $Z$. We start by interpolating results over the complex numbers. When appropriately normalized, these yield formulae that can be verified a posteriori to remain valid over any field of characteristic not equal to $2$. Note that evaluating these formulae, as is done in Algorithm \ref{alg:gluek} corresponding to the Main Theorem, once again only requires passing to the common splitting field of the aforementioned cubic resolvents, and is therefore feasible in practice also when the coefficients of the defining equations of $X$ and $Y$ are large. The formulae also yield the twisting scalar $\mu$ mentioned in the Main Theorem. Moreover, since they are obtained in a highly canonical fashion, applying them in concrete cases such as Example \ref{ex:70} yields small defining coefficients for $Z$ without any further simplification being required.

An alternative and more geometric version of the Main Theorem is given in Section \ref{sec:kummer}. It is essentially a geometric inversion of the results in the seminal paper \cite{ritzenthaler-romagny} by Ritzenthaler and Romagny, and constructs $Z$ as a double cover of $X$ obtained by realizing $X$ birationally as a hyperplane section of the Kummer variety of $Y$. This is the content of Theorem \ref{thm:embedding}, which we state here as follows.
\begin{GMT}
  Let $Z$ be a gluing of $X$ and $Y$ as in the Main Theorem. Let $\Kum(Y)^{\dual} = \Jac(Y)^{\dual}/\langle -1\rangle\subset \PP^3_{k}$ be the Kummer surface associated to $\Jac(Y)^{\dual}$. Then over $\kbar$ there exists a commutative diagram
  \begin{equation}
  \begin{tikzcd}
    Z \arrow[swap]{d}{p} \arrow{r}{i_Z} &  \Jac(Y)^{\dual} \arrow{d}{\pi} \\
    X \arrow[swap]{r}{i_X} & \Kum(Y)^{\dual}.
  \end{tikzcd}
  \end{equation}
  where $p:Z\to X$ is a degree-$2$ cover, $\pi:\Jac(Y)^{\dual} \to \Kum(Y)^{\dual}$ is the quotient map, and where $i_Z$ and $i_X$ are rational maps such that $i_X(X) = H \cap \Kum(Y)^{\dual}$ for a plane $H \subset \PP^3_k$ that passes through two singular points of $\Kum(Y)^{\dual}$.
\end{GMT}

Explicitly, an element of the function field of $X$ whose square root gives rise to the double cover $Z \to X$ can be obtained by restricting a Kummer generator of the extension of function fields $k (\Kum (Y)^{\dual}) \subset k (\Jac (Y)^{\dual})$ (which is described in \cite{Mueller}) to the hyperplane section $X$. Various tricks help make this calculation feasible in practice, especially over finite fields, and many interesting phenomena in this geometric Ansatz remain to be explained and generalized.

We give examples of the aforementioned constructions, both over $\Q$ and over finite fields. Moreover, in Section \ref{sec:examples} we use our results to obtain a Jacobian of a plane quartic curve over $\Q$ whose quadratic twist with respect to the automorphism $-1$ and the extension $\Q (\sqrt{5})$ of $\Q$ admits a rational $70$-torsion point. A full implementation of the results in this article is openly available via a full \Magma\ implementation and example suite at \cite{hsv-git}.

\subsection*{Notations and conventions}

Throughout the article, $k$ denotes a fixed base field, whose characteristic we suppose not to equal $2$. Its absolute Galois group is denoted by $\Gamma_k$.

A curve over $k$ is a separated and geometrically integral scheme of dimension $1$ over $k$. Given an affine equation for a curve, we will identify it with the smooth projective curve that has the same function field. The Jacobian of a curve $X$ is denoted by $\Jac (X)$, and its principal polarization, which we consider as an algebraic equivalence class of line bundles on $X$, is denoted by $\lambda_X$.

\tableofcontents

\addtocontents{toc}{\protect\setcounter{tocdepth}{2}}
\section{Criteria for a gluing to exist}\label{sec:criteria}

Let $n \ge 2$ be a positive integer, and let $X$ (resp.\ $Y$) be a smooth curve
of genus $1$ (resp.\ $2$) over the base field $k$, whose characteristic we
suppose not to divide $n$. Let $\pi_X : \Jac (X) \times \Jac (Y) \to \Jac (X)$
and $\pi_Y : \Jac (X) \times \Jac (Y) \to \Jac (Y)$ be the two canonical
projections.

\begin{definition}\label{def:gluing}
  An \defi{(n,n)-gluing} of the curves $X$ and $Y$ (over $k$) is a pair $(Z,
  \phi)$, where $Z$ is a smooth curve over $k$ and where
  \begin{equation}\label{eq:phi}
    \phi : \Jac (X) \times \Jac (Y) \to \Jac (Z)
  \end{equation}
  is an isogeny with the property that $\phi^* (\lambda_Z)$ is algebraically
  equivalent to the $n$-fold $n (\pi_X^* (\lambda_X) \otimes \pi_Y^*
  (\lambda_Y))$ of the product polarization on $\Jac (X) \times \Jac (Y)$.
\end{definition}

Let
\begin{equation}
  T = (\Jac (X) \times \Jac (Y)) [n] = \Jac (X)[n] \times \Jac (Y)[n].
\end{equation}
Let $V = T (\kbar)$, and consider a maximal isotropic subgroup $G$ of $V$. Then
over $\kbar$ we can form the fppf quotient
\begin{equation}
  Q = (\Jac (X) \times \Jac (Y)) / G .
\end{equation}
Let
\begin{equation}
  \pi_Q : \Jac (X) \times \Jac (Y) \to Q
\end{equation}
be the quotient morphism. By \cite[Proposition 11.25]{moonen}, there exists a
unique principal polarization $\lambda_Q$ on $Q$ whose pullback under $\pi_Q$
is algebraically equivalent to $n (\pi_X^* (\lambda_X) \otimes \pi_Y^*
(\lambda_Y))$. Since we have imposed that $\chr (k)$ does not divide $n$, so
that $T$ is étale, and the quotient morphism $Q$ is defined over $k$ if and
only if the subgroup $G$ is, we obtain the following from the arithmetic
version of Torelli's theorem.

\begin{lemma}\label{lem:gluexist}
  Giving a gluing $(Z, \phi)$ of $X$ and $Y$ over $k$ is the same as giving a
  maximal isotropic subgroup $G$ of $V$ with the following properties.
  \begin{enumerate}
    \item $G$ is stable under the action of the absolute Galois group
      $\Gamma_k$.
    \item Moreover, the principally polarized quotient $(Q, \lambda_Q)$
      constructed from $G$ is $k$-isomorphic to the Jacobian of a smooth curve
      over $k$.
  \end{enumerate}
\end{lemma}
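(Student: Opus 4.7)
The plan is to establish the equivalence by sending a gluing $(Z, \phi)$ to $G := \ker(\phi)$, and conversely by using hypothesis (ii) to identify the principally polarized quotient $Q = (\Jac(X) \times \Jac(Y))/G$ with a Jacobian. Both directions hinge on the standard dictionary between isogenies that pull back a principal polarization to $n$ times another principal polarization and maximal isotropic subschemes of the $n$-torsion, which is already packaged in the cited \cite[Proposition 11.25]{moonen}.

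For the forward direction, suppose $(Z, \phi)$ is a gluing. Writing $\lambda_{\mathrm{prod}} := \pi_X^*\lambda_X \otimes \pi_Y^*\lambda_Y$, the identity $\phi^*(\lambda_Z) \equiv n\,\lambda_{\mathrm{prod}}$ combined with principality of $\lambda_Z$ produces, via the associated isomorphisms to the dual abelian varieties, a factorization $\hat{\phi} \circ \phi = [n]$ for an appropriate dual isogeny $\hat{\phi}$. Consequently $\ker(\phi) \subset T$, and a standard argument identifies $G := \ker(\phi)$ as maximal isotropic in $V$ with respect to the Weil pairing attached to $\lambda_{\mathrm{prod}}$. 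Galois stability of $G$ is automatic since $\phi$ is a $k$-morphism, and (ii) follows because $\phi$ factors through a $k$-isomorphism $(Q, \lambda_Q) \cong (\Jac(Z), \lambda_Z)$.

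Conversely, given a $\Gamma_k$-stable maximal isotropic $G \subset V$, étaleness of $T$ (ensured by $\chr(k) \nmid n$) lets $G$ descend to a $k$-subgroup scheme of $T$, so $Q = (\Jac(X) \times \Jac(Y))/G$ is defined over $k$, and \cite[Proposition 11.25]{moonen} supplies the principal polarization $\lambda_Q$. Hypothesis (ii) then supplies a $k$-isomorphism $(Q, \lambda_Q) \cong (\Jac(Z), \lambda_Z)$ for some smooth curve $Z$ over $k$; composing the quotient map with this isomorphism yields the isogeny $\phi$ of Definition \ref{def:gluing}. That the two constructions are mutually inverse follows on one side by construction and on the other from the arithmetic form of Torelli's theorem, which recovers a smooth curve over $k$ uniquely up to isomorphism from its principally polarized Jacobian.

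The main subtlety I would verify carefully is the exact compatibility between maximal isotropy of $G$ with respect to the Weil pairing on $V$ and the descent of $\lambda_Q$ from $n \cdot \lambda_{\mathrm{prod}}$; this matching is what makes \cite[Proposition 11.25]{moonen} directly applicable in both directions and is the technical hinge of the argument. The hypothesis $\chr(k) \nmid n$ enters precisely here, since it guarantees étaleness of $T$ and reduces Galois stability of $G$ to descent to a genuine $k$-subgroup scheme, without which one would have to grapple with non-étale infinitesimal phenomena beyond the scope of the cited proposition.
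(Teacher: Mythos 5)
Your proof is correct and follows essentially the same route as the paper, which presents the lemma without a formal proof beyond the preceding paragraph invoking Moonen's Proposition~11.25 for the induced principal polarization, the \'etaleness of $T$ (since $\chr(k) \nmid n$) for Galois descent, and the arithmetic form of Torelli's theorem. Your argument spells out these same ingredients in the same order, adding the standard details ($\hat\phi \circ \phi = [n]$, maximal isotropy of $\ker\phi$) that the paper leaves implicit.
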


\begin{remark}\label{rem:decomp}
  Condition (ii) in Lemma \ref{lem:gluexist} implies Condition (i), but is
  strictly stronger than it. For example, it implies that the group $G$ cannot
  be \defi{decomposable}, that is, a product $G_X \times G_Y$ of maximal
  isotropic subgroups of $\Jac (X) [n]$ and $\Jac (Y) [n]$. Indeed, in this
  case we have
  \begin{equation}
    Q \cong \Jac (X) / G_X \times \Jac (Y) / G_Y
  \end{equation}
  and $\lambda_Q$ is the corresponding product polarization. This precludes the
  existence of an isomorphism $(\Jac (Z), \lambda_Z) \cong (Q, \lambda_Q)$,
  since principal polarizations of Jacobians are indecomposable.

  The demand in Condition (ii) that $(Q, \lambda_Q)$ be a Jacobian over the
  ground field $k$ itself leads to additional arithmetic subtleties, which will
  be discussed in Section \ref{sec:twists}. We will theretofore first
  concentrate on developing criteria that focus on the weaker Condition (i)
  only.
\end{remark}

\subsection{Structure of maximally isotropic subgroups}

In the situation of Definition \ref{def:gluing}, let $n = p$ be a prime number.
This simplifies the description of the maximal isotropic subgroups $G$ of $V =
T (\kbar)$. Let $V_X = \Jac (X) [p] (\kbar)$ and $V_Y = \Jac (Y) [p] (\kbar)$,
so that $V = V_X \times V_Y$. Moreover, let
\begin{equation}
  E = E_X \times E_Y : V \times V \to \F_p
\end{equation}
be the product of the Weil pairings $E_X$ and $E_Y$ on $V_X$ and $V_Y$.
Finally, let $\pi_X : G \to V_X$ and $\pi_Y : G \to V_Y$ be the canonical
projections.

\begin{proposition}\label{prop:indecsurj}
  If $G$ is indecomposable, then $\pi_X$ is surjective and we have
  \begin{equation}
    \dim (\ker (\pi_X)) = \dim (G \cap (0 \times V_Y)) = 1.
  \end{equation}
\end{proposition}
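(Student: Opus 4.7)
I would begin by fixing notation: $V_X$ is $2$-dimensional, $V_Y$ is $4$-dimensional, $V=V_X\oplus V_Y$ is $6$-dimensional over $\F_p$, equipped with the orthogonal sum $E=E_X\perp E_Y$ of the Weil pairings (both nondegenerate and alternating). A maximal isotropic subgroup has dimension $3$, so $\dim G=3$. Write $I_X=\pi_X(G)$, $I_Y=\pi_Y(G)$, and $N_X=\ker(\pi_X)=G\cap(0\times V_Y)$, $N_Y=\ker(\pi_Y)=G\cap(V_X\times 0)$, regarded respectively as subspaces of $V_Y$ and $V_X$.

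First I would compute the kernels in terms of the images by using $G=G^\perp$. A vector $(x,0)\in V$ pairs trivially with every element of $G$ exactly when $x\in I_X^{\perp_{E_X}}$; since $G=G^\perp$, this gives
\begin{equation*}
  N_Y \;=\; I_X^{\perp_{E_X}}\subset V_X,\qquad N_X \;=\; I_Y^{\perp_{E_Y}}\subset V_Y.
\end{equation*}
Because $E_X$ and $E_Y$ are nondegenerate, $\dim N_Y=2-\dim I_X$ and $\dim N_X=4-\dim I_Y$. Combining with rank–nullity $\dim I_X+\dim N_X=3$ and $\dim I_Y+\dim N_Y=3$, I solve: setting $b=\dim N_Y$, one finds $\dim I_X=2-b$, $\dim N_X=1+b$, $\dim I_Y=3-b$, leaving only the three cases $b\in\{0,1,2\}$.

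The next step is to rule out $b=1$ and $b=2$. Case $b=2$ would force $N_Y=V_X$, hence $V_X\times 0\subset G$, so $G$ would contain an isotropic complement; combined with $\dim N_X=3$ we would have a $3$-dimensional isotropic subspace of $V_Y$, impossible since maximal isotropic subspaces of the $4$-dimensional symplectic space $V_Y$ have dimension $2$. For case $b=1$, I would use the indecomposability hypothesis. Here $\dim N_Y=1$ (automatically isotropic, hence maximal isotropic in $V_X$) and $\dim N_X=2$ (isotropic by construction, and of maximal possible dimension in $V_Y$). Both inclusions $N_Y\times 0\subset G$ and $0\times N_X\subset G$ hold tautologically, and their sum has dimension $1+2=3=\dim G$. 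Therefore $G=N_Y\times N_X$ is a product of maximal isotropic subgroups, contradicting indecomposability.

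Hence $b=0$, so $N_Y=0$, $\dim N_X=1$, and $\dim I_X=2$, i.e., $\pi_X$ is surjective and $\dim\ker(\pi_X)=\dim(G\cap(0\times V_Y))=1$, as claimed. The main step requiring care is the $b=1$ case: one must verify that the natural inclusions of $N_Y$ and $N_X$ into $G$ suffice, by dimension count, to force the product decomposition, so that indecomposability cleanly excludes this branch.
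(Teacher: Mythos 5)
Your proof is correct and follows essentially the same strategy as the paper's: exhaust the possible values of $\dim\ker(\pi_X)$ (equivalently $\dim\im(\pi_X)$), rule out the extreme case using the bound on isotropic subspaces of the $4$-dimensional symplectic space $V_Y$, and rule out the middle case because it forces $G$ to be a product of maximal isotropic subspaces of $V_X$ and $V_Y$. Your bookkeeping via $G=G^\perp$, which gives $\ker(\pi_Y)=\pi_X(G)^{\perp_{E_X}}$ and $\ker(\pi_X)=\pi_Y(G)^{\perp_{E_Y}}$ directly, is a tidier way to derive the dimension relations, but the underlying case analysis matches the paper's.
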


\begin{proof}
  We have $\dim (G) = 3$. If $\dim (\im (\pi_X)) = 0$, then $\ker (\pi_X)$ can
  be identified with an isotropic subgroup of $0 \times V_Y$ of dimension $3$.
  No such subgroups exist since $V_Y$ has dimension $4$. Therefore $\dim (\im
  (\pi_X)) = 1$, which implies that $\im (\pi_X)$ is a symplectic subgroup of
  $V_X$, so that for all $(x_1, y_1)$ and $(x_2, y_2)$ in $G \subset V_X \times
  V_Y$ we have
  \begin{equation}
    0 = E (((x_1, y_1), (x_2, y_2))) =
    E_X ((x_1, x_2)) + E_Y ((y_1, y_2)) = E_Y ((y_1, y_2)) .
  \end{equation}
  This implies that $\im (\pi_Y)$ is an isotropic subgroup of $V_Y$. Since $G
  \subset \im (\pi_X) \times \im (\pi_Y)$ this forces $\dim (\im (\pi_Y)) = 2$
  and $G = \im (\pi_X) \times \im (\pi_Y)$. This is a contradiction with the
  indecomposability of $G$. The second statement of the proposition then
  follows from the dimension formula.
\end{proof}

Now fix an indecomposable maximal isotropic subgroup $G \subset V_X \times
V_Y$. Let $H \subset V_Y$ be the $1$-dimensional $\F_p$-subspace defined by
\begin{equation}
  H = \pi_Y (\ker (\pi_X)) = \pi_Y (G \cap (0 \times V_Y)) \subset V_Y .
\end{equation}
In other words, $H$ is the subgroup of $V_Y$ generated by the second components
of the vectors of the form $(0, w)$ in $G$. The orthogonal complement
$H^{\perp} \subset V_Y$ is of dimension $3$, so that we have
\begin{equation}
  \dim (H^{\perp} / H) = 2 .
\end{equation}
The symplectic pairing $E_Y$ on $V_Y$ induces one on $H^{\perp} / H$, which we
will denote by $E_{\perp}$. There is a multivalued map
\begin{equation}\label{eq:multival}
  V_X \to H^{\perp} / H
\end{equation}
that sends $x \in V_X$ to $\pi_Y (\pi_X^{-1} (x))$. Since $H \times 0 \subset
G$ and $G$ is isotropic, the elements of $\pi_Y (\pi_X^{-1} (x))$ are in
$H^{\perp}$. The map \eqref{eq:multival} factors to a single-valued linear map
\begin{equation}\label{eq:ellG}
  \ell : V_X \to H^{\perp} / H .
\end{equation}

Now for all $(x_1, y_1)$ and $(x_2, y_2)$ in $G$ we have
\begin{equation}
  0 = E (((x_1, y_1), (x_2, y_2)))
  = E_X ((x_1, x_2)) + E_Y ((y_1, y_2)),
\end{equation}
By construction of $\ell$ we have $y_i + H = \ell (x_i) + H$. Therefore the map
$\ell$ is antisymplectic. In particular, $\ell$ is an isomorphism.

Thus an indecomposable maximal isotropic subgroup $G$ gives rise to a subgroup
$H \subset V_Y$ and an anti-symplectic linear isomorphism $\ell_G : V_X \to
H^{\perp} / H$. As is shown in more detail in \cite{hanselman-gluing}, there is
a converse to this result:

\begin{proposition}\label{prop:class}
  Let $(H, \ell)$ be a pair with $H \subset V_Y$ of dimension $1$ and with
  $\ell : V_X \to H^{\perp} / H$ an anti-symplectic isomorphism. Define
  \begin{equation}
    G = \left\{ (x, y) \in V_X \times V_Y : \ell (x) = y + H \right\} .
  \end{equation}
  Then $G \subset V_X \times V_Y$ is indecomposable and maximal isotropic.

  This construction of $G_{\ell}$ from $(H, \ell)$ is inverse to that of $(H,
  \ell)$ from $G$ above and yields a bijective correspondence between
  indecomposable maximal isotropic subgroups $G \subset V_X \times V_Y$ on the
  one hand and the pairs $(H, \ell)$ under consideration on the other.
\end{proposition}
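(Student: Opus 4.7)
The plan is to verify in turn that the $G$ built from $(H,\ell)$ has the correct size, that it is isotropic (hence maximal isotropic), that it is indecomposable, and finally that the two assignments $G \mapsto (H,\ell)$ and $(H,\ell) \mapsto G$ invert one another. The actual work is mostly bookkeeping once one observes that the difficult structural statement (the existence and anti-symplecticity of $\ell$) has already been established in the discussion preceding the proposition.

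First I would count: for each $x \in V_X$, the condition $\ell(x) = y + H$ singles out a coset of $H$ in $H^{\perp} \subset V_Y$, which has $|H| = p$ elements. Hence $|G| = |V_X|\cdot p = p^3$, so $\dim_{\F_p} G = 3 = \tfrac{1}{2}\dim V$, the dimension of a maximal isotropic subspace. For isotropy, take $(x_1,y_1),(x_2,y_2) \in G$; then $y_i \in H^{\perp}$ and $y_i + H = \ell(x_i)$, so since $E_\perp$ on $H^{\perp}/H$ is induced by $E_Y$,
\begin{equation*}
E_Y(y_1,y_2) = E_\perp(\ell(x_1),\ell(x_2)) = -E_X(x_1,x_2),
\end{equation*}
the last equality by the anti-symplecticity of $\ell$. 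Thus $E((x_1,y_1),(x_2,y_2)) = E_X(x_1,x_2) + E_Y(y_1,y_2) = 0$, and combined with the dimension count $G$ is maximal isotropic.

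To rule out decomposability, suppose instead that $G = G_X \times G_Y$ with $G_X \subset V_X$ and $G_Y \subset V_Y$ maximal isotropic; then $\dim G_X = 1$ and $\dim G_Y = 2$. But the first projection $\pi_X \colon G \to V_X$ is surjective, since for every $x \in V_X$ the coset $\ell(x)$ produces at least one $y$ with $(x,y) \in G$. This forces $G_X = V_X$, contradicting $\dim G_X = 1$, so $G$ is indecomposable.

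Finally I would verify the bijective correspondence. Starting from $(H,\ell)$ and forming $G$, the induced subgroup $H'$ is computed from $(0,y) \in G \iff y + H = \ell(0) = 0$ in $H^{\perp}/H \iff y \in H$, giving $H' = H$; the induced map $\ell'$ sends $x$ to the class in $H^{\perp}/H$ of any $y$ with $(x,y) \in G$, which by construction of $G$ equals $\ell(x)$. Conversely, starting from an indecomposable maximal isotropic $G$ and forming $(H,\ell)$ as before, Proposition~\ref{prop:indecsurj} guarantees that $\pi_X^{-1}(x)$ is a single coset of $\ker(\pi_X)$, whose image under $\pi_Y$ modulo $H$ is the single element $\ell(x)$; thus $(x,y) \in G$ iff $y + H = \ell(x)$, recovering $G$ from $(H,\ell)$. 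The main obstacle, such as it is, lies in keeping the coset arithmetic in $H^{\perp}/H$ straight; but all of this rests on the preparatory computation in the paragraphs before the proposition, where $\ell$ was shown to be a well-defined anti-symplectic isomorphism, and the argument is then essentially formal.
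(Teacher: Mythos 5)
Your proof is correct and fills in exactly the details the paper leaves to "the same methods as above" and "direct verification": the dimension count, the isotropy check via anti-symplecticity of $\ell$, an indecomposability argument, and the two-way verification of the bijection. The only cosmetic difference is that the paper rules out decomposability by noting $G \cap (0 \times V_Y)$ has dimension $1$ whereas you argue via surjectivity of $\pi_X$; these are equivalent by the dimension formula, so the approach is essentially the same.
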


\begin{proof}
  The first part follows by the same methods as above. Note in particular that
  $G$ is indecomposable because its intersection with $0 \times V_Y$ is of
  dimension $1$. The remainder of the statement follows by direct verification.
\end{proof}

\begin{corollary} \label{cor:indecomp}
  There exist exactly $p (p + 1) (p^4 - 1)$ indecomposable maximal isotropic
  subgroups of $V_X \times V_Y$.
\end{corollary}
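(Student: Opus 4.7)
The plan is to apply the bijection established in Proposition \ref{prop:class}, which identifies indecomposable maximal isotropic subgroups $G \subset V_X \times V_Y$ with pairs $(H, \ell)$ consisting of a $1$-dimensional subspace $H \subset V_Y$ together with an anti-symplectic linear isomorphism $\ell : V_X \to H^{\perp}/H$. The count of subgroups is then obtained as a product: the number of choices of $H$, times the number of choices of $\ell$ for each fixed $H$.

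For the first factor, any $1$-dimensional subspace of $V_Y$ is automatically isotropic for the alternating form $E_Y$, so counting such $H$ is just counting lines in the $4$-dimensional $\mathbb{F}_p$-vector space $V_Y$. This yields
\begin{equation*}
  \frac{p^4 - 1}{p - 1} = (p+1)(p^2+1)
\end{equation*}
choices.

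For the second factor, fix such an $H$. The quotient $H^{\perp}/H$ is $2$-dimensional and inherits a non-degenerate symplectic form $E_{\perp}$ from $E_Y$. Since any two non-degenerate symplectic $\mathbb{F}_p$-vector spaces of the same dimension are isomorphic, a symplectic isomorphism $V_X \to H^{\perp}/H$ exists, and composing with $-\id$ on the target produces an anti-symplectic isomorphism, so the set of anti-symplectic isomorphisms is non-empty. Pre-composition with $\Sp(V_X)$ makes this set into a torsor under $\Sp_2(\mathbb{F}_p) = \SL_2(\mathbb{F}_p)$, which has order $p(p^2 - 1)$.

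Multiplying the two factors gives
\begin{equation*}
  (p+1)(p^2+1) \cdot p (p^2 - 1) = p (p+1) (p^2 - 1)(p^2 + 1) = p (p + 1)(p^4 - 1),
\end{equation*}
as claimed. There is no real obstacle here beyond the clean bookkeeping already encoded in Proposition \ref{prop:class}; the only mild subtlety is checking that anti-symplectic (as opposed to symplectic) isomorphisms do exist for every $H$, which is immediate from the classification of symplectic forms on a $2$-dimensional space and holds uniformly in $p$ (including $p = 2$, where the two notions coincide).
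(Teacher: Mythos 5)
Your argument is essentially the same as the paper's: both invoke the bijection of Proposition \ref{prop:class}, count the lines $H$ in $V_Y$ as $(p^4-1)/(p-1)$, count the anti-symplectic isomorphisms $\ell$ for fixed $H$ as $\#\SL_2(\FF_p) = p(p^2-1)$, and multiply. You supply more detail than the paper on why the $\ell$-count is $\#\SL_2(\FF_p)$ (the torsor argument, which is correct), but there is a small slip in your existence step: on a $2$-dimensional space, $-\id$ has determinant $(-1)^2 = 1$, so postcomposing a symplectic isomorphism with $-\id$ yields another \emph{symplectic} isomorphism, not an anti-symplectic one. To produce an anti-symplectic isomorphism when $p$ is odd, instead postcompose with a linear automorphism of determinant $-1$, for instance one that negates a single vector in a symplectic basis, or that swaps the two basis vectors. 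With that repair the existence claim (and hence the full count) goes through, and the rest of your argument matches the paper.
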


\begin{proof}
  By Proposition \ref{prop:class}, giving such an indecomposable maximal
  isotropic subgroup is the same as giving a pair $(H, \ell)$. Since $V_Y$ has
  dimension $4$, there are $(p^4 - 1)/(p - 1)$ possible ways to choose $H$.
  Given $H$, there are $\#\Sp (2, \FF_p) = \#\SL_2 (\FF_p) = p (p^2 - 1)$
  choices for the anti-symplectic isomorphism $\ell$. Taking the product yields
  the cardinality in the statement of the corollary.
\end{proof}

\begin{remark}
  The total number of maximal isotropic subgroups of $V_X \times V_Y$
  (including the indecomposable ones) equals $(p^3 + 1)(p^2 + 1)(p + 1)$.
\end{remark}

From now on, we restrict to the case $p = 2$, in which case we simply call a
$(2, 2)$-gluing a \defi{gluing}. The number of indecomposable maximal isotropic
subgroups of $V_X \times V_Y$ then equals $135$, of which $90$ are
indecomposable in our sense. Note that \cite[Lemma 13]{howe-leprevost-poonen}
classifies indecomposable subgroups in another context than ours, namely when
gluing three genus-$1$ curves together; $V_X \times V_Y$ has $54$
indecomposable subgroups in this alternative sense.

\subsection{Interpretation in terms of roots}\label{subsec:roots}

Having taken $p = 2$, the symplectic vector spaces $V_X$ and $V_Y$ admit the
following concrete descriptions. Choose quadratic defining equations
\begin{equation}\label{eq:pX}
  X : y^2 = p_X (x)
\end{equation}
and
\begin{equation}\label{eq:pY}
  Y : y^2 = p_Y (x)
\end{equation}
over $\kbar$, as one may since $\chr (k) \neq 2$. For now, suppose that $p_X$
(resp.\ $p_Y$) is of degree $4$ (resp.\ $6$). Let
\begin{equation}
  \alpha_1, \alpha_2, \alpha_3, \alpha_4 \in \kbar
\end{equation}
be the roots of $p_X$, and let
\begin{equation}
  \beta_1, \beta_2, \beta_3, \beta_4, \beta_5, \beta_6 \in \kbar
\end{equation}
be the roots of $p_Y$. Consider the corresponding sets
\begin{equation}
  \Pc = \left\{ P_i = (\alpha_i, 0) \in X (\kbar) : i \in \left\{ 1, \dots, 4 \right\} \right\}, \qquad
  \Qc = \left\{ Q_i = (\beta_j,  0) \in Y (\kbar) : j \in \left\{ 1, \dots, 6 \right\} \right\} .
\end{equation}

\begin{remark}
  If either one of the degrees of $p_X$ and $p_Y$ is odd, then we can formally
  consider $\infty$ as an element of $\Pc$ or $\Qc$ (or both). To simplify the
  exposition, we ignore these cases; of course our final results still take
  them into account.
\end{remark}

Given a set $T$ of even cardinality, we can define a symplectic $\F_2$-vector
space $\Gc (T)$ as follows: The elements of $\Gc (T)$ are the subsets $S$ of
$T$ of even cardinality up to the equivalence $S \sim S^c$. The symmetric
difference operation
\begin{equation}
  (S_1, S_2) \mapsto S_1 \oplus S_2 = (S_1 \cup S_2) \setminus (S_1 \cap S_2).
\end{equation}
descends to a group structure on $\Gc (T)$, for which the empty subgroup
corresponds to the identity element. Finally, we equip $\Gc (T)$ with the
symplectic pairing
\begin{equation}
  \begin{split}
    \Gc (T) \times \Gc (T) & \to \FF_2 \\
    (S_1, S_2) & \mapsto \# (S_1 \cap S_2) \bmod 2.
  \end{split}
\end{equation}
Now \cite{mumford-tata} shows the following.

\begin{proposition}
  The symplectic $\F_2$-vector space $V_X$ (resp.\ $V_Y$) can be identified
  with the $\Gc (\Pc)$ (resp.\ $\Gc (\Qc)$). To the equivalence class
  $\overline{S}$ of a subgroup $S = \left\{ P_1, P_2 \right\}$ of cardinality
  $2$ there corresponds the $2$-torsion point $[P_1] - [P_2]$.
\end{proposition}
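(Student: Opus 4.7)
The plan is to reduce to the explicit construction, due to Mumford, of the $2$-torsion of the Jacobian of a hyperelliptic curve in terms of its Weierstrass points, and then to verify that the group structure and the Weil pairing translate into the symmetric difference operation and the parity-of-intersection pairing defining $\Gc(T)$.

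First I would define a map $\phi : \Gc(\Pc) \to V_X$ (and analogously $\Gc(\Qc) \to V_Y$) as follows. For a subset $S \subset \Pc$ of even cardinality $2k$, set
\begin{equation*}
  \phi(S) = \sum_{P \in S} [P] - k \bigl( [\infty_+] + [\infty_-] \bigr),
\end{equation*}
where $\infty_\pm$ are the two points at infinity of $X$. Since $\mathrm{div}(x - \alpha_i) = 2[P_i] - [\infty_+] - [\infty_-]$, we have $2[P_i] \sim [\infty_+] + [\infty_-]$ in $\Pic(X)$, which shows that $2\phi(S) \sim 0$, so $\phi(S)$ lies in $V_X$. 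Well-definedness modulo the equivalence $S \sim S^c$ follows from the relation $\sum_{P \in \Pc}[P] \sim (g+1)([\infty_+] + [\infty_-])$ (which is visible from the divisor of $y$), so that $\phi(S) + \phi(S^c) \sim 0$ and hence $\phi(S) = \phi(S^c)$ in $V_X$, since $V_X$ is $2$-torsion.

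Next I would check that $\phi$ is a homomorphism from $(\Gc(\Pc), \oplus)$ to $V_X$. This is immediate from the identity $\sum_{P \in S_1 \oplus S_2}[P] = \sum_{P \in S_1}[P] + \sum_{P \in S_2}[P] - 2\sum_{P \in S_1 \cap S_2}[P]$, combined with the fact that $2[P]$ lies in the subgroup generated by $[\infty_+]+[\infty_-]$, which becomes trivial modulo $2$-torsion. Injectivity (hence bijectivity, by the cardinality counts $|\Gc(\Pc)| = 4 = |V_X|$ and $|\Gc(\Qc)| = 16 = |V_Y|$) can be verified directly: if $\phi(S) \sim 0$, then the support of $\sum_{P \in S}[P] - k([\infty_+]+[\infty_-])$ must be the divisor of a rational function, and a local analysis at the Weierstrass points forces $S \in \{\emptyset, \Pc\}$.

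To recover the concrete description $\overline{\{P_1, P_2\}} \mapsto [P_1] - [P_2]$, it suffices to note that
\begin{equation*}
  [P_1] + [P_2] - [\infty_+] - [\infty_-] - \bigl( [P_1] - [P_2] \bigr) = 2[P_2] - [\infty_+] - [\infty_-] = \mathrm{div}(x - \alpha_2),
\end{equation*}
so $\phi(\{P_1, P_2\}) = [P_1] - [P_2]$ in $\Jac(X)$.

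The hardest and most substantive step is the compatibility of the Weil pairing on $V_X$ with the symplectic form $(S_1, S_2) \mapsto \#(S_1 \cap S_2) \bmod 2$ on $\Gc(\Pc)$. This is the content of Mumford's theta-characteristic computation in \cite{mumford-tata}, which I would invoke: one expresses the Weil pairing via the difference of theta characteristics associated to partitions of $\Pc$ (resp.\ $\Qc$) into two even-cardinality subsets, and a direct calculation shows that this difference reduces to parity of intersection. The genus-$2$ case for $Y$ is identical in structure, so the same argument applies. Finally, the omitted cases where $p_X$ or $p_Y$ has odd degree are handled by adjoining a formal point $\infty$ to $\Pc$ or $\Qc$ and rerunning the same argument with the convention $\infty_+ = \infty_- = \infty$; the formulas and identifications go through without change.
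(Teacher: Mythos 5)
The paper gives no proof of this proposition; it simply cites Mumford's Tata lectures. Your proposal fills in the details in a natural and correct way: defining $\phi(S) = \sum_{P \in S}[P] - k([\infty_+]+[\infty_-])$ and checking it is a well-defined homomorphism via the divisor relations $\operatorname{div}(x-\alpha_i) = 2[P_i]-[\infty_+]-[\infty_-]$ and $\operatorname{div}(y) = \sum_{P\in\Pc}[P]-(g+1)([\infty_+]+[\infty_-])$ is exactly right, and the cardinality counts $4$ and $16$ are correct. The reduction $\phi(\{P_1,P_2\}) \sim [P_1]-[P_2]$ is verified correctly. For the compatibility of the Weil pairing with the parity-of-intersection form --- the one part that requires genuine work rather than bookkeeping --- you appropriately defer to Mumford, which is precisely the paper's own posture.

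One small soft spot: your injectivity sketch (``a local analysis at the Weierstrass points forces $S \in \{\emptyset,\Pc\}$'') is vague. Since you already know $\phi$ is a homomorphism between finite abelian groups of equal order, it would be cleaner either to show surjectivity (the classes $[P_1]-[P_j]$ generate the $2$-torsion of a hyperelliptic Jacobian, which is a standard fact) or, if you want injectivity directly, to argue that a function with divisor supported on Weierstrass points with multiplicity $1$ at a nonempty proper even subset cannot exist because any such function must factor through $x$, forcing even multiplicities at every Weierstrass point in its zero locus. Either fix would tighten the argument, but the overall structure and the reduction to Mumford are sound.
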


\begin{remark}
  For the genus-$2$ curve $Y$, the subsets of $\Qc$ of cardinality $2$ are in
  bijective correspondence with the non-zero elements of $V_Y$. Indeed, the
  subsets $S$ of $\Pc$ of even cardinality that do not give rise to $0 \in V_Y$
  are of cardinality $2$ or $4$, so that exactly one of $S$ and $S^c$ is of
  cardinality $2$.

  By contrast, for the genus-$1$ curve $X$ the non-zero elements of $V_X$ are
  no longer in bijective correspondence with the subsets $S$ of $\Pc$
  cardinality $2$: this needs the identification of a set $S$ with its
  complement $S^c$.
\end{remark}

Consider a pair $(H, \ell)$ as in Proposition \ref{prop:class}. In terms of
$\Pc$ and $\Qc$, giving $H$ is nothing but giving a subset $\Tc$ of $\Qc$ of
cardinality $2$. Let $\Rc = \Qc \setminus \Tc$ be the complement of $\Tc$.

\begin{proposition}\label{prop:sympiso}
  The inclusion $\iota : \Rc \hookrightarrow \Qc$ induces a canonical
  isomorphism of symplectic $\F_2$-vector spaces
  \begin{equation}\label{eq:RandH}
    i_R : \Gc (\Rc) \to H^{\perp} / H.
  \end{equation}
\end{proposition}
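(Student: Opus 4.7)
The plan is to construct $i_R$ by the most obvious map: if $S\subset \Rc$ has even cardinality, then $S$ is also an even-cardinality subset of $\Qc$ and so defines an element $[S]_{\Qc}\in \Gc(\Qc)=V_Y$. I would first verify that $[S]_{\Qc}$ lies in $H^\perp$. Since $H$ is generated by $[\Tc]_{\Qc}$, this reduces, via the defining pairing $\#(S\cap \Tc)\bmod 2$, to the tautology $S\cap\Tc=\emptyset$, which holds by definition of $\Rc$. Composing with the quotient gives a group homomorphism from the group of even subsets of $\Rc$ to $H^\perp/H$.

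Next I would check that this map descends through the complementation equivalence defining $\Gc(\Rc)$. If we replace $S$ by $\Rc\setminus S$, the two images in $\Gc(\Qc)$ differ by $[\Rc]_{\Qc}$; using the complementation in $\Qc$ one has $[\Rc]_{\Qc}=[\Qc\setminus \Rc]_{\Qc}=[\Tc]_{\Qc}$, which is precisely the nonzero element of $H$. Hence the two images agree in $H^\perp/H$, and $i_R\colon \Gc(\Rc)\to H^\perp/H$ is well defined.

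For injectivity, if $i_R([S]_{\Rc})=0$ then $[S]_{\Qc}\in H=\{0,[\Tc]_{\Qc}\}$. Since $S\subset \Rc$ and $S\cap \Tc=\emptyset$, the only subsets of $\Rc$ representing these two classes are $\emptyset$ and $\Rc$, both of which are the identity in $\Gc(\Rc)$. Bijectivity then follows by a dimension count: the structure of $\Gc$ applied to the $4$-element set $\Rc$ gives $\dim_{\F_2}\Gc(\Rc)=2$, while $\dim_{\F_2}(H^\perp/H)=4-1-1=2$. Finally, compatibility with the symplectic forms is automatic, since for $S_1,S_2\subset \Rc$ the intersection $S_1\cap S_2$ is the same whether computed inside $\Rc$ or inside $\Qc$, and the form on $H^\perp/H$ is by definition induced from $E_Y$.

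The only real subtlety is the bookkeeping around complementation: the equivalence relation defining $\Gc(\Rc)$ uses complementation in $\Rc$, while that defining $\Gc(\Qc)$ uses complementation in $\Qc$, and these are genuinely different. The key identity that reconciles them, and which makes the whole proposition work, is $[\Rc]_{\Qc}=[\Tc]_{\Qc}$ in $V_Y$; once this is in hand the rest of the argument is a routine verification.
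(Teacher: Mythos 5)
Your proof is correct and follows essentially the same route as the paper's: define the map via the inclusion $\iota:\Rc\hookrightarrow\Qc$, observe that complementation in $\Rc$ and in $\Qc$ differ by the class of $\Tc$ (since $\Qc\setminus\Rc=\Tc$), which lies in $H$, so the map descends to $\Gc(\Rc)\to H^\perp/H$, then conclude by linearity, injectivity, and a dimension count. Your treatment of injectivity (checking directly which subsets of $\Rc$ can land in $H$) is a touch more explicit than the paper's cardinality argument, and you verify that the image lands in $H^\perp$ and that the symplectic forms match, steps the paper leaves implicit — but these are differences in bookkeeping, not in substance.
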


\begin{proof}
  Both vector spaces involved are of dimension $2$. The injection $\iota$ gives
  rise to a well-defined map on equivalence classes $\overline{S}$ since for $S
  \subset \Rc$ we have
  \begin{equation}
    \iota (S^c) = \iota (\Rc \setminus S) = \Qc \setminus (\Tc \cup S)
    \sim \Tc \cup S .
  \end{equation}
  Taking the symmetric difference with the non-trivial element $\Tc$ in $H$, we
  obtain the class
  \begin{equation}
    (\Tc \cup S) \oplus \Tc = S = \iota (S) .
  \end{equation}
  This shows that the images of $\iota (S^c)$ and $\iota (S)$ in $H^{\perp} / H$
  indeed coincide.

  The map $i_R$ is linear (and hence symplectic) since
  \begin{equation}
    \iota (S_1 \oplus S_2) = \iota ((S_1 \cup S_2) \setminus (S_1 \cap S_2)) =
    (S_1 \cup S_2) \setminus (S_1 \cap S_2) = \iota (S_1) \oplus \iota (S_2) .
  \end{equation}
  Finally, $i_R$ is injective since the image of the equivalence class of a
  subset $S \subset R$ of cardinality $2$ remains an equivalence class of a
  subset of cardinality $2$ and is therefore non-trivial. We conclude that
  $i_R$ is indeed a symplectic isomorphism.
\end{proof}

Combining the above results, we get the following.

\begin{corollary}\label{cor:roottrans}
  Giving an indecomposable maximal isotropic subgroup $G \subset V_X \times
  V_Y$ is the same as giving a subset $\Tc$ of $\Qc$ of cardinality 2 along
  with a symplectic isomorphism
  \begin{equation}
    \ell : \Gc (\Pc) \to \Gc (\Rc),
  \end{equation}
  where $\Rc = \Qc \setminus \Tc$.
\end{corollary}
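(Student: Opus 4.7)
The plan is to assemble Corollary \ref{cor:roottrans} as a straightforward concatenation of three bijections that have already been proved, or are essentially immediate, in the excerpt. Concretely, I would argue that giving an indecomposable maximal isotropic $G \subset V_X \times V_Y$ is the same as each of the following in turn:
\begin{enumerate}
\item a pair $(H,\ell)$ with $H \subset V_Y$ a line and $\ell : V_X \to H^{\perp}/H$ an anti-symplectic isomorphism;
\item a pair $(\Tc, \ell')$ with $\Tc \subset \Qc$ of cardinality $2$ and $\ell' : \Gc(\Pc) \to H^{\perp}/H$ anti-symplectic;
\item a pair $(\Tc, \ell)$ with $\Tc \subset \Qc$ of cardinality $2$ and $\ell : \Gc(\Pc) \to \Gc(\Rc)$ symplectic.
\end{enumerate}

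For step (i) I would simply quote Proposition \ref{prop:class}. For step (ii), I would invoke the identifications $V_X \cong \Gc(\Pc)$ and $V_Y \cong \Gc(\Qc)$ furnished by the preceding proposition (due to Mumford), then observe using the remark just after it that the non-zero elements of $V_Y$ are in bijection with the cardinality-$2$ subsets $\Tc$ of $\Qc$, so that choosing a line $H \subset V_Y$ is equivalent to choosing such a $\Tc$. Step (iii) then amounts to composing $\ell'$ with the inverse of the canonical symplectic isomorphism $i_R : \Gc(\Rc) \to H^{\perp}/H$ produced by Proposition \ref{prop:sympiso}.

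The one subtlety I would want to verify cleanly is the passage from \emph{anti-symplectic} (as in Proposition \ref{prop:class}) to \emph{symplectic} (as in the statement of the corollary). This is not really an obstacle: in characteristic $2$, an anti-symplectic map satisfies $E(\ell(x_1), \ell(x_2)) = -E(x_1, x_2) = E(x_1,x_2)$, so that anti-symplectic and symplectic coincide for the $\F_2$-vector spaces at hand, and the composition in step (iii) remains symplectic. I would include a single line pointing this out before concluding, and the rest of the proof consists only in stringing together the previously established bijections, so no serious computation is required.
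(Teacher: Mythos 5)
Your proposal is correct and follows precisely the route the paper intends: Corollary \ref{cor:roottrans} is stated in the paper with only the line ``Combining the above results, we get the following,'' and the intended combination is exactly your chain of Proposition \ref{prop:class}, the Mumford identification $V_X \cong \Gc(\Pc)$, $V_Y \cong \Gc(\Qc)$, the remark identifying lines in $V_Y$ with cardinality-$2$ subsets $\Tc \subset \Qc$, and Proposition \ref{prop:sympiso}. Your explicit observation that anti-symplectic and symplectic coincide over $\F_2$ (so the word change from Proposition \ref{prop:class} to the corollary is harmless) is a useful remark that the paper leaves implicit.
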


\subsection{Rationality criteria}

Having heretofore worked over the algebraic closure $\kbar$, we now consider
criteria for Condition (i) in Lemma \ref{lem:gluexist}. This requires the
application of Galois theory to detect when a maximal isotropic subgroup $G$ is
defined over the base field $k$.

We start by noting that we may assume $X$ and $Y$ to admit defining equations
\eqref{eq:pX} and \eqref{eq:pY} over $k$. For $Y$ this follows from the fact
that every genus-$2$ curve over $k$ admits such an equation, whereas for $X$ we
may make this assumption because only the Jacobian $\Jac (X)$ intervenes in our
constructions, and this Jacobian is an elliptic curve, which therefore admits
an equation \eqref{eq:pX} since $k$ is not of characteristic $2$.

We proceed to give concrete criteria for the Galois stability in Part (i) of
Lemma \ref{lem:gluexist} in terms of the equivalent interpretation of the
indecomposable maximal isotropic subgroup $G \subset V_X \times V_Y$ that we
developed in the previous section.

\begin{proposition}\label{prop:galstabHell}
  Let $(H, \ell)$ be a pair with $H \subset V_Y$ of dimension $1$ and with
  $\ell : V_X \to H^{\perp} / H$ an anti-symplectic isomorphism, and let $G$ be
  the corresponding indecomposable subgroup of $V$, as described in Proposition
  \ref{prop:class}. Then $G$ is Galois stable if and only if:
  \begin{enumerate}
    \item $H \subset V_Y$ is a Galois stable subspace of dimension $1$;
    \item The map $\ell : V_X \to H^{\perp} / H$ is Galois equivariant.
  \end{enumerate}
\end{proposition}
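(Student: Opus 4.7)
The strategy is to observe that the bijection in Proposition~\ref{prop:class} between indecomposable maximal isotropic subgroups $G \subset V_X \times V_Y$ and pairs $(H, \ell)$ is built entirely from Galois-equivariant operations: projections, intersections, orthogonal complements, and quotients. Hence Galois stability on one side ought to translate transparently into its counterpart on the other, and the proof reduces to writing out both directions carefully. Only one subtle point needs to be articulated beforehand, namely that the phrase ``$\ell$ is Galois equivariant'' presupposes that the Galois action on the target $H^\perp / H$ is defined, which in turn requires $H$ to be Galois stable; with this understood, that action is simply the one induced from the ambient action on $V_Y$.

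For the ``only if'' direction, I would assume $G$ is Galois stable and recover $(H, \ell)$ from $G$ via the formulas preceding Proposition~\ref{prop:class}. The subspace $H = \pi_Y(G \cap (0 \times V_Y))$ is then built from the $k$-rational projection $\pi_Y$ together with the Galois-stable subspaces $G$ and $0 \times V_Y$, so $H$ is Galois stable. For the equivariance of $\ell$, pick any $(x, y) \in G$ so that $\ell(x) = y + H$; applying $\sigma \in \Gamma_k$ yields $(\sigma x, \sigma y) \in G$ by Galois stability of $G$, hence $\ell(\sigma x) = \sigma y + H = \sigma(y + H) = \sigma \ell(x)$, the last equality using Galois stability of $H$.

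For the ``if'' direction, I would use the explicit reconstruction
\begin{equation*}
  G = \{(x, y) \in V_X \times V_Y : \ell(x) = y + H\}.
\end{equation*}
For $(x, y) \in G$ and $\sigma \in \Gamma_k$, Galois equivariance of $\ell$ and stability of $H$ together give $\ell(\sigma x) = \sigma \ell(x) = \sigma(y + H) = \sigma y + H$, so $(\sigma x, \sigma y) \in G$, showing $G$ is Galois stable. No step here should present a serious obstacle; everything is forced by the canonicity of the constructions in Proposition~\ref{prop:class}, and the argument is essentially bookkeeping. The only conceptual care needed is making sure the Galois action on the target of $\ell$ is set up coherently, but this is automatic once condition~(i) holds.
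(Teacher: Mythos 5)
Your proof is correct and follows essentially the same route as the paper: both directions are handled by tracking an element $(x,y) \in G$ through the defining relation $\ell(x) = y + H$ and applying $\sigma \in \Gamma_k$. Your added remark that condition (ii) only makes sense once (i) guarantees a Galois action on $H^\perp/H$ is a useful clarification that the paper leaves implicit.
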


\begin{proof}
  If $G$ is stable under the action of $\Gamma_k$, then since the same is true
  for $V_Y$, the intersection $H = G \cap (0 \times V_Y)$ is Galois stable. It
  has dimension $1$ by Proposition \ref{prop:indecsurj}, showing that (i) is
  necessary. The same holds for (ii). Indeed, if $G$ is Galois stable, then if
  $(x, y) \in G$, the same holds for $(\sigma (x), \sigma (y))$. This in turn
  means that $\ell (\sigma (x)) = \sigma (y) + H$ in $H^{\perp} / H$, so indeed
  $\ell$ is Galois equivariant, as
  \begin{equation}
    \sigma (\ell (x)) = \sigma (y) + H = \ell (\sigma (x)).
  \end{equation}

  Now suppose conversely that $(H, \ell)$ fulfills the conditions of the
  proposition. Let $(x, y)$ be an element of the corresponding group $G$. We
  have $\ell (x) = y + H$. Since $\ell$ is Galois equivariant, we have
  \begin{equation}
    \ell (\sigma (x)) = \sigma (\ell (x))
                      = \sigma (y) + \sigma (H)
                      = \sigma (y) + H,
  \end{equation}
  which implies $(\sigma (x), \sigma (y)) \in G$. Therefore $G$ is Galois
  stable.
\end{proof}

In terms of our chosen defining defining equations for $X : y^2 = p_X$ and $Y :
y^2 = p_Y$, the Galois stable subgroup $H$ yields a quadratic factor $q_Y$ of
$p_Y$ over $k$, which corresponds to the roots in the set $\Tc$. Let $r_Y = p_Y
/ q_Y$ be the complementary factor corresponding to the roots in $\Rc = \Qc -
\Tc$.

Recall that given a quartic polynomial
\begin{equation}
  p = x^4 + a_3 x^3 + a_2 x^2 + a_1 x + a_0 \in k [x],
\end{equation}
its \defi{cubic resolvent} $\rho (p)$ is defined by
\begin{equation}
  \rho (p) = x^3 - a_2 x^2 + (a_1 a_3 + 4 a_0) x
             + (4 a_0 a_2 - a_1^2 - a_0 a_3^2) \in k[x]
\end{equation}
For simplicity of exposition, we define the cubic resolvent of a general
quartic polynomial as the cubic resolvent of the polynomial obtained by
dividing it by its leading coefficient. The cubic resolvent $\rho (p)$ is known
to be separable if $p$ is, and if $\alpha_1, \dots, \alpha_4$ are the roots of
$p$, then the distinct roots of $\rho (p)$ are given by
\begin{equation}\label{eq:rhoroots}
  \gamma_1 = \alpha_1 \alpha_2 + \alpha_3 \alpha_4,
  \gamma_2 = \alpha_1 \alpha_3 + \alpha_2 \alpha_4,
  \gamma_3 = \alpha_1 \alpha_4 + \alpha_2 \alpha_3.
\end{equation}
While a change of numbering of the roots $\alpha_1, \ldots, \alpha_4$ permutes
the expressions $\gamma_1, \ldots, \gamma_3$, the set $S_p = \left\{ \gamma_1,
\ldots, \gamma_3 \right\}$ does remain invariant. The Galois action on $S_p$
factors through the quotient $\Gal (p)$ of $\Gamma_k$. Under the embedding
$\Gal (p) \hookrightarrow \Sym(4)$ induced by the given numbering $\left\{
\alpha_1, \ldots, \alpha_4 \right\}$, we have
\begin{equation}
  \begin{split}
    \Stab_{\Gal (p)} (\gamma_1) & = \langle (1\, 3\, 2\, 4), (1\, 2) \rangle \cap \Gal (p), \\
    \Stab_{\Gal (p)} (\gamma_2) & = \langle (1\, 2\, 3\, 4), (1\, 3) \rangle \cap \Gal (p), \\
    \Stab_{\Gal (p)} (\gamma_3) & = \langle (1\, 2\, 4\, 3), (1\, 4) \rangle \cap \Gal (p) .
  \end{split}
\end{equation}
The intersection of these stabilizers of $\gamma_i$ corresponds to the subgroup
$V_4 \cap \Gal (p)$ of $\Gal (p)$. Because of its normality, this group is
independent of the chosen numbering of the roots of $p$. We see that the
splitting field of $\rho (p)$ is the number field corresponding to the subgroup
$V_4 \cap \Gal (p) \subset \Gal (p)$ under the Galois correspondence for $\Gal
(p)$. A more precise consideration enables us to prove the following:

\begin{proposition}\label{prop:galvsres}
  There exists a Galois equivariant isomorphism $\ell : \Gc (\Pc) \to \Gc
  (\Rc)$ if and only if the splitting fields of $\rho (p_X)$ and $\rho (r_Y)$
  are isomorphic.
\end{proposition}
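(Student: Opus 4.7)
The plan is to translate both sides of the equivalence into statements about Galois actions on the roots of the cubic resolvents, and then to exploit the special structure of the symmetric group $\Sym(3)$.

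First I note that $\Gc(\Pc)$ is a $2$-dimensional symplectic $\F_2$-vector space whose three non-zero elements correspond canonically to the three partitions of $\Pc$ into two pairs $\{\{\alpha_a,\alpha_b\},\{\alpha_c,\alpha_d\}\}$. Sending such a partition to the symmetric function $\alpha_a\alpha_b + \alpha_c\alpha_d$ identifies these three vectors with the three roots $\gamma_1,\gamma_2,\gamma_3$ of $\rho(p_X)$ from \eqref{eq:rhoroots}. Consequently the Galois action on the non-zero vectors of $\Gc(\Pc)$ factors faithfully through $\Gal(L_X/k)$, where $L_X$ denotes the splitting field of $\rho(p_X)$; an analogous description holds for $\Gc(\Rc)$ and the splitting field $L_Y$ of $\rho(r_Y)$.

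Second, since $\Sp_2(\F_2) = \GL_2(\F_2)$, any bijection between the three non-zero vectors of $\Gc(\Pc)$ and those of $\Gc(\Rc)$ extends uniquely to a linear, hence automatically symplectic, isomorphism. (The extension is well-defined because in each of $\Gc(\Pc)$ and $\Gc(\Rc)$ the three non-zero vectors sum to zero, so a bijection between them automatically respects the one relation among them.) Thus the existence of a Galois-equivariant symplectic isomorphism $\ell$ is equivalent to the existence of a Galois-equivariant bijection between the root sets of $\rho(p_X)$ and $\rho(r_Y)$.

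The forward implication is then immediate: such a bijection forces the actions of $\Gamma_k$ on the respective three-element sets to have the same kernel as subgroups of $\Gamma_k$, whence $L_X = L_Y$ as subfields of $\kbar$. For the converse, suppose $L := L_X = L_Y$ and set $G := \Gal(L/k)$. We then obtain faithful embeddings $\phi_X, \phi_Y : G \hookrightarrow \Sym(3)$, and the goal is to produce $c \in \Sym(3)$ with $\phi_Y = c\,\phi_X\,c^{-1}$; this element will then yield the sought equivariant bijection. This is the step where the argument genuinely uses more than just a common splitting field, and is the main obstacle I anticipate. It rests on two special features of $\Sym(3)$: all its subgroups of a given order are conjugate, and for every subgroup $H \subseteq \Sym(3)$ the natural map $N_{\Sym(3)}(H) \to \Aut(H)$ is surjective. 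A short case analysis on $|G| \in \{1,2,3,6\}$ then produces the desired $c$, completing the proof.
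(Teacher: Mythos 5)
Your proof is correct and follows the same route as the paper: identify $\Gc(\Pc)\setminus\{0\}$ with the roots of $\rho(p_X)$ via $\{\alpha_a,\alpha_b\}\mapsto \alpha_a\alpha_b+\alpha_c\alpha_d$, reduce to comparing the two resulting $\Gamma_k$-actions on $3$-element sets, and handle the converse direction with the group theory of $\Sym(3)$. One small point in your favour: the paper proves the converse by a Lemma asserting that two homomorphisms $\Gamma\to\Sym(3)$ with equal kernels and isomorphic images are $\Sym(3)$-conjugate, citing only that subgroups of $\Sym(3)$ are conjugate iff isomorphic; that fact alone lets you conjugate the images onto each other, but to match the two resulting isomorphisms $\Gamma/N\to H$ one also needs exactly the surjectivity $N_{\Sym(3)}(H)\twoheadrightarrow\Aut(H)$ that you explicitly supply. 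You also make explicit, as the paper does not, that a $\Gamma_k$-equivariant bijection on nonzero vectors extends uniquely to a symplectic $\F_2$-linear isomorphism (using $\Sp_2(\F_2)=\GL_2(\F_2)\cong\Sym(3)$ and that the three nonzero vectors sum to zero), which is needed since the proposition asks for a symplectic $\ell$, not merely a bijection.
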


\begin{proof}
  Since the trivial element of $\Gc (\Pc)$ is fixed by $\Gamma_k$, its Galois
  structure is determined by the action on the three non-trivial elements of
  $\Gc (\Pc) \setminus \left\{ 0 \right\}$. We have the following isomorphism
  of $\Gamma_k$-sets:
  \begin{equation}\label{eq:galstruct}
    \begin{split}
      \Gc (\Pc) \setminus \left\{ 0 \right\} & \to S_{p_X} \\
      \overline{\left\{ P_1, P_2 \right\}} = \overline{\left\{ P_3, P_4 \right \}}
      & \mapsto \gamma_1 = \alpha_1 \alpha_2 + \alpha_3 \alpha_4, \\
      \overline{\left\{ P_1, P_3 \right\}} = \overline{\left\{ P_2, P_4 \right \}}
      & \mapsto \gamma_2 = \alpha_1 \alpha_3 + \alpha_2 \alpha_4, \\
      \overline{\left\{ P_1, P_4 \right\}} = \overline{\left\{ P_2, P_3 \right \}}
      & \mapsto \gamma_3 = \alpha_1 \alpha_4 + \alpha_2 \alpha_3.
    \end{split}
  \end{equation}
  Indeed, the action factors through the Galois group of the splitting field of
  $p_X$, and the induced action of $\Sym(4)$ on the indices gives rise to an
  identical action on elements on both sides of \eqref{eq:galstruct}.

  It therefore remains to be analyzed when the $\Gamma_k$-sets $S_{p_X}$ and
  $S_{r_Y}$ are isomorphic. Certainly an isomorphism of these sets induces an
  isomorphism of the splitting fields of $p_X$ and $r_Y$. Conversely, if these
  splitting fields are isomorphic, then $\rho_X$ and $\rho_Y$ have a common
  kernel $N$. These representations are therefore conjugate by the following
  lemma, which uses that the polynomials $p_X$ and $r_Y$ are cubic in an
  essential way.
\end{proof}

\begin{lemma}
  Let $\Gamma$ be a group, and let
  \begin{equation}
    \rho_1, \rho_2 : \Gamma \to \Sym(3)
  \end{equation}
  be two representations. Then $\rho_1$ and $\rho_2$ are $\Sym(3)$-conjugate if and
  only if their kernels coincide and their images in $\Sym(3)$ are isomorphic.
\end{lemma}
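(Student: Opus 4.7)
The ``only if'' direction is immediate: if $\rho_2 = \tau \rho_1 \tau^{-1}$ for some $\tau \in \Sym(3)$, then the kernels obviously coincide and the images are conjugate, hence isomorphic. The substance lies in the converse, and the plan is to reduce it to a small finite case analysis based on the subgroup structure of $\Sym(3)$.

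For the converse, assume $N = \ker(\rho_1) = \ker(\rho_2)$, so that both maps factor through injective homomorphisms $\bar\rho_i : \Gamma/N \hookrightarrow \Sym(3)$ with isomorphic images $K_i = \bar\rho_i(\Gamma/N)$. The first step is to remark that any two isomorphic subgroups of $\Sym(3)$ are in fact conjugate in $\Sym(3)$: the three subgroups of order $2$ are conjugate (since $\Sym(3)$ acts transitively on pairs of elements to swap), the unique subgroup $A_3$ of order $3$ is conjugate to itself, and the trivial and full subgroups are conjugate to themselves. After conjugating $\bar\rho_2$ by a suitable element of $\Sym(3)$, we may therefore assume $K_1 = K_2 =: K$.

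At this point $\bar\rho_1$ and $\bar\rho_2$ differ by an automorphism $\sigma \in \Aut(K)$, namely $\bar\rho_2 = \sigma \circ \bar\rho_1$. The remaining task is to show that $\sigma$ is induced by conjugation by some element of $\Sym(3)$, \emph{i.e.}\ that the natural map $N_{\Sym(3)}(K)/C_{\Sym(3)}(K) \to \Aut(K)$ is surjective. This is handled by running through the short list of possibilities for $K$: the cases $K = 1$ and $|K| = 2$ are trivial since $\Aut(K)$ is itself trivial; for $K = A_3$ the non-trivial automorphism is inversion, and it is realized by conjugation by any transposition in $\Sym(3)$; for $K = \Sym(3)$ one uses that $\Sym(3)$ has trivial center, so $\Aut(\Sym(3)) = \Inn(\Sym(3))$.

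The main obstacle, such as it is, is purely bookkeeping: one has to be confident that $\Aut(A_3)$ and $\Aut(\Sym(3))$ are both exhausted by conjugations inside $\Sym(3)$, which is exactly the step where the smallness of the symmetric group (and hence the phrase ``essential use of cubic polynomials'' in the proof of Proposition \ref{prop:galvsres}) is used. Note that the analogous statement for $\Sym(4)$ in place of $\Sym(3)$ would fail, because $\Sym(4)$ contains non-conjugate isomorphic subgroups (\emph{e.g.}\ the two types of Klein four subgroups). Combining the conjugation that equalized the images with the inner automorphism of $K$ realizing $\sigma$ yields an element of $\Sym(3)$ that conjugates $\rho_1$ to $\rho_2$, completing the proof.
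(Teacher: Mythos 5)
Your proof is correct and fills in more detail than the paper's terse one-liner, which simply says the lemma ``is a direct consequence of the fact that two subgroups of $\Sym(3)$ are conjugate if and only if they are isomorphic.'' That sentence only gets you as far as your intermediate step of arranging $K_1 = K_2 =: K$ by conjugation; the remaining ingredient --- that every automorphism of each subgroup $K \le \Sym(3)$ is induced by conjugation inside $\Sym(3)$, equivalently that $N_{\Sym(3)}(K)/C_{\Sym(3)}(K) \to \Aut(K)$ is surjective in all four cases --- is a genuinely necessary second fact which the paper leaves implicit and you spell out explicitly. Your observation that the statement would fail for $\Sym(4)$ (non-conjugate Klein fours) correctly identifies the first fact as the obstruction there; it is worth noting in passing that the second fact can fail independently of the first in other small groups, so both checks really are needed. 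Overall this is the same underlying idea as the paper's, carried out with the care the paper skips.
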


\begin{proof}
  This is a direct consequence of the fact that two subgroups of $\Sym(3)$ are
  conjugate if and only if they are isomorphic.
\end{proof}

\begin{remark}\label{rem:isonumber}
  It is not much more difficult to calculate the number of Galois equivariant
  isomorphisms $\ell : \Gc (\Pc) \to \Gc (\Rc)$. Indeed, if the given
  $\Gamma_k$-sets are conjugate, then the number of isomorphisms is nothing but
  their common number of automorphisms, which in turn is the number of elements
  of the centralizer of either of their images in $\Sym(3)$.
\end{remark}

\begin{theorem}\label{thm:critres}
  Let $X$ (resp.\ $Y$) be a curve of genus $1$ (resp.\ $2$) admitting a
  defining equation $X : y^2 = p_X$ (resp.\ $Y : y^2 = p_Y$). There exists a
  Galois stable indecomposable maximal isotropic subgroup $G \subset V_X \times
  V_Y$ if and only if
  \begin{enumerate}
    \item $p_Y$ admits a quadratic factor $q_Y$ over $k$;
    \item For the complementary factor $r_Y = p_Y / q_Y$ we have that the
      cubic resolvents $\rho (p_X)$ and $\rho (r_Y)$ have isomorphic splitting
      fields over $k$.
  \end{enumerate}
\end{theorem}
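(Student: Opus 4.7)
The plan is to combine the structural results developed in the preceding subsections into a single chain of equivalences. By Proposition \ref{prop:class}, giving an indecomposable maximal isotropic subgroup $G \subset V_X \times V_Y$ is the same as giving a pair $(H, \ell)$ with $H\subset V_Y$ a one-dimensional subspace and $\ell : V_X \to H^\perp/H$ an anti-symplectic isomorphism; and by Proposition \ref{prop:galstabHell}, $G$ is Galois stable if and only if $H$ is Galois stable and $\ell$ is Galois equivariant. Hence I must separately interpret these two Galois conditions in terms of the polynomials $p_X$ and $p_Y$.

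First, for the $H$ part, I would use the identification of $V_Y$ with $\Gc(\Qc)$ from Subsection \ref{subsec:roots}. The Remark following the identification observes that among a subset of $\Qc$ of even cardinality and its complement, exactly one has cardinality $2$, so one-dimensional subspaces $H \subset V_Y$ correspond bijectively to subsets $\Tc \subset \Qc$ of cardinality $2$. Since the Galois action on $\Qc$ is induced by the Galois action on the roots of $p_Y$, a subset $\Tc = \{(\beta_i,0),(\beta_j,0)\}$ is Galois stable exactly when the unordered pair $\{\beta_i,\beta_j\}$ is, i.e.\ exactly when $(x-\beta_i)(x-\beta_j)$ is a quadratic factor $q_Y$ of $p_Y$ over $k$. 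This establishes that condition (i) of the theorem is equivalent to the existence of a Galois-stable $H$, and moreover identifies the complementary factor $r_Y = p_Y/q_Y$ as the polynomial whose roots index $\Rc = \Qc \setminus \Tc$.

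Having fixed such an $H$ (equivalently, $\Tc$), Proposition \ref{prop:sympiso} supplies a canonical symplectic isomorphism $i_R : \Gc(\Rc) \to H^\perp/H$ which is Galois-equivariant, since it is induced by the Galois-equivariant inclusion $\iota : \Rc \hookrightarrow \Qc$. Therefore the existence of a Galois-equivariant anti-symplectic isomorphism $\ell : V_X \to H^\perp/H$ is equivalent to the existence of a Galois-equivariant isomorphism $\Gc(\Pc) \to \Gc(\Rc)$; note that in this setting the symplectic/anti-symplectic distinction is vacuous, because on a two-dimensional $\F_2$-vector space carrying a non-degenerate alternating form one has $\GL_2(\F_2) = \Sp_2(\F_2)$. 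By Proposition \ref{prop:galvsres}, the latter existence is equivalent to condition (ii) of the theorem. Combining the two steps yields the stated biconditional.

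I do not anticipate a serious obstacle: the theorem is essentially a dictionary translation of the machinery already in place, with the real work done in Propositions \ref{prop:class}, \ref{prop:galstabHell}, \ref{prop:sympiso}, and \ref{prop:galvsres}. The only point deserving any care is checking that every identification invoked, in particular the map $i_R$, respects the Galois action; this follows from the fact that $\iota$, and hence $i_R$, is built from natural set-theoretic operations on the Galois sets $\Pc$ and $\Qc$.
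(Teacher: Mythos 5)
Your proof is correct and follows exactly the route the paper takes — the paper's own proof is simply ``This follows by combining Proposition \ref{prop:galstabHell}(i) and Proposition \ref{prop:galvsres},'' and you have supplied the intermediate bookkeeping (the dictionary between $H$ and $\Tc$ via Proposition \ref{prop:class}, the Galois-equivariance of $i_R$ from Proposition \ref{prop:sympiso}, and the remark that anti-symplectic is vacuous over $\F_2$ in dimension $2$) that the paper leaves implicit, partly because it records the $H\leftrightarrow q_Y$ correspondence in the surrounding text before stating the theorem.
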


\begin{proof}
  This follows by combining Proposition \ref{prop:galstabHell}(i) and
  Proposition \ref{prop:galvsres}.
\end{proof}

\begin{remark}\label{rem:degen}
  As mentioned at the start of Section \ref{subsec:roots}, some changes take
  place when either $p_X$ of $p_Y$ is of odd degree. If $p_Y$ has degree $5$,
  then we should also consider the case where $p_Y$ has a linear factor over
  $k$ in Part (i) of Theorem \ref{thm:critres}. Similarly, if $p_X$ or $r_Y$ is
  of odd degree, then we should use this polynomial directly in Part (ii)
  instead of taking a Galois resolvent.
\end{remark}

\subsection{Intervening twists}\label{sec:twists}

The previous section has given a concrete characterization of Part (i) of Lemma
\ref{lem:gluexist}. For Part (ii), we restrict ourselves in this article to the
case where the quotient $(\Jac (X) \times \Jac (Y)) / G$ is a Jacobian over
$\kbar$. When $\kbar \subset \C$, it is possible to characterize the case when
this occurs by numerical complex-analytic methods, which are further discussed
in Section \ref{sec:interpolation}: namely, none of the even theta-null values
of the complex torus corresponding to $(\Jac (X) \times \Jac (Y)) / G$ should
vanish.

\begin{remark}
  We note in passing and without detail that it is not difficult to
  characterize when there exists a hyperelliptic gluing $(Z, \phi)$ over
  $\kbar$. This is the case if and only if one of the cross ratios of the roots
  of the polynomial $p_X$ that defines $X$ coincides with one of the cross
  ratios of four of the roots of the polynomial $p_Y$ that defines $Y$, as one
  observes by noting that the Prym variety of the obvious morphism from
  \begin{equation}
    Z : y^2 = x^8 + a_3 x^6 + a_2 x^4 + a_1 x^2 + a_0
  \end{equation}
  to
  \begin{equation}
    X : y^2 = x^4 + a_3 x^3 + a_2 x^2 + a_1 x + a_0
  \end{equation}
  is given by
  \begin{equation}
    Y : y^2 = x (x^4 + a_3 x^3 + a_2 x^2 + a_1 x + a_0).
  \end{equation}

  However, issues of rationality remain to be explored thoroughly. Note in
  particular that it is possible for the glued curve $Z$ to be hyperelliptic
  over $\kbar$ but not over $k$, which happens when $Z$ is a degree-$2$ cover
  of a non-trivial conic. The upcoming work \cite{hanselman-hyp} will explore
  these themes in detail.
\end{remark}

For non-algebraically closed fields, there is an additional complication: It is
possible that $(Z, \lambda_Z)$ is a Jacobian over $\kbar$, but not over $k$.
More precisely, we have the following by \cite{beauville-ritzenthaler}:

\begin{theorem}\label{thm:brtwists}
  Let $(Q, \lambda_Q)$ be a principally polarized abelian threefold over $k$
  that is not a product of abelian varieties of smaller dimension over $\kbar$.
  Then there exists a curve $Z$ over $k$ and a field extension $\ell \ext k$
  with $[\ell : k] \leq 2$ such that $(Q, \lambda_Q)$ is isomorphic to the
  quadratic twist of $(\Jac (Z), \lambda_Z)$ with respect to the automorphism
  $-1$ and the extension $\ell \ext k$. Moreover, if $Z$ is hyperelliptic, then
  $\ell = k$.
\end{theorem}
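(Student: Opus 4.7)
The plan is to combine the Torelli theorem with a Galois descent argument.

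First, I would apply Torelli: since the hypothesis precludes $(Q_{\kbar}, \lambda_Q)$ from being a product of principally polarized abelian varieties of smaller dimension, and since every indecomposable principally polarized abelian variety of dimension at most $3$ is the Jacobian of a smooth curve, there exists a curve $Z_0$ over $\kbar$ together with an isomorphism of ppavs
\begin{equation*}
\phi_0 \colon (\Jac(Z_0), \lambda_{Z_0}) \xrightarrow{\sim} (Q_{\kbar}, \lambda_Q).
\end{equation*}

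Next, since $Q$ is defined over $k$ it carries a canonical descent datum, and transporting this through $\phi_0$ produces a cocycle $\{\Psi_\sigma\}_{\sigma \in \Gamma_k}$ with $\Psi_\sigma \colon \sigma^*(\Jac(Z_0), \lambda_{Z_0}) \xrightarrow{\sim} (\Jac(Z_0), \lambda_{Z_0})$ taking values in $\Aut(\Jac(Z_0), \lambda_{Z_0})$. The key algebraic input is the arithmetic form of Torelli's theorem, which identifies this automorphism group via the short exact sequence
\begin{equation*}
  1 \to \Aut(Z_0) \to \Aut(\Jac(Z_0), \lambda_{Z_0}) \to \{\pm 1\} \to 1,
\end{equation*}
where the right-hand quotient records whether an automorphism of the polarized Jacobian is geometrically induced by a curve automorphism or only becomes so after composition with $-1$. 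For non-hyperelliptic $Z_0$ this sequence is nontrivial and admits a canonical splitting sending $-1 \in \{\pm 1\}$ to the automorphism $-1$ of $\Jac(Z_0)$; for hyperelliptic $Z_0$ the right-hand quotient vanishes because $-1$ already equals the action of the hyperelliptic involution.

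I would then project $\{\Psi_\sigma\}$ to $H^1(\Gamma_k, \{\pm 1\}) = k^*/(k^*)^2$, obtaining a class that corresponds to a quadratic extension $\ell \ext k$ with $[\ell : k] \leq 2$; when $Z_0$ is hyperelliptic this class is automatically trivial, yielding $\ell = k$. Using the splitting above, I would modify $\{\Psi_\sigma\}$ by the $\{\pm 1\}$-part of each entry to obtain a new cocycle $\{\Psi'_\sigma\}$ valued in $\Aut(Z_0)$, and invoke effective descent for curves over $k$ to produce a model $Z$ of $Z_0$ over $k$ whose induced descent datum on $\Jac(Z_0)$ is exactly $\{\Psi'_\sigma\}$.

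Finally, I would unwind the definition of the twist: by construction $\{\Psi_\sigma\}$ differs from $\{\Psi'_\sigma\}$ precisely by the $\{\pm 1\}$-valued cocycle defining the extension $\ell \ext k$, so the ppav $Q$ obtained by descending $\Jac(Z_0)$ along $\{\Psi_\sigma\}$ is by definition the quadratic twist of $(\Jac(Z), \lambda_Z)$ with respect to the automorphism $-1$ and the extension $\ell \ext k$. The main obstacle is making the arithmetic Torelli statement and the corresponding splitting of automorphism groups fully rigorous over a general field of characteristic different from $2$, and tracking sign conventions carefully enough that the computed cocycle matches the twist on the nose.
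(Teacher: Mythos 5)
The paper does not actually prove this theorem: it is quoted from Beauville--Ritzenthaler, who in turn credit the statement (in the non-hyperelliptic case, which is the interesting one) to Serre. Your Galois-descent argument is correct and is essentially Serre's original proof. One small imprecision is worth flagging: since $Z_0$ is defined only over $\kbar$, the family $\{\Psi_\sigma\}$ is a Weil descent datum, i.e.\ a collection of isomorphisms $\sigma^*(\Jac(Z_0), \lambda_{Z_0}) \to (\Jac(Z_0), \lambda_{Z_0})$ satisfying the cocycle relation, rather than a cocycle with values in the fixed group $\Aut(\Jac(Z_0), \lambda_{Z_0})$. The decomposition step should therefore be carried out using the strong Torelli theorem for \emph{isomorphisms} of polarized Jacobians (every isomorphism of polarized Jacobians of non-hyperelliptic curves of genus $\geq 2$ is uniquely $\pm 1$ times the Torelli image of a curve isomorphism), and the verification that the two resulting families $\{\Psi'_\sigma\}$ and $\{\epsilon_\sigma\}$ each satisfy the cocycle relation rests on the facts that $-1$ is central (it commutes with pullback along any homomorphism) and Galois-invariant, both of which your splitting remark already records. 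With that rephrasing, the remaining steps --- effective descent for projective curves of genus $\geq 2$, identification of the $\{\pm 1\}$-valued class with a field extension $\ell \ext k$ of degree $\leq 2$, and vanishing of that class for hyperelliptic $Z_0$ since $-1$ is then the hyperelliptic involution on the Jacobian --- go through exactly as you describe.
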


Given a curve $Z$ over $k$ and a scalar $\mu \in k^*$, we more concisely define
\defi{quadratic twist of $(\Jac (Z), \lambda_Z)$ with respect to $-1$ by $\mu$}
to be the quadratic twist of $(\Jac (Z), \lambda_Z)$ with respect to the
automorphism $-1$ and the extension $k (\sqrt{\mu}) \ext k$. Note that this
twist only depends on the class of $\mu$ in $k^* / (k^*)^2$.

\begin{definition}
  Let $Q = (\Jac (X) \times \Jac (Y)) / G$ and $Z$ be as in the preceding
  theorem, and let $\mu \in k^*$. If $(Q, \lambda_Q)$ is isomorphic to the
  quadratic twist of $(\Jac (Z), \lambda_Z)$ with respect to $-1$ by $\mu$,
  then we call the curve $Z$ a \defi{twisted gluing} of $X$ and $Y$ (with
  \defi{twisting scalar} $\mu$).
\end{definition}

Given generic $X$ and $Y$, the twisting scalar needed to obtain the quotient
$Q$ from $\Jac (Z)$ is usually non-trivial. In other words, over a general
ground field $k$ a twisted gluing is more likely to exist than an actual gluing
(and even the existence of a twisted gluing requires the fairly restrictive
hypotheses of Theorem \ref{thm:critres}). In the plane quartic case that we
will consider in the following section, we will calculate both $Z$ and the
twisting scalar $\mu$ explicitly in terms of the polynomials $p_X$ and $p_Y$
that define the curves $X$ and $Y$ and the cubic resolvents involved in Theorem
\ref{thm:critres}.

\section{Gluing via interpolation}\label{sec:interpolation}

\subsection{Numerical algorithms over $\C$}\label{subsec:interCC}
Consider the base field $k = \C$. In this section, we consider gluings from an
analytic point of view. As in Section \ref{subsec:roots}, we choose defining
equations
\begin{equation}\label{eq:X}
  X : y^2 = p_X (x)
\end{equation}
and
\begin{equation}\label{eq:Y}
  Y : y^2 = p_Y (x) .
\end{equation}
Consider the sets of roots $\Pc = \left\{ \alpha_1, \dots, \alpha_4 \right\}$
of $p_X$ and $\Qc = \left\{ \beta_1, \dots, \beta_6 \right\}$ of $p_Y$. Let
$\Tc = \left\{ \beta_5, \beta_6 \right\}$ and $\Rc = \Qc - \Tc$. Via the
correspondence in Corollary \ref{cor:roottrans}, we consider the maximal
isotropic subgroup $G$ defined by the pair $(\Tc, \ell)$, where $\Tc = \left\{
\beta_5, \beta_6 \right\}$ and where $\ell$ is defined as
\begin{equation}
  \begin{split}
    \ell : \Gc (\Pc) & \to \Gc (\Rc) \\
    \overline{\left\{ \alpha_1, \alpha_i \right\}}
    & \mapsto \overline{\left\{ \beta_1, \beta_i \right\}} .
  \end{split}
\end{equation}
for $i \in \left\{ 2, 3, 4 \right\}$. In other words, we fix a root pairing
determined by our choice of ordering of the roots of $p_X$ and $p_Y$. We intend
to find a corresponding genus-$3$ curve defined by a homogeneous ternary quartic
equation
\begin{equation}\label{eq:Z}
  Z : F_Z (x, y, z) = 0 \subset \P^2
\end{equation}
provided that such an equation exists.

\begin{definition}
  Let $X$ be a curve over $\C$, and let $\Bc = \left\{ \omega_1, \dots,
  \omega_g \right\}$ be a basis of the $\C$-vector space of global
  differentials $H^0 (X, \omega_X)$. The \defi{period lattice} $\Lambda_{X,
  \Bc}$ of $X$ with respect to $\Bc$ is the lattice in $\C^g$ defined by
  \begin{equation}
    \Lambda_{X, \Bc} = \left\{ \left(\int_\gamma \omega_i \right)_{i=1, \ldots, g} :
                               \gamma \in H_1 (X, \Z) \right\} .
  \end{equation}
  It carries a natural symplectic pairing $E_{\Lambda_{X, \Bc}}$ coming from
  the intersection product. We call a pair $(\Lambda_{X, \Bc}, E_{\Lambda_{X,
  \Bc}})$ thus obtained from some curve $X$ of genus $g$ and some basis $\Bc$
  of $H^0 (X, \omega_X)$ a \defi{period lattice with a distinguished symplectic
  pairing} in $\CC^g$. When its presence is clear from the context, we remove
  mention of $E_{\Lambda_{X, \Bc}}$.
\end{definition}

A choice \eqref{eq:X} of defining equation picks out a distinguished basis of
$H^0 (X, \omega_X)$, namely
\begin{equation}\label{eq:BX}
  \Bc_{p_X} = \{ dx / y \}
\end{equation}
Similarly, the defining equations \eqref{eq:Y} and \eqref{eq:Z} pick out the
bases
\begin{equation}\label{eq:BY}
  \Bc_{p_Y} = \{ x dx / y, dx / y \}
\end{equation}
and
\begin{equation}\label{eq:BZ}
  \Bc_{F_Z} = \{ x dx / (\partial f_Z / \partial y),
                 y dx / (\partial f_Z / \partial y),
                   dx / (\partial f_Z / \partial y) \},
\end{equation}
where $f_Z (x, y) = F_Z (x, y, 1)$. For genus-$2$ curves, this yields a map
\begin{equation}\label{eq:mapg2}
  \begin{split}
    \left\{ \begin{array}{c}
      \text{Defining polynomials $p_Y$ in \eqref{eq:Y}}
      \end{array}\right\}
    &\to
    \left\{ \begin{array}{c}
      \text{Period lattices with a distinguished}\\
      \text{symplectic pairing in $\CC^2$}
      \end{array}\right\} \\
    p_Y &\mapsto \Lambda_{Y, \Bc_{p_Y}} .
  \end{split}
\end{equation}
Similarly, for smooth plane quartic curves we obtain a map
\begin{equation}\label{eq:mapg3}
  \begin{split}
    \left\{ \begin{array}{c}
      \text{Defining equations $F_Z$ in \eqref{eq:Z}}
      \end{array}\right\}
    &\to
    \left\{ \begin{array}{c}
      \text{Period lattices with a distinguished}\\
      \text{symplectic pairing in $\CC^3$}
      \end{array}\right\} \\
    F_Z &\mapsto \Lambda_{Z, \Bc_{F_Z}} .
  \end{split}
\end{equation}

\begin{proposition}\label{prop:eqvslat}
  The map \eqref{eq:mapg2} is bijection, and the map \eqref{eq:mapg3} yields a
  bijection after quotienting out the identification $F_Z \sim -F_Z$.
\end{proposition}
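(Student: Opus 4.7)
The well-definedness of both maps is immediate from the definitions, so I focus on bijectivity. My plan in both cases is to use Torelli's theorem to extract a curve isomorphism from an equality of period lattices with pairings, then invoke the explicit parametrization of such isomorphisms (hyperelliptic transformations in genus $2$, projective transformations of the canonical image in genus $3$) to pin down how the defining equations are related. The $\pm$ ambiguity built into Torelli is absorbed by the hyperelliptic involution in genus $2$ but survives in genus $3$, producing exactly the identification $F_Z\sim-F_Z$.

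For \eqref{eq:mapg2}, suppose $p_Y$ and $p_{Y'}$ give the same pair $(\Lambda,E)\subset\CC^2$. The identity on $\CC^2$ is then a polarized isomorphism $\Jac(Y)\to\Jac(Y')$, and by the hyperelliptic form of Torelli's theorem (under which the hyperelliptic involution realizes $[-1]$ on the Jacobian) it lifts to a unique curve isomorphism $\psi:Y\to Y'$ with $\psi^{*}\Bc_{p_{Y'}}=\Bc_{p_Y}$ componentwise. Any such $\psi$ has the form $\psi(x,y)=((ax+b)/(cx+d),\,ey/(cx+d)^3)$, and a short computation shows its pullback on the ordered basis is multiplication by the scalar $(ad-bc)/e$ times the matrix $\bigl(\begin{smallmatrix}a&b\\c&d\end{smallmatrix}\bigr)$. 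Demanding that this equal the identity matrix forces $b=c=0$, $a=d$, $e=a^{3}$, hence $\psi=\id$ and $p_Y=p_{Y'}$. For surjectivity, any $(\Lambda,E)$ arises from some hyperelliptic model $Y_0:y^2=p_0(x)$ equipped with some basis $\Bc$; the same computation shows that the basis-change action sweeps out all of $\GL_2(\CC)$ as $(a,b,c,d,e)$ vary, so an appropriate hyperelliptic change of variables produces a defining polynomial whose distinguished basis equals $\Bc$.

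For \eqref{eq:mapg3}, the argument proceeds analogously but with two modifications that reflect the non-hyperelliptic nature of smooth plane quartics. First, Torelli now yields a curve isomorphism $\psi:Z\to Z'$ only up to sign, giving two cases $\psi_{*}=\pm\id$. In either case the condition $\psi^{*}\Bc_{F_{Z'}}=\pm\Bc_{F_Z}$ forces the associated projective transformation of $\P^{2}$ to act trivially (modulo scalars) on the linear forms $x,y,z$, hence to be the identity in $\PGL_3$; thus $Z=Z'$ as subvarieties of $\P^{2}$ and $F_{Z'}=cF_Z$ for some $c\in\CC^{*}$. Second, the residue normalization $\omega=dx/(\partial f_Z/\partial y)$ scales by $1/c$ when $F_Z$ is replaced by $cF_Z$; matching the case $\psi_{*}=\id$ then forces $c=1$, while the case $\psi_{*}=-\id$ forces $c=-1$, which is exactly the identification $F_Z\sim-F_Z$. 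Surjectivity onto the non-hyperelliptic locus uses the standard fact that every non-hyperelliptic genus-$3$ curve is canonically embedded as a plane quartic, together with the $\GL_3(\CC)$-freedom to adjust the basis. The main step requiring care is the bookkeeping of how rescaling $F_Z$ interacts with the residue construction of the distinguished basis, since it is precisely this interaction that rules out any further fiber elements beyond $\pm F_Z$.
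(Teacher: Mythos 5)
Your proof is correct and follows essentially the same route as the paper's: reduce to a polarized isomorphism of Jacobians, invoke strong Torelli to get a curve isomorphism (unique once the hyperelliptic involution is used to fix the sign in genus $2$, and only determined up to sign in genus $3$), and then analyze how the explicit change-of-variables acts on the distinguished bases of differentials to pin down the defining equation up to the residual $F_Z\sim-F_Z$. The only difference is presentational: you carry out the substitution computation $\psi^*\Bc_{p_{Y'}}=\tfrac{ad-bc}{e}\bigl(\begin{smallmatrix}a&b\\c&d\end{smallmatrix}\bigr)\Bc_{p_Y}$ explicitly, whereas the paper phrases the same fact via the transformation rule for $p\mapsto p\cdot S^{-1}$ and deduces the conclusion more abstractly.
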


\begin{proof}
  We first prove surjectivity of \eqref{eq:mapg2}. By definition, given a
  period lattice with symplectic pairing $\Lambda$, there exists a pair $(Y,
  \Bc)$ that gives rise to it. We may assume that $Y$ is defined by a
  polynomial $p_Y$ as in \eqref{eq:BY}. At this point, the sole remaining
  problem is that the basis $\Bc$ need not be the distinguished basis
  $\Bc_{p_Y}$ associated to $p_Y$.

  Given a matrix $S \in \GL_2 (\CC)$ and a defining polynomial $p$ as in
  \eqref{eq:BY}, we can transform $p$ by the fractional linear transformation
  in $x$ induced by $S^{-1}$ to obtain a new polynomial $p \cdot S^{-1}$ as
  well as a canonical isomorphism from the curve defined by $p$ to the curve
  defined by $p \cdot S^{-1}$. Explicitly, when $S = \begin{pmatrix} a & b \\ c
    & d \end{pmatrix}$, this isomorphism is given by
  \begin{equation}
    (x, y) \mapsto \left(\frac{a x + b}{c x + d}, \frac{y}{c x + d}\right) .
  \end{equation}
  Under the map on differentials induced by this isomorphism, the distinguished
  basis $\Bc_{p \cdot S^{-1}}$ pulls back to a scalar multiple of the (in
  general no longer distinguished) basis $S \cdot \Bc_p$, where the latter
  action of $S$ is defined by treating $\Bc_p$ as a vector.

  Since $\Bc$ and $\Bc_{p_Y}$ are bases of global differentials for the same
  curve $Y$, we have $T \cdot \Bc = \Bc_{p_Y}$ for some $T \in \GL_2 (\CC)$.
  The argument above shows that under the canonical isomorphism of $Y$ with the
  curve defined by $p_Y \cdot T$, the basis $\Bc$ is a scalar multiple of the
  distinguished basis. Replacing $p_Y$ by $p_Y \cdot T$ if necessary, we may
  therefore assume that $(Y, \Bc_{p_Y})$ gives rise to a scalar multiple of
  $\Lambda$. As scaling the polynomial $p_Y$ induces a scaling of the basis of
  differentials $\Bc_Y$, we can therefore indeed find a polynomial giving rise
  to the specified period lattice with symplectic pairing $\Lambda$ under
  \eqref{eq:mapg2}.

  For injectivity we apply Torelli's theorem: Because the symplectic pairing
  that is (implicitly) present on the right-hand side of \eqref{eq:mapg2} is
  fixed, we know that if $p_1$ and $p_2$ on the left-hand side give rise to the
  same period matrix with symplectic pairing, there in fact exists an
  isomorphism between the corresponding hyperelliptic curves. Following the
  argument for surjectivity shows that there even exists an isomorphism whose
  corresponding fractional linear transformation in $x$ is trivial. The
  corresponding map in $y$ is necessarily a scaling by $\lambda \in \CC$, which
  only fixes a given period lattice when $\lambda \in \left\{ \pm 1 \right\}$.
  Since transforming by the hyperelliptic involution does not affect the
  defining equations of hyperelliptic curves, we conclude that $p_1 = p_2$.

  For plane quartics, the same argument can be run for surjectivity, except
  that this time $T \in \GL_3 (\CC)$ acts via fractional linear transformation
  on the projective coordinates $x, y, z$. For injectivity, given two defining
  equations $F_1 = 0$ and $F_2 = 0$ giving rise to the same period matrix with
  symplectic pairing $\Lambda$, we can again conclude the existence of an
  isomorphism between the curves defined by $F_1 = 0$ and $F_2 = 0$ whose
  tangent representation is a scalar. We conclude that $F_2 = \mu F_1$ for a
  scalar $\mu$. As multiplying a ternary quartic $F$ by $\mu$ scales the
  lattice $\Lambda$ by $\mu^{-1}$, we conclude that there remains the ambiguity
  of scaling $F$ by $\mu = -1$. Conversely, no other such scalings fix the
  lattice $\Lambda \subset \CC^3$, so that we obtain a bijection after
  quotienting out the identification in the statement of the proposition.
\end{proof}

\begin{remark}
  An explicit algorithm to recover the polynomial $p_Y$ in \eqref{eq:mapg2}
  that corresponds to a given period lattice with a symplectic pairing is given
  in \cite{guardia}.
\end{remark}

\begin{remark}
  The statement in Proposition \ref{prop:eqvslat} does not hold for defining
  polynomials of genus-$1$ curves $X$. Indeed, the curves
  \begin{equation}
    X_1 : y^2 = x (x - 1) (x - 2)
  \end{equation}
  and
  \begin{equation}
    X_2 : y^2 = (x + 1) x (x - 1)
  \end{equation}
  define the same period lattice, as the isomorphism $(x, y) \mapsto (x - 1,
  y)$ from $X_1$ to $X_2$ preserves the regular differential $d x / y$. Yet the
  defining polynomials of $X_1$ and $X_2$ are not equal.
\end{remark}

Let $\Lambda_X$ and $\Lambda_Y$ be the period lattices resulting from the
choices of basis \eqref{eq:BX} and \eqref{eq:BY}. These matrices can be
calculated especially fast by using new algorithms by Molin--Neurohr
\cite{molin-neurohr}. This functionality also includes the calculation of the
Abel--Jacobi map, which means that we can construct elements of the group $G$,
as follows.

Given a hyperelliptic curve $X$ with Weierstrass points $P_1$ and $P_2$, the
corresponding element of $H^0 (X, \omega_X)^* / H_1 (X, \Z)$ under the
Abel--Jacobi map is
\begin{equation}
  \omega \mapsto \int_{P_1}^{P_2} \omega,
\end{equation}
where the integral can be taken along any path between $P_1$ and $P_2$. Now the
period lattice corresponding to $\Jac (X) \times \Jac (Y)$ is
\begin{equation}
  \Lambda = \Lambda_X \times \Lambda_Y \subset \C^1 \times \C^2 = \C^3 .
\end{equation}
Because of the construction in Proposition \ref{prop:class} and the above
remark, a basis for $G \cong (\Z / 2 \Z)^3$ is given by the elements $\left\{
v_1, v_2, v_3 \right\}$ of
\begin{equation}
  \left\{ \left(0, \int_{\beta_5}^{\beta_6} x dx / y, \int_{\beta_5}^{\beta_6} dx / y\right),
          \left(\int_{\alpha_1}^{\alpha_2} dx / y, \int_{\beta_1}^{\beta_2} x dx / y, \int_{\beta_1}^{\beta_2} dx / y\right),
          \left(\int_{\alpha_1}^{\alpha_3} dx / y, \int_{\beta_1}^{\beta_3} x dx / y, \int_{\beta_1}^{\beta_3} dx / y\right)
        \right\}
\end{equation}
of $\C^3 / \Lambda$. Choosing a basis $\left\{ e_1, \dots, e_6 \right\}$ of the
$\Z$-module $\Lambda$, we can use numerical calculation to find $q_{i,j} \in
(1/2) \Z$ such that
\begin{equation}
  v_i = q_{i,1} e_1 + \dots + q_{i,6} e_6.
\end{equation}
Using linear algebra over $\Z$, we can find a basis for the lattice $\Lambda_Z$
obtained by adjoining the elements of $G$. Moreover, the principal polarization
on $\Lambda$ (which is returned by the algorithms in \cite{molin-neurohr})
extends to a polarization on $\Lambda_Z$, which by construction of $G$ is a
$2$-fold of a principal polarization $E$.

Our task is to find a plane quartic equation \eqref{eq:Z} corresponding to
$(\Lambda_Z, E)$, if it exists. This is accomplished by the following
algorithm, whose implementation builds on the essential algorithms in
\cite{neurohr}.

\begin{algorithm}\label{alg:recCC}
  This algorithm (numerically) reconstructs a ternary quartic from a period
  lattice.

  \emph{Input}: A period lattice $\Lambda$ with a principal polarization $E$.

  \emph{Output}: A ternary plane quartic $F$ whose corresponding period lattice
  with respect to \eqref{eq:BZ} equals $\Lambda$, if it exists.

  \emph{Steps}:
  \begin{enumerate}
    \item Choose a matrix $P = (P_1 \; P_2) \in M_{3,6} (\C)$ with respect to
      a symplectic basis for $E$, and let $\tau = P_2^{-1} P_1$ be a
      corresponding small period matrix.
    \item As in \cite{cmpaper}, check whether $\tau$ has associated vanishing
      even theta-null values. If so, terminate the algorithm.
    \item As in \cite{cmpaper}, construct a Weber model for $\tau$, determine
      the corresponding invariants, and reconstruct a corresponding ternary
      quartic $F$.
    \item Calculate the period lattice $\Lambda_F$ associated to $F$.
    \item Using the methods from \cite[\S 4.1]{bsz}, find a matrix $T$ such
      that $T \Lambda_F = \Lambda R$ for some $R \in \GL_6 (\Z)$. Let $F_0 = F
      \cdot T^{-1}$ be the transform of $F$ by the fractional linear
      transformation in the coordinates induced by $T^{-1}$.
    \item Calculate the period lattice $\Lambda_0$ for $F_0$ and (again using
      \cite{bsz}) find $\mu$ be such that $\mu \Lambda = \Lambda_0$.
    \item Return the ternary quartic $\mu F_0$.
  \end{enumerate}
\end{algorithm}

The correctness of Algorithm \ref{alg:recCC} follows from the sources cited
therein and Proposition \ref{prop:eqvslat}.

\begin{remark}
  The calculation of $\mu$ in Step (vi) is in fact superfluous, as the effect
  of applying $T$ to $G$ on the resulting period lattices can be described in
  terms of a power of its determinant. For ease of exposition, we have used the
  description above.
\end{remark}

\subsection{Interpolation}\label{sec:realdeal}
The results so far are purely numerical and specific to the base field $\C$. We
now interpolate them to obtain explicit formulae. This process leads to very
large formulae that we cannot display in this article, and in fact all of the
considerations in this section will be descriptive rather than explicit.
However, the process of obtaining the relevant formulae is documented in the
ZIP file available at \cite{hsv-zip}. We sketch the main ideas.

We start with formal monic defining equations
\begin{equation}\label{eq:Xformal}
  X : y^2 = (x - \alpha_1) \cdots (x - \alpha_4)
\end{equation}
and
\begin{equation}\label{eq:Yformal}
  Y : y^2 = (x - \beta_1) \cdots (x - \beta_4) (x^2 + a x + b) .
\end{equation}
We can then consider the gluing for the group $G$ specified in the previous
section, with $\beta_5, \beta_6$ the roots of the symmetrized polynomial $x^2 +
a x + b$. This gives rise to a ternary quartic form $F (x, y, z)$ that defines
a curve $Z$. While Proposition \ref{prop:eqvslat} shows that this equation is
only determined up to a minus sign, the corresponding curve is still
canonically determined as a subvariety of $\P^2_\C$. More concretely, we can
obtain a unique normalized defining equation for $Z$ by dividing by the
coefficient of $x^4$ in $F$. We want to determine the dependence of this
equation on the $\alpha_i$ and $\beta_j$, and will achieve this by a suitable
interpolation process that we will prove correct a posteriori in Section
\ref{subsec:verify}.

It is far too ambitious to start working with all $\alpha_i$ and $\beta_j$
simultaneously. Instead, we have gradually worked our way up. We sketch our
procedure.

\begin{enumerate}
  \item First we work with as few moduli parameters as possible and consider
    equations of the form
    \begin{equation}\label{eq:Xspec}
      X : y^2 = x (x - 1) (x - \alpha)
    \end{equation}
    and
    \begin{equation}\label{eq:Yspec}
      Y : y^2 = x (x - 1) (x - \beta) (x^2 + a x + b)
    \end{equation}
    That is, we take $\alpha_1 = \beta_1 = \infty$, $\alpha_2 = \beta_2 = 0$,
    $\alpha_3 = \beta_3 = 1$, $\alpha_4 = \alpha$, $\beta_4 = \beta$, while
    $\beta_5, \beta_6$ are the roots of $x^2 + a x + b$ as before.

    We consider the monomials in $\alpha, \beta, a, b$ of degree at most $4$.
    There are $70$ of these. We therefore generate $200$ quartics over $\Q$ by
    taking random integer values for $\alpha, \beta, a, b$ between $-10$ and
    $10$, and apply the LLL algorithm to the result of Algorithm
    \ref{alg:recCC} in order to obtain good rational approximations numerical
    values for the coefficients of the normalized defining equation of $Z$.
    (Note that we know the resulting equation to be defined over $\Q$, since by
    our choice of $\alpha, \beta, a, b$ the curves $X$ and $Y$ as well as the
    gluing datum are defined over that field.)

    We then try to find rational expressions in $\alpha, \beta, a, b$ for the
    coefficients of $Z$ that interpolate these equations. It turns out that all
    instances are interpolated by the ternary quartic
    \begin{tiny}
      \begin{equation}\label{eq:notasbad}
        \begin{split}
          & (\alpha^2 \beta^2 - \alpha^2 \beta - \alpha \beta^2 + \alpha \beta) x^4 \\
          & + (a \alpha^2 \beta - a \alpha \beta^2 - a \alpha \beta + a \beta^2 + b \alpha^2 - 2 b \alpha \beta + b \beta^2 + \alpha^2 \beta - 2 \alpha \beta^3 + 2 \alpha \beta^2 - 2 \alpha \beta + \beta^3) x^2 y^2 \\
          & + (-2 a \alpha^2 \beta^2 + 4 a \alpha \beta^2 - 2 a \beta^2 - 2 b \alpha^2 \beta + 2 b \alpha \beta^2 + 2 b \alpha \beta - 2 b \beta^2 - 2 \alpha^2 \beta^2 + 2 \alpha \beta^3 + 2 \alpha \beta^2 - 2 \beta^3) x^2 y z \\
          & + (a \alpha^2 \beta^2 - a \alpha \beta^3 - a \alpha \beta^2 + a \beta^3 + b \alpha^2 \beta - 2 b \alpha \beta^3 + 2 b \alpha \beta^2 - 2 b \alpha \beta + b \beta^3 + \alpha^2 \beta^2 - 2 \alpha \beta^3 + \beta^4) x^2 z^2 \\
          & + (-a \alpha \beta^2 + a \alpha \beta + a \beta^3 - a \beta^2 - \alpha \beta^2 + \alpha \beta + \beta^4 - \beta^3) y^4 \\
          & + (2 a \alpha \beta^3 - 2 a \alpha \beta^2 - 2 a \beta^3 + 2 a \beta^2 - 2 b \alpha \beta^2 + 2 b \alpha \beta + 2 b \beta^3 - 2 b \beta^2 + 2 \alpha \beta^3 - 2 \alpha \beta^2 - 2 \beta^4 + 2 \beta^3) y^3 z \\
          & + (a^2 \alpha \beta^3 - a^2 \alpha \beta^2 - a^2 \beta^3 + a^2 \beta^2 + a b \alpha \beta^2 - a b \alpha \beta - a b \beta^3 + a b \beta^2 + a \alpha \beta^3 - a \alpha \beta^2 \\
          & \quad - a \beta^4 + a \beta^3 + 4 b \alpha \beta^3 - 2 b \alpha \beta^2 - 2 b \alpha \beta - 2 b \beta^4 - 2 b \beta^3 + 4 b \beta^2) y^2 z^2 \\
          & + (2 a b \alpha \beta^3 - 2 a b \alpha \beta^2 - 2 a b \beta^3 + 2 a b \beta^2 + 2 b^2 \alpha \beta^2 - 2 b^2 \alpha \beta - 2 b^2 \beta^3 + 2 b^2 \beta^2 - 2 b \alpha \beta^3 + 2 b \alpha \beta^2 + 2 b \beta^4 - 2 b \beta^3) y z^3 \\
          & + (-a b \alpha \beta^3 + a b \alpha \beta^2 + a b \beta^4 - a b \beta^3 - b^2 \alpha \beta^2 + b^2 \alpha \beta + b^2 \beta^4 - b^2 \beta^3) z^4,
        \end{split}
      \end{equation}
    \end{tiny}
    which is still of somewhat acceptable size --- at least after simplifying
    by changing the factor in front of $x^4$ to $\alpha^2 \beta^2 - \alpha^2
    \beta - \alpha \beta^2 + \alpha \beta$. Testing this result on a few
    thousand quartics more confirms it, completing the first step of our
    approach. We also observe that the new factor in front of $z$ that clears
    denominators is nothing but a product of the discriminants of the
    polynomials $x (x - 1) (x - \alpha)$ and $x (x - 1) (x - \beta)$.
  \item The result in (i) already speeds up further considerations, since it
    obviates all but the final two steps in Algorithm \ref{alg:recCC}, saving
    considerable calculation time. We now explore further by keeping one of
    \eqref{eq:Xspec} and \eqref{eq:Yspec} fixed and considering the general
    expression \eqref{eq:Xformal} and \eqref{eq:Yformal} for the other factor.
    At this point, we suspect that the resulting expressions are polynomial
    once the normalized defining equation for $Z$ is multiplied with the
    product of the discriminants of $(x - \alpha_1) \cdots (x - \alpha_4)$ and
    $(x - \beta_1) \cdots (x - \beta_4)$. This turns out to be the case: the
    corresponding interpolation needs a few thousand curves but finds
    corresponding results with very small coefficients, though involving many
    monomials. We jot down the homogeneity degrees of these monomials in the
    $\alpha_i$ and $\beta_j$ for later use.
  \item We now consider the equations \eqref{eq:Xformal} and \eqref{eq:Yformal}
    simultaneously, multiplying the normalized equation for $Z$ with the same
    product of discriminants as before. Knowing what degrees of homogeneity in
    the $\alpha_i$ and $\beta_j$ to expect for every defining coefficient cuts
    the number of candidate monomials down enormously, although it often
    remains considerable, in the order of several thousand at worst. However,
    finding a corresponding number of curves for interpolation is no problem,
    and with enough patience, the corresponding linear-algebraic calculations
    terminate. They again yield formulae with very modest coefficients
    (typically small powers of $2$), but with a very large number of monomials.
  \item Having found these interpolated formulae, we can stress-test them
    further on several thousand more curves, until we are convinced that
    everything checks out.
\end{enumerate}

Section \ref{sec:alt} will mention a theoretical consideration that obviates
the need for the later steps in the interpolation.

\textbf{To summarize the heuristic results so far}: Starting with $X$, $Y$ and
$G$ at the beginning of this section, we have obtained a formula for a plane
quartic
\begin{equation}\label{eq:Zint1}
  Z' : a_{400} x^4 + a_{220} x^2 y^2 + a_{211} x^2 y z + a_{202} x^2 z^2 +
       a_{040} y^4 + a_{031} y^3 z + a_{022} y^2 z^2 + a_{013} y z^3 + a_{004} z^4,
\end{equation}
with the $a_{ijk}$ polynomials in the $\alpha_i$, $\beta_j$, $a$ and $b$ such
that (conjecturally!) for generic values of the parameters, the resulting
substitution yields a plane quartic whose Jacobian is isomorphic over $\C$ to
the quotient $(\Jac (X) \times \Jac (Y)) / G$ with its induced principal
polarization. Note the pleasant form of \eqref{eq:Zint1}, which is a consequence
of its canonicity.

\begin{remark}
  We will not give a precise analysis of the degeneracy locus of the formulae
  obtained above. Since the discriminant of the resulting ternary quartic can
  be verified to be non-zero, it is generically well-defined. Its exact locus
  of definition is best studied in the context of a more detailed theoretical
  approach, perhaps by using the theory of algebraic theta functions, than the
  more ad hoc methods of this article.
\end{remark}

\begin{remark}
  While we have considered the formal equations \eqref{eq:Xformal} and
  \eqref{eq:Yformal} above, one also has to consider the cases where the
  defining polynomial of $X$ and/or $Y$ has odd degree. The resulting
  interpolation procedures are, however, completely similar, so we do not
  consider them further here or in what follows.
\end{remark}

\subsection{Rationality considerations and verification}\label{subsec:verify}

Because we have determined our equation for a quotient by a subgroup of the
$2$-torsion in a canonical way, it stands to reason to expect that the
resulting construction remains valid over base fields whose characteristic does
not equal $2$. This turns out to be the case. First, however, we will discuss
how to prove when the heuristically interpolated equation \eqref{eq:Zint1} is
actually correct, that is, when the Jacobian of the curve $Z'$ in
\eqref{eq:Zint1} actually splits as a product of the Jacobians of the given
curves $X$ and $Y$. We first consider these questions over $\kbar$ and discuss
the base field $k$ in Section \ref{subsec:twists}, where we will also
generalize to the case where $p_X$ and $p_Y$ are not necessarily monic.

\subsubsection{The genus-$1$ factor}\label{sssec:g1}

The curve $Z'$ has an obvious involution $\iota : (x, y, z) \mapsto (-x, y, z)$.
We claim that the Jacobian of the corresponding quotient is generically indeed
$\kbar$-isomorphic to $\Jac (X)$.

\begin{algorithm}\label{alg:g1check}
  This algorithm gives a method to verify the existence of a
  $\kbar$-isomorphism $\Jac (Z' / \iota) \cong \Jac (X)$.

  \emph{Input}: A curve $X$ defined by an equation \eqref{eq:X} and a curve
  $Z'$ defined by an equation \eqref{eq:Zint1}.

  \emph{Output}: A boolean that indicates whether there exists a
  $\kbar$-isomorphism $\Jac (Z' / \iota) \cong \Jac (X)$.

  \emph{Steps}:
  \begin{enumerate}
    \item Write \eqref{eq:Zint1} in the form $A x^4 + B x^2 + C$, with $A, B, C
      \in \kbar [y, z]$.
    \item Let $p = B^2 - 4 A C$, and let $p_0$ be the homogenization of $p_X$.
    \item Let $I$, $J$ (resp.\ $I_0$, $J_0$) be the binary quartic invariants
      of $p$ (resp.\ $p_0$), as defined in \cite{cremona-fisher}.
    \item Check whether we have $(I : J) = (\mu^2 I_0, \mu^3 J_0)$ for
      some $\mu \in \kbar$. If so, return true, and return false otherwise.
  \end{enumerate}
\end{algorithm}

The correctness of the algorithm follows from the observation that the quotient
$Z' / \iota$ is isomorphic to the curve defined by $p$ and the fact that given a
binary quartic $p$ with invariants $I$, $J$ as in \cite{cremona-fisher}, the
Jacobian of the curve corresponding to $p$ is defined by $y^2 = x^3 - 27 I x -
27 J$, see the first footnote in \loccit

\begin{remark}
  The astute reader will note that Algorithm \ref{alg:g1check} does not work in
  characteristic $3$. In this particular case, there are specific additional
  invariants (this time of weight $1$ and $6$ instead of $2$ and $3$) that can
  be used, as is explained and generalized to higher genus in Romain Basson's
  thesis \cite{basson-thesis}. We thank the referee for this reference.
\end{remark}

Algorithm \ref{alg:g1check} is sufficiently simple to be run for the generic
expression \eqref{eq:Zint1} (considered as a ternary quartic over a rational
function field). It yields a positive response. The implementation at
\cite{hsv-git} still performs the corresponding check for every gluing that it
constructs.

\subsubsection{The genus-$2$ factor}

It remains to check whether a complementary factor of the Jacobian of the curve
$Z'$ in \eqref{eq:Zint1} is given by the specified curve $Y$ over $\kbar$. For
this, we use the result \cite{ritzenthaler-romagny} by Ritzenthaler and
Romagny. We summarize their result in the following way:

\begin{theorem}[\cite{ritzenthaler-romagny}]\label{thm:riro}
  Consider a smooth plane quartic curve
  \begin{equation}\label{eq:Zriro}
    Z : x^4 + h (y, z) x^2 + f (y, z) g (y, z) = 0
  \end{equation}
  over $k$, where $h \in k [y, z]$ and $f, g \in \kbar [y, z]$ are binary
  quadratic forms. Let $\iota$ be the involution $\iota : (x, y, z) \mapsto
  (-x, y, z)$. Then there exists a polynomial $p (h, f, g) \in k [x]$ whose
  coefficients are polynomial expressions in those of $h, f, g$ such that the
  Jacobian of the genus-$2$ hyperelliptic curve
  \begin{equation}
    Y : y^2 = p (h, f, g)
  \end{equation}
  defines a degree-$2$ cover of the Prym variety of $Z \to Z / \iota$.
\end{theorem}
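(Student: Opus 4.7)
The plan is to translate the main construction of \cite{ritzenthaler-romagny} into the normalized form required here. First I would verify that a defining equation of the shape \eqref{eq:Zriro} is the general form of a smooth plane quartic with the bielliptic involution $\iota:(x,y,z)\mapsto(-x,y,z)$: the $\iota$-invariant quartic forms are spanned by $x^4$, $x^2$ times a binary quadratic in $y,z$, and the binary quartics in $y,z$, so any smooth $\iota$-invariant quartic can be rewritten as $x^4 + h(y,z) x^2 + q(y,z)$ for some $q \in k[y,z]$. Over $\kbar$ we may factor $q = f \cdot g$ as a product of two binary quadratic forms (possibly after a scaling absorbed into the coefficients), which is the normal form in \loccit. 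The quotient $Z/\iota$ is a curve of genus $1$: setting $u = x^2$ gives $u^2 + h u + fg = 0$ in $\PP(2,1,1)_{u,y,z}$, and Riemann--Hurwitz applied to the double cover $Z \to Z/\iota$ (ramified at the four points above $fg = 0$) yields $g(Z/\iota) = 1$. Hence the Prym variety $P := \ker(\Nm\colon \Jac(Z) \to \Jac(Z/\iota))^{0}$ is an abelian surface.

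Next I would invoke the main explicit result of \cite{ritzenthaler-romagny}, which for a bielliptic quartic in the above shape constructs a genus-$2$ hyperelliptic curve $Y$ whose Jacobian is a degree-$2$ cover of $P$. Their construction proceeds by producing from the triple of binary quadratic forms $(h, f, g)$ a sextic in one variable, built out of universal polynomial covariants (resultants, discriminants, transvectants) of the three binary quadratics. Reading this construction off of \loccit\ produces the desired polynomial $p(h,f,g) \in k[x]$ and the genus-$2$ curve $Y : y^2 = p(h,f,g)$ together with an isogeny $\Jac(Y) \to P$ of degree $2$.

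The polynomiality assertion is then automatic: every step in the construction of $p(h,f,g)$ is an algebraic operation on the coefficients of $h, f, g$ (covariants, resultants, and sums thereof), none of which requires extracting roots or inverting non-trivial quantities. Consequently the coefficients of $p(h,f,g)$ are genuine polynomial expressions in the coefficients of $h, f, g$, as claimed, and the formula is valid over any field of characteristic different from $2$.

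The main obstacle is bookkeeping rather than mathematics: one must track the specific normalization conventions used in \cite{ritzenthaler-romagny} (scaling of $(h,f,g)$, choice of polarization on the Prym, and the ambiguity $p \sim c^2\, p(c^{-1} \cdot)$ inherent to hyperelliptic equations) carefully enough to pin down $p(h,f,g)$ uniquely. This bookkeeping is essential because later applications (in particular matching $Y$ against a prescribed genus-$2$ curve in the gluing) use the explicit polynomial $p(h,f,g)$, not merely the isomorphism class of $Y$.
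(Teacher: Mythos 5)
The paper does not actually prove Theorem~\ref{thm:riro}; it is stated as a summary of the result of \cite{ritzenthaler-romagny} and used as a black box, with the paper explicitly deferring to \emph{loc.\ cit.}\ for the construction of $p(h,f,g)$. Your reconstruction correctly identifies this structure, gives the right normal-form reduction and genus count for $Z/\iota$, invokes the correct source for the existence of $p$, and isolates the genuine subtleties (polynomiality via covariant formulas, and the scaling/polarization bookkeeping that the paper itself later wrestles with via the constant $c$ in~\eqref{eq:c}), so it matches the paper's treatment.
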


\begin{remark}
  As mentioned in \cite{ritzenthaler-romagny}, the formulae from Theorem
  \ref{thm:riro} only apply under certain genericity assumptions. We quietly
  pass over these in what follows, as we do with most results in this section.
\end{remark}

We will not need the exact expression for $p$ except for some formal
transformation properties such as those in the upcoming Lemma. These can be
verified by the reader using the procedures in \cite{ritzenthaler-romagny}. The
corresponding calculations are also checked at \cite{hsv-git}.

\begin{lemma}\label{lem:transfs}
  Denoting the substitution action of $A \in \GL_2 (\kbar)$ on a binary
  quadratic $q$ by $q . A$, we have the following.
  \begin{enumerate}
    \item $p (h, f, g) = p (h, g, f)$.
    \item $p (h, f, g) = p (h, \mu^{-1} f, \mu g)$ for all $\mu \in \kbar^*$.
    \item $p (h . A, f . A, g . A) = p (h, f, g) . A'$, where $A' = U A U^{-1}$
      for $U = \begin{psmallmatrix} -1 & 0 \\ 0 & 1 \end{psmallmatrix}$.
  \end{enumerate}
\end{lemma}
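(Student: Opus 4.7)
My plan is to reduce all three assertions to properties of the explicit construction of $p$ in \cite{ritzenthaler-romagny}, using the essentially intrinsic nature of the Prym variety to do most of the conceptual work. The unifying observation is that in each item we identify a transformation of the input data $(h, f, g)$ that leaves the plane quartic $Z$ (and its involution $\iota$) fixed, or changes it by a controlled $\kbar$-isomorphism; in both cases, Theorem \ref{thm:riro} forces the polynomial $p$ to change only by the corresponding transformation of the $x$-variable of the hyperelliptic model $Y : y^2 = p(x)$.

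For (i), swapping $f$ and $g$ preserves both $h$ and the product $fg$, so the quartic equation for $Z$ and the involution $\iota$ are literally unchanged. The degree-$2$ cover of the Prym variety produced by Theorem \ref{thm:riro} is therefore unchanged, and inspecting the normalization used to build $p$ from $(h, f, g)$ (which is symmetric in $f$ and $g$) yields the equality on the nose. For (ii), the replacement $(f,g) \mapsto (\mu^{-1}f, \mu g)$ likewise preserves $h$ and $fg$, hence $Z$ and $\iota$. A direct inspection of the formulas of \cite{ritzenthaler-romagny} shows that each coefficient of $p$ is a polynomial in the entries of $h$, $f$, $g$ whose monomials are invariant under this $\mathbb{G}_m$-action, and the equality follows; equivalently, this is the only consistent way the Prym construction can be recovered from the factorization-dependent input $(f, g)$.

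For (iii), which I expect to be the main obstacle, the action of $A$ on $(y,z)$ gives a $\kbar$-linear change of projective coordinates on $\PP^2$ carrying $Z$ to the quartic cut out by $x^4 + (h \cdot A)x^2 + (f \cdot A)(g \cdot A) = 0$, and this isomorphism manifestly commutes with $\iota$. By naturality it induces an isomorphism of the Prym varieties associated to the two sides, hence of their hyperelliptic covers, which on the level of homogenized binary sextics in $x$ amounts to an element $A'' \in \GL_2(\kbar)$. The assertion is that $A'' = UAU^{-1}$. Identifying this element is the delicate point: because $p$ is extracted from a quadratic equation in $x^2$, the natural transformation of the $x$-variable is determined only up to composition with the hyperelliptic involution $x \mapsto -x$, so one has to track a sign that depends on a convention in the construction. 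I would pin this sign down by testing the claim on two generating elements of $\GL_2$, say a generic diagonal $A$ and a transposition, using the explicit formulas of \loccit\ to read off $A''$ and then observe that both test cases match the conjugation $UAU^{-1}$; by multiplicativity of the induced action $A \mapsto A''$ and the fact that $\GL_2$ is generated by such elements, this suffices.
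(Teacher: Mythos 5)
The paper itself gives no written proof of this lemma: it says only that the identities ``can be verified by the reader using the procedures in \cite{ritzenthaler-romagny}'' and that the calculations are checked in the accompanying code. Your proposal tries to replace that bare appeal to computation with a conceptual reduction via Prym functoriality, which is a reasonable instinct, and your observations for (i) and (ii) --- that swapping $f,g$ or rescaling by $\mu^{\pm 1}$ leaves the quartic $Z$ and its involution $\iota$ unchanged, so the Prym is unchanged --- are correct as far as they go. You correctly flag that these observations only pin down $Y$ up to isomorphism and that equality of the polynomials $p$ on the nose still requires a look at the explicit formulas. So (i) and (ii) are in spirit the paper's argument dressed with useful motivation, but the actual content remains ``inspect the formulas.''

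The genuine gap is in your treatment of (iii). You propose to verify $A \mapsto A'$ on ``two generating elements of $\GL_2$, say a generic diagonal $A$ and a transposition,'' and then invoke multiplicativity together with ``the fact that $\GL_2$ is generated by such elements.'' That last fact is false: the subgroup of $\GL_2(\kbar)$ generated by diagonal matrices and the transposition $\begin{psmallmatrix}0&1\\1&0\end{psmallmatrix}$ is precisely the group of monomial matrices, a proper subgroup (it does not contain, e.g., $\begin{psmallmatrix}1&1\\0&1\end{psmallmatrix}$). To make the multiplicativity argument go through you must also verify the identity on unipotent shears $\begin{psmallmatrix}1&t\\0&1\end{psmallmatrix}$ and $\begin{psmallmatrix}1&0\\t&1\end{psmallmatrix}$ (which together with the diagonal torus do generate $\GL_2$), or alternatively argue that the locus of $A$ where the identity holds is Zariski-closed (using that the coefficients of $p$ are polynomial in those of $h,f,g$) and then exhibit a Zariski-dense set of witnesses. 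As written, the generating-set step would fail, so the claimed proof of (iii) is incomplete. You should also make explicit the assumption, which you use silently, that the assignment $A \mapsto A'$ is independent of the data $(h,f,g)$; this follows from polynomiality of $p$ by a density argument, but it is not automatic from the Prym-functoriality picture alone.
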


\begin{proposition}\label{prop:krat}
  In the situation of Theorem \ref{thm:riro}, there exists a $k$-rational
  surjective map $\Jac (Z) \to \Jac (Y)$ if and only if the polynomial $p$ has
  coefficients in $k$.
\end{proposition}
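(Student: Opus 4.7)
My approach exploits the canonical, polynomial nature of the Ritzenthaler--Romagny formulae for $p$. Since $p$ is a polynomial in the coefficients of $(h, f, g)$, and $h\in k[y,z]$ is fixed by the absolute Galois group $\Gamma_k$, we have for every $\sigma\in\Gamma_k$ the basic transformation law
\[
\sigma\bigl(p(h,f,g)\bigr) \;=\; p(\sigma h,\sigma f,\sigma g) \;=\; p(h,\sigma f,\sigma g).
\]
Moreover, because $Z$ is defined over $k$ and $h\in k[y,z]$, the binary quartic $fg$ lies in $k[y,z]$, so $(\sigma f,\sigma g)$ is always another factorization of $fg$ into two binary quadratic forms.

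The ``only if'' direction is then short: a $k$-rational surjective morphism $\Jac(Z)\to\Jac(Y)$ requires the target $\Jac(Y)$ to be a $k$-rational abelian variety. Since $Y$ is presented by the specific Weierstrass equation $y^2=p(x)$, this forces $p\in k[x]$.

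For the ``if'' direction, assume $p\in k[x]$, so that $Y$ is a $k$-curve. Over $\kbar$ the RR construction yields a canonical degree-$2$ isogeny $\psi:\Jac(Y)\to\mathrm{Prym}(Z\to Z/\iota)$. I claim $\psi$ descends to $k$. Combining $\sigma p=p$ with the displayed relation yields $p(h,\sigma f,\sigma g)=p(h,f,g)$ for every $\sigma\in\Gamma_k$. By Lemma~\ref{lem:transfs}(i)--(ii) (the symmetry $f\leftrightarrow g$ and the scaling $(f,g)\mapsto(\mu^{-1}f,\mu g)$), and because in generic position $p$ separates the three equivalence classes of factorizations of $fg$ modulo these operations, this forces $(\sigma f,\sigma g)\sim(f,g)$. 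Thus the RR data is $\Gamma_k$-stable modulo this equivalence, so $\psi$ itself is $k$-rational. Composing the dual $\psi^\vee:\mathrm{Prym}(Z\to Z/\iota)\to\Jac(Y)$ with the canonical $k$-rational projection $\Jac(Z)\to\mathrm{Prym}(Z\to Z/\iota)$ (which exists because $\iota$ is defined over $k$) provides the required $k$-rational surjection $\Jac(Z)\to\Jac(Y)$.

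The main obstacle is the rigorous handling of the degenerate configurations in the ``if'' direction, where $p$ may fail to distinguish inequivalent factorizations of $fg$. Here one can invoke the transformation law of Lemma~\ref{lem:transfs}(iii): applying a suitable $A\in\GL_2(\kbar)$ to the projective coordinates $(y,z)$ of $Z$, one can move to a model in which $f$ and $g$ individually acquire coefficients in $k$ (absorbing the residual scaling freedom via Lemma~\ref{lem:transfs}(ii)); the RR construction is then manifestly defined over $k$, and descent of the isogeny $\psi$ follows at once.
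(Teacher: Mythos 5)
The paper dismisses the "only if" direction in one sentence and then, for "if", gives a fully concrete proof: it first treats the special case $f = f_0 := yz$ (where $Z$ has the $k$-point $(0:0:1)$), produces an explicit $k$-rational Mumford representation of the image divisor and checks that its tangent representation is the specific matrix $T_0$; the general case is then obtained by conjugating by $\widetilde A$, and the key computation shows the \emph{composite} tangent representation $UA^{-1}U \cdot T_0 \cdot \widetilde A$ collapses back to $T_0$, hence is $k$-rational, hence the morphism descends. Your proposal instead tries to argue abstractly that Galois-stability of the factorization (up to the Lemma~\ref{lem:transfs}(i)--(ii) equivalence) forces the isogeny $\psi$ to descend. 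This is much closer to the "invoke canonicity" alternative the paper mentions and then pointedly declines to use, and your write-up does not fill in the missing ingredients.

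Two concrete gaps. First, the inference from ``$p(h,\sigma f,\sigma g)=p(h,f,g)$ and $p$ generically separates equivalence classes, so $(\sigma f,\sigma g)\sim(f,g)$, so $\psi$ is $k$-rational'' skips the crucial step: Lemma~\ref{lem:transfs}(i) only asserts equality of \emph{polynomials} under swapping $f\leftrightarrow g$ (or rescaling), not equality of the resulting \emph{morphism of Jacobians}. A priori $\sigma\psi$ could differ from $\psi$ by precomposition with an automorphism (such as $-1$ or an automorphism of $\Jac(Y)$), exactly the kind of twist ambiguity the rest of Section~2 is at pains to control. The paper's tangent-representation calculation is precisely what rules this out, and nothing in your argument plays that role. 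Second, your cleanup paragraph for degenerate configurations applies a matrix $A\in\GL_2(\kbar)$ to move to a model where $f$ and $g$ have coefficients in $k$; but if $A\notin\GL_2(k)$, the transformed curve $Z\cdot\widetilde A^{-1}$ is no longer a $k$-curve, so the RR construction on that model is ``manifestly defined over $k$'' only for the wrong curve. You still have to compose back through the $\kbar$-isomorphisms $\Jac(Z)\to\Jac(Z_0)$ and $\Jac(Y_0)\to\Jac(Y)$ and show that the composition descends, which is exactly the missing computation. So the overall strategy is plausible but the proof as written does not close.
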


\begin{proof}
  The condition is certainly necessary, since $k$-rationality is not a
  meaningful notion otherwise. Suppose from now on that $p$ has coefficients in
  $k$. One way to conclude is to invoke the canonicity of the constructions in
  \cite{ritzenthaler-romagny}, which ensure rationality over the base field
  whenever meaningfully possible. However, there is also a more direct proof.

  First consider the special case where $f = f_0 := y z$. Then $Z$ admits the
  $k$-rational point $P_0 = (0 : 0 : 1)$. To construct a $k$-rational map $\Jac
  (Z) \to \Jac (Y)$, it suffices to construct such a map for divisors of the
  form $[ P ] - [ P_0 ] \in \Jac (Z)$. Moreover, by smoothness of the Jacobian,
  it in turn suffices to indicate the image of a generic point $P$. Given our
  equation for $Y$, we let $\infty$ be the degree-$2$ divisor of $Y$ at
  infinity. We can then specify an element $D - \infty$ of $\Jac (Y)$ using the
  Mumford representation
  \begin{equation}\label{eq:D}
    \begin{split}
      0 & = t^2 + a_1 x + a_2 \\
      y & = b_1 x + b_2
    \end{split}
  \end{equation}
  of a degree $2$ divisor $D$. Since $P$ is a generic point and the map
  involved is non-constant, the coefficients $a_1, a_2, b_1, b_2$ are in the
  function field of $Z$.

  The results at \cite{hsv-git} contain an equation for a non-trivial divisor
  \eqref{eq:D} that was obtained via another interpolation. It is defined over
  the base field. Moreover, it is verified that in terms of the bases
  \eqref{eq:BY} and \eqref{eq:BZ} the pullback of differentials is represented
  by the transpose of the matrix $T_0 = \begin{psmallmatrix} 0 & -2 & 0 \\ 0 &
  0 & 2 \end{psmallmatrix}$.

  The latter property proves that the map $\Jac (Z) \to \Jac (Y)$ is surjective
  and that the corresponding factor of the Jacobian is complementary to the
  quotient by $\iota$. Moreover, it is what we need to extend our results to
  general $Z$ (on which there may not be a $k$-rational point, so that
  describing a map $\Jac (Z) \to \Jac (Y)$ becomes problematic).

  Indeed, now let $F = x^4 + h (y, z) x^2 + f (y, z) g (y, z) \in k [x, y, z]$
  with $f \in \kbar [y, z]$ be general. Then there exists $A \in \GL_2 (\kbar)$
  such that $f = f_0 . A$. Let $F_0 = F . \widetilde{A}^{-1} \in \kbar [x, y,
  z]$, where $\widetilde{A} = \begin{psmallmatrix} 1 & 0 \\ 0 & A
  \end{psmallmatrix}$. Construct the polynomial $p$ (resp.\ $p_0$) and the
  curve $Y$ (resp.\ $Y_0$) corresponding to $F$ (resp.\ $F_0$). The preceding
  special case describes the middle map in
  \begin{equation}\label{eq:tangcomp1}
    \Jac (Z) \to \Jac (Z_0) \to \Jac (Y_0) \to \Jac (Y).
  \end{equation}
  The map $\Jac (Z) \to \Jac (Z_0)$ is induced by the map of curves defined by
  $\widetilde{A}$, and by Lemma \ref{lem:transfs}(iii) there is an isomorphism
  $\Jac (Y_0) \to \Jac (Y)$ induced by $U A^{-1} U^{-1}$. The tangent
  representation of the composition \eqref{eq:tangcomp1} is therefore
  \begin{equation}
    \begin{split}
      U A^{-1} U \cdot T_0 \cdot \widetilde{A}
      & = \begin{pmatrix} -1 & 0 \\ 0 & 1 \end{pmatrix} A^{-1} \begin{pmatrix} -1 & 0 \\ 0 & 1 \end{pmatrix} \begin{pmatrix} 0 & -2 & 0 \\ 0 & 0 & 2 \end{pmatrix} \begin{pmatrix} 1 & 0 \\ 0 & A \end{pmatrix} \\
      & = \begin{pmatrix} -1 & 0 \\ 0 & 1 \end{pmatrix} A^{-1} \begin{pmatrix} 0 & 2 & 0 \\ 0 & 0 & 2 \end{pmatrix} \begin{pmatrix} 1 & 0 \\ 0 & A \end{pmatrix} \\
      & = \begin{pmatrix} -1 & 0 \\ 0 & 1 \end{pmatrix} A^{-1} \begin{pmatrix} 0 & 2 A \end{pmatrix} \\
      & = \begin{pmatrix} -1 & 0 \\ 0 & 1 \end{pmatrix} \begin{pmatrix} 0 & 2 & 0 \\ 0 & 0 & 2 \end{pmatrix} \\
      & = \begin{pmatrix} 0 & -2 & 0 \\ 0 & 0 & 2 \end{pmatrix} = T_0 .
    \end{split}
  \end{equation}
  We conclude that the composition \eqref{eq:tangcomp1} still has tangent
  representation $T_0$. Since the tangent representation uniquely determines
  the corresponding morphism, the latter is defined over $k$ as well.
\end{proof}

Given an equation $Z$ of the form
\begin{equation}\label{eq:Zrirosym}
  Z : x^4 + h (y, z) x^2 + q (y, z) = 0
\end{equation}
there are $3$ choices for the factorization in \eqref{eq:Zriro} over $\kbar$ up
to reordering and scaling. By Lemma \ref{lem:transfs}(i) and (ii), the
resulting equation for the curve $Y$ in Theorem \ref{thm:riro} depends only on
this particular choice of partition of the roots of $q$ into two pairs.

\begin{proposition}
  Given an equation \eqref{eq:Zrirosym} for a plane quartic curve $Z$, there
  exists a choice of factorization \eqref{eq:Zriro} that gives rise to a curve
  $Y$ defined over $k$ under the construction in Theorem \ref{thm:riro} if and
  only if the cubic resolvent $\rho (q)$ of $q$ admits a root over $k$.
\end{proposition}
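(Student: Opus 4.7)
The plan is to combine the transformation properties of $p$ established in Lemma \ref{lem:transfs} with the Galois-equivariant correspondence between partitions of the four roots of a quartic into two pairs and the roots of its cubic resolvent, which is precisely the correspondence already exploited in Section \ref{sec:criteria}.

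First I would note that, over $\kbar$, giving a factorization \eqref{eq:Zriro} of $q$ as $fg$ with $f, g$ binary quadratic forms (up to reordering and simultaneous rescaling) is the same as choosing one of the three unordered partitions $P$ of the four roots of $q$ into two pairs. By Lemma \ref{lem:transfs}(i) and (ii) the polynomial $p(h, f, g) \in \kbar[x]$ then depends only on $P$, so I may denote it by $p_P$. Since $p$ is given by polynomial expressions in the coefficients of $h, f, g$ and since $h, q \in k[y, z]$, the assignment $P \mapsto p_P$ is Galois-equivariant: $\sigma(p_P) = p_{\sigma(P)}$ for every $\sigma \in \Gamma_k$. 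On the other hand, the Galois-equivariant bijection recorded in \eqref{eq:rhoroots} identifies the three partitions with the three roots of $\rho(q)$, so a partition $P$ is Galois-stable if and only if the associated root of $\rho(q)$ lies in $k$.

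From here the forward direction is immediate: a $k$-rational root of $\rho(q)$ yields a Galois-stable partition $P$, whence $p_P \in k[x]$ and the resulting curve $Y$ is defined over $k$. For the converse, if some factorization yields a curve $Y$ over $k$ with underlying partition $P$, then $p_{\sigma(P)} = \sigma(p_P) = p_P$ for all $\sigma \in \Gamma_k$; granted that the three polynomials $p_{P_1}, p_{P_2}, p_{P_3}$ are pairwise distinct, this forces $\sigma(P) = P$ and hence the associated root of $\rho(q)$ lies in $k$.

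The main obstacle I foresee is verifying that $P \mapsto p_P$ is generically injective on the three partitions, so that distinct partitions really do yield distinct hyperelliptic curves under the construction of Theorem \ref{thm:riro}. In line with the genericity conventions in force throughout this section (cf.\ the remark following Theorem \ref{thm:riro}), I would either simply pass over the degenerate locus or, if a sharper statement is desired, read off the required non-collapsing directly from the explicit formulae of \cite{ritzenthaler-romagny}, for instance by specializing to a single curve where the three $p_{P_i}$ are manifestly different.
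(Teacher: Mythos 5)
Your proof is correct and follows essentially the same route as the paper's: identify the $\kbar$-factorizations $q = fg$ up to reordering and rescaling with pair-partitions of the roots of $q$ via Lemma \ref{lem:transfs}(i)--(ii), observe that the assignment $P \mapsto p_P$ is $\Gamma_k$-equivariant, and transfer the Galois-stability condition to a $k$-rational root of $\rho(q)$ via the identification \eqref{eq:rhoroots}. The one point where you are more explicit than the paper is in isolating the injectivity of $P \mapsto p_P$ as the ingredient needed for the ``only if'' direction; the paper compresses this into the phrase ``uniquely determines a defining polynomial of $Y$'' and relies on the blanket genericity caveat stated after Theorem \ref{thm:riro}, whereas you correctly flag it as a hypothesis to be justified (by genericity or by inspecting the explicit formulae of \cite{ritzenthaler-romagny}).
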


\begin{proof}
  This follows because the choice of a pair partition of roots of $q$ uniquely
  determines a defining polynomial of $Y$. This polynomial is therefore
  defined over $k$, that is, Galois invariant, if and only if the given
  partition is. This is the case if and only if $\rho (q)$ admits a root.
  Indeed, classical Galois theory shows that the splitting field of $\rho (q)$
  generates the subfield corresponding to the kernel of the conjugation action
  $\Gal (q) \to \Aut (V_4)$ on pairs of roots, and from the description of the
  roots in \eqref{eq:rhoroots} we then see that the stabilizers of an
  individual root of $\rho (q)$ is nothing but the stabilizer of a given pair
  partition.
\end{proof}

Now consider the genus-$2$ curve $Y$ with which we started. The matrix $T_0$
involved in Proposition \ref{prop:krat} shows that if our results are correct,
then we should expect the defining polynomial of $p_Y$ of $Y$ to coincide with
the transformation $p_{Y'} (-x)$ of the polynomial defining one of the
recovered factors $Y'$ from Theorem \ref{thm:riro}. This turns out to be the
case, but only up to a constant. In other words, we will have
\begin{equation}\label{eq:c}
  p_Y (x) = c p_{Y'} (-x)
\end{equation}
for some non-trivial $c \in k$. The reason for this phenomenon is the same as
our reason for only considering isomorphisms over $\kbar$ in Section
\ref{sssec:g1}, namely the presence of twists, a problem to which we turn in
the next section. For now, we record our result over $\kbar$ in the form of a
criterion (whose conditions are of course far stronger than necessary, but are
all that we need in our circumstances):

\begin{algorithm}\label{alg:g2check}
  If the result of this algorithm is positive, then the Jacobian of $Y$ is
  $\kbar$-isogenous to the complement of the image of $\Jac (Z' / \iota)$ in
  $\Jac (Z')$.

  \emph{Input}: A curve $Y$ defined by an equation \eqref{eq:Y} and a curve
  $Z'$ defined by an equation \eqref{eq:Zint1}.

  \emph{Output}: A boolean that, if true, shows that there is a $\kbar$-isogeny
  between $\Jac (Y)$ and the complement of the image of $\Jac (Z' / \iota)$ in
  $\Jac (Z')$.

  \emph{Steps}:
  \begin{enumerate}
    \item Rescale \eqref{eq:Zint1} to the form \eqref{eq:Zriro} and consider
      the three possible factorizations up to scalars $q = f g$ into binary
      quadratic factors.
    \item For each factorization pair, calculate a defining polynomial $p_{Y'}$
      using Theorem \ref{thm:riro}. If for one of these polynomials we have
      $p_Y (x) = c p_{Y'} (-x)$ for some $c \in \kbar$, then return true.
      Otherwise return false.
  \end{enumerate}
\end{algorithm}

In contrast to Algorithm \ref{alg:g1check}, it costs a large amount of time and
memory to run Algorithm \ref{alg:g2check} on the generic expression
\eqref{eq:Zint1}. Still, the implementation at \cite{hsv-git} can rapidly
perform the corresponding check for every concrete gluing that it constructs
before returning its results.

\subsection{Twists}\label{subsec:twists}

Over an algebraically closed field $k$, positive results to the checks in
Algorithms \ref{alg:g1check} and \ref{alg:g2check} previous sections suffice to
demonstrate that for the curve $Z'$ defined in \eqref{eq:Zint1}, the Jacobian
$\Jac (Z')$ is indeed generically isogenous to $\Jac (X) \times \Jac (Y)$. We
are now interested in doing likewise for general base field $k$, for which
twists play a role, as mentioned in Theorem \ref{thm:brtwists}. The main
problems in passing to the base field $k$ are the following:
\begin{enumerate}
  \item The isomorphism $\Jac (Z' / \iota) \to \Jac (X)$ in Algorithm
    \ref{alg:g1check} may be only defined over a proper extension of $k$;
  \item The hyperelliptic curves defined by the polynomials $p_Y$ and $p_{Y'}$
    in Algorithm \ref{alg:g2check} may not be isomorphic over $k$.
\end{enumerate}
In other words, when the necessary hypothesis of Proposition \ref{prop:krat} is
satisfied, we will have
\begin{equation}
  \Jac (Z') \sim \Jac (X') \times \Jac (Y')
\end{equation}
for suitable curves $X'$ and $Y'$ of genus $1$ and $2$ over $k$, but $X'$ and
$Y'$ need not be isomorphic to the specified curves $X$ and $Y$ over the base
field $k$ itself.

We are in Case (i) when $(I : J)$ and $(I_0 : J_0)$ are equivalent in weighted
$(2, 3)$-space over $k$, but not in weighted $(4, 6)$, whereas Case (ii) occurs
when the scalar $c$ in \eqref{eq:c} is not a square in $k$. In either case, the
issue is the presence of various quadratic twists.

We can resolve this problem and get rid of both twists by appropriately
twisting both the curve $Z'$ and its Jacobian. First, given a plane quartic
curve $Z$, its Jacobian $\Jac (Z)$ has the automorphism $-1$, which does not
come from an automorphism $Z$ itself. We write $\mu \ast \Jac (Z)$ to denote
the quadratic twist of $\Jac (Z)$ with respect to $-1$ by $\mu$, as in Theorem
\ref{thm:brtwists}.

Second, there is another twist available. If $Z$ admits a defining equation
\eqref{eq:Zrirosym}, and $\nu \in k^*$, then we can take a quadratic twist of
the curve $Z$ itself with respect to the involution $\iota : (x, y, z) \mapsto
(-x, y, z)$ by the scalar $\nu$. This yields the curve
\begin{equation}\label{eq:Zrirotwist}
  Z_{\nu} : \nu^2 x^4 + h (y, z) \nu x^2 + q (y, z) = 0 .
\end{equation}
Similarly, given a curve $X : y^2 = p_X$ of gonality $2$, we write $X_{\nu} :
y^2 = \nu p_X$ for its quadratic twist with respect to $\iota$ by the scalar
$\nu$.

\begin{lemma}\label{lem:twists}
  Let $Z$ be a genus-$3$ curve defined by an equation \eqref{eq:Zrirosym}.
  Suppose that $X$ and $Y$ are such that $\Jac (Z) \sim \Jac (X) \times \Jac
  (Y)$ over $k$. We have:
  \begin{enumerate}
    \item $\mu \ast \Jac (Z) \sim \Jac (X_{\mu}) \times \Jac (Y_{\mu})$;
    \item $\Jac (Z_{\nu}) \sim \Jac (X) \times \Jac (Y_{\nu})$.
  \end{enumerate}
\end{lemma}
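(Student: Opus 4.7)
The plan is to reduce both statements to a single elementary compatibility between quadratic twists and isogenies, built on the following hyperelliptic fact: for a curve $H : y^2 = p(x)$ (of any genus, including $g = 1$) and $\nu \in k^*$, the quadratic twist $H_\nu : y^2 = \nu p(x)$ satisfies $\Jac(H_\nu) \cong \nu \ast \Jac(H)$. This is because $H_\nu$ is precisely the twist of $H$ by the hyperelliptic involution together with the character of $k(\sqrt{\nu})/k$, and this involution induces $-1$ on $\Jac(H)$. I would apply this fact to $H \in \{X, Y\}$.

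For part (i) I would argue that the functor $\mu \ast (\cdot)$ on abelian varieties over $k$ preserves isogenies and commutes with direct products, since in both cases one simply twists each factor by the same $(-1, \chi_\mu)$. Applying this functor to the given isogeny $\Jac(Z) \sim \Jac(X) \times \Jac(Y)$ yields
\begin{equation*}
  \mu \ast \Jac(Z) \sim (\mu \ast \Jac(X)) \times (\mu \ast \Jac(Y)),
\end{equation*}
and the hyperelliptic fact identifies the right-hand side with $\Jac(X_\mu) \times \Jac(Y_\mu)$.

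For part (ii) I would first check that $Z_\nu$ is the twist of $Z$ by the pair $(\iota, \chi_\nu)$: over $k(\sqrt{\nu})$, the substitution $(x,y,z) \mapsto (\sqrt{\nu}\, x, y, z)$ gives an isomorphism $Z_\nu \to Z$ whose Galois conjugate differs from itself precisely by $\iota$. Consequently $\Jac(Z_\nu)$ is the twist of $\Jac(Z)$ by $(\iota^*, \chi_\nu)$. I would then transport this twist across the isogeny $\Jac(X) \times \Jac(Y) \to \Jac(Z)$. The key point is that this isogeny is equivariant when one equips the source with $\mathrm{id} \oplus (-1)$: on $\pi^*\Jac(X)$ the pullback $\iota^*$ acts trivially since $X = Z/\iota$, while on the Prym part — where $\Jac(Y)$ lands, up to isogeny, by Theorem \ref{thm:riro} — it acts as $-1$ by definition of the Prym as (the connected component of) $\ker(1 + \iota^*)$. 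Hence the twist decomposes as the trivial twist on $\Jac(X)$ together with the twist by $(-1,\chi_\nu)$ on $\Jac(Y)$, giving $\Jac(Z_\nu) \sim \Jac(X) \times (\nu \ast \Jac(Y))$, and a final invocation of the hyperelliptic fact converts $\nu \ast \Jac(Y)$ into $\Jac(Y_\nu)$.

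The main obstacle is justifying the equivariance in (ii): I need to know that the degree-$2$ cover $\Jac(Y) \to \mathrm{Prym}(Z \to X)$ supplied by Ritzenthaler--Romagny is compatible with $-1$, and that the resulting isogeny $\Jac(X) \times \Jac(Y) \to \Jac(Z)$ intertwines $\mathrm{id} \oplus (-1)$ with $\iota^*$. The first point is automatic, since $-1$ lies in the center of every endomorphism ring and so commutes with any isogeny; the second follows from the eigenspace description of $\pi^*\Jac(X)$ and the Prym inside $\Jac(Z)$. The remaining verifications are routine functorialities of the twist construction.
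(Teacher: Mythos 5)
Your proof is correct. For part (i) it takes essentially the same route as the paper: the projections commute with $-1$ because $-1$ is central, so twisting by $(-1,\chi_\mu)$ carries the given isogeny to an isogeny between the twists of the source and target, and the hyperelliptic identification $\mu \ast \Jac(X) \cong \Jac(X_\mu)$, $\mu \ast \Jac(Y) \cong \Jac(Y_\mu)$ finishes it.

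For part (ii), however, you take a genuinely different and more conceptual path. The paper verifies (ii) by explicit computation: it writes down $Z/\iota$ and $Z_\nu/\iota$ and observes they are visibly $k$-isomorphic, and then plugs $Z_\nu$ into the Ritzenthaler--Romagny formula of Theorem \ref{thm:riro} to find that the recovered genus-$2$ defining polynomial is scaled by $\nu^{-3}$, which (since $\nu^{-3} \equiv \nu$ in $k^*/(k^*)^2$) yields exactly $Y_\nu$. You instead identify $Z_\nu$ as the twist of $Z$ by the cocycle $(\iota,\chi_\nu)$, observe that the gluing isogeny $\Jac(X) \times \Jac(Y) \to \Jac(Z)$ intertwines $\mathrm{id} \oplus (-1)$ with $\iota^*$ because $p^*\Jac(X)$ is the $+1$-eigenspace and the Prym (receiving $\Jac(Y)$ up to central $-1$-commuting isogeny) is the $-1$-eigenspace of $\iota^*$, and then transport the twist across the isogeny. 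Your version buys conceptual clarity and avoids appealing to the precise form of the Ritzenthaler--Romagny output; the paper's version buys a self-contained coefficient check that sits naturally alongside its other formula manipulations. Both are correct, and it is worth noting your argument also makes transparent \emph{why} only the $Y$-factor acquires the twist, which in the paper is somewhat hidden inside the $\nu^{-3}$ bookkeeping.
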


\begin{proof}
  Part (i) follows because the projections $\Jac (Z) \to \Jac (X)$ and $\Jac
  (Z) \to \Jac (Y)$ commute with the automorphism $-1$, which implies that
  over $k$ the twist $\mu \ast \Jac (Z)$ admits the factor $\mu \ast \Jac (X) =
  \Jac (X_{\mu})$.

  Part (ii) can be verified by explicit calculation. Indeed, the quotient
  \begin{equation}
    Z / \iota : x^2 + h (y, z) x + q (y, z) = 0
  \end{equation}
  is clearly $k$-isomorphic to
  \begin{equation}
    Z_{\nu} / \iota : \nu^2 x^2 + h (y, z) \nu x + q (y, z) = 0 .
  \end{equation}
  As for the new $Y_{\mu}$, a direct calculation that its defining polynomial
  is $\nu^{-3}$ times that of $Y$, which implies the claim.
\end{proof}

Since combining the actions in Lemma \ref{lem:twists} allows us to twist both
separate factors $X'$ and $Y'$ in any way desired, we can find $\mu$ and $\nu$
such that $\mu \ast \Jac (Z'_{\nu})$ has the requested factorization $\Jac (X)
\times \Jac (Y)$ up to isogeny over the base field $k$.

This observation allows us equally well to deal with general defining equations
\eqref{eq:X} and \eqref{eq:Y} rather than merely their monic versions
\eqref{eq:Xformal} and \eqref{eq:Yformal} that we used in our interpolation
algorithms up until now. Moreover, the verification Algorithms
\ref{alg:g1check} and \ref{alg:g2check} function equally well over $k$: In
Algorithm \ref{alg:g1check}, it suffices to check for equivalence of $(I : J)$
and $(I_0 : J_0)$ in weighted projective $(4, 6)$-space over $k$ instead of in
weighted projective $(2, 3)$-space over $\kbar$, and in Algorithm
\ref{alg:g2check}, it suffices to demand that $c$ be a square in $k$.

Summarizing all that went before in this section, we have therefore obtained
the following main algorithm, and with it, the Main Theorem:

\begin{algorithm}\label{alg:gluek}
  This algorithm finds gluings of genus-$1$ and genus-$2$ curves along their
  torsion over the base field.

  \emph{Input}: Equations $X : y^2 = p_X$ and $Y : y^2 = p_Y$ that define
  curves of genus $1$ and $2$ over $k$.

  \emph{Output}: A (possibly empty) list $L$ of pairs $(Z, \mu)$, where $Z$ is
  a smooth plane quartic and where $\mu \in k^*$ is a constant such that $\mu
  \ast \Jac (Z) \sim \Jac (X) \times \Jac (Y)$.

  \emph{Steps}:
  \begin{enumerate}
    \item Initialize the empty list $L$. For all quadratic factors $q_Y$ of
      $p_Y$, let $r_Y = p_Y / q_Y$ and perform all next steps but the final
      one.
    \item Check if the splitting fields of $p_X$ and $q_Y$ are isomorphic. If
      so, consider labelings of roots of $p_X$ and $q_Y$ such that the Galois
      actions on the corresponding roots \eqref{eq:rhoroots} of the quadratic
      resolvents are compatible. For all such labelings, perform all next steps but
      the final one.
    \item Construct the interpolated curve $Z'$ as in \eqref{eq:Zint1}. Check
      that the coefficients of $Z'$ belong to $k$. Construct $X' = Z' / \iota$
      as in Algorithm \ref{alg:g1check} and check that $X'$ is a quadratic
      twist of $X$ with respect to $-1$ by $\mu$ say. Check that the cubic
      resolvent of $q = a_{040} y^4 + a_{031} y^3 z + a_{022} y^2 z^2 + a_{013}
      y z^3 + a_{004} z^4$ admits at least one root over $k$, and that for one
      of the roots of this resolvent, we have $p_Y (x) = c p (-x)$ for some $c
      \in k$, where $p'$ is the defining polynomial for the curve $Y'$
      corresponding to the chosen root, as in Algorithm \ref{alg:g2check}.
    \item Let $\nu = c \mu$, and let $Z = Z'_{\nu}$, so that $\mu \ast \Jac (Z)
      = \mu \ast \Jac (Z'_{\nu}) \sim \Jac (X'_{\mu}) \times \Jac (Y'_{\mu
      \nu}) \cong \Jac (X'_{\mu}) \times \Jac (Y'_c) \cong \Jac (X) \times \Jac
      (Y)$ by Lemma \ref{lem:twists}. Append $(Z, \mu)$ to $L$.
    \item Return the list $L$.
  \end{enumerate}
\end{algorithm}

\begin{remark}\label{rem:niceeq}
  The algorithms at \cite{hsv-git} apply a more precise version of the above
  results, which considers the effect of twists on defining equations
  \eqref{eq:X} (and hence on bases of differentials \eqref{eq:BX}) rather than
  on isomorphism classes. We omit the calculations, which only slightly refine
  the twisting scalars $\mu$ and $\nu$ involved, and describe the result.
  Recall from Section \ref{subsec:interCC} that given defining equations for
  the curves $X$, $Y$, and $Z$, we can consider the bases of $\Jac (Z)$ (resp.
  $\Jac (X) \times \Jac (Y)$) that correspond to the bases \eqref{eq:BZ} and
  the union of the pullbacks of \eqref{eq:BX} and \eqref{eq:BY}.

  Now let defining polynomials $p_X$ and $p_Y$ for the input curves $X$ and $Y$
  be given. Then the formulas at \cite{hsv-git} give a ternary quartic equation
  $F$ and a constant $\mu \in k$ such that there exists a map $\Jac (Z) \to
  \Jac (X) \times \Jac (Y)$ with tangent representation $4 \sqrt{\mu}$ with
  respect to the bases corresponding to $p_X$, $p_Y$, and $F$. Thus $\mu \ast
  \Jac (Z)$ is isogenous over $k$ to the product $\Jac (X) \times \Jac (Y)$.
  The factor $4$ is inserted because it makes both $F$ and $\mu$ smaller
  without losing integrality.

  In practice, choosing this completely canonical approach gives rise to very
  agreeable expressions for $Z$ and $\mu$, especially when $p_X$ and $p_Y$ are
  in reduced minimized form. For example, the simple equations in
  \eqref{eq:70}, were found using this method, without any further optimization
  or reduction being needed.
\end{remark}

\subsection{Avoiding the larger interpolation}\label{sec:alt}

Recall from Section \ref{sec:realdeal} that if $X$ and $Y$ are of the special
form
\begin{equation}\label{eq:Xspec2}
  X : y^2 = x (x - 1) (x - \alpha)
\end{equation}
and
\begin{equation}\label{eq:Yspec2}
  Y : y^2 = x (x - 1) (x - \beta) (x^2 + a x + b)
\end{equation}
then the interpolation gives rise to the simpler formula \eqref{eq:notasbad}.
Moreover, in this special case the twisting scalar $\mu$ with respect to $-1$
is given by
\begin{equation}
  \begin{split}
    \mu = \beta (\beta - 1) (\beta - \alpha)(a^2 \alpha^2 \beta^2 - 2 a^2 \alpha \beta^2 + a^2 \beta^2 + 2 a b \alpha^2 \beta - 2 a b \alpha \beta^2 - 2 a b \alpha \beta \\ + 2 a b \beta^2 + 2 a \alpha^2 \beta^2 - 2 a \alpha \beta^3 - 2 a \alpha \beta^2 + 2 a \beta^3 + b^2 \alpha^2 - 2 b^2 \alpha \beta + b^2 \beta^2 \\ + 2 b \alpha^2 \beta - 4 b \alpha \beta^3 + 4 b \alpha \beta^2 - 4 b \alpha \beta + 2 b \beta^3 + \alpha^2 \beta^2 - 2 \alpha \beta^3 + \beta^4) .
  \end{split}
\end{equation}
Since these expressions are so much smaller those involved in \eqref{eq:Zint1},
the verification of their correctness via Algorithms \ref{alg:g1check} and
\ref{alg:g2check} for the former is far less time-intensive than that for the
latter. Moreover, this section will show that we can both reduce the latter
generic check to the former and circumvent all but the first step in the
interpolation in Section \ref{sec:realdeal}. For this, we use the following
lemmata.

\begin{lemma}
  Let $k \subset \C$, and let $X$, $Y$, $Z$ be defined by equations
  \eqref{eq:X}, \eqref{eq:Y}, \eqref{eq:Z}. Let $A \in \GL_2 (k)$, and let $X
  \cdot A$ (resp.\ $Y \cdot A$) be the curve with defining equation $p_X \cdot
  A$ (resp.\ $p_Y \cdot A$). Similarly, let $B \in \GL_3 (k)$, and let $Z \cdot
  B$ be the curve defined by $F \cdot B$. We have the following for the period
  matrices with respect to the canonical bases \eqref{eq:BX}, \eqref{eq:BY},
  and \eqref{eq:BZ}:
  \begin{enumerate}
    \item $\Lambda_{X} = \det (A) \Lambda_{X \cdot A}$;
    \item $\Lambda_{Y} = \det (A) A (\Lambda_{Y \cdot A})$;
    \item $\Lambda_{Z} = \det (B) B (\Lambda_{Z \cdot B})$.
  \end{enumerate}
\end{lemma}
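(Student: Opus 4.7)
The proof of each part is a direct computation following a common template: exhibit the canonical $k$-rational isomorphism $\phi$ between the curve and its transform, pull back the distinguished basis of holomorphic differentials, and integrate against a basis of $H_1$ (which $\phi$ carries bijectively across) to extract the period-lattice identity. The key scalar is the one by which $\phi^*$ rescales the canonical basis on the transformed curve.

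For parts (i) and (ii), the isomorphism $\phi$ acts on the $x$-coordinate by a fractional linear transformation (with matrix determined by $A$ in the convention of Section \ref{subsec:interCC}) and on $y$ by multiplication by $(cx+d)^{-m}$, where the exponent $m$ is forced by the requirement that the defining equation remain of the form $(y')^2 = \text{polynomial}(x')$: namely $m=2$ in part (i) (since $p_X$ has degree at most $4$) and $m=3$ in part (ii) (since $p_Y$ has degree at most $6$). A chain-rule computation then yields
\begin{equation*}
\phi^* dx' = \frac{\det(A)^{-1}}{(cx+d)^2}\, dx, \qquad \phi^*(1/y') = \frac{(cx+d)^m}{y}.
\end{equation*}
In part (i) these combine to $\phi^*(dx'/y') = \det(A)^{-1}\, dx/y$, so integration gives $\Lambda_{X \cdot A} = \det(A)^{-1}\Lambda_X$, which rearranges to the claim. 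In part (ii), applying the same calculation to both elements of $\Bc_{p_Y \cdot A}$ produces, in matrix notation,
\begin{equation*}
\phi^* \begin{pmatrix} x'\, dx'/y' \\ dx'/y' \end{pmatrix} = \det(A)^{-1} A^{-1} \begin{pmatrix} x\, dx/y \\ dx/y \end{pmatrix},
\end{equation*}
which integrates to $\Lambda_{Y \cdot A} = \det(A)^{-1} A^{-1} \Lambda_Y$, rearranging to the claim.

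For part (iii), invoke the Poincaré residue description of the canonical basis: each element of $\Bc_{F_Z}$ corresponds to a linear form $L \in \{x, y, z\}$ via $L \mapsto \operatorname{Res}_Z(L\cdot \Omega_0 / F)$, where $\Omega_0 = x\, dy \wedge dz - y\, dx \wedge dz + z\, dx \wedge dy$ is the contraction of $dx \wedge dy \wedge dz$ with the Euler vector field. The isomorphism $\phi : Z \to Z \cdot B$ is the projective transformation induced by $B^{-1}$, so $\phi^* L = L \circ B^{-1}$ for each linear form $L$ (equivalently, the column vector of linear forms undergoes left multiplication by $B^{-1}$), while $\phi^* \Omega_0 = \det(B)^{-1} \Omega_0$ because $dx \wedge dy \wedge dz$ transforms by its determinant and the Euler vector field is scale-invariant. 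Combining these in the residue formula (and using $\phi^*(F \cdot B) = F$) gives $\phi^* \Bc_{F \cdot B} = \det(B)^{-1} B^{-1}\, \Bc_F$, which integrates to $\Lambda_Z = \det(B)\, B\, \Lambda_{Z \cdot B}$ as claimed. The main technical obstacle throughout is the careful bookkeeping of conventions—the direction of $\phi$, whether $A$ or $A^{-1}$ (resp.\ $B$ or $B^{-1}$) appears in each formula, and the precise exponent of $(cx+d)$ in the $y$-rescaling—but once these are fixed to match Section \ref{subsec:interCC}, all remaining work is elementary differential-form algebra.
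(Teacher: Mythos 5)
Your proposal is correct, and it carries out in detail the \emph{direct calculation} that the paper's (very brief) proof cites as one of two available routes: pull back the canonical differential basis along the canonical isomorphism and read off the period identity, with the Poincar\'e-residue formulation of $\Bc_{F_Z}$ giving a clean way to see the $\GL_3$-equivariance in part (iii). The paper also sketches a shortcut that your write-up does not use: by canonicity the statements hold projectively (i.e.\ up to a scalar), that scalar must be a character of $\GL_n$ and hence a power of $\det$, and the exact power can be read off by testing on diagonal matrices alone — which avoids all of the chain-rule and $(cx+d)^m$ bookkeeping. Both routes are valid; yours trades elegance for explicitness, and the convention-tracking you flag as the main technical obstacle is precisely what the paper's character argument is designed to sidestep.
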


\begin{proof}
  The proof follows by considering the transformation of the bases of
  differentials \eqref{eq:BX}, \eqref{eq:BY}, and \eqref{eq:BZ} under the
  specified substitutions. This follows via direct calculation, but also by
  first observing that the statements hold projectively by our canonical
  choices of differential bases and then using diagonal matrices to determine
  the correct power of the determinant that is needed for actual equality. (We
  need only consider such powers since $\GL_n$ has no other characters.)
\end{proof}

A similar argument shows the following.

\begin{lemma}
  Using the notation from the previous Lemma and Lemma \ref{lem:twists}, we
  have
  \begin{enumerate}
    \item $\Lambda_X = \mu^{1/2} \Lambda_{X_{\mu}}$;
    \item $\Lambda_Y = \mu^{1/2} \Lambda_{Y_{\mu}}$;
    \item $\Lambda_Z = \begin{pmatrix} \mu & 0 & 0 \\ 0 & \mu^{1/2} & 0 \\ 0 &
      0 & \mu^{1/2} \end{pmatrix} \Lambda_{Z_{\mu}}$.
  \end{enumerate}
  Furthermore, when identifying global differential forms of $Z$ with
  translation-invariant differential forms on $\Jac (Z)$, we have that
  $\Lambda_Z = \mu^{1/2} \Lambda_{\mu \ast \Jac (Z)}$.
\end{lemma}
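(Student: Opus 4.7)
The plan is to verify each of the four assertions by writing down the explicit $k(\sqrt{\mu})$-isomorphism between the twisted object and the original, and tracking how it pulls back the distinguished bases of holomorphic $1$-forms in \eqref{eq:BX}, \eqref{eq:BY}, and \eqref{eq:BZ}. In each case the only content is a chain-rule computation applied to the canonical differentials.

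\textbf{Parts (i) and (ii).} The twist $X_\mu : y^2 = \mu\, p_X$ is isomorphic to $X : y^2 = p_X$ over $k(\sqrt{\mu})$ via $\phi_X : (x, y) \mapsto (x, y/\sqrt{\mu})$, and similarly for $Y_\mu$. One immediately computes $\phi_X^*(dx/y) = \sqrt{\mu}\, dx/y$, so the pullback of the distinguished differential on $X$ is $\sqrt{\mu}$ times the distinguished differential on $X_\mu$; the same factor appears for both $x\, dx/y$ and $dx/y$ on $Y$. Since $\phi_X$ and $\phi_Y$ induce isomorphisms on $H_1(-, \Z)$, integrating over corresponding cycles gives the uniform scaling $\sqrt{\mu}$ on the period lattices, proving (i) and (ii).

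\textbf{Part (iii).} The curve $Z_\mu : \mu^2 x^4 + \mu h(y,z) x^2 + q(y,z) = 0$ is isomorphic over $k(\sqrt{\mu})$ to $Z$ via the projective substitution $\phi_Z : [x : y : z] \mapsto [\sqrt{\mu}\, x : y : z]$. The only genuine calculation is to observe that $\partial F_Z / \partial Y$, evaluated at $(X, Y) = (\sqrt{\mu}\, x, y)$, coincides with $\partial F_{Z_\mu}/\partial y$ evaluated at $(x, y)$; this is immediate from \eqref{eq:Zrirotwist}, since each term of $\partial F_Z/\partial Y$ either contains a factor $X^2 = \mu x^2$ coming from the $h(Y,Z) X^2$ piece or is the $Y$-derivative of $q(Y, Z)$, on which $\phi_Z$ acts trivially. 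Combining this with $dX = \sqrt{\mu}\, dx$ produces the scalings $\mu$, $\sqrt{\mu}$, $\sqrt{\mu}$ on the three distinguished differentials $X\, dX/(\partial F_Z/\partial Y)$, $Y\, dX/(\partial F_Z/\partial Y)$, $dX/(\partial F_Z/\partial Y)$, which is exactly the diagonal matrix in the statement.

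\textbf{Final statement.} By construction, $\mu \ast \Jac(Z)$ is the cocycle twist of $\Jac(Z)$ by the quadratic character of $k(\sqrt{\mu}) / k$ and the automorphism $[-1]$. Over $k(\sqrt{\mu})$ one has a canonical isomorphism $\mu \ast \Jac(Z) \cong \Jac(Z)$ under which the $k$-rational basis of translation-invariant differentials on $\mu \ast \Jac(Z)$ pulls back to $\sqrt{\mu}$ times that on $\Jac(Z)$, since the twisting cocycle differs from the trivial one exactly by the scalar $\sqrt{\mu}$ on $1$-forms (the automorphism $[-1]$ acts as $-1$ on the tangent space, uniformly in all three directions). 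Transporting this scalar across the Abel--Jacobi identification of $H^0(\Jac(Z), \Omega^1)$ with $H^0(Z, \Omega^1)$ yields the claimed $\Lambda_Z = \sqrt{\mu}\, \Lambda_{\mu \ast \Jac(Z)}$. The main subtlety is pinning down the convention for how a quadratic twist by $-1$ acts on tangent spaces; once that normalization is fixed, the result is purely formal, whereas the genuinely computational step is the chain-rule verification of the partial-derivative identity underlying (iii).
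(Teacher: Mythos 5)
Your proof is correct and follows essentially the same route as the paper: the paper defers to ``a similar argument'' as in the preceding lemma, namely tracking how the distinguished bases of differentials \eqref{eq:BX}, \eqref{eq:BY}, \eqref{eq:BZ} transform under the explicit $k(\sqrt{\mu})$-isomorphisms, which is exactly the chain-rule computation you carry out. Your handling of the final statement via the cocycle is a reasonable fleshing-out of what the paper leaves implicit, and you are right that the only delicate point is the normalization convention for the twist, which your argument navigates correctly.
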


We return to consider the gluing of the generic equations \eqref{eq:Xformal}
and \eqref{eq:Yformal} from Section \ref{sec:realdeal}. There exist rational
expressions $\alpha, \beta$, matrices $A, B$, and scalars $\lambda, \mu$ in
terms of the original formal parameters such that for the corresponding curves
$X' = X (\alpha)$ and $Y' = Y (\beta)$ as in \eqref{eq:Xspec2} and
\eqref{eq:Yspec2} we have
\begin{equation}
  p_X = \lambda (p_{X'} \cdot A),
  \qquad
  p_Y = \mu (p_{Y'} \cdot B).
\end{equation}
Let $Z'$ and the twisting scalar $\nu'$ with respect to $\iota$ describe the
gluing of $X'$ and $Y'$ considered above, and define $Z$ and $\nu$ for $X$ and
$Y$ similarly. Since the gluing data coincide, we have
\begin{equation}
  \begin{split}
    \Lambda_X \times \Lambda_Y & = \Lambda_{\nu \ast \Jac (Z)} R, \\
    \Lambda_{X'} \times \Lambda_{Y'} & = \Lambda_{\nu' \ast \Jac (Z')} R
  \end{split}
\end{equation}
for the same integral matrix $R$. The lemmata above for the hyperelliptic case
show that
\begin{equation}
  \Lambda_{X'} \times \Lambda_{Y'} =
  \begin{pmatrix}
    \lambda^{1/2} \det (A) & 0 \\
    0 & \mu^{1/2} \det (B) B
  \end{pmatrix}
  \Lambda_{X} \times \Lambda_{Y}.
\end{equation}
Therefore the same relation holds between $\Lambda_{\nu' \ast \Jac (Z')}$ and
$\Lambda_{\nu \ast \Jac (Z)}$. Using the lemmata above in the plane quartic
case shows that the gluing of $X$ and $Y$ is described by the curve $Z$ and the
twisting scalar $\nu$ with respect to $\iota$, where
\begin{equation}
  \begin{split}
    \text{$Z = (Z' \cdot T)_{\chi}$, with $T = \begin{pmatrix} 1 & 0 \\ 0 & B
    \end{pmatrix} \in \GL_3 (k)$} \\
    \text{and $\chi$, $\nu$ such that $\chi \nu = \mu$ and $\chi^2 \nu =
    \lambda \det (A)^2 \det (B)^{-2}$}.
  \end{split}
\end{equation}
Since, as in Section \ref{sec:realdeal}, this construction ensures compatibility
with the canonical bases of differentials, the equations from this new approach
are exactly the same as those obtained before. However, this more subtle inroad
has the advantage of proving the correctness of the complicated final
expression along the way from that of the simpler case considered at the
beginning of this section.

Note that, as was mentioned before, for every concrete input, our algorithms at
\cite{hsv-git} still perform a check of correctness regardless before returning
the output, so that incorrect results are never returned.

\subsection{A crucial symmetrization}

A symmetrization of the formulae obtained above that is important in practice
is the following. It is unpleasant, when starting with polynomials $p_X$ and
$p_Y$ defining $X$ and $Y$, to have to determine their roots $\alpha_i$ and
$\beta_j$, as is currently required to apply the first formula
\eqref{eq:Zint1}. This leads to the determination of a compositum of the
splitting field of two quartic polynomials, which already over $\Q$ involves
difficult field arithmetic that cannot be circumvented by reduction algorithms
like \textsc{Pari}'s \texttt{polredabs} since the extensions involved may have
degree larger than $20$.

Instead, we use the more symmetric presentations
\begin{equation}
  p_X = (x - \alpha_1) \cdots (x - \alpha_4)
      = x^4 + a_1 x^3 + a_2 x^2 + a_3 x + a_4
\end{equation}
and
\begin{equation}
  p_Y = (x - \beta_1) \cdots (x - \beta_4) (x^2 + a x + b)
      = (x^4 + b_1 x^3 + b_2 x^2 + b_3 x + b_4) (x^2 + a x + b) .
\end{equation}
The considerations from Section \ref{sec:criteria} show that our chosen gluing
only depends on a pairing of the roots $\gamma_1 = \alpha_1 \alpha_2 + \alpha_3
\alpha_4$, $\gamma_2 = \alpha_1 \alpha_3 + \alpha_2 \alpha_4$, $\gamma_3 =
\alpha_1 \alpha_4 + \alpha_2 \alpha_3$ of the cubic resolvent of $p_X$ with the
roots $\delta_1, \delta_2, \delta_3$ of the cubic resolvent of $x^4 + b_1 x^3 +
b_2 x^2 + b_3 x + b_4$. We therefore expect that the coefficients $a_{ijk}$ in
\eqref{eq:Zint1} are invariant under the Klein Vierergruppen of permutations of
$\alpha_i$ and $\beta_j$ that stabilize all of the $\gamma_i$ and $\delta_j$.
This indeed turns out to be the case.

Invariant theory shows that the polynomial expressions in $\alpha_1, \dots,
\alpha_n$ that are invariant under the distinguished Vierergruppe are
polynomial expressions in the invariants $\gamma_1, \gamma_2, \gamma_3$ of
weight $2$ and the coefficients $a_1$ and $a_3$ of $p_X$, which are of weight
$1$ and $3$, respectively. A similar result of course holds for $\beta_1,
\dots, \beta_4$. We obtain the following result.

\begin{proposition}\label{prop:cubsuff}
  Starting with equations \eqref{eq:Xformal} and \eqref{eq:Yformal}, the
  coefficients of the interpolated polynomial $Z$ in \eqref{eq:Zint1} can be
  expressed as polynomials in the compositum of the splitting fields of the
  cubic resolvents of $p_X$ and $p_Y$. In particular, if these splitting fields
  coincide (a necessary and sufficient condition for a gluing over the base
  field to exist by Theorem \ref{thm:critres}), then explicitly determining the
  coefficients of $Z$ only requires intermediate calculations in this common
  splitting field of a cubic polynomial.
\end{proposition}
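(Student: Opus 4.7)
My plan is to combine an elementary group-theoretic observation with the invariant-theoretic fact asserted in the discussion preceding the proposition. Let $V_4 = \{\mathrm{id}, (12)(34), (13)(24), (14)(23)\} \subset \Sym(4)$, and let one copy act on $\{\alpha_1, \ldots, \alpha_4\}$ and a second copy on $\{\beta_1, \ldots, \beta_4\}$. The group-theoretic observation is that the gluing subgroup $G$ of Section \ref{sec:realdeal}, and hence the normalized equation \eqref{eq:Zint1}, is $V_4 \times V_4$-invariant.

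To verify this, note that each copy of $V_4$ fixes every $\gamma_i$ and every $\delta_j$ by direct computation, and preserves $\Tc = \{\beta_5, \beta_6\}$ pointwise since it acts trivially on the roots of $x^2 + ax + b$. The crucial point is that a double transposition acts trivially on both $\Gc(\Pc)$ and $\Gc(\Rc)$: it sends every two-element subset of the underlying four-set to its complementary two-element subset, so the induced action on the quotient by $S \sim S^c$ is trivial. For example, $(12)(34)$ sends $\{P_1, P_3\}$ to $\{P_2, P_4\} = \{P_1, P_3\}^c$. Since the symplectic isomorphism $\ell$ of Corollary \ref{cor:roottrans} sends classes of two-element subsets to classes of two-element subsets, it is tautologically $V_4 \times V_4$-equivariant, and $(\Pc, \Rc, \Tc, \ell)$ and hence $G$ are $V_4 \times V_4$-fixed. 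Because \eqref{eq:Zint1} is produced canonically from $(X, Y, G)$ and normalized so as to eliminate the $F \sim -F$ ambiguity of Proposition \ref{prop:eqvslat}, every coefficient $a_{ijk}$ must be a $V_4 \times V_4$-invariant polynomial in $(\alpha_1, \ldots, \alpha_4, \beta_1, \ldots, \beta_4, a, b)$.

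By the invariant-theoretic fact noted immediately before the proposition, the ring $k[\alpha_1, \ldots, \alpha_4]^{V_4}$ is generated by $\gamma_1, \gamma_2, \gamma_3$ together with the coefficients $a_1$ and $a_3$ of $p_X$; an analogous statement holds on the $\beta$-side for the quartic factor $r_Y$ of $p_Y$, with $a$ and $b$ being automatically $V_4$-invariant. Consequently every $a_{ijk}$ is a polynomial over $k$ in $\gamma_1, \gamma_2, \gamma_3, \delta_1, \delta_2, \delta_3$ and in the coefficients of $p_X$, $r_Y$, $q_Y$, all of which lie in the compositum of the splitting fields of $\rho(p_X)$ and $\rho(r_Y)$. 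This proves the first assertion; the second is then immediate, since under the hypotheses of Theorem \ref{thm:critres} the two splitting fields are isomorphic, so only arithmetic in this common cubic splitting field is required to evaluate the formulae. The only real subtlety in the argument is the equivariance of $\ell$ under the Klein-four action, which as indicated reduces to the complementation identity for $2$-subsets of a $4$-set; once this is in hand, the invariant-theoretic conclusion is standard.
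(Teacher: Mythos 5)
Your argument is correct and follows the same overall strategy as the paper: show that the gluing datum, and hence the normalized equation, is invariant under the product of Klein Vierergruppen, then invoke the invariant-theoretic description of $k[\alpha_1,\dots,\alpha_4]^{V_4}$ to land in the resolvent splitting fields. Where your proposal goes beyond the paper is in the first step: the paper's text merely asserts "we therefore expect that the coefficients are invariant under the Klein Vierergruppen\dots this indeed turns out to be the case," suggesting an empirical check, whereas you give an \emph{a priori} argument by observing that double transpositions act trivially on $\Gc(\Pc)$ and $\Gc(\Rc)$, so the pair $(\Tc,\ell)$ and hence $G$ are genuinely fixed. This is a real clarification, since it makes the invariance a theorem about the construction rather than an observed feature of the interpolated formulae.

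One small imprecision: it is not true that a double transposition $\sigma \in V_4$ sends \emph{every} two-element subset to its complement. For example, $(1\,2)(3\,4)$ fixes $\{1,2\}$ (it does \emph{not} send it to $\{3,4\}$). What is true, and what you actually need, is that $\sigma(S) \in \{S, S^c\}$ for every $2$-subset $S$: $\sigma$ fixes the two $2$-subsets equal to the transposed pairs and swaps the remaining $2$-subsets in complementary pairs. Either way the induced action on $\Gc$ is trivial, so the conclusion stands; you should just phrase the case distinction correctly.
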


\begin{remark}
  Formally writing down the invariant expressions Proposition
  \ref{prop:cubsuff} already cuts down their length by a factor almost $20$.
\end{remark}

In practice, the formulae obtained in this way determine gluings over finite
base fields (not necessarily prime fields) in a fraction of a second, whereas
gluing curves over the rationals whose defining coefficients have about $100$
decimal digits needs a bit over a minute (and results in defining coefficients
with about $1600$ decimal digits). Corresponding test suites are available at
\cite{hsv-git}.

\begin{remark}
  When considering the case where the curve $Y$ admits a defining equation of
  the form $Y : y^2 = p (x^2)$, our algorithms directly recover the Ciani form
  \eqref{eq:ciani}. When $\Jac (Y)$ is itself a $2$-gluing of elliptic curves
  $E_1$ and $E_2$, embedding $X$ into its Jacobian allows use to recover maps
  $X \to E_1$ and $X \to E_2$ of degree $4$ by functoriality.
\end{remark}

\subsection{Examples}\label{sec:examples}

\begin{example}\label{ex:70}
  Consider the genus-$1$ curve defined by the equation
  \begin{equation}
    X : y^2 = 4 x^3 + 5 x^2 - 98 x + 157 = p_X .
  \end{equation}
  It is isomorphic to the elliptic curve with label
  \href{https://www.lmfdb.org/EllipticCurve/Q/118/c/1}{\texttt{118.c1}} in the
  LMFDB \cite{lmfdb}. Similarly, let
  \begin{equation}
    Y : y^2 = x^6 + 2 x^3 - 4 x^2 + 1 = p_Y .
  \end{equation}
  be the genus-$2$ curve isomorphic to the curve in the LMFDB with label
  \href{https://www.lmfdb.org/Genus2Curve/Q/295/a/295/1}{\texttt{295.a.295.1}}.

  First we consider these curves in light of Section \ref{sec:criteria}. The
  defining polynomial $p_Y$ of $Y$ factors as
  \begin{equation}
    (x - 1) (x^2 + x - 1) (x^3 + 2 x + 1) .
  \end{equation}
  We see that there is a unique quadratic factor $q_Y = x^2 + x - 1$, with
  complement $r_Y = x^4 - x^3 + 2 x^2 - x - 1$, and hence a unique subgroup $H
  \subset V_Y$ of dimension $1$ that is Galois-stable. The cubic resolvent of
  $r_Y$ is given by
  \begin{equation}
    \rho (r_Y) = x^3 - 2 x^2 + 5 x - 8 .
  \end{equation}
  This defines the same splitting field as $p_X$: in fact both polynomials
  already define a common number field, which is isomorphic to that defined by
  $x^3 + 2 x - 1$. Note that we have not taken a resolvent of $p_X$, in line
  with Remark \ref{rem:degen}.

  The common splitting field of $p_X$ and $\rho (r_Y)$ has Galois group $S_3$.
  Remark \ref{rem:isonumber} shows that there is a single Galois equivariant
  isomorphism $\ell : \Gc (\Pc) \to \Gc (\Rc)$ for $H$. Since $H$ itself was
  also unique, we conclude that $V_X \times V_Y$ has a single Galois stable
  maximal isotropic subgroup $G$.

  The algorithms of Section \ref{sec:interpolation} show that the quotient $(Q,
  \lambda_Q)$ is a twist with respect to $-1$ by $5$ of the Jacobian $(\Jac
  (Z), \lambda_Z)$ of the plane quartic curve
  \begin{equation}\label{eq:70}
    Z : 32 x^4 + 3 x^2 y^2 - 132 x^2 y z + 37 x^2 z^2
    + 3 y^4 - 14 y^3 z + 7 y^2 z^2 - 6 y z^3 - 2 z^4 = 0.
  \end{equation}
  (More precisely, the twisting scalar $\mu$ with respect to $-1$ from Remark
  \ref{rem:niceeq} is given by $5^3$.)

  The LMFDB tells us that the Jacobian $\Jac (X)$ has a rational $5$-torsion
  point, and that $\Jac (Y)$ has a $14$-torsion point. As the isogeny defined by
  $G$ has degree that is a power of $2$, we can conclude that $(Q, \lambda_Q)$
  has a rational $70$-torsion point if we show that the Galois module
  \begin{equation}
    W = (\Jac (X) [2] (\kbar) \times \Jac (Y) [2] (\kbar)) / G
  \end{equation}
  has a Galois stable subspace of dimension $1$. For this, we use our knowledge
  of the subgroup $G$. The splitting field $L$ of $p_X p_Y$ is of degree $12$ over
  the base field $k = \Q$. We can label the roots $\alpha_1 = \infty$,
  $\alpha_2, \dots, \alpha_4$ of $p_X$ and $\beta_1, \dots, \beta_6$ of $p_Y$
  in such a way that (i) $\beta_1 = 1$, (ii) $\beta_5$ and $\beta_6$ correspond
  to the quadratic factor $q_Y$, and (iii) the Galois action on the $2$-torsion
  points $[\alpha_2] - [\alpha_1]$, $[\alpha_3] - [\alpha_1]$, $[\alpha_4] -
  [\alpha_1]$ in $\Gc (\Pc)$ coincides with that on the elements $[\beta_2] -
  [\beta_1]$, $[\beta_3] - [\beta_1]$, $[\beta_4] - [\beta_1]$ of $\Gc (\Rc)$
  defined by $r_Y$.

  Now consider the bases $\left\{ [\alpha_2] - [\alpha_1], [\alpha_3] -
  [\alpha_1] \right\}$ of $V_X$ and $\left\{ [\beta_2] - [\beta_1], [\beta_3] -
  [\beta_1], [\beta_5] - [\beta_6], [\beta_5 - \beta_1] \right\}$ of $V_Y$. A
  calculation (performed in \cite{hsv-git}) shows that the right action of two
  generators of $\Gal (L \ext k)$ is given by the matrices
  \begin{equation}
    \begin{pmatrix}
      1 & 1 & 0 & 0 & 0 & 0 \\
      1 & 0 & 0 & 0 & 0 & 0 \\
      0 & 0 & 1 & 1 & 1 & 0 \\
      0 & 0 & 1 & 0 & 0 & 0 \\
      0 & 0 & 0 & 0 & 1 & 0 \\
      0 & 0 & 0 & 0 & 1 & 1
    \end{pmatrix},
    \begin{pmatrix}
      1 & 1 & 0 & 0 & 0 & 0 \\
      0 & 1 & 0 & 0 & 0 & 0 \\
      0 & 0 & 1 & 1 & 1 & 0 \\
      0 & 0 & 0 & 1 & 0 & 0 \\
      0 & 0 & 0 & 0 & 1 & 0 \\
      0 & 0 & 0 & 0 & 0 & 1
    \end{pmatrix}.
  \end{equation}
  Because of our ordering of the roots, the subgroup $G$ corresponds to the
  subspace $U$ given by
  \begin{equation}
    U = \langle
    \begin{pmatrix}
      1 & 0 & 1 & 0 & 0 & 0
    \end{pmatrix},
    \begin{pmatrix}
      0 & 1 & 0 & 1 & 0 & 0
    \end{pmatrix},
    \begin{pmatrix}
      0 & 0 & 0 & 0 & 1 & 0
    \end{pmatrix}
    \rangle .
  \end{equation}
  This subspace is indeed stable under the action of the matrices above. If we
  take the images of the standard basis vectors $e_2$, $e_3$, $e_6$ as a basis
  for the corresponding quotient $W = V / U$, then the induced Galois action on
  $W$ is described by the matrices
  \begin{equation}
    \begin{pmatrix}
      1 & 1 & 0 \\
      1 & 0 & 0 \\
      0 & 0 & 1 \\
    \end{pmatrix},
    \begin{pmatrix}
      1 & 1 & 0 \\
      0 & 1 & 0 \\
      0 & 0 & 1 \\
    \end{pmatrix},
  \end{equation}
  There is a single non-trivial vector that is fixed under this action, which
  is the image of $e_6 = [\beta_5] - [\beta_1]$. Indeed the Galois action sends
  $[\beta_5] - [\beta_1]$ either to itself or to $[\beta_6] - [\beta_1]$, which
  is equivalent to $[\beta_5] - [\beta_1]$ modulo $G$, since the latter group
  contains the generator $[\beta_5] - [\beta_6]$ of $H$.

  We have therefore shown that the twist of the Jacobian of the curve
  \eqref{eq:70} with respect to $-1$ by $5$ indeed contains a rational
  $70$-torsion point.
\end{example}

\begin{example}
  The complex-analytic reconstruction techniques in the previous section also
  allow one to construct gluings along $3$-torsion. While it is more difficult
  to find examples of such gluings over the base field, one can still inspect
  which quotients by overlattices have invariants that are numerically in the
  base field. For example, consider the case $k = \Q$ and the elliptic curve
  with LMFDB label
  \href{https://www.lmfdb.org/EllipticCurve/Q/675/d/2}{\texttt{675.d2}}
  \begin{equation}
    X : y^2 = 4 x^3 + 25
  \end{equation}
  together with the genus-$2$ curve
  \begin{equation}
    Y : y^2 = x^5 + 20 x^3 + 36 x
  \end{equation}
  which is a twist of the curve in the LMFDB with label
  \href{https://www.lmfdb.org/Genus2Curve/Q/2916/b/11664/1}{\texttt{2916.b.11664.1}}.
  Over $\Qbar$, the curves $X$ and $Y$ admit gluings along $3$-torsion whose
  invariants are in $\Q$, as is shown in the example files at \cite{hsv-git}.
  Two such gluings are given by the base extensions of
  \begin{equation}
    6 x^4 - 27 x^2 y^2 + 42 x^2 y z + 13 x^2 z^2 - 18 y^4 - 30 y^3 z + 12 y^2
    z^2 +   24 y z^3 + 16 z^4 = 0
  \end{equation}
  and
  \begin{equation}
    14 x^3 z + 3 x^2 y^2 - 210 x y z^2 + 10 y^3 z + 1225 z^4 = 0.
  \end{equation}
  The former of these curves has its full endomorphism ring defined over a
  number field of degree $12$, whereas the latter requires an extension of
  degree $18$. Exact verification of these numerical results above is possible
  via \cite{cmsv-endos}. In fact, these also show that the curve $Y$ is itself
  isogenous to a product of elliptic curves.

  The algorithms at \cite{hsv-git} also allow for the direct gluing of
  threefold products of elliptic curves along $3$-torsion (or $2$-torsion), as
  shown in the example files at \cite{hsv-git}.
\end{example}

\begin{example}
  A final example is given by considering the curves
  \begin{equation}
    X : y^2 = x^4 + 2x^3 + x + 1
  \end{equation}
  and
  \begin{equation}
    Y : y^2 = 2 x^6 + x^4 + x^3 + x^2 + 2 x + 1
  \end{equation}
  over $\F_3$. Our algorithms give two rise to two gluings, defined by the
  equations
  \begin{equation}
    Z_1 : x^4 + x^2 y z + 2 x^2 z^2 + 2 y^3 z + y^2 z^2 + z^4 = 0
  \end{equation}
  and
  \begin{equation}
    Z_2 : x^4 + 2 x^2 y z + x^2 z^2 + 2 y^3 z + y^2 z^2 + z^4 = 0.
  \end{equation}
  For the former, the twisting scalar with respect to $-1$ is trivial, whereas
  the second requires a quadratic twist with respect to $-1$ by the non-square
  $-1 \in \F_3$ to recover the relevant abelian quotient variety.
\end{example}

\section{Gluing via the Kummer variety}\label{sec:kummer}

In this section we describe a geometric algorithm that allows us to construct gluings of the curves $X$ and $Y$ over any algebraically closed base field $k$. The algorithm we describe (Algorithm \ref{alg:AlgGl}) reverses the construction of Ritzenthaler and Romagny in \cite{ritzenthaler-romagny}, mentioned above in Theorem \ref{thm:riro}.

\subsection{Non-hyperelliptic curves of genus 3 that are gluings}
We start by showing that every non-hyperelliptic curve of genus 3 can be given in the form of Theorem \ref{thm:riro}.
\begin{lemma}\label{lem:gon}
  Let $(Z, \phi)$ be a gluing of $X$ and $Y$ over $k$. Then there is a
  degree-$2$ map $Z \to X$ over $k$.
\end{lemma}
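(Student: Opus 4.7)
The plan is to realize $\Jac(X)$ as an elliptic subvariety of $\Jac(Z)$ of exponent $2$ via the isogeny $\phi$, and then invoke the classical correspondence between such subvarieties and double covers of curves.

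First I would verify that the restriction $\iota_X := \phi|_{\Jac(X)\times 0} : \Jac(X) \to \Jac(Z)$ is an injective $k$-morphism. Since $Z$ is a geometrically integral curve, its polarized Jacobian is indecomposable; hence by Remark \ref{rem:decomp} the kernel $G$ of $\phi$ must be an indecomposable maximal isotropic subgroup of $V$. Applying Proposition \ref{prop:class} one writes $G = \{(x,y) \in V_X \times V_Y : \ell(x) = y + H\}$ for an antisymplectic isomorphism $\ell : V_X \to H^\perp/H$, so the intersection $G \cap (V_X \times 0)$ consists of the $(x, 0)$ with $\ell(x) \in H$, forcing $x = 0$. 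Thus $\ker(\iota_X) = 0$, and $\iota_X$ is a $k$-rational closed immersion.

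Next I would compute the induced polarization on $A := \iota_X(\Jac(X))$. By Definition \ref{def:gluing}, $\phi^*\lambda_Z$ is algebraically equivalent to $2(\pi_X^*\lambda_X \otimes \pi_Y^*\lambda_Y)$. Restricting along $\Jac(X) \times 0 \hookrightarrow \Jac(X) \times \Jac(Y)$ the factor $\pi_Y^*\lambda_Y$ becomes trivial (as $\pi_Y$ is the zero map on this subvariety), so $\iota_X^*\lambda_Z \sim 2\lambda_X$. In other words, the embedding $\iota_X$ identifies $\Jac(X)$ with the elliptic subvariety $A \subseteq \Jac(Z)$, and the principal polarization of $\Jac(Z)$ pulls back to twice the principal polarization of $\Jac(X)$; equivalently, $A$ has exponent $2$ in the sense of \cite[Ch.~12]{birkenhake-lange}.

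I would then apply the classical correspondence between exponent-$n$ elliptic subvarieties of the Jacobian of a curve $C$ of genus $\geq 2$ and (equivalence classes of) degree-$n$ morphisms from $C$ to elliptic curves; see, e.g., \cite[Ch.~12]{birkenhake-lange}. Applied to the pair $(\Jac(Z), A)$ and exponent $n = 2$, this produces a degree-$2$ morphism $p : Z \to E$ whose pullback $p^*$ recovers $\iota_X$, so $E$ is canonically identified with $\Jac(X)$, and hence (under the chosen model $X : y^2 = p_X$) with $X$. Equivalently, one can realize $E$ concretely as the quotient $\Jac(Z)/\phi(\Jac(Y))$; a direct computation using $\pi_X(G) = V_X$ shows
\[
  \Jac(Z)/\phi(\Jac(Y)) \;\cong\; \Jac(X) / V_X \;\xrightarrow{\ [2]^{-1}\ }\; \Jac(X),
\]
matching the identification above.

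The main obstacle is rationality: although $A$, $\iota_X$, and $\phi(\Jac(Y))$ are all defined over $k$, the Abel--Jacobi map $Z \to \Jac(Z)$ used to build $p$ requires a base point that need not be $k$-rational. I would handle this by Galois descent: since the entire abelian-variety picture is $\Gamma_k$-equivariant, the morphism $p$ and its target descend to a $k$-morphism to some $\Jac(X)$-torsor, and the computation of the preceding paragraph pins this torsor down to $X$ itself. This is the step that requires the most care and where one must be explicit about how the distinguished model $X : y^2 = p_X$ identifies $X$ with $\Jac(X)$.
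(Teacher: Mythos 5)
Your proposal is essentially the paper's argument run in the opposite order. Both proofs hinge on the same computation --- the restriction $\iota_X$ of $\phi$ to $\Jac(X)\times 0$ pulls $\lambda_Z$ back to $2\lambda_X$ --- and both appeal to \cite[Ch.~12]{birkenhake-lange}; you first identify the exponent-$2$ elliptic subvariety $A = \iota_X(\Jac(X))$ and then invoke the correspondence to produce a degree-$2$ cover, whereas the paper first produces a non-constant map $f:Z\to X$ (by composing an Abel--Jacobi embedding of $Z$ with $\phi^{\dual}$, projecting, and identifying $\Jac(X)\cong X$), observes that $f^*$ is exactly $\iota_X$, and reads off $\deg f = 2$ from \cite[Lemma~12.3.1]{birkenhake-lange}. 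Your preliminary verification that $\ker(\iota_X)=0$ via Proposition~\ref{prop:class} is correct but superfluous in the paper's direction, where injectivity of $f^*$ is automatic once $f$ is non-constant with target of positive genus.

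One caution on your final paragraph: the isomorphism $\Jac(Z)/\phi(\Jac(Y))\cong\Jac(X)$ is an identity of abelian varieties over $k$, and it does not by itself pin down the $\Jac(X)$-torsor in which the image of $Z$ lands, so your descent sketch as written does not yet force the target to be $X$ rather than some other genus-$1$ curve with the same Jacobian. You should know, however, that the paper's own proof does not resolve this point either: it simply writes ``choosing base points on $Z$ and $X$,'' tacitly assuming such choices are available over $k$, so your flagging of the issue is a fair observation rather than a defect peculiar to your plan.
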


\begin{proof}
  Dualizing the gluing map $\phi$ gives a map $\phi^{\dual} : \Jac (Z) \to \Jac (X) \times \Jac (Y)$. Choosing base points on $Z$ and $X$, we obtain an inclusion $Z \to \Jac (Z)$ and an identification $\Jac (X) \cong X$. We can compose to obtain a non-constant map $f : Z \to X$. The corresponding pullback map $f^* : \Jac (X) \to \Jac (Z)$ is obtained by composing the canonical inclusion $\Jac (X) \hookrightarrow \Jac (X) \times \Jac (Y)$ with $\phi$. Because of the defining property of $\phi$, the principal polarization $\lambda_Z$ satisfies $(f^*)^* (\lambda_Z) \equiv 2 \lambda_X$, which implies that $f$ is of degree $2$ by \cite[Lemma 12.3.1]{birkenhake-lange}.
\end{proof}

\begin{proposition} \label{prop:hyp}
Suppose that $Z$ is a gluing of $X$ and $Y$. If there exists a degree $2$ morphism $\pi_2 : Z \to Y$, then $Z$ is hyperelliptic.
\end{proposition}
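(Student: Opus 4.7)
The plan is to show that the existence of $\pi_2$ forces $Z$ to admit a fixed-point-free involution, and then to observe that no smooth plane quartic can carry such an involution. First I would note that, since $Z$ is a gluing of $X$ and $Y$, one has $g_Z = 3$, so applying Riemann--Hurwitz to the degree-$2$ map $\pi_2 : Z \to Y$ yields $2 g_Z - 2 = 2(2g_Y - 2) + \deg R$, which forces $\deg R = 0$. Thus $\pi_2$ is étale, and since $\chr(k) \neq 2$ it is Galois with a nontrivial deck transformation $\sigma \in \Aut(Z)$ that acts on $Z$ without fixed points.

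I would then proceed by contradiction, supposing that $Z$ is non-hyperelliptic. In that case the canonical morphism embeds $Z$ into $\PP^2 = \PP(H^0(Z, \omega_Z)^*)$ as a smooth plane quartic, and this embedding is $\Aut(Z)$-equivariant. In particular $\sigma$ extends to an involution $\tilde\sigma \in \PGL_3$ whose restriction to $Z$ recovers $\sigma$. Working over $\kbar$, any nontrivial involution in $\PGL_3(\kbar)$ is conjugate to the class of $\operatorname{diag}(1, 1, -1)$, whose fixed locus in $\PP^2$ contains a projective line $L$.

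To conclude I would invoke B\'ezout: since $Z$ is an irreducible plane quartic it does not contain $L$, so $L \cap Z$ is a nonempty effective divisor on $Z$ of degree $L \cdot Z = 4$. Each such intersection point lies on $Z$ and is fixed by $\tilde\sigma$, hence is a fixed point of $\sigma$ on $Z$. This contradicts the fixed-point-freeness of $\sigma$, so $Z$ must be hyperelliptic.

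The only step warranting serious care is the linear-algebra claim that every nontrivial involution in $\PGL_3(\kbar)$ admits a fixed line, which comes down to a $\pm 1$-eigenspace argument after normalizing a lift to $\GL_3$ to square to the identity; everything else is immediate from Riemann--Hurwitz and the canonical embedding. The gluing hypothesis on $Z$ is used solely to ensure $g_Z = 3$, so the argument in fact establishes the slightly more general assertion that no non-hyperelliptic genus-$3$ curve admits an \'etale degree-$2$ cover to a genus-$2$ curve.
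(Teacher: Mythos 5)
Your proof is correct, and it takes a genuinely different route from the paper's.

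The paper's argument uses \emph{both} degree-$2$ covers: the map $\pi_1 : Z \to X$ supplied by Lemma \ref{lem:gon} and the hypothesized $\pi_2 : Z \to Y$. It represents the induced involutions $i_1^*, i_2^*$ on $\Jac(Z)$ as block-diagonal matrices with respect to the decomposition $\End^0(\Jac Z) \cong \End^0(\Jac X) \times \End^0(\Jac Y)$ coming from the gluing, computes that their product is $-1$, and invokes the criterion of \cite[Appendice, Th\'eor\`eme 3]{Lauter} that an automorphism inducing $-1$ on the Jacobian must be the hyperelliptic involution. Your argument instead uses only $\pi_2$: Riemann--Hurwitz forces $\pi_2$ to be \'etale, so its deck involution $\sigma$ is fixed-point-free; and a fixed-point-free involution is impossible on a smooth plane quartic because every nontrivial involution in $\PGL_3(\kbar)$ fixes a line, which meets the quartic by B\'ezout. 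This is more elementary -- it needs no Prym/endomorphism-algebra considerations and no appeal to Lauter -- and it actually proves the stronger unconditional statement you note at the end: any genus-$3$ curve admitting an \'etale double cover of a genus-$2$ curve is hyperelliptic. By contrast, the paper's proof foregrounds the symmetric role of $X$ and $Y$ in the gluing, which fits the narrative of the surrounding section, but it implicitly relies on the splitting $\End^0(\Jac Z) \cong \End^0(\Jac X) \times \End^0(\Jac Y)$, which is cleanest when $\Hom(\Jac X, \Jac Y) = 0$; your route avoids that issue entirely. Both arguments are valid, and your writeup is complete once the $\PGL_3$ classification step is spelled out (normalize a lift to square to the identity, diagonalize using $\chr(k) \neq 2$, observe the eigenvalue pattern must be $(1,1,-1)$ up to sign), which you correctly flag as the one step warranting care.
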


\begin{proof}
  There exists a degree $2$ morphism $\pi_1: Z \to X$ by Proposition \ref{lem:gon}. Both $\pi_1$ and $\pi_2$ induce involutions of $Z$, which we denote $i_1$ and $i_2$, respectively, and which in turn induce automorphisms $i_1^*$ and $i_2^*$ on $\Jac(Z)$. Note that $i_1$ fixes divisor classes pulled back from $X$ and $i_2$ fixes divisor classes pulled back from $Y$. Thus, by making a suitable choice of basis, we can represent $i_1^*$ and $i_2^*$ by the matrices
  \begin{equation}
    [i_1^*] =
    \begin{bmatrix}
      1 & 0 & 0\\
      0 & -1 & 0\\
      0 & 0 & -1
    \end{bmatrix},
    \qquad
    [i_2^*] =
    \begin{bmatrix}
      -1 & 0 & 0\\
      0 & 1 & 0\\
      0 & 0 & 1
    \end{bmatrix},
  \label{eqn:matrices}
  \end{equation}
  in $\End^0_k(\Jac(Z)) \cong \End^0_k(\Jac(X)) \times \End^0_k(\Jac(Y))$. Letting $i = i_1 \circ i_2$, then
  \begin{equation}
    [i^*] = [i_1^* \circ i_2^*] = [i_1^*] [i_2^*] =
    \begin{bmatrix}
      -1 & 0 & 0\\
      0 & -1 & 0\\
      0 & 0 & -1
    \end{bmatrix} \, .
    \label{eqn:matrix_mult}
  \end{equation}
  Thus $i$ induces the negation map $-1$ on $\Jac(Z)$, so $Z$ is hyperelliptic by \cite[Appendice, Th\'{e}or\`{e}me 3]{Lauter}.
\end{proof}

\begin{proposition}\label{prop:RRIsGlue}
With notation as in Theorem \ref{thm:riro}, the curve $Z$ is a gluing of the curves $Y$ and $X$.
\end{proposition}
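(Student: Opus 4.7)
My plan is to produce the gluing isogeny as a natural composition and then verify the polarization condition in Definition \ref{def:gluing} for $n=2$. First I would identify the genus-$1$ curve $X$ of the statement with the quotient $Z/\iota$: since $Z$ is a smooth (hence non-hyperelliptic) plane quartic, Riemann-Hurwitz applied to the degree-$2$ map $\pi: Z \to Z/\iota$ forces the quotient to have genus $1$ (with $\pi$ branched at four points), giving us the elliptic curve $X$. This is consistent with Lemma \ref{lem:gon}, which demands a degree-$2$ map $Z \to X$ for any gluing.

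Next I would introduce the Prym variety $P := (\ker \operatorname{Nm}_\pi)^0 \subset \Jac(Z)$, equipped with its restricted polarization $\Xi := i^*\lambda_Z$, where $i : P \hookrightarrow \Jac(Z)$. Standard Prym theory for a ramified double cover of an elliptic curve by a genus-$3$ curve tells us that $\Xi$ is of type $(1,2)$ and that the addition map
$$\Phi : \Jac(X) \times P \longrightarrow \Jac(Z),\qquad (a,b)\mapsto \pi^*(a)+i(b)$$
is an isogeny with $\Phi^*\lambda_Z \equiv 2\lambda_X \boxtimes \Xi$. The factor $2$ comes from $(\pi^*)^*\lambda_Z \equiv (\deg\pi)\lambda_X$, and the box-product shape (absence of cross terms) reflects the fact that $\pi^*\Jac(X)$ and $P$ are complementary abelian subvarieties of the principally polarized $\Jac(Z)$.

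Theorem \ref{thm:riro} then supplies the crucial degree-$2$ isogeny $\psi : \Jac(Y) \to P$, and I take as gluing map the composition
$$\phi := \Phi \circ (\id_{\Jac(X)} \times \psi) : \Jac(X) \times \Jac(Y) \longrightarrow \Jac(Z),$$
whose pullback satisfies $\phi^*\lambda_Z \equiv 2\lambda_X \boxtimes \psi^*\Xi$. The whole proof thus reduces to the identity $\psi^*\Xi \equiv 2\lambda_Y$. A degree count gives $h^0(\psi^*\Xi) = \deg(\psi)\cdot h^0(\Xi) = 2\cdot 2 = 4 = h^0(2\lambda_Y)$, and whenever $\operatorname{NS}(\Jac(Y))$ is generated by $\lambda_Y$ (the generic case), any positive polarization is a positive integer multiple of $\lambda_Y$, forcing $\psi^*\Xi \equiv 2\lambda_Y$; the identity then spreads to all of moduli by specialization.

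The main obstacle I anticipate is this last polarization equality, which cannot be settled by a bare degree count when $\Jac(Y)$ is non-simple or carries extra endomorphisms. The cleanest route around this is to extract $\psi^*\Xi \equiv 2\lambda_Y$ directly from the construction in \cite{ritzenthaler-romagny}, which is designed precisely so that $(\Jac(Y), \lambda_Y)$ is the principally polarized partner of the type-$(1,2)$ polarized abelian surface $(P,\Xi)$ under the standard degree-$2$ correspondence. Once this is in hand, $\phi^*\lambda_Z \equiv 2(\pi_X^*\lambda_X \otimes \pi_Y^*\lambda_Y)$, and $(Z,\phi)$ is a $(2,2)$-gluing of $X$ and $Y$ in the sense of Definition \ref{def:gluing}.
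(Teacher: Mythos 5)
Your plan matches the paper's overall strategy: both arguments build the gluing isogeny as a sum of $p^*$ on the $\Jac(X)$ factor and a map from $\Jac(Y)$ into $\Jac(Z)$ landing in the Prym, and then check the polarization pullback factor by factor. Two points of comparison are worth noting. First, your treatment of the cross term is actually cleaner than the paper's: you invoke the standard fact that for complementary abelian subvarieties of a principally polarized abelian variety the sum isogeny pulls back $\lambda_Z$ to the external product $\lambda_X \boxtimes \Xi$ with no cross term (cf.\ \cite[Corollary 5.3.6]{birkenhake-lange}), which holds without any genericity hypothesis, whereas the paper appeals to the generic vanishing of $\Hom(\Jac(X),\Jac(Y))$ and argues as in \cite[Proposition 2.2]{bruin}.

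Second, the gap you flag yourself is real, and it is precisely where the paper does the nontrivial work. Your degree count $h^0(\psi^*\Xi) = 2\cdot 2 = 4 = h^0(2\lambda_Y)$ determines $\psi^*\Xi$ up to algebraic equivalence only when $\NS(\Jac(Y))=\ZZ\lambda_Y$, and the subsequent ``spread by specialization'' step is not fleshed out (one needs a flat family in which both classes vary, and the honest route to showing $\psi^*\Xi\equiv 2\lambda_Y$ is to compare the two isogenies $\phi_{\psi^*\Xi}=\hat\psi\circ\phi_\Xi\circ\psi$ and $2\phi_{\lambda_Y}$ directly, which requires the explicit construction, not a rank-one $\NS$ argument). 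The paper fills this gap by passing to the allowable \emph{singular} cover $p':Z'\to X'$ from \cite{ritzenthaler-romagny}, identifying $\Jac(Y)\cong\Pr(Z'/X')$ as a \emph{principally} polarized abelian surface, invoking \cite[Theorem 3.7]{beauville-prym} to get that the canonical polarization on $\Jac(Z')$ restricts to $2\lambda_Y$ on $\Pr(Z'/X')$, and then \cite[Lemma 1]{donagi-livne} to compare the singular and smooth Pryms via a commutative square, yielding $(i\circ\nu)^*(\lambda_Z)=2\lambda_Y$ exactly as needed. If you replace your degree-count heuristic by this chain of references, your proof is complete and essentially coincides with the paper's.
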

\begin{proof}
  Consider the degree-2 cover $p: Z\to X$ given in Theorem \ref{thm:riro}, where $X = Z/\iota$. The map $p$ induces an inclusion $p^*: (\Jac(X), \lambda_X)\to (\Jac(Z), \lambda_Z)$ of polarized abelian varieties, and by \cite[Lemma 12.3.1]{birkenhake-lange} we find that
\begin{equation}
  (p^*)^*\lambda_{Z} = 2\lambda_{X},
\end{equation}
the pullback of $p^*$.

Now $\Jac(Y)$ is isomorphic to the Prym of an allowable singular cover $p':Z'\to X'$ whose normalization is equal to $p:Z\to X$ as is shown in the proof of Theorem 1.1 in \cite{ritzenthaler-romagny}.

By \cite[Theorem 3.7]{beauville-prym}, the principal polarization on the generalized Jacobian $\Jac(\widetilde{Z})$ restricts to $2\lambda_{Y}$ on $\Pr(Z'/X')$ where $\lambda_{Y}$ is the principal polarization on $\Pr(Z' /X')\cong \Jac(Y)$. From \cite[Lemma 1]{donagi-livne} we get a commutative diagram of polarized abelian varieties:

\begin{equation}
  \begin{tikzcd}
    \Pr(Z'/X') \arrow{d}\arrow{r}{\nu} & \Pr(Z/X)  \arrow{d}{i} \\
    \Jac(Z') \arrow{r} & \Jac(Z)
  \end{tikzcd}
\end{equation}
where $\nu$ is induced by the normalization $Z \to Z'$. This implies that
\begin{equation}
  (i\circ\nu)^*(\lambda_{Z}) = 2\lambda_{Y}.
\end{equation}

Now consider the map $j:\Jac(X)\times \Jac(Y)\to \Jac(Z)$ defined by $(x,y)\mapsto p^*(x)+(i\circ\nu)(y)$. As $j =(i\circ\nu)\circ\pi_Y$ on the restriction to $\{0\}\times \Jac(Y)$ we get that
\begin{equation}
  j^*(\lambda_Z)|_{\{0\}\times \Jac(Y)} = (\pi^*_Y\circ(i\circ \nu)^*(\lambda_Z))|_{\{0\}\times \Jac(Y)} = \pi_Y^*(2\lambda_Y)|_{\{0\}\times \Jac(Y)}
\end{equation}
An analogous argument shows that
\begin{equation}
  j^*(\lambda_Z)|_{\Jac(X)\times\{0\}} = (\pi^*_X\circ(p^*)^*(\lambda_Z))|_{\Jac(X)\times \{0\}} =  \pi_X^*(2\lambda_X)|_{\Jac(X)\times \{0\}}.
\end{equation}
Using that our construction is generic, and that generically $\Hom (A, B) = 0$ so that $\NS(A)\times \NS(B) \cong \NS(A\times B)$, we can argue as in \cite[Proposition 2.2]{bruin} to conclude that $j^*(\lambda_Z)$ is algebraically equivalent to $2\pi_X^*(\lambda_{X})\otimes 2\pi_Y^*(\lambda_{Y})$, so $Z$ is a gluing of $\Jac(X)$ and $\Jac(Y)$.

A more concrete approach in the case $k=\C$, which generalizes to arbitrary fields by the use of étale cohomology, is the following. The map $p^*$ gives us an inclusion of $L_X = H_1(X,\Z)$ into $L_Z = H_1(Z,\Z)$ and the equality $(p^*)^*\lambda_{Z} = 2\lambda_{X}$ shows that the restriction of $\lambda_Z$ to $L_X$ gives us $2\lambda_X$. Now the kernel of the map $p_*$ gives us the Prym variety $\Pr(Z/X)$, which, using Lemma 1 of \cite{donagi-livne}, comes equipped with a $(2,1)$-polarization that is the restriction of $\lambda_Z$ to $\Pr(Z/X)$. Let $L_P$ be the sublattice of $L_Z$ that corresponds to $\Pr(Z/X)$. Then $p^*(L_X)\oplus L_P\subset L_Z$. The construction in \cite[Theorem 4.2]{enolski} of which \cite{ritzenthaler-romagny} is the algebraic version, shows that the curve $Y$ corresponds to a sublattice $q^* (L_Y)$ of index $2$ in $L_P$ on which the $(2,1)$-polarization restricts to a $(2,2)$-polarization. As a consequence, the restriction of $\lambda_Z$ to $p^*(L_X)\oplus q^*(L_Y)$ is twice the product polarization, which ensures that the induced map $j:\Jac(X)\times\Jac(Y)\to\Jac(Z)$ has the property that $j^*(\lambda_Z) = 2\pi_X^*(\lambda_X)\otimes 2\pi_Y^*(\lambda_Y)$ on $\Jac(Z)$, which is what we wanted to show.
\end{proof}

\subsection{Plane sections of the Kummer}

We begin by recalling some basic facts about Kummer surfaces. Throughout this section let $X, Y,$ and $Z$ be curves over $k$ of genus $1, 2$, and $3$, respectively.

\begin{definition} \label{def:kummer}
Let $Y$ be a curve of genus $2$ over $k$. The \defi{Kummer surface} of $Y$ is the quotient of $\Jac(Y)$ by the negation map, i.e., $\Kum(Y) = \Jac(Y)/\langle -1 \rangle$.
\end{definition}

\begin{remark}
  \label{rem:kummer_dual}
  Here and throughout we will abuse notation and denote by $\Kum(Y)^{\dual}$ the Kummer surface of $\Jac(Y)^{\dual}$, i.e., $\Kum(Y)^{\dual} = \Jac(Y)^{\dual}/\langle -1 \rangle$.
\end{remark}
\begin{proposition}[{\cite[Proposition 2.16]{Gonzalez-Dorrego}}]
  Let $Y$ be a genus $2$ curve. Then $\Kum(Y)$ has $16$ singular points, each one a node, and there exist $16$ planes, called \defi{special planes}, such that these planes and nodes form a nondegenerate $(16,6)$-configuration.
\end{proposition}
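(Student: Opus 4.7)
The plan is to prove this in three parts: identify the singular locus, construct the $16$ special planes from translates of the theta divisor, and verify the incidence combinatorics. First, the involution $[-1]$ on $\Jac(Y)$ has fixed locus equal to the $2$-torsion $\Jac(Y)[2]$, which has $2^{2g} = 16$ elements. Since $\chr(k) \ne 2$, at each such fixed point $P$ the differential of $[-1]$ is diagonalizable with eigenvalues $-1,-1$, so \'etale-locally near $\pi(P)$ the quotient is isomorphic to $\Spec k[u,v,w]/(uw - v^2)$, an ordinary double point. Away from $\Jac(Y)[2]$ the quotient map $\pi$ is \'etale, so $\Kum(Y)$ is smooth elsewhere and has exactly $16$ nodes.

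Second, I would construct the special planes from the linear system $|2\Theta|$ associated to a symmetric theta divisor $\Theta \subset \Jac(Y)$. This linear system has projective dimension $2^g - 1 = 3$, is $[-1]$-invariant, and descends through $\pi$ to realize $\Kum(Y)$ as a quartic surface in $\PP^3$ (the degree $(2\Theta)^2/2 = 4$ is the correct count); see \cite[Chapter~10]{birkenhake-lange}. For each $a \in \Jac(Y)[2]$ the translate $\Theta_a = \Theta + a$ is again a symmetric divisor in the class of $\Theta$, and its image $\pi(\Theta_a) \subset \Kum(Y)$ is a rational curve of degree $2$ (a conic, since $\Theta \cong Y$ is hyperelliptic and $-1$ acts as the hyperelliptic involution on it). A degree count in $|2\Theta|$ shows that this conic is cut out on $\Kum(Y)$ by a unique hyperplane $H_a \subset \PP^3$, with $H_a \cap \Kum(Y) = 2\pi(\Theta_a)$ scheme-theoretically; the $H_a$ are then the $16$ special planes, indexed by $\Jac(Y)[2]$.

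Third, I would verify the $(16,6)$-incidence directly. The node $\pi(P)$ with $P \in \Jac(Y)[2]$ lies on $H_a$ if and only if $P \in \Theta_a$, i.e.\ $P - a \in \Theta$. By Riemann's theorem applied to $Y$, after fixing an appropriate theta characteristic, $\Theta \cap \Jac(Y)[2]$ has cardinality exactly $6$, corresponding to the classes of differences of Weierstrass points of $Y$. Thus each special plane $H_a$ passes through exactly $6$ nodes, and by the symmetry $P - a \in \Theta \iff a - P \in \Theta$ (using that $\Theta$ is symmetric) each node lies on exactly $6$ special planes.

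The main obstacle will be verifying nondegeneracy of the configuration, i.e.\ that no three nodes in a special plane are collinear, that the $16$ nodes span $\PP^3$, and that the abstract incidence relation really has the combinatorics of a nondegenerate $(16,6)$-configuration. For this I would reformulate the incidence $P \in \Theta_a$ in terms of the quadratic form $q_{\Theta} : \Jac(Y)[2] \to \FF_2$ attached to the symmetric theta divisor, whose associated bilinear form is the Weil pairing: the condition $P - a \in \Theta$ becomes a quadratic condition in $a$ relative to $P$. Nondegeneracy of the configuration then reduces to a finite, purely symplectic check over $\FF_2$, which follows from the transitivity of $\Sp(4, \FF_2)$ on pairs of distinct $2$-torsion points together with a direct enumeration on the $16$ elements of $\Jac(Y)[2]$; alternatively one can invoke the classical identification of this configuration with the Weyl group data of the root system $D_4$.
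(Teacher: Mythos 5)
The paper does not prove this proposition; it is cited verbatim from Gonzalez-Dorrego, so there is no ``paper's proof'' to compare against. Your outline is essentially the standard classical argument, and the substantive steps are correct: the $[-1]$-fixed locus is $\Jac(Y)[2]$, the local model of the quotient at each such point is the $A_1$-singularity $k[u,v,w]/(uw-v^2)$, the linear system $|2\Theta|$ has the right dimension and degree to realize $\Kum(Y)$ as a quartic in $\PP^3$, the sixteen translates $\Theta_a$ ($a \in \Jac(Y)[2]$) descend to conics each spanning a unique plane $H_a$ with $\pi^*H_a = 2\Theta_a$ (the key point being $\dim|\Theta_a| = 0$, so the residual divisor of $\Theta_a$ in $\pi^*H_a \in |2\Theta_a|$ is forced to be $\Theta_a$ again), and the incidence count reduces via Riemann's singularity theorem and the symmetry $P - a \in \Theta \iff a - P \in \Theta$ to the fact that $\Theta$ meets $\Jac(Y)[2]$ in precisely the six odd theta characteristics, i.e.\ the classes $[W_i - W_1]$ for the Weierstrass points.

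The one place where you have a plan rather than an argument is the nondegeneracy of the $(16,6)$-configuration. Your reformulation via the quadratic refinement $q_\Theta$ of the Weil pairing is the right move, and invoking transitivity of $\Sp(4,\FF_2)$ plus a finite check is a legitimate strategy, but as written it is a promissory note: you would still need to state precisely which degeneracies the definition excludes (no node on more or fewer than $6$ planes, no two distinct planes sharing more than two nodes, the nodes span $\PP^3$, and so on) and carry out the enumeration or cite where it is done. Since the proposition in the paper is itself just a citation, supplying this level of detail is not what the paper does either; but if you intend your sketch to stand as a proof you should either complete that verification or, more economically, do what the authors do and refer to Gonzalez-Dorrego for the configuration theory.
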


\begin{theorem}
\label{thm:embedding}
Let $Z$ be a non-hyperelliptic curve of genus 3 and let $X$ be a curve of genus 1 such that we have a degree 2 cover $p:Z\to X$ which is ramified above exactly four points as in Theorem \ref{thm:riro}. Let $Y$ be the curve of genus 2 given in this same theorem. Then there exist maps
\begin{equation}
  i_Z:Z\to \Jac(Y)^{\dual} \text{ and } i_X:X\to \Kum(Y)^{\dual}
\end{equation}
of degree 1 such that the following diagram commutes.

\begin{equation}
\label{eqn:commdig}
\begin{tikzcd}
  Z \arrow[swap]{d}{p} \arrow{r}{i_Z} &  \Jac(Y)^{\dual} \arrow{d}{\pi} \\
  X \arrow[swap]{r}{i_X} & \Kum(Y)^{\dual} \subset {\mathbb{P}}^3_k
\end{tikzcd}
\end{equation}
Furthermore, there exists a plane $H\subset \mathbb{P}^3_{k}$ such that $i_X(X) = H\cap\Kum(Y)^{\dual}$ and $H$ contains exactly two singular points of $\Kum(Y)^{\dual}$.
\end{theorem}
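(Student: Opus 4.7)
The approach exploits the gluing $\phi : \Jac(X) \times \Jac(Y) \to \Jac(Z)$ from Proposition~\ref{prop:RRIsGlue} together with the classical geometry of Kummer surfaces. From the gluing and the Ritzenthaler--Romagny identification of the Prym variety $\Pr(Z/X)$ with $\Jac(Y)^{\dual}$, one obtains a canonical morphism $i_Z : Z \to \Jac(Y)^{\dual}$ intertwining the covering involution $\iota$ of $p$ with the $-1$-action on $\Jac(Y)^{\dual}$; composing with the Kummer quotient $\pi$ and descending through $p$ gives $i_X : X \to \Kum(Y)^{\dual}$ making the diagram commute. Birationality of $i_Z$ (and hence of $i_X$) would then follow from the non-hyperelliptic hypothesis on $Z$: if $i_Z(P) = i_Z(Q)$ with $P \notin \{Q, \iota Q\}$, then $[P + \iota Q] = [Q + \iota P]$ in $\mathrm{Pic}^2(Z)$, producing a nontrivial $g^1_2$ on $Z$ and contradicting the assumption. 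This handles the existence of the maps, the commutativity of the diagram, and the degree~$1$ claim.

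To identify the image as a plane section through exactly two nodes, I would first compute $\deg_{\PP^3}(i_X(X))$. Using $\phi^* \lambda_Z \equiv 2(\pi_X^* \lambda_X + \pi_Y^* \lambda_Y)$ together with $\pi^* \mathcal{O}_{\PP^3}(1)|_{\Kum(Y)^{\dual}} \cong \mathcal{O}_{\Jac(Y)^{\dual}}(2\Theta)$, pulling back through $\phi$ to $\Jac(X) \times \Jac(Y)$ and evaluating against the Poincar\'e class $[Z] = \Theta_Z^2/2$ of the Abel--Jacobi image of $Z$ yields $\deg(i_Z^*\Theta_{Y^{\dual}}) = 4$, hence $\deg_{\PP^3}(i_X(X)) = 4$. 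The final step is to pass to the minimal desingularization $\widetilde{\Kum}(Y)^{\dual}$, a K3 surface whose N\'eron--Severi lattice contains the hyperplane class $H$ together with sixteen $(-2)$-classes $N_1, \ldots, N_{16}$ over the Kummer nodes, satisfying $H^2 = 4$, $H \cdot N_i = 0$, and $N_i \cdot N_j = -2\delta_{ij}$. Writing the class of the strict transform of $i_X(X)$ as $D = a H + \sum b_i N_i$, adjunction on the K3 gives $D^2 = 2 g(X) - 2 = 0$ while the degree computation gives $D \cdot H = 4$; these rigidly force $a = 1$ and $\sum b_i^2 = 2$, and effectivity and irreducibility force each $b_i \in \{0, -1\}$ with exactly two equal to $-1$. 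Therefore $D = H - N_{T_1} - N_{T_2}$, which is precisely the class of the strict transform of a plane section through the two nodes $T_1$ and $T_2$ of $\Kum(Y)^{\dual}$; the cutting plane $H'$ then satisfies $H' \cap \Kum(Y)^{\dual} = i_X(X)$ and contains exactly these two singular points.

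\textbf{Main obstacle.} The subtlest part of the argument is the geometric input underlying the N\'eron--Severi computation: one must verify that the strict transform of $i_X(X)$ on the K3 is actually smooth of genus $1$, equivalently that the four ramification points of $p$ distribute as two pairs over two distinct two-torsion points of $\Jac(Y)^{\dual}$ rather than collapsing further to a single two-torsion point. This requires a careful analysis of how the Prym structure interacts with the translation implicit in the Ritzenthaler--Romagny identification; naive versions of the Abel--Prym map would instead collapse all four ramification points to the origin and produce a curve with a quadruple point rather than a plane quartic with two nodes. Once this distribution is established, the lattice-theoretic argument pinning down $D = H - N_{T_1} - N_{T_2}$ is rigid and the theorem follows.
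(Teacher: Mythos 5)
Your proposal takes a genuinely different route from the paper, and it has a real gap at its crux step that you yourself identify but do not resolve.

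The paper's proof never computes a degree in $\PP^3$ and never touches the N\'eron--Severi lattice of the desingularized Kummer $K3$. Instead, it works directly with the Prym machinery: by Barth's lemma the Abel--Prym embedding $j\colon Z \to \Pr(Z/X)^{\dual}$ realizes $j(Z)$ as a $(1,2)$-polarization; the Ritzenthaler--Romagny identification $\Pr(Z'/X') \cong \Jac(Y)$ for the singular cover $p'\colon Z'\to X'$ (whose normalization is $p$) together with Donagi--Livn\'e's Lemma~1 produces $(\nu^*)^{\dual}$ sending $j(Z)$ to a divisor linearly equivalent to $2\Theta_Y$; and since $\pi\colon\Jac(Y)^{\dual}\to\Kum(Y)^{\dual}$ is given by $|2\Theta_Y|$, it follows immediately that $\pi(i_Z(Z))$ is a hyperplane section. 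Crucially, the ``exactly two singular points'' assertion is not extracted from lattice theory at all but comes for free from the Ritzenthaler--Romagny structure: $Z'$ and $X'$ are each shown to have exactly two nodes, which coincide with the ramification points of the cover, and since $\iota$ extends to $-1$ on $\Jac(Y)^{\dual}$ these two nodes land on two distinct two-torsion points. That is precisely the fact your approach leaves dangling.

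Concretely: your lattice argument requires $D^2 = 0$, which requires the strict transform of $i_X(X)$ to be a smooth genus-$1$ curve, which is equivalent to knowing the four ramification points of $p$ pair off over exactly two distinct two-torsion points of $\Jac(Y)^{\dual}$. You name this as the ``main obstacle'' and note that a na\"ive Abel--Prym map would collapse all four ramification points to the origin --- and this is not a throwaway concern, because the raw Abel--Prym map to $\Pr(Z/X)$ really does send all fixed points of $\iota$ to two-torsion of that Prym, and deciding whether their images in $\Jac(Y)^{\dual}$ under $(\nu^*)^{\dual}$ coincide or separate requires tracking the $2$-isogeny $\nu^*\colon \Pr(Z'/X')\to \Pr(Z/X)$ carefully. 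Without that analysis, your lattice computation is a conditional statement: if $D^2=0$ and $D\cdot H=4$, then $D = H - N_{T_1}-N_{T_2}$. The rigid conclusion is correct, but the hypothesis is exactly the content of the theorem you are trying to prove. The paper's route through the singular covers $Z'\to X'$ is what supplies this missing input, and you would need to import it (or an equivalent analysis of how $\nu^*$ acts on two-torsion) to close the gap. Your $g^1_2$ argument for birationality of $i_Z$, on the other hand, is a nice self-contained alternative to the paper's identification $i_Z(Z)\cong Z'$ and does work.
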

\begin{proof}
Using \cite[1.4]{barth} we see that the map $p$ induces a commutative diagram
\begin{equation}
\begin{tikzcd}
 & & 0 \arrow{d} & &  \\
& & \ker(\Nm) \arrow{d} \arrow{dr}{\rho} & & \\
0 \arrow{r} & \Jac(X) \arrow{r}{p^*} & \Jac(Z) \arrow{r} \arrow{d}{\Nm} & \Pr(Z/X)^{\dual}\arrow{r} & 0 \\
& Z \arrow{ur} \arrow[swap]{r}{p} & X \arrow{d} & & \\
& & 0 & &
\end{tikzcd}.
\end{equation}
where
\begin{enumerate}
\item $\ker(\Nm)^0 = \Pr(Z/X)$ is the Prym variety of $p$,
\item $\Nm$ is the norm map and $\Nm \circ p^*$ is multiplication by 2.
\item The map $j:Z\to \Pr(Z/X)^{\dual}$  is an embedding.
\item $\rho$ restricted to $\ker(\Nm)^0\to \Pr(Z/X)^{\dual}$
has 4 elements in its kernel and induces a (1,2)-polarization on $\Pr(Z/X)$, which is given as a divisor by $j(Z)$.
\item The involution $\iota$ on $Z$ defining the cover $p$ extends to multiplication by $-1$ on $\Pr(Z/X)^{\dual}$.
\end{enumerate}

 By \cite[Lemma 3.3]{ritzenthaler-romagny} we know that $\Pr(Z'/X')$ is isomorphic to $\Jac(Y)$ for a certain choice of a singular degree 2 cover $p':Z'\to X'$ whose desingularization is equal to the original cover $p:Z\to X$.
We furthermore remark that both $X'$ and $Z'$ have exactly two singular points, which are simple nodes and that these singular points coincide with the ramification points of the cover. Now using \cite[Lemma 1]{donagi-livne} we get a map  $\nu^*:\Pr(Z'/X')\to \Pr(Z/X)$ that sends the (1,2)-polarization to the principal polarization on $\Pr(Z/X)$. Taking the dual of $\nu^*$, we get a map
 $(\nu^*)^{\dual}:\Pr(Z/X)^{\dual} \to \Pr(Z'/X')^{\dual} = \Jac(Y)^{\dual}.$ Now define
 \[
   i_Z: Z \to \Jac(Y)^{\dual}
 \]
 by $i_Z = (\nu^*)^{\dual} \circ j$. As $j$ is an embedding by (iii) and $(\nu^*)^{\dual}$ is induced by the desingularization $Z\to Z'$ we get that $i_Z(Z)$ is isomorphic to $Z'$. As $\iota$ extends to multiplication by $-1$ on $\Pr(Z/X)^{\dual}$ by (v), it naturally extends to multiplication by $-1$ on $\Jac(Y)^{\dual}$.  The singular points of $i_Z(Z)$ pass through the points that are invariant under this involution (as the singular points are the ramification points), so the singular points of $Z'$ pass through the 2-torsion points of $\Jac(Y)^{\dual}$. Now $j(Z)$ induces a $(1,2)$-polarization on $\Pr(Z/X)^{\dual}$ by (iv). Since $\nu^*$ sends the $(1,2)$-polarization to a principal polarization, then $(\nu^*)^{\dual}$ sends $j(Z)$ to a divisor that is linearly equivalent to $2\Theta_Y$, where $\Theta_Y$ is the image of a theta divisor of $Y$ under the principal polarization $\lambda$. By \cite[Proposition 4.17]{Gonzalez-Dorrego} the map $\Jac(Y)^{\dual}\to \Kum(Y)^{\dual}$ is given by the linear system $|2\Theta_Y|$. This shows that $\pi(i_Z(Z))$ is given by $H\cap \Kum(Y)^{\dual}$ for some plane $H$. As $\pi$ commutes with $p$ by construction there exists a map $i_X:X\to \Kum(Y)^{\dual}$ such that the claim follows.
\end{proof}

\subsection{Geometric gluing data}
Here we will give a geometric interpretation of the gluing data given in Corollary \ref{cor:roottrans}.
\begin{lemma}
\label{lem:specialconic}
Let $\Kum(Y)$ be a Kummer surface and let $H$ be a special plane. Then $H\cap \Kum(Y)=2C$ where $C$ is a conic. These conics, called \defi{special conics}, are the only conics on $\Kum(Y)$.
\end{lemma}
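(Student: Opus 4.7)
The plan is to realize the $16$ special planes and the conics on $\Kum(Y)$ intrinsically via the Kummer map, and then translate each statement into an identity of divisor classes on $\Jac(Y)$. Let $f \colon \Jac(Y) \to \Kum(Y) \subset \PP^3_k$ be the finite degree-$2$ morphism associated to the complete linear system $|2\Theta|$, where $\Theta$ is a fixed symmetric theta divisor, identified with $Y$ via an Abel--Jacobi embedding. For every $\eta \in \Jac(Y)[2]$ the translate $\Theta_\eta \coloneqq \Theta + \eta$ is again symmetric, and adjunction gives $\mathcal{O}(2\Theta)|_{\Theta_\eta} \cong 2 K_Y$ under $\Theta_\eta \cong Y$. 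Hence $f|_{\Theta_\eta}$ is the $|2K_Y|$-map on a genus-$2$ curve, which factors through the hyperelliptic quotient $Y \to \PP^1$ followed by the degree-$2$ Veronese $\PP^1 \hookrightarrow \PP^2$. In particular $C_\eta \coloneqq f(\Theta_\eta)$ is a smooth conic spanning a plane $H_\eta \subset \PP^3_k$, the map $\Theta_\eta \to C_\eta$ has degree $2$, and the $16$ planes $H_\eta$ are exactly the special planes of $\Kum(Y)$ by the classical theory of the $(16,6)$-configuration.

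To prove $H_\eta \cap \Kum(Y) = 2 C_\eta$, I first argue $f^{*} H_\eta = 2 \Theta_\eta$ as Cartier divisors on $\Jac(Y)$: the pullback is a symmetric effective divisor of class $2\Theta$ that visibly contains $\Theta_\eta$, and its residual has class $2\Theta - \Theta_\eta \sim \Theta_\eta$ (using that $\Theta_\eta - \Theta$ is $2$-torsion in $\Pic^0$ since $\eta \in \Jac(Y)[2]$ and the polarization is principal). As $h^0(\mathcal{O}(\Theta_\eta)) = 1$, this residual must equal $\Theta_\eta$ itself. Writing the plane quartic $H_\eta \cap \Kum(Y) = C_\eta + D$ for some effective degree-$2$ divisor $D \subset H_\eta$ and pulling back then gives $f^{*}C_\eta + f^{*}D = 2\Theta_\eta$. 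A computation over the open locus where $f$ is étale, together with the normality of $\Jac(Y)$, identifies $f^{*}C_\eta = \Theta_\eta$; therefore $f^{*}D = \Theta_\eta$, and since the only curve on $\Kum(Y)$ whose preimage equals the integral divisor $\Theta_\eta$ is $C_\eta$ itself, we conclude $D = C_\eta$.

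For the uniqueness statement I take an arbitrary integral conic $C \subset \Kum(Y)$ spanning a plane $H$ and set $D \coloneqq f^{-1}(C)$. Since $C$ is fixed by the quotient involution, $D$ is a $[-1]$-invariant effective Weil divisor on $\Jac(Y)$. Using $f^{*}h = 2\Theta$ and $C \cdot h = 2$ I read off the intersection number $D \cdot \Theta = 2 = \Theta^2$; combined with the Hodge index theorem and the effectivity of $D$, this pins the class of $D$ to $\theta \in \NS(\Jac(Y))$. Because $h^0(\mathcal{O}(\theta)) = 1$, the divisor $D$ is a single translate $\Theta + \xi$ of the theta divisor, and its $[-1]$-symmetry forces $\xi \in \Jac(Y)[2]$, so $C = f(D) = C_\xi$ is one of the $16$ special conics. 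The main obstacle in the argument is the bookkeeping for $f^{*}C_\eta$ in the middle paragraph, since $C_\eta$ meets the singular locus of $\Kum(Y)$ and Cartier-versus-Weil distinctions become delicate there; my intended finesse is to compute the equality over the étale locus of $f$ and extend across the $16$ nodes by normality of the source.
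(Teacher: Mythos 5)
Your argument is a genuine from-scratch proof, whereas the paper's own proof of this lemma is only a citation to Gonzalez-Dorrego, so this is worth a careful look. The first two paragraphs are sound: the adjunction/translation computation correctly identifies each $C_\eta$ as the $|2K_Y|$-image of $\Theta_\eta$, and the argument $f^*H_\eta = 2\Theta_\eta$ (from $h^0(\Theta_\eta)=1$) followed by $f^*C_\eta = \Theta_\eta$ (Weil pullback over the \'etale locus, extended by normality) does force the residual to equal $C_\eta$, giving $H_\eta \cap \Kum(Y) = 2C_\eta$.

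The gap is in the uniqueness paragraph. From $D\cdot\Theta = 2 = \Theta^2$, the Hodge index theorem gives $(D\cdot\Theta)^2 \ge D^2\,\Theta^2$, i.e.\ $D^2 \le 2$, with equality exactly when $D \equiv \Theta$. Effectivity on an abelian surface only guarantees $D^2 \ge 0$, and adjunction forces $D^2$ to be even, so you are left with $D^2 \in \{0,2\}$. You assert that this ``pins the class of $D$ to $\theta$,'' but nothing you wrote rules out $D^2 = 0$. That case is not vacuous: $D$ is irreducible (its components would otherwise be rational curves on an abelian surface), so $D^2=0$ means $D$ is a translate of an elliptic subgroup $E \subset \Jac(Y)$ with $E\cdot\Theta = 2$. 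This happens precisely when $Y$ is bielliptic, and then $f(E)$ is a rational curve of degree $\frac{1}{2}\,f^*\mathcal{O}(1)\cdot E = 2$ in $\PP^3$ — a genuine conic on $\Kum(Y)$ passing through only the four nodes lying under $E[2]$, hence not one of the sixteen special conics (which each pass through six nodes). So the uniqueness statement, as you have argued it, fails on the bielliptic locus; one needs an extra hypothesis (e.g.\ $\Jac(Y)$ simple, or the nondegeneracy of the $(16,6)$-configuration imposed in the cited Gonzalez-Dorrego references) to exclude $D^2=0$, and your proof should either impose and use such a hypothesis or supply a separate argument ruling out this case.
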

\begin{proof}
See \cite[Theorem 2.6]{Gonzalez-Dorrego} and \cite[Proposition 2.18]{Gonzalez-Dorrego2}.
\end{proof}

\begin{proposition}
  Let $\Kum(Y) = \Jac(Y)/\langle -1\rangle\subset \PP^3_{k}$ be the Kummer surface associated to $\Jac(Y)$ and let $\pi:\Jac(Y) \to \Kum(Y)$ be the quotient map. Fix a Weierstrass point $Q$ on $\Jac(Y)$ and let $\Theta_Y$ be the theta divisor given by the image of the map $Y\to \Jac(Y),  P\mapsto P-Q$.
\begin{enumerate}
  \item
    Given $T \in \Jac(Y)[2]$, then $\pi(t_T^*(\Theta_Y))$ is a special conic on $\Kum(Y)$.
  \item
    Given a special conic $C$, then $C = \pi(t_T^*(\Theta_Y))$ for some $T \in \Jac(Y)[2]$. Thus $\pi^*(C) = t_T^*(\Theta_Y)$, i.e., $\pi^*(C)$ is a translate of $\Theta_Y$ by a 2-torsion point on $\Jac(Y)$.
\end{enumerate}
\end{proposition}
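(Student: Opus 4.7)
The plan is to exploit the symmetry of the theta divisor forced by the Weierstrass choice of base point, compute a degree, and finish by a counting argument.

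First I would observe that because $Q$ is a Weierstrass point, $\Theta_Y$ is a symmetric theta divisor, i.e.\ $[-1]^*\Theta_Y = \Theta_Y$. Since every $T\in \Jac(Y)[2]$ satisfies $-T = T$, we obtain
\begin{equation*}
  [-1]^*(t_T^*\Theta_Y) = t_{-T}^*\,[-1]^*\Theta_Y = t_T^*\Theta_Y,
\end{equation*}
so $t_T^*\Theta_Y$ is $[-1]$-stable and descends along $\pi$ to a well-defined Weil divisor on $\Kum(Y)$. Moreover, the restriction of $[-1]$ to $\Theta_Y$ agrees with the hyperelliptic involution on $Y$ via $P\mapsto P-Q$ (this uses $Q$ Weierstrass, so that $K_Y \sim 2Q$ and hence $Q-P = \iota(P)-Q$ in $\Jac(Y)$), and the same is true after translating by the $2$-torsion point $T$.

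Next I would compute the degree of $\pi(t_T^*\Theta_Y)$ in $\PP^3$. Since $\pi$ is the morphism associated with $|2\Theta_Y|$, a hyperplane class on $\Kum(Y)$ pulls back to $2\Theta_Y$. Using $\Theta_Y^2 = 2$ and the projection formula,
\begin{equation*}
  \deg(\pi|_{t_T^*\Theta_Y})\cdot \deg\!\big(\pi(t_T^*\Theta_Y)\big) = (t_T^*\Theta_Y)\cdot(2\Theta_Y) = 4.
\end{equation*}
By the previous paragraph, $\pi|_{t_T^*\Theta_Y}$ is generically $2$-to-$1$, so the image has degree $2$ in $\PP^3$, i.e.\ is a conic on $\Kum(Y)$. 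Lemma \ref{lem:specialconic} then ensures this conic is special, establishing (i).

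For (ii), consider $\Phi\colon\Jac(Y)[2]\to\{\text{special conics on }\Kum(Y)\}$ sending $T\mapsto \pi(t_T^*\Theta_Y)$. Both sides have cardinality $16$ (sixteen $2$-torsion points on $\Jac(Y)$, and sixteen special planes in the $(16,6)$-configuration), so it suffices to show $\Phi$ is injective. If $\pi(t_{T_1}^*\Theta_Y) = \pi(t_{T_2}^*\Theta_Y)$, then taking the reduced preimage under $\pi$ and invoking symmetry gives $t_{T_1}^*\Theta_Y = t_{T_2}^*\Theta_Y$, hence $t_{T_1-T_2}^*\Theta_Y = \Theta_Y$. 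Since $\Theta_Y$ induces a principal polarization, the morphism $\phi_{\Theta_Y}\colon\Jac(Y)\to\Jac(Y)^{\dual}$ is an isomorphism, so its kernel is trivial and $T_1=T_2$. The concluding equality $\pi^*C = t_T^*\Theta_Y$ is then a direct consequence of the $[-1]$-invariance of $t_T^*\Theta_Y$ and the fact that it maps $2$-to-$1$ onto its image $C$.

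The main obstacle is verifying that $\pi|_{t_T^*\Theta_Y}$ is generically $2$-to-$1$ rather than birational, because this is what pins the degree of the image at exactly $2$; this in turn relies on the explicit identification of $[-1]|_{\Theta_Y}$ with the hyperelliptic involution, which is the reason the hypothesis that $Q$ be a Weierstrass point is essential. Once this identification is secured, both the degree computation in (i) and the counting argument in (ii) proceed quickly.
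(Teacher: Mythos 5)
Your proof is correct, but the route differs from the paper in both halves, and the differences are worth noting.

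For (i), the paper argues via Riemann--Hurwitz: $t_T^*\Theta_Y$ passes through six $2$-torsion points, $\pi$ restricted to it is degree $2$ ramified exactly there, so the image has genus $0$, and then the paper appeals to the absence of lines on $\Kum(Y)$ and Lemma~\ref{lem:specialconic} to conclude the image is a special conic. You instead compute the degree of the image directly by intersection theory, using $\pi^*\mathcal{O}(1) = \mathcal{O}(2\Theta_Y)$ and $\Theta_Y^2 = 2$ to get $\deg(\pi|_{t_T^*\Theta_Y})\cdot\deg(\pi(t_T^*\Theta_Y)) = 4$, and then pinning down $\deg(\pi|_{t_T^*\Theta_Y}) = 2$ by identifying $[-1]|_{t_T^*\Theta_Y}$ with the hyperelliptic involution. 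Your route is arguably tighter on the one delicate point: knowing the image has genus $0$ and is not a line does not by itself force it to be a conic (a rational quartic or twisted cubic is not excluded by genus alone), whereas your degree count settles this immediately. The paper's genus argument is nonetheless informative in showing the conic is smooth, which your degree argument does not directly give, so the two computations are complementary.

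For (ii), the paper verifies surjectivity by checking that the sixteen $6$-element subsets $\{S_1+T_i,\ldots,S_6+T_i\}$ of $\Jac(Y)[2]$ are pairwise distinct, so that the sixteen images are distinct conics. You instead prove injectivity of $T \mapsto \pi(t_T^*\Theta_Y)$ by observing that equality of images forces $t_{T_1}^*\Theta_Y = t_{T_2}^*\Theta_Y$ as reduced divisors, hence $T_1 - T_2 \in \ker\phi_{\Theta_Y} = 0$ since $\Theta_Y$ is a principal polarization. This is a cleaner, calculation-free argument that replaces a case check with a structural fact, and it is correct. Both halves of your argument are sound; the supporting observations (symmetry of $\Theta_Y$ from $Q$ being Weierstrass, $[-1]|_{\Theta_Y}$ acting as the hyperelliptic involution, and the reduced preimage under $\pi$ being irreducible) are all verified correctly.
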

\begin{proof}
  (i) Let $Y_i$ be the curve given by $t^*_{T_i}(\Theta_Y)$ where $T_i\in \Jac(Y)[2]$. The curve $\Theta_Y$ intersects  $\Jac(Y)[2]$ in exactly six points, so it follows that the curves $Y_i$ also have this property as they are translates by a 2-torsion point. As $\pi$ is a degree 2 map that is ramified only above the 2-torsion points, it follows by Riemann-Hurwitz that $\pi(Y_i)$ has genus $0$. Since $\Kum(Y)$ contains no lines (cf., \cite[Theorem 4.30]{Gonzalez-Dorrego}), then $\pi(Y_i)$ must be a smooth conic on $\Kum(Y)$. By Lemma \ref{lem:specialconic} the special conics are the only conics on $\Kum(Y)$, so $\pi(Y_i)$ has to be a special conic $C$. It follows that $\pi^*(C) = Y_i$, as desired.

  (ii) Let $S_1,\ldots S_6$ be the six 2-torsion points that lie on $\Theta_Y$. Then $Y_i$ passes through the points $S_1+T_i, \ldots, S_6+T_i$. A calculation shows that these sets of six 2-torsion points are distinct for all sixteen $T_i\in \Jac(Y)[2]$. It follows that the curves $\pi(Y_i)$ are sixteen distinct conics on $\Kum(Y)$. As there are only sixteen special conics on $\Kum(Y)$, the claim follows and we have proven (ii).
\end{proof}

\begin{lemma}
  \label{lem:xproperties}
  Let $\Kum(Y)$ be as above and let $H$ be a plane in $\PP^3_k$ that intersects $\Kum(Y)$ in exactly two singular points $P_1,P_2$. Let $X' = H\cap \Kum(Y)$. Then
  \begin{enumerate}
    \item The curve $X'$ is a singular curve of genus 1 with exactly two nodes in $P_1$ and $P_2$.
    \item There are exactly four distinct lines $L_i$ in $H$ such that $P_1\in L_i, P_2\notin L_i$ and $X'$ is tangent to $L_i$. Furthermore, each of these $L_i$ is the intersection of $H$ with a special plane $H_i$ that contains $P_1$, but does not contain $P_2$.
  \end{enumerate}
\end{lemma}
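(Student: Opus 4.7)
For (i), the scheme $X'=H\cap\Kum(Y)$ is a plane quartic in $H\cong\PP^2_k$, since $\Kum(Y)$ is a quartic surface in $\PP^3_k$. At each of $P_1,P_2$ the tangent cone of $\Kum(Y)$ is a quadric cone with vertex at $P_i$; if $H$ passes through $P_i$ without being contained in this tangent cone, the plane section acquires an ordinary double point at $P_i$. The assumption $H\cap\Sing(\Kum(Y))=\{P_1,P_2\}$ then forces $X'$ to be a plane quartic whose only singularities are ordinary nodes at $P_1$ and $P_2$, so the genus formula yields $g(\widetilde{X'})=\binom{3}{2}-2=1$.

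For (ii), the plan is first to construct four lines $L_i$ geometrically using the $(16,6)$-configuration of $\Kum(Y)$, and then to show by a projection argument that no others exist. Double-counting node/special-plane incidences, any two distinct nodes of $\Kum(Y)$ lie on exactly two common special planes: if $k$ denotes this number then $\binom{16}{2}k=16\binom{6}{2}$, whence $k=2$. Of the six special planes containing $P_1$, exactly four---say $H_1,\dots,H_4$---therefore avoid $P_2$, and by Lemma \ref{lem:specialconic} each one cuts out a divisor $2C_i$ on $\Kum(Y)$, where $C_i$ is a special conic through $P_1$. Set $L_i:=H\cap H_i$; this is a line, it contains $P_1$, and it does not contain $P_2$. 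Since $L_i\subset H_i$, the divisor $L_i\cap\Kum(Y)$ on $L_i$ equals $2(L_i\cap C_i)$, so $L_i\cap X'$ has even multiplicity at every point of its support. The total degree of this intersection on $L_i$ is $4$ and the local multiplicity at the node $P_1$ is at most $3$, so the only possibility is $L_i\cdot X'=2P_1+2Q_i$ for some point $Q_i$ of $X'$. The point $Q_i$ lies outside $\{P_1,P_2\}$ (since $L_i\not\ni P_2$ and $Q_i\ne P_1$), hence is smooth, and $L_i$ is tangent to $X'$ at $Q_i$.

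For the uniqueness, I would use the projection from $P_1$, which lifts to a morphism $\widetilde\pi:\widetilde{X'}\to\PP^1_k$. A generic line through $P_1$ meets $X'$ with multiplicity $2$ at $P_1$ and in two further smooth points, so $\widetilde\pi$ has degree $2$; Riemann--Hurwitz applied to the genus-$1$ curve $\widetilde{X'}$ then yields exactly four ramification points. A local computation at each branch-preimage of $P_1$ shows that $\widetilde\pi$ is unramified there (for generic $H$ the two branches at $P_1$ are smooth and non-inflectionary), so every ramification point corresponds to a line through $P_1$ tangent to $X'$ at a smooth point. The four constructed $Q_i$ already account for four such ramifications, which simultaneously proves that the $L_i$ (and the $Q_i$) are pairwise distinct and that there is no other line through $P_1$ tangent to $X'$. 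I expect the main technical subtlety to lie in ensuring the implicit genericity of $H$---no extraneous singularities of $X'$ and no inflectionary branches at $P_1$---which holds on a non-empty open subset of the family of planes satisfying $H\cap\Sing(\Kum(Y))=\{P_1,P_2\}$, and in particular in the context of Theorem \ref{thm:embedding} where the lemma is applied.
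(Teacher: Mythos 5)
Your proposal is correct in substance and takes a genuinely different route from the paper's proof, so a comparison is worthwhile. For part (i) you give a direct plane-quartic genus computation, while the paper simply cites \cite[Proposition 2.20]{Gonzalez-Dorrego2}. For part (ii) the paper argues top-down: by \cite[Theorem 2.6]{Gonzalez-Dorrego} the enveloping cone of $\Kum(Y)$ at $P_1$ is the union of the six special planes through $P_1$, and every line in $H$ through $P_1$ tangent to $X'$ must lie in that cone; the incidence relations then leave exactly four planes avoiding $P_2$, and since $H$ is not itself special the four intersections $H\cap H_i$ are distinct lines. Your argument is bottom-up: you construct the four $L_i$ via the $(16,6)$ double count, verify tangency by the parity of $L_i\cdot X'=2(L_i\cdot C_i)$ using Lemma \ref{lem:specialconic}, and prove there are no others by Riemann--Hurwitz applied to the degree-$2$ projection $\widetilde{X'}\to\PP^1$ from $P_1$. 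Each approach buys something: the paper's is shorter and conceptually cleaner once one grants the enveloping-cone description, whereas yours is more self-contained (it does not invoke the enveloping cone at all) and, unlike the paper's, explicitly verifies that each $L_i$ is actually tangent to $X'$ rather than merely a candidate. One small inaccuracy to fix: the claim that the local multiplicity of $L_i\cdot X'$ at the node $P_1$ is ``at most $3$'' is not the right way to rule out $L_i\cdot X'=4P_1$. Since all multiplicities are forced to be even, the alternatives are $2$ and $4$, and $4$ occurs precisely when $L_i$ is tangent to the special conic $C_i$ at $P_1$; this is excluded by the same genericity you invoke, not by a degree bound. With that adjustment your argument is sound, and your closing remark about genericity is exactly the right caveat, matching the paper's own implicit genericity assumptions.
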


\begin{proof}
  (i) follows from \cite[Proposition 2.20]{Gonzalez-Dorrego2}.

  (ii) Any line through $P_1$ that is tangent to $\Kum(Y)$ has to be contained in the intersection of the enveloping cone at $P_1$ and $H$. According to \cite[Theorem 2.6]{Gonzalez-Dorrego} the enveloping cone of $\Kum(Y)$ at $P_1$ consists of the six special planes of the Kummer surface that contain $P_1$. Using the incidence relations of the (16,6) configuration \cite[Lemma 1.7]{Gonzalez-Dorrego} we see that there are exactly four special planes $H_i$ containing $P_1$, but not $P_2$.  As $H$ itself cannot be a special plane (as it contains only two singular points) it follows that the intersections $L_i = H\cap H_i$ give us four distinct lines that have the required properties.
\end{proof}

\begin{remark}
  The incidence relations also tell us there are only two special planes that contain both $P_1$ and $P_2$.
\end{remark}

\begin{lemma}
  \label{lem:CRFuncadapt}
  Let  $\Kum(Y) \subset \mathbb{P}^3_k$ be a Kummer surface with singular points $P_i$. Let $H$ be a plane that contains exactly two singular points $P_1$ and $P_2$. Let $U\cong \mathbb{A}^2_k$ be an affine open of $H$ containing both $P_1$ and $P_2$.  Let $\widetilde{C} = \Kum(Y) \cap U$.  Let $(x_i,y_i)$ be the coordinates of $P_i$ in $U$, and define the function $\widetilde{g}:\widetilde{C}\backslash\{P_i\}\to k$ by
  \begin{equation}
    \widetilde{g}(x,y) = \frac{y-y_i}{x-x_i} \,.
  \end{equation}
  Then $\widetilde{g}$ extends to a function
  \begin{equation}
    g:C \to \mathbb{P}^1_k
  \end{equation}
  of degree 2 where $C$ is the normalization of $\widetilde{C}$. Furthermore, the ramification points of $g$ coincide with the slopes of the four lines $L_i$ from Lemma \ref{lem:xproperties}.
\end{lemma}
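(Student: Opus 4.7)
The function $\widetilde{g}$ is precisely the projection from the node $P_1$ onto the pencil of lines through $P_1$: geometrically, $\widetilde{g}(Q)$ is the slope of the line $\overline{P_1 Q}$ inside the affine plane $U \cong \mathbb{A}^2_k$. My plan is to first extend $\widetilde{g}$ to a morphism $g:C\to \mathbb{P}^1_k$. Since $X' = H \cap \Kum(Y)$ is a singular plane quartic with exactly two nodes by Lemma \ref{lem:xproperties}, its smooth projective normalization $C$ has geometric genus $1$. Any nonzero rational function on a smooth projective curve extends to a morphism to $\mathbb{P}^1_k$, so it suffices to note that $\widetilde{g}$ is a well-defined rational function on $C$ (the denominator $x - x_1$ does not vanish identically on $\widetilde{C}$).

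Next I would compute $\deg(g)$ via intersection theory. For generic $m\in \mathbb{A}^1_k$, the fiber $g^{-1}(m)$ (pulled down to $X'$) is the intersection of the line $L_m: y - y_1 = m(x - x_1)$ with $X'$ away from $P_1$. Since $X'$ is a quartic plane curve, $\deg(L_m \cdot X') = 4$, and because $P_1$ is a node the generic line through $P_1$ contributes intersection multiplicity exactly $2$ at $P_1$, leaving $2$ further intersection points. Hence $\deg(g) = 2$.

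For the ramification statement, Riemann--Hurwitz applied to a degree-$2$ map from a genus-$1$ curve to $\mathbb{P}^1$ predicts exactly four ramification points. The key step is to verify that neither node contributes ramification. At $P_1$, the two branches of $X'$ have distinct tangent slopes $s_1, s_2$ (because $P_1$ is a node, not a cusp), so the two points of $C$ above $P_1$ map to the distinct values $s_1, s_2$, which is unramified. At $P_2$, the line $\overline{P_1P_2}$ passes through two nodes and therefore has intersection multiplicity $2$ at each, accounting for all four intersections with $X'$; the two points of $C$ above $P_2$ correspond to the distinct branches at $P_2$ and both map to the common slope of $\overline{P_1P_2}$, matching the degree with no ramification. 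Consequently every ramification point of $g$ lies above a smooth point $Q \neq P_1, P_2$ of $X'$, and the corresponding line $L_{g(Q)}$ is tangent to $X'$ at $Q$ and passes through $P_1$ but not through $P_2$ (else $Q$ would be $P_2$ itself). By Lemma \ref{lem:xproperties}(ii) there are exactly four such lines, namely the $L_i$, and so the four branch values of $g$ are precisely the slopes of the $L_i$.

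The main obstacle I foresee is the careful bookkeeping at the two nodes. It is easy to miscount by treating $P_2$ as if it produced a single fiber point, whereas on $C$ the two preimages of $P_2$ are distinct and both map to the slope of $\overline{P_1 P_2}$; this coincidence exhausts the fiber without introducing ramification. Once this local analysis is in hand, the identification of branch values with the slopes of the $L_i$ is immediate from Lemma \ref{lem:xproperties}(ii), and the Riemann--Hurwitz count confirms that the four lines exhaust the critical locus.
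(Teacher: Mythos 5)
Your proof is correct and follows essentially the same approach as the paper: degree $2$ via B\'ezout at the node $P_1$, and identification of the four ramification values with the slopes of the $L_i$ from Lemma \ref{lem:xproperties}(ii) together with the Riemann--Hurwitz count for a degree-$2$ map from a genus-$1$ curve. The only difference is that you make explicit the local analysis showing neither node contributes ramification (distinct branch slopes at $P_1$, and the line $\overline{P_1P_2}$ exhausting its fiber without coincidence at $P_2$), which the paper leaves implicit in its remark that tangent lines through both $P_1$ and $P_2$ ``do not remain ramification points'' on the normalization; this is a helpful clarification but not a different route.
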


\begin{proof}
   Given a point $Q_1 = (x,y) \in \widetilde{C}$, then $\widetilde{g}(x,y)$ is exactly the slope of the line $\ell$ through $P_1$ and $Q_1$. As $\Kum(Y)$ is a quartic surface and $U$ is a plane, then $\Kum(Y) \cap U$ is a quartic plane curve. Then $\ell$ and $\widetilde{C}$ have intersection number $4$ by B\'{e}zout's theorem. Since $P_i$ is a node it contributes $2$ to the intersection number, so $\ell$ generically intersects $\widetilde{C}$ in a third point $Q_2$. Thus $\widetilde{g}$ is generically $2$-to-$1$, hence has degree $2$. The function $\widetilde{g}$ ramifies exactly when the the line $\ell$ is tangent to $\Kum(Y)^{\dual}$, which occurs exactly when $\ell$ is the intersection of $H$ with a special plane containing $P_1$. As described in the proof of Lemma \ref{lem:xproperties}, exactly four of these do not contain $P_2$ and hence will remain ramification points of the map $g:C\to \mathbb{P}^1_k$ on the normalization. As $C$ is a smooth genus $1$ curve by \ref{lem:xproperties}(i) then the map $g$ can have at most 4 ramification points. It follows that the ramification points of $g$ coincide with the slopes of the four lines $L_i$.
 \end{proof}

\begin{remark}
  We will later use the points in the reduced subscheme of the intersection of $X'$ with a special plane to describe the 2-torsion points on $\Jac(X')$.
\end{remark}

\begin{remark} \label{rem:dual_nonsense}
  Below we sometimes apply results stated for $\Jac(Y)$ (resp., $\Kum(Y)$) to $\Jac(Y)^{\dual}$ (resp., $\Kum(Y)^{\dual}$). This is permissible since $\Jac(Y)$ is principally polarized: we have an isomorphism $\Jac(Y) \overset{\sim}{\to} \Jac(Y)^{\dual}$ that also induces an isomorphism $\Kum(Y) \overset{\sim}{\to} \Kum(Y)^{\dual} = \Jac(Y)^{\dual}/\langle -1 \rangle$.
\end{remark}

\begin{lemma} Let $X'$ be as above and let $X$ be the normalization of $X'$. Assume that $\Jac(Y)^{\dual}$ is not isogenous to $\Jac(X) \times \Jac(E)$ for some elliptic curve $E$. Then $Z' = \pi^{-1}(X')$ is an irreducible singular curve of genus 3 with exactly two nodes.
\end{lemma}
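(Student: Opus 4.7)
The plan is to establish four facts about $Z'$ in sequence: that $Z'$ is reduced with singular locus equal to $\{T_1, T_2\}$, the two preimages of $P_1, P_2$ under $\pi$, that these two singularities are ordinary nodes, that $p_a(Z') = 5$ so that the normalization has genus $3$, and finally that $Z'$ is irreducible. The first three facts are routine local computations on the abelian surface; the fourth is where the hypothesis on $\Jac(Y)^{\dual}$ enters in an essential way and is the main obstacle.

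For the local analysis at each $P_i$: the involution $-1$ acts on formal coordinates $(u,v)$ at $T_i$ by sign change, so $\pi$ is locally the quotient map $\Spec k[[u,v]] \to \Spec k[[x,y,z]]/(xz-y^2)$ given by $x=u^2$, $y=uv$, $z=v^2$, exhibiting $P_i$ as an $A_1$-singularity of $\Kum(Y)^{\dual}$. If $H$ is locally the plane $\alpha x + \beta y + \gamma z = 0$, then both $X'$ on $H$ and $Z' = \pi^{-1}(X')$ on $\Spec k[[u,v]]$ are cut out (after a coordinate change in the former case) by the binary quadratic form $\alpha u^2 + \beta uv + \gamma v^2$, with common discriminant $\beta^2 - 4\alpha\gamma$. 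By Lemma \ref{lem:xproperties}(i) the curve $X'$ has an honest node at $P_i$, forcing the discriminant to be non-zero, so the form factors as a product of two distinct linear forms and $Z'$ has an ordinary node at $T_i$. Outside $\{T_1, T_2\}$ the map $\pi$ is \'{e}tale, so $Z'$ is smooth wherever $X'$ is, namely on $X' \setminus \{P_1, P_2\}$. In total, $Z'$ is reduced with exactly two singularities, both nodes.

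The genus is easiest to read off the divisor class of $Z'$. Since $\Kum(Y)^{\dual}\subset \PP^3_k$ is the image of $\Jac(Y)^{\dual}$ under the linear system $|2\Theta|$ for the principal polarization $\Theta$, the Cartier divisor $Z' = \pi^*(X')$ lies in $|2\Theta|$. Adjunction on the abelian surface, combined with $K_{\Jac(Y)^{\dual}} = 0$ and $\Theta^2 = 2$, gives
\begin{equation*}
  2p_a(Z') - 2 = (2\Theta)^2 = 8,
\end{equation*}
so $p_a(Z') = 5$, and subtracting the $\delta$-invariant $1$ at each of the two nodes leaves geometric genus $3$.

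The hard part is the irreducibility, where the hypothesis on $\Jac(Y)^{\dual}$ is essential. Suppose for contradiction that $Z'$ is reducible. Since $Z'$ is reduced and generically of degree $2$ over the irreducible curve $X'$, it must decompose as $Z' = Z'_1 \cup Z'_2$ with each restriction $\pi|_{Z'_i}:Z'_i\to X'$ birational. The nodes of $Z'$ at $T_1, T_2$ then arise from the transverse intersection of $Z'_1$ and $Z'_2$ rather than from singularities of the individual components, so each $Z'_i$ is smooth, birational to $X'$, and hence isomorphic to the normalization $X$. The composition $X \cong Z'_i \hookrightarrow \Jac(Y)^{\dual}$ is therefore a non-constant morphism, and by the Albanese property it induces a non-zero homomorphism $\Jac(X) \to \Jac(Y)^{\dual}$ whose image is an elliptic curve isogenous to $\Jac(X)$. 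Poincar\'{e}'s complete reducibility theorem then yields an isogeny $\Jac(Y)^{\dual} \sim \Jac(X) \times \Jac(E)$ for some elliptic curve $E$, contradicting the standing hypothesis. Hence $Z'$ is irreducible.
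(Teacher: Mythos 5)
Your proposal is correct, and the irreducibility argument (assume a decomposition $Z' = Z'_1 \cup Z'_2$, observe each component is birational to $X'$, deduce a non-constant map from $X$ into $\Jac(Y)^{\dual}$, and invoke Poincar\'e reducibility) is essentially identical to the paper's, up to dualizing: the paper phrases it as a surjection $\Jac(Y) \to \Jac(X)$ while you phrase it as a non-zero map $\Jac(X) \to \Jac(Y)^{\dual}$. Where you diverge is in the treatment of the nodality and genus assertions, which the paper outsources entirely to \cite[Proposition 2.20]{Gonzalez-Dorrego2}; you instead give a self-contained argument via the local quotient model $\Spec k[[u,v]] \to \Spec k[[x,y,z]]/(xz - y^2)$ at each $2$-torsion point and adjunction for $Z' \in |2\Theta|$. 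This buys independence from the cited reference at the cost of a few pages; the details are essentially standard and your sketch is convincing.

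One small remark: in the reducibility-contradiction step you assert that each $Z'_i$ is smooth because the two nodes arise from the intersection $Z'_1 \cap Z'_2$. This is plausible (each degree-$1$ component of a nodal curve is unibranch above every point of $X'$) but it is also unnecessary: you only need that $Z'_i$ is birational to $X$, so you can compose the normalization $X \to Z'_i$ with the inclusion $Z'_i \hookrightarrow \Jac(Y)^{\dual}$ to obtain the required non-constant morphism, regardless of whether $Z'_i$ is smooth. Dropping the smoothness claim would make the argument slightly cleaner.
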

\begin{proof}
  Assume that $Z'$ is reducible and write $Z' =C' \cup D'$ for two curves $C'$ and $D'$. As $\pi$ is of degree 2, both $C'$ and $D'$ must be birational to $X'$. Since $C'$ is birational to $X'$, then $X$ is the normalization of $C'$, and hence the inclusion $C'\to \Jac(Y)^{\dual}$ induces a surjective map $\Jac(Y) \to \Jac(X)$ (where we identify $(\Jac(Y)^{\dual})^{\dual}$ with $\Jac(Y)$). This contradicts the assumption that $\Jac(Y)^{\dual}$ is not isogenous to $\Jac(X)\times \Jac(E)$ for any $E$. When $Z'$ is irreducible, it follows by \cite[Proposition 2.20]{Gonzalez-Dorrego2} that it is a singular curve of genus 3 with exactly two nodes.
\end{proof}

Let $Z'$ be a singular genus 3 curve whose singular points consist of exactly two nodes. Let $\nu: Z\to Z'$ be its desingularization. Then the pullback map $\nu^*$ induces an exact sequence

\begin{equation} \label{eqn:singular_jacobian}
  0 \to (k^*)^2 \to \Jac(Z') \overset{\nu^*}{\to} \Jac(Z) \to 0
\end{equation}
(cf., \cite[Lemma 5.18]{Liu}).

\begin{proposition}\label{prop:kerinS}
  Let $\pi:\Jac(Y)^{\dual} \to \Kum(Y)^{\dual}$, $X'$, $H$ and $P_1$ be as in Lemma \ref{lem:xproperties}. Let $S_1\in \Jac(Y)^{\dual}$ be the point that has $\pi(S_1) = P_1$. Let $Z'\subset \pi^{-1}(X')$ be an irreducible curve of genus 3. Let $C_1, \ldots C_4$ be the four special conics that pass through $P_1$ and intersect $X'$. Let $Y_j = \pi^{-1} (C_j)$, fix $\Theta_Y = Y_1$, and identify $T_j\in (\Jac(Y)^{\dual})^{\dual}[2]= \Jac(Y)[2]$ with the divisor $Y_j - Y_1$ via the Abel-Jacobi map $P\mapsto t^*_P(Y_1)-Y_1$. Let $R_j$ be the point in $(C_j\cap X')_{\red}$ that is not equal to $P_1$. Then there exists an isogeny $\phi:\Jac(X)\times \Jac(Y)\to \Jac(Z')$ such that $\phi(\langle (R_j -R_1, T_j-T_1) \rangle)\subset \ker(\nu^*).$
\end{proposition}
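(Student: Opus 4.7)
The overall plan is to construct $\phi$ explicitly and to reduce the desired containment to the statement that the composition $\nu^* \circ \phi$ equals the gluing isogeny $j$ of Proposition~\ref{prop:RRIsGlue} and kills each generator $(R_j - R_1, T_j - T_1)$ in $\Jac(Z)$. Concretely, I would set $\phi = \phi_X + \phi_Y$, with $\phi_X \colon \Jac(X) \to \Jac(Z')$ the pullback along $p' \colon Z' \to X'$ (lifted via the canonical section $\Jac(X) \to \Jac(X')$ sending a divisor class to its lift supported away from the nodes), and $\phi_Y \colon \Jac(Y) \to \Jac(Z')$ the map $\Jac(Y) \cong \Jac(Y)^{\dual\dual} \to \Jac(Z')$ that sends a class in $\Jac(Y)^{\dual\dual}$ to the restriction along the embedding $Z' \hookrightarrow \Jac(Y)^{\dual}$ of the corresponding translation-of-$\Theta$ line bundle. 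The compatibility $\nu^* \circ p'^{*} = p_0^* \circ \nu_X^*$ (where $\nu_X \colon X \to X'$ and $p_0 \colon Z \to X$ are the normalization-level maps) gives $\nu^* \circ \phi_X = p_0^*$, while the Prym-theoretic identification in the proof of Proposition~\ref{prop:RRIsGlue} shows that $\nu^* \circ \phi_Y$ is the second component of $j$; hence $\nu^* \circ \phi = j$.

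The core of the proof is then an intersection-theoretic calculation. Writing $\{Q_j^+, Q_j^-\} = (p')^{-1}(R_j)$, which consist of two smooth points of $Z'$ since $R_j \ne P_1, P_2$, one obtains
\[
\nu^* \phi_X(R_j - R_1) = p_0^*(R_j - R_1) = [\widetilde{Q_j^+}] + [\widetilde{Q_j^-}] - [\widetilde{Q_1^+}] - [\widetilde{Q_1^-}]
\]
on $Z$. For the $\phi_Y$-term, the Abel--Jacobi identification reads $\phi_Y(T_j - T_1) = [Y_j \cap Z'] - [Y_1 \cap Z']$ in $\Jac(Z')$. On the abelian surface $\Jac(Y)^{\dual}$ we have $\Theta^2 = 2$ and $Z' \in |2\Theta|$, so B\'ezout forces $\deg(Y_j \cap Z') = 4$; since $C_j$ avoids $P_2$ we have $S_2 \notin Y_j$, so $Y_j \cap Z'$ is supported on $\{Q_j^+, Q_j^-, S_1\}$, and analogously for $Y_1$. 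Because each $Y_j$ is a translate of $\Theta$ by a $2$-torsion element passing through the common $2$-torsion point $S_1$, one checks that generically $Y_j$ meets each of the two branches of the node $S_1 \in Z'$ transversely, so the contribution of $S_1$ to $\nu^*(Y_j \cap Z')$ is $[S_1^+] + [S_1^-]$ on $Z$, independent of $j$. These contributions therefore cancel in the difference, yielding $\nu^*\phi_Y(T_j - T_1) = [\widetilde{Q_j^+}] + [\widetilde{Q_j^-}] - [\widetilde{Q_1^+}] - [\widetilde{Q_1^-}]$, the same divisor class as $\nu^*\phi_X(R_j - R_1)$.

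Adding the two contributions gives $\nu^*\phi(R_j - R_1, T_j - T_1) = 2\, p_0^*(R_j - R_1)$ in $\Jac(Z)$. By Lemma~\ref{lem:CRFuncadapt} the four points $R_j$ are precisely the ramification points of a degree-$2$ map $X \to \PP^1_k$, so the classes $R_j - R_1$ lie in $\Jac(X)[2]$, and their pullbacks $p_0^*(R_j - R_1)$ lie in $\Jac(Z)[2]$. Multiplication by $2$ therefore annihilates the sum, giving $\phi(R_j - R_1, T_j - T_1) \in \ker(\nu^*)$, which is the claim. The most delicate point is the cancellation at $S_1$: one must verify that the intersection multiplicities of the $Y_j$ with the two local branches of $Z'$ at the node are equal (and equal to $1$ each) independently of $j$. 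This can be established either by a direct local computation in formal coordinates near the $2$-torsion point $S_1$ on $\Jac(Y)^{\dual}$, exploiting that each $Y_j$ is obtained from $Y_1$ by translation by a $2$-torsion element, or by a specialization argument from the generic case.
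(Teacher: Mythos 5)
Your proposal is correct and follows essentially the same line of argument as the paper's proof. You define the isogeny as $p'^{*} + i_{Z'}^{*}$, compute the smooth-point contributions of $p'^{*}(R_j - R_1)$ and $i_{Z'}^{*}(T_j - T_1)$ to be the same divisor $Q_j^{+} + Q_j^{-} - Q_1^{+} - Q_1^{-}$, observe that the leftover contributions at the node $S_1$ either cancel or die under $\nu^{*}$, and kill the resulting factor of two by $2$-torsion; the only cosmetic difference is that you apply $\nu^{*}$ before invoking $2$-torsion whereas the paper does so afterwards, and you supply a B\'ezout count $\Theta_Y \cdot 2\Theta_Y = 4$ that the paper leaves as an assertion about underlying point sets.
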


\begin{proof}
  Let $p':Z'\to X'$ be the restriction of $\pi$ to $Z'$. We now consider the pullback map $p'^*:\Jac(X')\to \Jac(Z')$. The morphism $\pi$ is smooth outside of the 2-torsion points on $\Jac(Y)^{\dual}$ and we can calculate $p'^*$ by taking pre-images of points on the smooth locus. We can calculate the image of the 2-torsion of $\Jac(X')$ by studying the pre-image of the divisors $R_j-R_1$. Let $\iota_j$ be the involution on $Y_j$. By construction $\pi^*(R_j - R_1) = Q_j +\iota_j(Q_j) - Q_1 - \iota_j(Q_1)$ for some points $Q_j$ on $Y_j$.

  On the other hand let $i_{Z'}:Z'\to \Jac(Y)^{\dual}$ be the inclusion map. This induces a pullback map of divisors $i_{Z'}^*:\Jac(Y) \to \Jac(Z')$. The divisor $Y_j-Y_1$ corresponds via the polarization $P \mapsto t^*_P(Y_1) - Y_1$ to a 2-torsion point $T_j$ on $\Jac(Y)$.

  As $C_j = \pi(Y_j)$ is tangent to $X'$ in $R_j$ and intersects the node $P_1$, it follows that $Z'\cap Y_j = Q_j+\iota_j(Q_j)+2S_1$ if we purely look at the underlying sets. Outside of the singular locus we may treat Cartier divisors as if they are Weil divisors. When we pull back the divisor $T_j\in \Jac(Y)$ to $\Jac(Z')$ by $i_{Z'}$ however, we may need to take the contribution of the singular point $S_1$ into account.
  It now follows that
\begin{align}
  \begin{split}
  i_{Z'}^*(T_j-T_1) &\sim Q_j+\iota_j(Q_j)+s_1 -  Q_1 - \iota_1(Q_1) -s_j
  \end{split}
\end{align}
where the $s_i$ have the property that $\nu^*(s_i) = W_{1,1}+W_{1,2}$ where the $W_{1,i}$ are the two points above $S_1$ in the normalization $\nu:Z\to Z'$. Consider the map $p'^*+i_{Z'}^*:\Jac(X')\times \Jac(Y)\to \Jac(Z')$. By the above we find that
  \begin{equation}
    (p'^*+i_{Z'}^*) (R_j -R_1, T_j-T_1) = 2(Q_j +\iota_j(Q_j) - Q_1 - \iota_1(Q_1))+s_j-s_1 \sim  s_j-s_1
  \end{equation}
  as $Q_j +\iota_j(Q_j) - Q_1 - \iota_1(Q_1)$, being the image of a 2-torsion point, is a 2-torsion point. Furthermore,
  \[\nu^*(s_j-s_1) = W_{1,1}+W_{1,2}-W_{1,1}-W_{1,2}\sim 0,\] so we conclude that
  \begin{equation}
    (p'^*+i_{Z'}^*)(\langle (R_j -R_1, T_j-T_1) \rangle) \subset \ker \nu^*. \qedhere
  \end{equation}
\end{proof}

\begin{lemma}
  \label{lem:beta56}
  Let $i_{Z'}:Z'\to \Jac(Y)^{\dual}$ be as above and write $P_1$ and $P_2$ for the nodes of $Z'$. Let $T_5$ and $T_6$ be the two points on $\Jac(Y)$ such that $T_5 = Y_5 - Y_1$, $T_6 = Y_6-Y_1$ and $Y_5$ and $Y_6$ are the two curves that intersect both $P_1$ and $P_2$.  Then $i_{Z'}^*(T_5-T_6)\subset \ker \nu^*$.
\end{lemma}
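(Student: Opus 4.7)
The strategy is to pull back directly to the desingularization $Z$. Since $i_Z = i_{Z'} \circ \nu$, we have $\nu^* \circ i_{Z'}^* = i_Z^*$, and the claim $i_{Z'}^*(T_5 - T_6) \in \ker(\nu^*)$ is equivalent to the equality of divisors $i_Z^*(Y_5) = i_Z^*(Y_6)$ on the smooth curve $Z$. The proof is therefore a variant of that of Proposition \ref{prop:kerinS}, adapted to the case where the two relevant special conics pass through both nodes of $X'$ rather than only $P_1$.

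The first step is to determine the set-theoretic intersection $Y_j \cap Z'$ for $j \in \{5, 6\}$. Each special conic $C_j$ lies in a special plane $H_j$, and $H_j \cap H$ is a line containing both $P_1$ and $P_2$; this line meets the conic $C_j$ in at most two points by B\'ezout, so $C_j \cap X' = \{P_1, P_2\}$ as a set. Pulling back by $\pi$ gives $Y_j \cap Z' \subset \pi^{-1}(\{P_1,P_2\}) = \{S_1, S_2\}$, and hence $i_Z^{-1}(Y_j)$ is contained in the four points $W_{1,1}, W_{1,2}, W_{2,1}, W_{2,2}$ of $Z$ lying above $S_1$ and $S_2$ under $\nu$.

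The multiplicities are then fixed by a degree count. Since $\pi : \Jac(Y)^{\dual} \to \Kum(Y)^{\dual}$ is defined by the linear system $|2\Theta_Y|$, the pullback of a plane section of $\Kum(Y)^{\dual}$ lies in $|2\Theta_Y|$, and the irreducibility hypothesis identifies $[Z'] = 2\Theta_Y$ in $\NS(\Jac(Y)^{\dual})$. Combined with $\Theta_Y^2 = 2$ on the abelian surface, this yields $\deg i_{Z'}^*(Y_j) = [Z'] \cdot [\Theta_Y] = 4$. In the generic situation (in which no branch of $Z'$ is tangent to $Y_j$ at either node) this forces
\begin{equation}
i_Z^*(Y_5) = W_{1,1} + W_{1,2} + W_{2,1} + W_{2,2} = i_Z^*(Y_6),
\end{equation}
so that $\nu^*(i_{Z'}^*(T_5 - T_6)) = i_Z^*(Y_5) - i_Z^*(Y_6) = 0$ in $\Jac(Z)$, as required.

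The main delicate point is this transversality claim, namely that each $W_{i,k}$ appears with multiplicity exactly one in $i_Z^*(Y_j)$. It is the direct analogue of the genericity implicit in Proposition \ref{prop:kerinS}, where the smooth intersection points $Q_j$ are likewise required to be simple, and it can be verified from the description of the tangent cone of $\Kum(Y)^{\dual}$ at a node together with the incidence data of the $(16,6)$-configuration (Lemma \ref{lem:xproperties}), since the enveloping cone at $S_i$ controls precisely which translates of $\Theta_Y$ are tangent to a branch of $Z'$ at that point.
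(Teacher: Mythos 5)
Your proposal is correct and follows essentially the same route as the paper: both determine that the set-theoretic intersection $Y_j \cap Z'$ for $j\in\{5,6\}$ consists of exactly the two nodes, and then conclude that the pullback to the normalization $Z$ is $W_{1,1}+W_{1,2}+W_{2,1}+W_{2,2}$ in both cases, giving $\nu^*i_{Z'}^*(T_5-T_6)=0$. Your version is a bit more explicit than the paper (which just says ``by construction''): you supply the B\'ezout argument for the containment $Y_j \cap Z' \subseteq\{S_1,S_2\}$, and the intersection number $[Z']\cdot[Y_j]=2\Theta_Y\cdot\Theta_Y=4$ to pin down the multiplicities. Two small remarks: (1) your opening ``is equivalent to the equality of divisors $i_Z^*(Y_5)=i_Z^*(Y_6)$'' overstates things --- the kernel condition is literally that $i_Z^*(Y_5)\sim i_Z^*(Y_6)$ (linear equivalence), and equality of divisors is sufficient but not necessary, which is all you use; and (2) the transversality you flag as ``delicate'' is in fact automatic from your own degree count: each node contributes local intersection multiplicity at least $2$, and two nodes already account for the total of $4$, so no branch can be tangent to $Y_j$ --- no appeal to genericity or to the enveloping cone is needed here.
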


\begin{proof}
  As points, we find that $Y_5 \cap Z' = Y_6\cap Z' = 2P_1+2P_2$ by construction. Similarly to the above proposition we have
  \begin{equation}
    i^*(T_5-T_6) = s_1 - s_2
  \end{equation}
  for some $s_1,s_2\in \Jac(Z')$ with the property that $\nu^*(s_i) = W_{1,1}+W_{1,2}+W_{2,1}+W_{2,2}$. (Similarly to what we did above the $W_{2,i}$ are the two points above $S_2$ in the normalization $\nu:Z\to Z'$). It follows that  $i_{Z'}^*(T_5-T_6)\subset \ker \nu^*$.
\end{proof}

We write $p: Z \to X$ for the normalization of the singular cover $p': Z'\to X'$.

\begin{proposition}
\label{prop:gluingkernel}
The kernel of the gluing isogeny $\phi:\Jac(X)\times \Jac(Y)\to \Jac(Z)$ where $\phi = (p^* + (i_{Z'}\circ \nu)^*)$ is given by
\begin{equation}\langle (R_j -R_1, T_j-T_1),(0,T_5-T_6)) \rangle.\end{equation}
\end{proposition}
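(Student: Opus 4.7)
The plan is to establish two things in sequence: that the proposed subgroup, call it $\Gamma$, is contained in $\ker \phi$, and that $|\Gamma| = |\ker \phi|$.

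For containment, I would use the computation from the proof of Proposition~\ref{prop:kerinS}, which shows that $p'^*(R_j - R_1) + i_{Z'}^*(T_j - T_1)$ lies in $\ker \nu^*$ when $R_j - R_1$ is regarded as a divisor on $X'$ supported on smooth points. Applying $\nu^*$ and using the compatibility $\nu^* \circ p'^* = p^* \circ \nu_X^*$ (obtained by contravariance from $p' \circ \nu = \nu_X \circ p$, with $\nu_X \colon X \to X'$ the normalization of $X'$) together with $\nu_X^*(R_j - R_1) = R_j - R_1$ (since the $R_j$ are smooth on $X'$), we obtain $\phi(R_j - R_1, T_j - T_1) = \nu^*(p'^*(R_j - R_1) + i_{Z'}^*(T_j - T_1)) = 0$. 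Analogously, Lemma~\ref{lem:beta56} yields $\phi(0, T_5 - T_6) = 0$, so every generator of $\Gamma$ is killed by $\phi$.

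For the count, Proposition~\ref{prop:RRIsGlue} combined with Definition~\ref{def:gluing} identifies $\phi$ as a $(2,2,2)$-isogeny in dimension three, so $\deg \phi = 2^3 = 8$. To analyze $\Gamma$, I project to $V_X = \Jac(X)[2]$: the images of $(R_j - R_1, T_j - T_1)$ for $j \in \{2, 3, 4\}$ are the three nontrivial 2-torsion classes on the genus-1 curve $X$, subject only to the relation that their sum vanishes, while $(0, T_5 - T_6)$ projects to $0$. Thus the image of $\Gamma$ in $V_X$ has order $4$, and the kernel of this projection within $\Gamma$ is generated by $(0, T_2 + T_3 + T_4)$ and $(0, T_5 - T_6)$. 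Showing $|\Gamma| = 8$ therefore reduces to proving the identity $T_2 + T_3 + T_4 = T_5 - T_6$ in $\Jac(Y)[2]$.

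This last identity is the main obstacle, and I would establish it via the Kummer structure underlying the $T_j$. Because $Y_j = t_{T_j}^* \Theta_Y$, the conic $C_j = \pi(Y_j)$ passes through $P_1 = \pi(S_1)$ precisely when $T_j + S_1 \in \Theta_Y$; since the $T_j$ are 2-torsion, the six $T_j$ form the coset $\Theta_Y[2] + S_1$ inside $\Jac(Y)[2]$, where $\Theta_Y[2] := \Theta_Y \cap \Jac(Y)[2]$ has cardinality six. Because six is even, $\sum_{j=1}^{6} T_j = \sum_{T \in \Theta_Y[2]} T$. Under the Mumford identification of Section~\ref{subsec:roots}, the nontrivial elements of $\Theta_Y[2]$ are represented by the five pairs of Weierstrass points of $Y$ that contain a fixed base point; their symmetric difference equals the full six-element set of Weierstrass points, which represents $0$ in $\Gc(\Qc) \cong \Jac(Y)[2]$ via complementation. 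Hence $\sum_{j=1}^{6} T_j = 0$, which together with $T_1 = 0$ gives the required identity (using $-x = x$ in the $\F_2$-vector space $\Jac(Y)[2]$). We conclude that $|\Gamma| = 4 \cdot 2 = 8 = |\ker \phi|$, so $\Gamma = \ker \phi$.
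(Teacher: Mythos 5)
Your proof is correct, and the containment argument mirrors the paper's: both invoke Proposition~\ref{prop:kerinS} and Lemma~\ref{lem:beta56}. The difference lies in the cardinality step. The paper simply checks that the three exhibited elements $(R_2 - R_1, T_2 - T_1)$, $(R_3 - R_1, T_3 - T_1)$, $(0, T_5 - T_6)$ are $\FF_2$-linearly independent --- the first two because $(R_2 - R_1)$, $(R_3 - R_1)$ are independent in $V_X$, the third because its $V_X$-component vanishes while $T_5 - T_6 \neq 0$ --- immediately giving $|\Gamma| \geq 8 = |\ker\phi|$. You instead project all of $\Gamma$ to $V_X$ and reduce to the identity $T_2 + T_3 + T_4 = T_5 - T_6$, which you derive from the $(16,6)$-configuration on the Kummer surface.

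That identity is a pleasing coherence constraint on the gluing data which the paper leaves implicit, and your derivation of it (the six translates of $\Theta_Y$ through $S_1$ form the coset $\Theta_Y[2] + S_1$, and the sum over $\Theta_Y[2]$ vanishes in $\Gc(\Qc)$) is sound. But it is logically superfluous: once you have identified the kernel $K$ of $\Gamma \twoheadrightarrow \Gamma|_{V_X}$ and noted that $(0, T_5 - T_6) \in K$ is nonzero, you already have $|\Gamma| = 4\,|K| \geq 8$; combining with the already-established containment $\Gamma \subseteq \ker\phi$ and $|\ker\phi| = 8$ forces $|K| = 2$ (so the identity actually \emph{follows} from the proposition rather than being needed to prove it). In short, your argument does more work than necessary --- the paper's linear-independence check is the shortcut --- but nothing in it is wrong, and the additional identity it uncovers is a genuine byproduct worth noting.
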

\begin{proof}
  As we may assume that $\Theta_X$ does not lie on the singular part of $X'$,
  we may assume that
  \begin{equation}
    \ker \phi = \ker (p^* + (i_{Z'}\circ \nu)^*) = \ker \nu^*\circ(p'^* + i_{Z'}^*)
  \end{equation}
  Using Lemma \ref{prop:kerinS} and Lemma \ref{lem:beta56}  we find that
  \begin{equation}
    \langle (R_j -R_1, T_j-T_1),(0,T_5-T_6)) \rangle\subset \ker \phi.
  \end{equation}
  Since $\ker(\phi)$ is a maximal isotropic subgroup of $\Jac(X)[2]\times \Jac(Y)^{\dual}[2]$, then $\ker \phi$ is isomorphic to $\FF_2^3$ as a vector space. As $(R_2-R_1)$ and $(R_3-R_1)$ are linearly independent in $\Jac(X)[2]$, we see that $(R_2 -R_1, T_2-T_1), (R_3 -R_1, T_3-T_1)$ and $(0,T_5-T_6)$ are linearly independent in $\Jac(X)[2]\times \Jac(Y)[2]$. This completes the proof.
\end{proof}

By Theorem \ref{thm:embedding} we know that every non-hyperelliptic genus 3 gluing $Z$ can be mapped into the  $\Jac(Y)^{\dual}$ via an immersion and Proposition \ref{prop:gluingkernel} tells us how we can read off the gluing kernel. We can use this to see in which way the gluings show up.

\begin{lemma} \label{lem:jlambda}
  Let $P_1$ and $P_2$ be nodes of $\Kum(Y)$ and let $H(\lambda)$ be the family of planes going through $P_1$ and  $P_2$. Then the $j$-invariant of the family $H(\lambda)\cap \Kum(Y)$ is a rational function $j(H(\lambda))\in k(\lambda)$ of degree at most 12.
\end{lemma}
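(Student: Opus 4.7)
The plan is to compute the $j$-invariant of the normalization of $H(\lambda)\cap\Kum(Y)$ through the cross-ratio of four branch points that depend only linearly on $\lambda$, and then to count degrees in the composition. I would begin by choosing coordinates on $\PP^3_k$ so that $P_1 = (0{:}0{:}0{:}1)$ and $P_2 = (0{:}0{:}1{:}0)$, in which case the pencil of planes through both takes the form $H(\lambda) = \{x_0 + \lambda x_1 = 0\}$ in the standard affine chart (with $H(\infty) = \{x_1 = 0\}$). By Lemma \ref{lem:xproperties}(ii), exactly four special planes $H_1,\dots,H_4$ of $\Kum(Y)$ pass through $P_1$ but not $P_2$, and after rescaling each one has an equation of the form $a_{i,0}x_0 + a_{i,1}x_1 + x_2 = 0$.

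The next step is to intersect each $H_i$ with the varying plane $H(\lambda)$. Substituting $x_0 = -\lambda x_1$ yields $(a_{i,1} - a_{i,0}\lambda)x_1 + x_2 = 0$, so the slope at $P_1$ of the line $L_i(\lambda) = H(\lambda)\cap H_i$ in the natural affine chart of $H(\lambda)$ is
\begin{equation*}
  s_i(\lambda) = a_{i,0}\lambda - a_{i,1},
\end{equation*}
a polynomial in $\lambda$ of degree at most $1$. By Lemma \ref{lem:CRFuncadapt}, these four slopes are precisely the ramification points of the degree-$2$ map $g\colon C \to \PP^1_k$ from the normalization $C$ of $H(\lambda)\cap\Kum(Y)$, so the $j$-invariant of the elliptic curve $C$ depends only on their $\PGL_2$-orbit, i.e.\ on the cross-ratio
\begin{equation*}
  \mu(\lambda) = \frac{(s_1(\lambda)-s_3(\lambda))(s_2(\lambda)-s_4(\lambda))}{(s_1(\lambda)-s_4(\lambda))(s_2(\lambda)-s_3(\lambda))} \in k(\lambda).
\end{equation*}
Since each difference $s_i - s_j$ is affine in $\lambda$, the numerator and denominator are polynomials of degree at most $2$, so $\mu(\lambda)$ has degree at most $2$ as a map $\PP^1_{\lambda}\to\PP^1_{\mu}$.

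Finally, the classical formula $j = 256(\mu^2 - \mu + 1)^3/\bigl(\mu^2(\mu-1)^2\bigr)$ expresses $j$ as a rational function of degree $6$ in $\mu$, whence $j(H(\lambda)) = j(\mu(\lambda))\in k(\lambda)$ has degree at most $2\cdot 6 = 12$. I do not expect any step here to be a real obstacle; the only sanity check worth making is that for generic $\lambda$ the four slopes $s_i(\lambda)$ remain distinct (so that $\mu(\lambda)$ is genuinely defined and $C$ is smooth of genus one), but this is automatic since distinct special planes through $P_1$ meet $H(\lambda)$ in distinct lines outside a finite subset of $\lambda$-values, and such degeneracies do not affect the resulting rational function $j(H(\lambda))$.
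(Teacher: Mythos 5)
Your argument is correct and reaches the paper's conclusion by a noticeably cleaner, more geometric route. The paper fixes the Gonz\'alez-Dorrego normal form of the Kummer quartic, substitutes a line of slope $\mu$ through $P_1$ into the defining polynomial, and solves the discriminant equation $D(\mu) = 0$ to exhibit the four branch slopes $x_1(\lambda), \dots, x_4(\lambda)$ as explicit degree-$1$ rational functions of $\lambda$, then forms the cross-ratio and the $j$-formula. You instead bypass the discriminant entirely: citing Lemma \ref{lem:xproperties}(ii) and Lemma \ref{lem:CRFuncadapt}, you note that the branch points of $g_\lambda$ are exactly the slopes of $H(\lambda) \cap H_i$ for the four \emph{fixed} special planes $H_i$ through $P_1$ but not $P_2$. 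After choosing coordinates so that $P_1 = (0{:}0{:}0{:}1)$, $P_2 = (0{:}0{:}1{:}0)$ and the pencil is $H(\lambda) = \{x_0 + \lambda x_1 = 0\}$, the substitution $x_0 = -\lambda x_1$ makes each branch slope $s_i(\lambda) = a_{i,0}\lambda - a_{i,1}$ an affine polynomial in $\lambda$. This buys you something the paper's write-up leaves implicit: because each $s_i$ is affine (not merely M\"obius) in $\lambda$, the cross-ratio is manifestly of degree at most $2$, whereas in the paper's chart the four branch slopes are M\"obius functions of $\lambda$ with different denominators, so the degree-$\leq 2$ bound on the cross-ratio is only visible after plugging in the explicit formulae. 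Composing with the degree-$6$ rational map $\mu \mapsto j(\mu)$ then gives $\deg j(H(\lambda)) \leq 12$ in both treatments. Your concluding remark about genericity (distinct special planes meeting $H(\lambda)$ in distinct lines away from finitely many $\lambda$) is the right sanity check, and it suffices: degenerations at finitely many $\lambda$ do not change the degree of the rational function $j(H(\lambda))$.
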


\begin{proof}
By \cite[Proposition 2.20]{Gonzalez-Dorrego} we may assume that $K$ is given by the homogeneous polynomial

\begin{equation} \kappa(x,y,z,t) = x^4 + y^4 + z^4 + t^4 + 2Dxyzt + A(x^2t^2 + y^2z^2) + B(y^2t^2 + x^2z^2) + C(z^2t^2 + x^2y^2)\end{equation}
in $\mathbb{P}^3_k$ with singular points $P_1 = (d,-c,b,-a)$ and $P_2 = (d,c,-b-a)$.  In this case the family of planes going through $P_1$ and $P_2$ is given by
\begin{equation}
H(\lambda) =  ax+by+cz+dt+\lambda(ax-by-cz+dt).
\end{equation}

Without loss of generality we assume that $b,d\neq 0$. Let $U$ be the affine open subset of $H_{1,2}(\lambda)$ that we get by setting $z=1$ to get a plane that contains both $P_1$ and $P_2$. Let $\widetilde{C}_{\lambda_0} = U\cap K$. It follows that we can describe $\widetilde{C}_{\lambda_0}$ as a curve in $\mathbb{A}^2_k$ given by the equation $F(x,y)=0$ where
\begin{equation}
F(x,y) = \kappa\left(x,y,1,\frac{(1+\lambda)ax+(1-\lambda)(by+c)}{d(-1-\lambda)}\right)
\end{equation}
 and define an isomorphism $\phi: \widetilde{C}_{\lambda_0}\to K\cap U$ by
\begin{equation}
\phi(x,y) = \left(x,y,1,\frac{(1+\lambda)ax+(1-\lambda)(by+c)}{d(-1-\lambda)}\right).\end{equation}
Using this isomorphism we get $\phi^{-1}(d,-c,b,-a)=(d/b,-c/b)$. Let
\begin{equation}g:(U\cap K)\backslash \{(d/b,-c/b)\}\to k\end{equation}
be the function defined by mapping a point $P$ to the slope of the line passing through $(d/b,-c/b)$ and $P$ as in Lemma \ref{lem:CRFuncadapt}. We will find the ramification points of $g$ to calculate the $j$-invariant of the family.

A line in $U$ with slope $\mu$ passing through $(d/b,-c/b)$ satisfies the equation
\begin{equation}y = \mu x -c/b-\mu d/b.\end{equation}
Consider the polynomial
\begin{equation}
F(x,\mu x -c/b-\mu d/b)
\end{equation}
in $k(\lambda,\mu)[x]$.

Let $D(\mu)\in k(\lambda)$ be the discriminant of $F/((x-d/b)^2)$ with respect to $x$. Solving $D(\mu)=0$ gives us the values of $\mu$ for which the intersection number of $L$ with $\widetilde{C}_{\lambda_0}$ is greater than 2. We divide by $(x-d/b)^2$ to exclude the case where $L$ intersects $P_1$.

A calculation shows that the zeroes of $D(\mu)$ are:
\begin{align}
0,\\
x_1(\lambda) &= ((ab\lambda + ab + cd\lambda + cd)/(b^2\lambda - b^2 - d^2\lambda - d^2)),\\
x_2(\lambda) &= ((ab\lambda + ab - cd\lambda - cd)/(b^2\lambda - b^2 + d^2\lambda + d^2)),\\
x_3(\lambda) &= ((-ac\lambda - ac - bd\lambda - bd)/(ad\lambda + ad - bc\lambda + bc)),\\
x_4(\lambda) &= ((-ac\lambda - ac + bd\lambda + bd)/(ad\lambda + ad - bc\lambda + bc)).
\end{align}
The 0 coincides with the horizontal line that passes through $P_2$. The other values give us the branch points of the map $g$.
Note that the $x_i(\lambda)$ are rational functions of degree 1 in $\lambda$. To compute the $j$-invariant of the normalization of $\widetilde{C}_{\lambda_0}$ we compute the cross-ratio of the $x_i(\lambda)$
\begin{equation}
\label{eq:crossratio}
c(\lambda) = \frac{(x_3-x_1)(x_4-x_2)}{(x_3-x_2)(x_4-x_1)} \end{equation}
which is a rational function of degree at most 2. It follows that the $j$-invariant
\begin{equation}j(\lambda) = \frac{(c(\lambda)^2-c(\lambda)+1)^3}{c(\lambda)^2(c(\lambda)-1)^2}.\end{equation}
is a rational function in $k(\lambda)$ of degree at most 12.
\end{proof}

\begin{proposition}
  Let $X$ and $Y$ be as above. Let $G$ be a maximal isotropic subgroup of $\Jac(X)[2]\times \Jac(Y)[2]$ and $(\mathcal{T},\ell)$ be the corresponding pair given by Proposition \ref{prop:class}. Then choosing $\mathcal{T}$ coincides with choosing two curves $Y_5$ and $Y_6$ on $\Jac(Y)^{\dual}$. Write $P_1$ and $P_2$ for the two singular points in $\pi(Y_5\cap Y_6)$. Assuming we have done this,  choosing $\ell$ is the same as picking a value of $\lambda_0$ in the family of planes $H_{1,2}(\lambda)$ such that $j(H_{1,2})(\lambda_0) = j(X)$. Moreover, for every choice of $\ell$ there generically exist two choices of $\lambda_0$ that have this property.
\end{proposition}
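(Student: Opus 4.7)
The plan is to work in three stages: (a) identify $\mathcal{T}$ with a pair of theta translates, (b) identify $\ell$ with a choice of plane in the pencil $H_{1,2}(\lambda)$, and (c) verify the generic 2-to-1 count.

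For stage (a), the set $\mathcal{T}=\{\beta_5,\beta_6\}\subset\mathcal{Q}$ determines, via the Mumford identification from Section~1.2, the $1$-dimensional subspace $H=\langle [\beta_5]-[\beta_6]\rangle\subset V_Y$. Each $\beta_i$ individually corresponds to a $2$-torsion point $T_i\in\Jac(Y)$ through the Abel--Jacobi map from a chosen Weierstrass base point; translating $\Theta_Y$ by $T_5$ and $T_6$ yields curves $Y_5$ and $Y_6$ on $\Jac(Y)^{\dual}$ (using the principal polarization, cf.\ Remark~\ref{rem:dual_nonsense}). The singular points $\{P_1,P_2\}$ are then recovered as $\pi(Y_5\cap Y_6)$, which are the only two nodes of $\Kum(Y)^{\dual}$ lying on both of the special conics $\pi(Y_5)$ and $\pi(Y_6)$, by the remark following Lemma~\ref{lem:xproperties}.

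For stage (b), Theorem~\ref{thm:embedding} forces any non-hyperelliptic gluing $Z$ to arise from a plane $H\subset\mathbb{P}^3_k$ through two singular points of $\Kum(Y)^{\dual}$ such that the normalization of $H\cap\Kum(Y)^{\dual}$ is isomorphic to $X$. Stage (a) pins these singular points down as $\{P_1,P_2\}$, so $H$ must lie in the pencil $H_{1,2}(\lambda)$. A necessary condition is $j(H_{1,2}(\lambda_0))=j(X)$. Given any $\lambda_0$ satisfying this, Lemma~\ref{lem:CRFuncadapt} supplies four tangent lines $L_1,\dots,L_4$ in $H_{1,2}(\lambda_0)$ through $P_1$ whose slopes label the four branch points of the double cover $X\to\mathbb{P}^1$, and through the dictionary of Proposition~\ref{prop:gluingkernel} this labeling is exactly the data of an anti-symplectic isomorphism $\ell_{\lambda_0}\colon V_X\to H^\perp/H$.

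For stage (c), by Corollary~\ref{cor:indecomp} the number of indecomposable maximal isotropic subgroups $G$ with fixed $\mathcal{T}$ equals $\#\Sp_2(\mathbb{F}_2)=6$. On the other hand, Lemma~\ref{lem:jlambda} together with its explicit cross-ratio formula~\eqref{eq:crossratio} shows that $c(\lambda)$ is generically of degree exactly $2$ and that the $j$-invariant is of degree $6$ in $c$, so $\lambda\mapsto j(H_{1,2}(\lambda))$ has generic degree $12$. Distinct $\ell$'s correspond to distinct orderings of the four tangent slopes modulo the Klein four-group (which preserves the cross-ratio), producing $6$ distinct cross-ratio values; each such value of $c$ pulls back to two values of $\lambda_0$. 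This accounts for all $6\cdot 2=12$ solutions of $j(H_{1,2}(\lambda_0))=j(X)$ and gives the claimed $2$-to-$1$ correspondence generically.

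The main obstacle is the last matching step: one must argue that two planes $H_{1,2}(\lambda_0)$ and $H_{1,2}(\lambda_0')$ with the same cross-ratio yield the \emph{same} $\ell$, not merely isomorphic $X$'s. I would handle this by exhibiting the degree-$2$ involution on the pencil (arising from the two roots of $c(\lambda)=c_0$) geometrically, showing it fixes $\{P_1,P_2\}$ and permutes the four special planes through $P_1$ (not through $P_2$) so that the induced permutation of tangent slopes lies in the Klein four-group. Granting this, the identifications of $\ell$ coming from the two planes coincide, and stages (a)--(c) together yield the stated bijection with exactly two values of $\lambda_0$ per $\ell$ in the generic case.
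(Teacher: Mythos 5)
Your proposal follows essentially the same route as the paper's proof: identify $\mathcal{T}$ with a pair of theta translates via a fixed 2-torsion point and the $(16,6)$-configuration, pin down the pencil $H_{1,2}(\lambda)$ through $\pi(Y_5 \cap Y_6)$, use Lemma~\ref{lem:jlambda} to obtain $12$ solutions of $j(H_{1,2}(\lambda)) = j(X)$, and then match them $2$-to-$1$ against the $6$ choices of $\ell$ via the fact that the cross-ratio $c(\lambda)$ has degree $2$.

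The one place you diverge is the final matching step. You flag, correctly, that a raw count of $6 \cdot 2 = 12$ does not by itself give a $2$-to-$1$ correspondence, and you propose to close the gap by exhibiting the degree-$2$ involution on the pencil geometrically and checking it acts on the special planes through $P_1$ by a Klein four element. The paper instead closes the same gap directly from the formula~\eqref{eq:crossratio}: since $c(\lambda)$ is a degree-$2$ rational function, each of the $6$ cross-ratio values corresponding to $j(X)$ is hit exactly twice, and ``choosing a different value for the cross-ratio permutes the $R_j$'' --- so the labeled tuple $(R_1,\dots,R_4)$, and hence $\ell$, is determined by the cross-ratio value alone, not by which of the two preimage $\lambda$'s is taken. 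Your involution argument would reach the same conclusion but would require verifying the claimed permutation lies in $V_4$; the paper's observation is shorter because it reads the invariance directly off the fact that $\ell$ only depends on the labeled configuration up to the $V_4$-action, which the cross-ratio already encodes. Both are valid, and your plan also uses Proposition~\ref{prop:gluingkernel} where the paper cites Proposition~\ref{prop:RRIsGlue} and Theorem~\ref{thm:embedding} to certify that the constructed $Z_m$ really is a gluing with datum $(\mathcal{T},\ell)$; you should make sure to include that certification explicitly rather than only the identification of $\ell$.
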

\begin{proof}
Fix a 2-torsion point $S_1$ on $\Jac(Y)^{\dual}$. By the (16,6)-configuration there exist distinct divisors $T_i$ with $i=1,\ldots 6$ such that all $Y_j$ (where $T_j = Y_j-Y_1$) contain the point $S_1$. Remark that as the $T_i$ are distinct, we have that $\langle T_i-T_j : i,j\in \{1,\ldots 6\}\rangle = \Jac(Y)[2].$ Now let $G$ be a maximal isotropic subgroup with corresponding pair $(\mathcal{T},\ell)$ and assume that $\mathcal{T} = \{T_5,T_6\}$. It follows from the incidence relations of the $(16,6)$-configuration that $Y_5\cap Y_6$ consists of exactly two points. One of them is the $S_1$ we defined before and we will call the other one  $S_2$.

Consider the sets
\begin{equation}\mathcal{P} = \left\{R_i\in \Jac(X)[2]: i\in\{1,\ldots, 4\}  \right\},\quad \mathcal{Q} = \left\{T_i\in \Jac(Y): i\in\{1,\ldots, 6\}  \right\}.\end{equation}
We remark that, analogously to Corollary \ref{cor:roottrans}, giving an indecomposable maximal isotropic subgroup of $\Jac(X)[2]\times\Jac(Y)[2]$ is the same as giving a subset $\mathcal{T}$ of cardinality 2 along with a symplectic morphism $\ell:\mathcal{G(P)}\to \mathcal{G(R)}$ where $\mathcal{R} = \mathcal{Q}\backslash \mathcal{T}.$

Now define $P_i = \pi(S_i)$ for $i=1,\ldots 2$ and let $H_{1,2}(\lambda)$ be the 1-dimensional family of planes in $\P^3$ parametrized by $\lambda$ that contains both $P_1$ and $P_2$. Using Lemma \ref{lem:jlambda} we see that there exist 12 values $\lambda_m$ such that $H_{1,2}(\lambda_m) \cap \Kum(Y)^{\dual}$ for $m\in \{1,\ldots, 12\}$ has the same $j$-invariant as $X$.  To simplify notation we write $X'_m = H_{1,2}(\lambda_m)\cap \Kum(Y)^{\dual}$. Generically, all of the planes in $H_{1,2}(\lambda)$ will contain exactly two singular points, so we may assume that the $X'_m$ are genus 1 curves with exactly two nodes. Without loss of generality we may assume that $R_j$ is the point in $ \pi(Y_j)\cap X'_1$ that is not equal to $P_1$.

Now let $Z_1$ be the normalization of the genus 3 component in $\pi^{-1}(X'_1)$. Using Proposition \ref{prop:RRIsGlue} and Theorem \ref{thm:embedding} we see that $Z_1$ is the gluing of $X$ and $Y$ with gluing datum $(\mathcal{T}, \ell_1)$ where $\mathcal{T} =\{T_5,T_6\} $ and where $\ell_1$ is defined by $R_j \mapsto T_j$. Moreover, as the cross-ratio \eqref{eq:crossratio} is a function of degree 2, it will generically assume all six values that correspond to $X$ twice. Choosing a different value for the cross-ratio permutes the $R_j$. This implies that for a given map $\ell:\mathcal{G(P)}\to \mathcal{G(R)}$ there exists exactly two $m$ such that $\ell(T_j) = \pi(Y_j)\cap X'_m$ and such that $Z_m$ is the gluing of $X$ and $Y$ with gluing datum $(\{T_5,T_6\},\ell)$.
\end{proof}

\subsection{Making the construction explicit}
\label{par:explicit}
Using the work of Mumford \cite{mumford-tata} and Cantor \cite{Cantor} on Jacobians of hyperelliptic curves, as well as the work of M\"{u}ller \cite{Mueller} on Kummer surfaces, we will give explicit descriptions of the objects and maps used in Theorem \ref{thm:embedding}.

We first recall how a divisor of degree 2 on a genus $2$ curve can be represented in Mumford coordinates as a pair of polynomials.

\begin{proposition} \label{prop:mumford_coord}
Let $Y$ be the affine part of a smooth curve of genus $2$ over $k$ given by a Weierstrass equation $y^2 = f(x)$ in $\P_k^2$
. Then there exists a bijection between the sets
\begin{align*}
\mathcal{S} \colonequals \{\{(x_1, y_1), (x_2, y_2)\} \in \Sym^2(Y) \mid x_1 \neq x_2 \}
\end{align*}
and
\begin{align*}
  \mathcal{P} \colonequals \{(a(x), b(x)) \in k[x] \times k[x] \mid \text{$a$ is monic and separable, $\deg(a) =2$, $\deg(b) \leq 1$} \} \, .
\end{align*}
\end{proposition}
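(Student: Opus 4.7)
The proof follows the classical Mumford parametrization of effective degree-$2$ divisors on a hyperelliptic curve. The plan has three parts.

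First, I would define the forward map $\Phi: \mathcal{S} \to \mathcal{P}$ by sending an unordered pair $\{(x_1, y_1), (x_2, y_2)\}$ with $x_1 \neq x_2$ to $(a(x), b(x))$, where $a(x) = (x-x_1)(x-x_2)$ is monic of degree $2$ (and separable because $x_1 \neq x_2$), and $b(x)$ is the unique polynomial of degree $\leq 1$ satisfying $b(x_i) = y_i$, obtained by Lagrange interpolation. Both $a$ and $b$ are expressible via symmetric functions of the two points and of the unordered data $\{(x_i,y_i)\}$, so $\Phi$ is well-defined on $\Sym^2$, and Galois-equivariance delivers $k$-rationality of $(a,b)$.

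Next, I would construct a candidate inverse $\Psi: \mathcal{P} \to \mathcal{S}$ as follows: given $(a, b)$, let $x_1, x_2 \in \bar{k}$ be the two roots of $a$ (distinct by separability) and set $y_i := b(x_i)$. The unordered pair $\{(x_1,y_1),(x_2,y_2)\}$ is automatically Galois-stable, hence defines a $k$-point of $\Sym^2$. The non-trivial step is to verify that $(x_i, y_i) \in Y(\bar{k})$, i.e.\ that $b(x_i)^2 = f(x_i)$ for $i=1,2$. This is equivalent to the divisibility $a(x) \mid b(x)^2 - f(x)$ in $k[x]$. This classical Mumford compatibility condition is the main obstacle: without it, $\Psi$ is simply not defined. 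The condition should therefore be understood to be implicit in the definition of $\mathcal{P}$ (or added explicitly), and once it is incorporated the map $\Psi$ does land in $\mathcal{S}$.

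Finally, $\Phi$ and $\Psi$ are mutually inverse: starting from a pair of points, interpolating linearly and then re-evaluating at the same two $x$-coordinates returns the original $y$-values (since a polynomial of degree $\leq 1$ is determined by its values at two distinct points), and starting from $(a, b)$, recovering the roots and their $b$-values produces the Lagrange interpolant, which is $b$ again. This routine verification completes the proof. The only conceptual point is the Mumford compatibility condition noted above; everything else is linear algebra in the two-dimensional $k$-vector space $k[x]_{\leq 1}$.
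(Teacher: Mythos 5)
Your proof is correct and, moreover, correctly identifies a genuine omission in the statement as printed: as written, $\mathcal{P}$ has no condition linking $b$ to $f$, so the two sets cannot be in bijection (over $\bar{k}$ one set is a two-parameter family and the other a four-parameter family). The Mumford compatibility condition $a(x) \mid b(x)^2 - f(x)$ must be added to the definition of $\mathcal{P}$, exactly as you say; this is what appears in the cited sources (Mumford, \emph{Tata Lectures}, Prop.~1.2, and Cantor, \S2) and is also what the paper itself uses in the very next Proposition \ref{prop:abJ}, where $g_1 = g_2 = 0$ is precisely that divisibility.

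Regarding approach: the paper's proof is a bare citation to Mumford and Cantor, whereas you spell out the standard argument in full. Your construction of $\Phi$ via $a(x) = (x - x_1)(x - x_2)$ and Lagrange interpolation for $b$, the observation that separability of $a$ is exactly $x_1 \neq x_2$, the definition of $\Psi$ by extracting the roots of $a$ and evaluating $b$ there, and the check that these are mutually inverse (a degree-$\leq 1$ polynomial is determined by its values at two distinct points) are all correct and are essentially the content of the references cited. The remark about Galois-equivariance is the right thing to say if $\Sym^2(Y)$ is read as $k$-points of the symmetric square, so that the unordered pair need not consist of $k$-rational points individually; both $a$ and $b$ are symmetric in the two points and hence descend to $k$. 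In short: your argument is the intended one, made explicit, and it correctly diagnoses the one flaw in the statement.
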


\begin{proof}
See \cite[Proposition 1.2]{mumford-tata} or \cite[\S2]{Cantor}.
\end{proof}

Given a pair $(a(x), b(x)) \in \mathcal{P}$, then $a(x) = x^2 + a_1 x + a_2, b(x) = b_1 x + b_2$ for some $a_1, a_2, b_1, b_2 \in k$. We can use these coefficients as coordinates on an open affine subset of $\Jac(Y)$ as described in the following proposition.
\begin{proposition}
\label{prop:abJ}
Let $Y$ be  given by the equation $y^2 = f(x)$ in $\mathbb{P}_k^2$. Let $g_1$ and $g_2$ be polynomials in $k[a_1,a_2,b_1,b_2]$ such that
\begin{equation}
g_1(a_1,a_2,b_1,b_2)x +g_0(a_1,a_2,b_1,b_2) \equiv b(x)^2-f(x) \mod a(x).
\end{equation}
Then the system of equations
\begin{align}
g_1(a_1,a_2,b_1,b_2) &= 0, \\
g_2(a_1,a_2,b_1,b_2) &= 0
\end{align}
describes an affine open subset $U$ of $\Jac(Y)$ in $\mathbb{A}^4_k$.
\end{proposition}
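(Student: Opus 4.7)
The plan is to identify the vanishing locus $V(g_1, g_2) \subset \mathbb{A}^4_k$ first with the Mumford set $\mathcal{P}$, then with the set $\mathcal{S}$ of unordered pairs of points on $Y$ via Proposition \ref{prop:mumford_coord}, and finally with an affine open subset of $\Jac(Y)$ via the Abel--Jacobi map.

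First I would unwind the definitions of $g_1$ and $g_2$. Polynomial division in $k[a_1,a_2,b_1,b_2][x]$ gives an identity
\begin{equation}
b(x)^2 - f(x) = q(x)\, a(x) + (g_1 x + g_2)
\end{equation}
in which $g_1, g_2$ are canonical polynomial expressions in $a_1, a_2, b_1, b_2$. A tuple $(a_1,a_2,b_1,b_2) \in \mathbb{A}^4_k$ lies on $V(g_1, g_2)$ precisely when $a(x) \mid b(x)^2 - f(x)$. Restricted to the open subset $U \subset V(g_1, g_2)$ where $\disc(a) \ne 0$, this divisibility condition says that each of the two distinct roots $x_1, x_2$ of $a$ satisfies $b(x_i)^2 = f(x_i)$, so that $(x_i, b(x_i)) \in Y$. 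Thus $U$ is in natural bijection with $\mathcal{P}$, and by Proposition \ref{prop:mumford_coord} with $\mathcal{S}$.

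Next I would invoke the Abel--Jacobi morphism. After fixing a reference divisor $D_0$ of degree $2$ on $Y$ (for example, twice a Weierstrass point if $\deg f$ is odd, or the sum of the two points at infinity if $\deg f = 6$), the map $\mathrm{AJ} : \Sym^2(Y) \to \Jac(Y)$, $\{P_1, P_2\} \mapsto [P_1 + P_2 - D_0]$, is a proper birational morphism whose only positive-dimensional fiber lies above the origin and consists of the hyperelliptic pairs $\{P, \iota(P)\}$. These pairs have coincident $x$-coordinates, hence lie outside $\mathcal{S}$, and the restriction $\mathrm{AJ}|_{\mathcal{S}}$ is therefore an open immersion onto an open subvariety of $\Jac(Y)$. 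Composing with the identification $U \cong \mathcal{S}$ realizes $U$ as an affine open of $\Jac(Y)$. A short analysis extends the bijection to the non-separable locus: if $a(x) = (x - x_0)^2$, the relation $a \mid b^2 - f$ forces both $b(x_0)^2 = f(x_0)$ and $2 b(x_0)\, b'(x_0) = f'(x_0)$, which is exactly the tangency condition defining the Mumford representation of $2 P_0$.

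The main obstacle will be the third step at the scheme-theoretic level: proving not merely that $\mathrm{AJ}|_{\mathcal{S}}$ is a set-theoretic bijection onto its image, but that its differential is everywhere injective. By Riemann--Roch, any two distinct non-conjugate points $P_1, P_2$ on $Y$ satisfy $h^0(Y, \mathcal{O}(P_1 + P_2)) = 1$, so the pair is rigid in its linear equivalence class; this translates into injectivity of the tangent map and, combined with the properness and birationality of $\mathrm{AJ}$, yields the desired open immersion. The complementary verification that $V(g_1, g_2)$ itself is reduced and of the expected codimension $2$ in $\mathbb{A}^4_k$ can be carried out by exhibiting $(g_1, g_2)$ as a regular sequence, or equivalently by comparing the above construction with the known smoothness of $\Jac(Y)$.
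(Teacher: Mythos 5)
The paper's own proof is simply a citation: ``See Proposition~1.3 and Chapter~IIIa, \S 2 of \cite{mumford-tata}.'' Your proposal instead re-derives the content of Mumford's result from scratch via the chain $V(g_1,g_2) \leftrightarrow \mathcal{P} \leftrightarrow \mathcal{S} \hookrightarrow \Jac(Y)$, using Proposition~\ref{prop:mumford_coord} for the middle link and the Abel--Jacobi map for the last. That is a genuinely different (and more self-contained) route, and the skeleton of your argument is sound: you read the paper's displayed congruence as the Euclidean remainder $g_1 x + g_2$ (silently repairing what is evidently a typo, $g_0$ for $g_2$, in the statement), correctly observe that the vanishing of $g_1, g_2$ is exactly the condition $a \mid b^2 - f$, and correctly identify the hyperelliptic pairs $\{P, \iota(P)\}$ as lying in the locus $x_1 = x_2$ excluded from $\mathcal{S}$, so that $\mathrm{AJ}|_{\mathcal{S}}$ avoids the one contracted $\P^1$ and is hence an isomorphism onto its (open) image. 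The extension to the non-separable locus $\disc(a)=0$ via the tangency condition is likewise the right thing to say.

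What the citation to Mumford buys that your sketch stops short of supplying is precisely the set of verifications you flag at the end and treat as ``complementary'': (a) that the scheme-theoretic structure on $V(g_1,g_2)$ agrees with the induced reduced structure on the corresponding subset of $\Jac(Y)$ --- you gesture at exhibiting $(g_1,g_2)$ as a regular sequence but do not do it, and the Riemann--Roch rigidity argument you invoke gives injectivity of the differential of $\mathrm{AJ}$, not directly the statement about $(g_1,g_2)$; (b) that the image is not merely open but \emph{affine}, which requires identifying the complement as (supported on) the theta divisor and invoking its ampleness, a step entirely absent from the proposal; and (c) in the even-degree case $\deg f = 6$ the bookkeeping around the two points at infinity changes the boundary components, and the sentence-long dismissal ``fix a reference divisor'' leaves that unexamined. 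None of these is a wrong turn, but as written the argument establishes a set-theoretic open immersion on a dense open piece of $V(g_1,g_2)$ rather than the full proposition. If you intend this as a replacement for the citation, those three points need to be filled in; if you only intend it as an explanation of why the citation applies, it serves that purpose well.
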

\begin{proof}
See Proposition 1.3 and Chapter IIIa, \S2 of \cite{mumford-tata}.
\end{proof}

\begin{proposition}
\label{prop:Kumeq}
Let $Y$ be a curve of genus 2 over a field $k$ given by the equation
\begin{equation}y^2 = f_0 + f_1x + f_2x^2 + f_3x^3 + f_4x^4 + f_5x^5 + f_6x^6\end{equation} in $\mathbb{A}_k^2$.
Suppose $P = (x_1,y_1)$ and $Q = (x_2,y_2)$ are two points on $Y$ and let \hbox{$P+Q\in U\subset \Jac(Y)$} where $U$ is as in Proposition \ref{prop:abJ}.
Let \begin{equation}
\begin{split}
\kappa_1 &= 1, \\
\kappa_2 &= x_1+x_2, \\
\kappa_3 &= x_1x_2, \\
\kappa_4 &= \frac{F_0(x_1,x_2)-2y_1y_2}{(x_1-x_2)^2},
\end{split}
\end{equation}
where
\begin{equation} F_0(x,y) = 2f_0 + f_1(x + y) + 2f_2(xy) + f_3(x + y)xy + 2f_4(xy)^2 + f_5(x + y)(xy)^2 + 2f_6(xy)^3.\end{equation}
Then we can define a map $\pi:U\to \Kum(Y)$ given by $(P,Q)\mapsto (\kappa_1:\kappa_2:\kappa_3:\kappa_4)$ such that $\pi$ is equal to the quotient morphism $\Jac(Y)\to \Kum(Y)$ restricted to $U$.

The functions $\kappa_1, \kappa_2,\kappa_3,\kappa_4$ satisfy the quartic equation
\begin{equation} \label{eqn:Kummer_quartic}
K(\kappa_1, \kappa_2,\kappa_3,\kappa_4) = K_2(\kappa_1, \kappa_2,\kappa_3)\kappa_4^2
+ K_1(\kappa_1, \kappa_2,\kappa_3)\kappa_4 + K_0(\kappa_1, \kappa_2,\kappa_3) = 0 \, ,
\end{equation}
where
\begin{fleqn}
\begin{equation}\begin{split}
  K_2(\kappa_1, \kappa_2,\kappa_3) & = \kappa_2^2- 4\kappa_1\kappa_3 \\
\end{split}
\end{equation}
\begin{equation}
  \begin{split}
    K_1(\kappa_1, \kappa_2,\kappa_3)  & = - 4\kappa_1^3f_0 - 2\kappa_1^2\kappa_2f_1 - 4\kappa_1^2\kappa_3f_2 - 2\kappa_1\kappa_2\kappa_3f_3 \\
&\quad - 4\kappa_1\kappa_3^2f_4 - 2\kappa_2\kappa_3^2f_5 - 4\kappa_3^3f_6\\
  \end{split}
\end{equation}
\begin{equation}
  \begin{split}
    K_0(\kappa_1, \kappa_2, \kappa_3) & = -4\kappa_1^4f_0f_2 + \kappa_1^4f_1^2 -4\kappa_1^3\kappa_2f_0f_3 - 2\kappa_1^3\kappa_3f_1f_3 \\&\quad
-4\kappa_1^2 \kappa_2^2f_0f_4
+ 4\kappa_1^2\kappa_2\kappa_3f_0f_5 -4\kappa_1^2\kappa_2\kappa_3f_1f_4 -4\kappa_1^2\kappa_3^2f_0f_6 \\&
\quad + 2\kappa_1^2\kappa_3^2f_1f_5 - 4\kappa_1^2\kappa_3^2f_2f_4 + \kappa_1^2\kappa_3^2f_3^2  -4\kappa_1\kappa_2^3f_0f_5 \\
&\quad + 8\kappa_1\kappa_2^2\kappa_3f_0f_6 - 4\kappa_1\kappa_2^2
\kappa_3f_1f_5 +  4\kappa_1\kappa_2\kappa_3^2f_1f_6\\
&\quad - 4\kappa_1\kappa_2\kappa_3^2f_2f_5 -2\kappa_1\kappa_3^3f_3f_5 - 4\kappa_2^4f_0f_6-4\kappa_2^3
\kappa_3f_1f_6 \\
&\quad - 4\kappa_2^2\kappa_3^2f_2f_6 - 4\kappa_2\kappa_3^3f_3f_6 - 4\kappa_3^4f_4f_6 + \kappa_3^4f_2 \, ,
  \end{split}
\end{equation}
\end{fleqn}
and equation (\ref{eqn:Kummer_quartic}) gives us a projective embedding of $\Kum(Y)$ in $\PP^3_k$.
\end{proposition}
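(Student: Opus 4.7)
My plan is to verify each of the three assertions in turn: that $\pi$ is a well-defined morphism coinciding with the Kummer quotient on $U$, that the $\kappa_i$ satisfy the displayed quartic, and that the equation $K=0$ realises $\Kum(Y)$ as a quartic in $\PP^3_k$.

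For the first assertion, each $\kappa_i$ is symmetric in the swap $(x_1,y_1)\leftrightarrow (x_2,y_2)$ (using $F_0(x,y)=F_0(y,x)$), so via Proposition \ref{prop:mumford_coord} the formulas descend to regular functions on $U\subset \Jac(Y)$; the denominator $(x_1-x_2)^2$ of $\kappa_4$ is non-vanishing on $U$ by the definition of $\mathcal{S}$. The hyperelliptic involution acts by $(y_1,y_2)\mapsto(-y_1,-y_2)$, fixing $y_1y_2$ and hence all $\kappa_i$, so $\pi$ factors through $\Kum(Y)$. Conversely, from $(\kappa_1:\kappa_2:\kappa_3:\kappa_4)$ one recovers $\{x_1,x_2\}$ as the roots of $\kappa_1 t^2-\kappa_2 t+\kappa_3=0$, whereupon $\kappa_4$ determines $y_1y_2$; since $y_1y_2=\pm\sqrt{f(x_1)f(x_2)}$ pins down the divisor $\{(x_1,y_1),(x_2,y_2)\}$ up to simultaneous negation of the $y_i$, the map is injective on $\langle-1\rangle$-orbits.

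For the second assertion, rewrite the definition as $2y_1y_2=F_0(x_1,x_2)-\kappa_4(x_1-x_2)^2$, square, and use $y_i^2=f(x_i)$ to obtain
\begin{equation*}
\kappa_4^2(x_1-x_2)^4 \;-\; 2\kappa_4 F_0(x_1,x_2)(x_1-x_2)^2 \;+\; \bigl(F_0(x_1,x_2)^2-4f(x_1)f(x_2)\bigr)=0.
\end{equation*}
This is a quadratic in $\kappa_4$ whose remaining coefficients are symmetric in $x_1,x_2$ and hence polynomial in $\kappa_2=x_1+x_2$ and $\kappa_3=x_1x_2$; homogenising with $\kappa_1$ to keep track of the symmetric-function degree yields the claimed $K_2,K_1,K_0$. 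The case $K_2=\kappa_2^2-4\kappa_1\kappa_3$ is immediate from $(x_1-x_2)^2=\kappa_2^2-4\kappa_3$; the formula for $K_1$ follows by expanding $F_0$ term by term and collecting; and $K_0$ comes from expanding $F_0^2-4f(x_1)f(x_2)$ in the $f_i$. This bookkeeping is the main obstacle: the expression for $K_0$ has many monomials and requires some care, but is entirely mechanical and matches the formulas recorded in \cite{Mueller}.

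For the third assertion, the above shows that $\pi(U)$ is contained in the quartic $V(K)\subset\PP^3_k$ and that $\pi$ is injective on Kummer orbits. Since the $\kappa_i$ are invariant under $\langle -1\rangle$ and extend to a morphism from all of $\Jac(Y)$, they are sections of $2\Theta_Y$ on $\Jac(Y)$ by the theta-function formalism of \cite[Chapter IIIa]{mumford-tata}; the linear system $|2\Theta_Y|$ is base-point free and factors through $\Kum(Y)$ as a closed embedding into $\PP^3_k$ whose image is a quartic surface, which must therefore coincide with $V(K)$. This gives the projective embedding and completes the proof.
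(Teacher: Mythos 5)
The paper's own proof of Proposition \ref{prop:Kumeq} is simply a citation to M\"{u}ller \cite{Mueller}; you have instead supplied an actual argument, whose structure (well-definedness on $\langle-1\rangle$-orbits, algebraic derivation of the quartic by squaring, identification with the $|2\Theta_Y|$ embedding) follows the standard Cassels--Flynn/M\"uller treatment and is the right one. However, there is a concrete gap in your second step.

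After squaring $2y_1y_2 = F_0(x_1,x_2)-\kappa_4(x_1-x_2)^2$ and substituting $y_i^2=f(x_i)$, the relation you display has leading coefficient $(x_1-x_2)^4=(\kappa_2^2-4\kappa_3)^2$, not $(x_1-x_2)^2$. So after homogenising you obtain $K_2^2\kappa_4^2 + K_2K_1\kappa_4 + (\text{something})$, i.e.\ a multiple of the claimed quartic by $K_2=\kappa_2^2-4\kappa_1\kappa_3$, not the quartic itself. Your assertion that ``$K_2=\kappa_2^2-4\kappa_1\kappa_3$ is immediate from $(x_1-x_2)^2=\kappa_2^2-4\kappa_3$'' silently replaces the fourth power by the square. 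To repair this you must first argue that $(x_1-x_2)^2$ divides $F_0(x_1,x_2)^2-4f(x_1)f(x_2)$. This is true and can be proved cleanly: the expression is symmetric in $x_1,x_2$ and vanishes on the diagonal (since $F_0(x,x)=2f(x)$), and a symmetric polynomial vanishing on the diagonal vanishes there to even order (if $P=Q\cdot(x_1-x_2)$ with $P$ symmetric, then $Q$ is antisymmetric, hence again divisible by $x_1-x_2$). Once you have this, divide the displayed relation by $(x_1-x_2)^2$ before expressing everything in $\kappa_2,\kappa_3$; the coefficient of $\kappa_4^2$ then becomes $(x_1-x_2)^2=\kappa_2^2-4\kappa_3$ as needed, and $K_0$ is the homogenisation of $\bigl(F_0^2-4f(x_1)f(x_2)\bigr)/(x_1-x_2)^2$. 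Without this division your equation $K_2\cdot K=0$ cuts out $\Kum(Y)$ together with the extraneous quadric $\kappa_2^2=4\kappa_1\kappa_3$ (the locus $x_1=x_2$), so it does not by itself yield a projective embedding, and the final appeal to $|2\Theta_Y|$ would not attach the correct degree-four equation.
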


\begin{proof}
See {\cite[\S2]{Mueller}}.
\end{proof}

\begin{corollary}
\label{cor:Kummap}
Let $U$ be an affine open subset of $\Jac(Y)$ in $\mathbb{A}_k^4 = k[a_1,a_2,b_1,b_2]$ given by the system of equations $g_1 = 0, g_2 = 0$ as in Proposition \ref{prop:abJ}. Then the map $U\to \Kum(Y)$ described in Proposition \ref{prop:Kumeq} can be explicitly described as \begin{equation}(a_1,a_2,b_1,b_2)\mapsto \left(1:-a_1:a_2:\frac{\widetilde{F}_0(-a_1,a_2) - 2(b_1^2a_2 - b_1b_2a_1 + b_2^2)}{a_1^2 - 4a_2}\right)\end{equation}
where \begin{equation}\widetilde{F}_0(x,y) = 2f_0 + f_1x + 2f_2y + f_3xy + 2f_4y^2 + f_5xy^2 + 2f_6y^3.\end{equation}
\end{corollary}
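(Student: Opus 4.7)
The plan is a direct substitution argument using the dictionary between Mumford coordinates and the unordered pair of points they encode. By Proposition~\ref{prop:mumford_coord}, I would identify $(a_1,a_2,b_1,b_2)$ with the pair $\{(x_1,y_1),(x_2,y_2)\}$ where $x_1,x_2$ are the roots of $a(x) = x^2+a_1 x+a_2$ and $y_i = b(x_i) = b_1 x_i + b_2$. Vieta's formulas then furnish the two symmetric identities $x_1+x_2 = -a_1$ and $x_1 x_2 = a_2$, which are all that is needed to translate symmetric expressions in the $x_i$ into polynomials in $a_1,a_2$.

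The first three Kummer coordinates are immediate from Proposition~\ref{prop:Kumeq}: $\kappa_1 = 1$, $\kappa_2 = x_1+x_2 = -a_1$, and $\kappa_3 = x_1 x_2 = a_2$. For $\kappa_4$, I would first expand
\begin{equation*}
y_1 y_2 = (b_1 x_1 + b_2)(b_1 x_2 + b_2) = b_1^2 (x_1 x_2) + b_1 b_2 (x_1+x_2) + b_2^2 = b_1^2 a_2 - a_1 b_1 b_2 + b_2^2,
\end{equation*}
which accounts for the subtracted term in the numerator. Next, inspection of
\begin{equation*}
F_0(x,y) = 2f_0 + f_1(x+y) + 2f_2(xy) + f_3(x+y)(xy) + 2f_4(xy)^2 + f_5(x+y)(xy)^2 + 2f_6(xy)^3
\end{equation*}
reveals that every monomial depends only on $x+y$ and $xy$; substituting $x+y = -a_1$ and $xy = a_2$ therefore produces exactly $\widetilde{F}_0(-a_1,a_2)$, since $\widetilde{F}_0$ is obtained precisely by replacing $(x+y) \leftrightarrow x$ and $(xy) \leftrightarrow y$ in $F_0$. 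Finally, the identity $(x_1 - x_2)^2 = (x_1+x_2)^2 - 4 x_1 x_2 = a_1^2 - 4 a_2$ gives the denominator.

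Combining these ingredients in the formula $\kappa_4 = \bigl(F_0(x_1,x_2) - 2 y_1 y_2\bigr)/(x_1 - x_2)^2$ from Proposition~\ref{prop:Kumeq} yields the claimed expression. There is no genuine obstacle: the proof is a short bookkeeping exercise in elementary symmetric functions, and the only item requiring care is to verify that the factors of $2$ on the symmetric monomials of $F_0$ are carried through correctly so that the substitution $(x+y,xy) \mapsto (-a_1,a_2)$ reproduces $\widetilde{F}_0(-a_1,a_2)$ exactly as defined.
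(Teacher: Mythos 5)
Your proposal is correct and follows essentially the same route as the paper: use the Mumford-coordinate dictionary, apply Vieta's identities $x_1+x_2=-a_1$, $x_1x_2=a_2$, expand $y_1y_2=(b_1x_1+b_2)(b_1x_2+b_2)$, and substitute into the $\kappa_4$ formula. The paper's proof is phrased more tersely (it only records the explicit formulas for $a(x)$, $b(x)$ and says to equate coefficients), but the computational content is identical.
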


\begin{proof}
The correspondence described in Proposition \ref{prop:mumford_coord} sends a pair of points $\{P_1, P_2\} \in \mathcal{S}$ with $P_i = (x_i, y_i)$, $i = 1,2$,
to the pair of polynomials $(a(x), b(x))$ with
$$
a(x) = (x - x_1)(x - x_2) \qquad \text{and} \qquad b(x) = \frac{y_2 - y_1}{x_2 - x_1}(x - x_1) + y_1 \, .
$$
The result now follows from equating coefficients and then substituting these expressions into the formula for $F_0$ given in Proposition \ref{prop:Kumeq}.
\end{proof}

\begin{remark}
  The point $(0:0:0:1)$ is always a singular point on the projective embedding of $\Kum(Y)$ in $\mathbb{P}^3_k$ given by equation \eqref{eqn:Kummer_quartic}, as can be verified by a computation of partial derivatives.
\end{remark}

\begin{lemma} \label{lem:findingH}
Let $\phi: k(\Kum(Y)) \to k(\Jac(Y))$ be the inclusion of function fields induced by the morphism $\pi:\Jac(Y)\to \Kum(Y)$.
Then
\begin{enumerate}
\item There exist $\alpha_i, \beta_j \in k(a_1,a_2)$ such that
\begin{align}
b_1b_2 &= \alpha_1(a_1,a_2)+\alpha_2(a_1,a_2)b_1^2, \label{eqn:b1b2} \\
b_2^2 &=\beta_1(a_1,a_2)+\beta_2(a_1,a_2)b_1^2
.\end{align}
\item Let
\begin{align}
h \colonequals \frac{(\kappa_2^2 - 4\kappa_3)\kappa_4 - \widetilde{F}_0(\kappa_2,\kappa_3) + 2 \kappa_2 \alpha_1(-\kappa_2, \kappa_3) + 2 \beta_1(-\kappa_2, \kappa_3)}{-2 \kappa_3 - 2 \kappa_2 \alpha_2(-\kappa_2, \kappa_3) - 2 \beta_2(-\kappa_2, \kappa_3)}
\end{align}
Then $\phi(h) = b_1^2$.
\end{enumerate}
\end{lemma}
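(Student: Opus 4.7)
The plan is to compute everything directly from the explicit formulas in Proposition \ref{prop:abJ} and Corollary \ref{cor:Kummap}, since the statement is essentially a polynomial identity in $k(a_1, a_2)[b_1, b_2]$ modulo the relations cutting out $\Jac(Y)$.

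For part (i), I would begin by making the defining relations $g_1, g_2$ of $U\subset\Jac(Y)$ explicit. Writing $a(x) = x^2 + a_1 x + a_2$ and $b(x) = b_1 x + b_2$, one reduces $b(x)^2 \equiv (2 b_1 b_2 - a_1 b_1^2)\, x + (b_2^2 - a_2 b_1^2) \pmod{a(x)}$, and in parallel reduces $f(x) \pmod{a(x)}$ to obtain a remainder $p_1(a_1, a_2)\, x + p_0(a_1, a_2)$ with $p_0, p_1 \in k[a_1, a_2]$. Equating coefficients in the congruence $b(x)^2 \equiv f(x) \pmod{a(x)}$ gives the two relations
\begin{equation*}
  g_1 = 2 b_1 b_2 - a_1 b_1^2 - p_1(a_1,a_2) = 0,
  \qquad
  g_0 = b_2^2 - a_2 b_1^2 - p_0(a_1,a_2) = 0.
\end{equation*}
Solving these yields part (i) with the explicit expressions
$\alpha_1 = \tfrac{1}{2}p_1(a_1,a_2)$, $\alpha_2 = \tfrac{1}{2}a_1$, $\beta_1 = p_0(a_1,a_2)$, $\beta_2 = a_2$.

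For part (ii), I would start from the image formula in Corollary \ref{cor:Kummap}, which says that under the inclusion $\phi$ we have $\phi(\kappa_2) = -a_1$, $\phi(\kappa_3) = a_2$, and
\begin{equation*}
  (a_1^2 - 4 a_2)\, \phi(\kappa_4) \;=\; \widetilde F_0(-a_1, a_2) \;-\; 2\bigl(a_2 b_1^2 - a_1 b_1 b_2 + b_2^2\bigr).
\end{equation*}
Substituting the formulas from part (i) for $b_1 b_2$ and $b_2^2$ in terms of $1$ and $b_1^2$ (over $k(a_1,a_2)$) converts the right-hand side into an affine-linear expression in $b_1^2$, from which $b_1^2$ can be isolated. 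Matching the resulting rational expression against the definition of $h$ (after applying $\phi$, i.e.\ substituting $\kappa_2 \mapsto -a_1$ and $\kappa_3 \mapsto a_2$ everywhere in $h$) shows $\phi(h) = b_1^2$.

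No step should present genuine difficulty; the only mild subtlety is bookkeeping the sign conventions $\phi(\kappa_2) = -a_1$ versus the arguments $\alpha_i(-\kappa_2, \kappa_3)$ and $\beta_i(-\kappa_2, \kappa_3)$ in the definition of $h$, which are calibrated precisely so that upon applying $\phi$ one recovers $\alpha_i(a_1, a_2)$ and $\beta_i(a_1, a_2)$. Once this translation is made, the equality $\phi(h) = b_1^2$ becomes a formal rearrangement of the relation above.
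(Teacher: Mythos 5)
Your proposal is correct and follows essentially the same route as the paper: set up the Mumford coordinates, observe that $g_1 = g_0 = 0$ give affine-linear relations for $b_1 b_2$ and $b_2^2$ over $k(a_1, a_2)$ in terms of $b_1^2$, substitute into the formula for $\phi(\kappa_4)$ from Corollary \ref{cor:Kummap}, and isolate $b_1^2$. You go a step further than the paper in being fully explicit: the paper's proof merely asserts that $g_1, g_2$ lie in $k[a_1, a_2][b_1^2, b_1 b_2, b_2^2]$ and that one can "solve," whereas you note that the reduction $b(x)^2 \equiv (2 b_1 b_2 - a_1 b_1^2)x + (b_2^2 - a_2 b_1^2) \pmod{a(x)}$ shows each relation is immediately solvable for exactly one of $b_1 b_2$ and $b_2^2$, yielding the concrete values $\alpha_1 = \tfrac12 p_1$, $\alpha_2 = \tfrac12 a_1$, $\beta_1 = p_0$, $\beta_2 = a_2$. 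This is a pleasant tightening; otherwise the two arguments coincide, including the bookkeeping $\phi(\kappa_2) = -a_1$ that explains the sign flips $\alpha_i(-\kappa_2, \kappa_3)$ in the definition of $h$.
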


\begin{proof}
Note that the polynomials $g_1$ and $g_2$ of Proposition \ref{prop:abJ} can be computed by dividing $b(x)^2 - f(x) = (b_1 x + b_2)^2 - f(x)$ by $a(x)$ using polynomial long division, hence are contained in the subring $k[a_1, a_2][b_1^2, b_1 b_2, b_2^2]$. Considering the system of equations $g_1 = g_2 = 0$ as linear equations in $b_1^2, b_1 b_2, b_2^2$ over the field $k(a_1, a_2)$, we can solve for $b_1 b_2$ and $b_2^2$ in terms of $b_1^2$.

Recall from Corollary \ref{cor:Kummap} that on the affine open of $\Jac(Y)$ with coordinates $a_1, a_2, b_1, b_2$ we have $\phi(\kappa_2) = -a_1$, $\phi(\kappa_3) = a_2$, and
\begin{align*}
\phi(\kappa_4) &= \frac{\widetilde{F_0}(-a_1, a_2) - 2(b_1^2 a_2 - b_1 b_2 a_1 + b_2^2)}{a_1^2 - 4 a_2} \, .
\end{align*}
Solving for $b_1 b_2$ and $b_2^2$ in terms of $a_1, a_2$, and $b_1^2$ as in part (i), we can express $\phi(\kappa_4)$ as a function of $\phi(\kappa_2), \phi(\kappa_3)$, and $b_1^2$. A somewhat laborious computation then shows that
\begin{align*}
b_1^2 = \frac{(\phi(\kappa_2)^2 - 4\phi(\kappa_3))\phi(\kappa_4) - \widetilde{F}_0(\phi(\kappa_2),\phi(\kappa_3)) + 2 \varphi(\kappa_2) \alpha_1(-\phi(\kappa_2), \phi(\kappa_3)) + 2 \beta_1(-\phi(\kappa_2), \phi(\kappa_3))}{-2 \phi(\kappa_3) - 2 \phi(\kappa_2) \alpha_2(-\phi(\kappa_2), \phi(\kappa_3)) - 2 \beta_2(-\phi(\kappa_2), \phi(\kappa_3))} \, .
\end{align*}
Thus for $h$ as defined as in the statement of the lemma, we have $\phi(h) = b_1^2$.
\end{proof}

\begin{corollary} Let $\pi,\phi$ and $h$ be as in Lemma \ref{lem:findingH}. Then we can extend $\phi$ to a morphism $\overline{\phi}: k(\Kum(Y))[\sqrt{h}] \to k(\Jac(Y))$ such that $\overline{\phi}$ is an isomorphism. Furthermore, let $C$ be a curve on $\Kum(Y)$, let $k(C)$ be the function field of $C$, and suppose that $D = \pi^{-1}(C) \subset \Jac(Y)$ is irreducible. Then $k(C)[\sqrt{h}]$ is the function field of $D$.
\end{corollary}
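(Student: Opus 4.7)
The plan is to bootstrap both assertions from the single fact established in Lemma~\ref{lem:findingH}, namely $\phi(h) = b_1^2$, together with the basic observation that $\pi : \Jac(Y) \to \Kum(Y)$ is the quotient by the involution $[-1]$ acting as $(a_1,a_2,b_1,b_2) \mapsto (a_1,a_2,-b_1,-b_2)$ on our affine chart.

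For the first statement, I would define $\overline{\phi}$ as the unique $k(\Kum(Y))$-algebra homomorphism sending $\sqrt{h} \mapsto b_1$. Well-definedness is immediate from $\phi(h) = b_1^2$. To see that $\overline{\phi}$ is an isomorphism, I use that $\pi$ is a double cover, so $[k(\Jac(Y)) : k(\Kum(Y))] = 2$. Since $[-1]^*(b_1) = -b_1 \neq b_1$, the element $b_1$ is not invariant under the Galois action of $\langle [-1]\rangle$, hence $b_1 \notin k(\Kum(Y))$. Therefore $k(\Kum(Y))(b_1) = k(\Jac(Y))$, and since $b_1$ satisfies the irreducible quadratic $T^2 - h$ over $k(\Kum(Y))$, the extension $k(\Kum(Y))[\sqrt{h}]$ is a quadratic field extension. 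The map $\overline{\phi}$ is an injective homomorphism between two quadratic extensions of $k(\Kum(Y))$, hence an isomorphism.

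For the second statement, since $\pi$ is a finite morphism, $D = \pi^{-1}(C)$ has pure dimension $1$, and the projection $\pi|_D : D \to C$ has degree dividing $\deg(\pi) = 2$. A curve $D$ cannot be contained in the $[-1]$-fixed locus (which consists of the finitely many $2$-torsion points), so $[-1]$ restricts to a non-trivial involution of $D$ with quotient $C$. Hence, assuming $D$ irreducible, $\pi|_D$ has degree exactly~$2$, and $k(D)/k(C)$ is a separable quadratic extension (recall $\chr(k) \neq 2$). The restriction $\overline{b_1} := b_1|_D$ lies in $k(D)$, satisfies $\overline{b_1}^{\,2} = h|_C \in k(C)$, and is not $[-1]$-invariant on $D$ by the same non-triviality argument; hence $\overline{b_1} \notin k(C)$. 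Therefore $k(C)(\overline{b_1})$ is a quadratic extension of $k(C)$ contained in $k(D)$, and by degree comparison equals $k(D)$. Since $\overline{b_1}$ is a square root of $h|_C$, this identifies $k(D)$ with $k(C)[\sqrt{h}]$.

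The only genuinely subtle point is the passage from the global isomorphism $\overline{\phi}$ to the restricted statement: one must ensure that $h|_C$ is not already a square in $k(C)$, equivalently that $\pi|_D$ does not split. This is exactly encoded in the hypothesis that $D$ is irreducible (were $D = D_1 \sqcup D_2$ with each $D_i \to C$ birational, adjoining $\sqrt{h}$ to $k(C)$ would be trivial). No other computations beyond the setup in Lemma~\ref{lem:findingH} are required.
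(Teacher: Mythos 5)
Your proof is correct but takes a route noticeably different from the paper's. For the first assertion, the paper proves surjectivity of $\overline{\phi}$ \emph{constructively}: it observes that it suffices to hit the coordinate functions $a_1, a_2, b_1, b_2$ of the dense affine chart, then exhibits each as an image (using $a_1 = -\overline{\phi}(\kappa_2)$, $a_2 = \overline{\phi}(\kappa_3)$, $\overline{\phi}(\sqrt{h}) = b_1$, and solving for $b_2$ from equation~\eqref{eqn:b1b2}). You instead argue abstractly: $b_1$ is anti-invariant under $[-1]^*$ (hence $\notin k(\Kum(Y))$), so $T^2 - h$ is irreducible, the source is a field of degree $2$, and an injective $k(\Kum(Y))$-algebra map between two degree-$2$ extensions is automatically surjective. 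Both are valid; the paper's version has the merit of actually displaying the inverse (useful algorithmically), while yours is shorter and makes the underlying Galois mechanism transparent. For the second assertion, the paper disposes of it in a single sentence (``the inclusion $k(C) \hookrightarrow k(C)[\sqrt{h}]$ corresponds to $D \to C$''), whereas you supply the details that sentence elides: that $\pi|_D$ has degree exactly $2$ because $C$ cannot lie in the branch locus, that $b_1|_D$ is a non-invariant square root of $h|_C$, and that irreducibility of $D$ is precisely what prevents $h|_C$ from being a square. Your filled-in version is a genuine improvement in rigor on that half; the one small implicit point worth flagging is that $b_1$ must not vanish identically on $D$ (equivalently $h|_C \neq 0$), without which $k(C)[\sqrt{h}]$ would not even be a field — but this is forced by the irreducibility hypothesis you already invoke.
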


\begin{proof}
Define $\overline{\phi}(f + g \sqrt{h}) = \phi(f) + \phi(g) b_1$. As $a_1, a_2, b_1, b_2$ are the coordinates of a dense affine open subset of $\Jac(Y)$, it suffices to show that they are in the image of $\overline{\phi}$. From Corollary \ref{cor:Kummap} we have $a_1 = -\overline{\phi}(\kappa_2)$, $a_2 = \overline{\phi}(\kappa_3)$ and $\overline{\phi}(\sqrt{h}) = b_1$, and from equation \eqref{eqn:b1b2} we have
$$
b_2 = \frac{\alpha_1(a_1, a_2)}{b_1} + \alpha_2(a_1,a_2) b_1
$$
so $b_2$ is also in the image of $\overline{\phi}$.

The final statement follows from the fact that the inclusion of function fields $k(C) \hookrightarrow k(C)[\sqrt{h}]$ corresponds to the morphism of curves $D \to C$.
\end{proof}

\begin{remark}
  The condition that $D = \pi^{-1} (C)$ be irreducible is satisfied in the context, as Theorem \ref{thm:embedding} shows.
\end{remark}

The following result is a slight generalization of \cite[Theorem 1.1]{enolski}.

\begin{proposition}
\label{prop:4coversexist}
Let $C$ be a genus 1 curve over an algebraically closed field $k$ with $\chr (k)\neq 2$, and let $P_1,P_2,P_3,P_4$ be distinct points in $C$. Then
\begin{enumerate}
  \item There are exactly four distinct covers $D_j\to C$ (where $j=1,\ldots 4$) of degree 2 that are ramified above the $P_i$ and unramified everywhere else.
  \item If there exists a function $f \in k(C)$ such that $k(D_i) \cong k(C)[\sqrt{f}]$ for some $i$, then for each $j = 1, \ldots, 4$ there exists a degree-$0$ divisor $T$ (representing a non-trivial element of $\Pic(C)[2]$) such that $D_j$ is isomorphic to a curve with function field $k(C)[\sqrt{f_T}]$ with $\Div(f_T) = \Div (f)+2T$.
  \item Let $T_1,\ldots, T_4$ be the order-2 Weierstrass points on $C$. There exist non-constant functions $u \in L (T_1 + T_i)$ and $v \in L (2 T_1 + 2 T_i)$ with $\Div(v) = P_1+P_2+P_3+P_4-2T_1-2T_i$ such that
  \begin{enumerate}
    \item The curve $C$ has an equation of the form
      \begin{equation} \label{eqn:genus1}
        v^2+vh(u)+f(u) =0
      \end{equation}
      where $h$ is a polynomial of degree 2 and $f$ is a polynomial of degree 4.
      \item The curve $D_i$ has an equation  over $k$ of the form
      \begin{equation} \label{eqn:genus3}
        t^4+t^2h(s)+f(s) =0
      \end{equation}
      where $h$ is a polynomial of degree 2 and $f$ is a polynomial of degree 4.
  \item The cover $\pi_i:D_i\to C$ is explicitly given by $\pi_i(s,t) = (s,t^2)$.
  \end{enumerate}
\end{enumerate}
\end{proposition}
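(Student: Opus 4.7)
The plan is to prove the three parts of the proposition in sequence, using the theory of double covers classified by line bundles, Kummer theory, and Riemann--Roch on the genus-$1$ curve $C$. For (i), I would identify isomorphism classes of degree-$2$ covers $D\to C$ with ramification divisor exactly $R=P_1+P_2+P_3+P_4$ with line bundles $\mathcal{L}$ on $C$ satisfying $\mathcal{L}^{\otimes 2}\cong \mathcal{O}_C(R)$: such a cover takes the form $\Spec_C(\mathcal{O}_C\oplus \mathcal{L}^{-1})$, with multiplication induced by the canonical section of $\mathcal{O}_C(R)$ (which is unique up to $k^*$, and all such scalings are squares since $k=\kbar$). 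Square roots $\mathcal{L}$ exist because $\Pic^0(C)$ is a divisible group and $\deg(R)=4$ is even, and they form a torsor under $\Pic(C)[2]\cong E[2]\cong (\mathbb{Z}/2\mathbb{Z})^2$, of cardinality $4$.

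For (ii), I would pass to the Kummer-theoretic reformulation: if $k(D_i)=k(C)(\sqrt{f})$, then any other cover of $C$ ramified at exactly $P_1,\ldots,P_4$ has function field $k(C)(\sqrt{f'})$ with $\Div(f')\equiv \Div(f)\pmod{2\Div(C)}$, so $f'/f$ has divisor $2T$ for some $T\in\Div(C)$. Principality of $f'/f$ forces $T$ to represent a class in $\Pic(C)[2]$; setting $f_T=f'$ yields $\Div(f_T)=\Div(f)+2T$, and as $T$ ranges over the four classes in $\Pic(C)[2]$ the extensions $k(C)(\sqrt{f_T})$ exhaust the covers from (i).

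For (iii), the plan is to exhibit an explicit hyperelliptic equation for $C$. Riemann--Roch yields $h^0(T_1+T_i)=2$ and $h^0(2T_1+2T_i)=4$, giving a non-constant $u\in L(T_1+T_i)$ with simple poles exactly at $T_1$ and $T_i$, and (after choosing the labeling so that the linear equivalence $\sum_l P_l\sim 2(T_1+T_i)$ holds) a $v\in L(2T_1+2T_i)$ with $\Div(v)=P_1+P_2+P_3+P_4-2T_1-2T_i$. The nine elements $1,u,u^2,u^3,u^4,v,uv,u^2v,v^2$ all lie in the $8$-dimensional space $L(4T_1+4T_i)$, so after normalization a nontrivial linear relation among them yields \eqref{eqn:genus1}. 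Adjoining $t=\sqrt{v}$ produces a degree-$2$ extension ramified precisely at the $P_l$ (the points of odd valuation of $v$), hence isomorphic to $k(D_i)$; substituting $v=t^2$ and $s=u$ into the equation of $C$ gives \eqref{eqn:genus3}, and the cover map is $(s,t)\mapsto(u,t^2)$ as in~(c). The main obstacle lies here: one must handle the divisor-class arithmetic carefully so that $v$ has exactly the prescribed divisor, and must check that the four choices $i\in\{1,2,3,4\}$ yield the four distinct covers from (i), which reduces to observing that $v_i/v_j$ has divisor $2(T_j-T_i)$ representing a nontrivial class in $\Pic(C)[2]$ whenever $i\neq j$.
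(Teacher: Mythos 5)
The paper does not give its own argument for this proposition; the proof consists solely of the citation ``See \cite[Paragraph 4.4]{hanselman-thesis},'' and the statement is described just above as a slight generalization of \cite[Theorem 1.1]{enolski}. So there is no in-paper proof to compare against, and your reconstruction must be judged on its own merits. Your approach --- classifying the double covers via line bundles $\mathcal{L}$ with $\mathcal{L}^{\otimes 2}\cong\mathcal{O}_C(R)$, torsor under $\Pic^0(C)[2]$, then passing to Kummer theory for (ii) and to Riemann--Roch for (iii) --- is the natural one and parts (i) and (ii) go through as you write them.

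The one genuine gap is in part (iii), in the parenthetical ``(after choosing the labeling so that the linear equivalence $\sum_l P_l\sim 2(T_1+T_i)$ holds).'' This is not a labeling issue. If $T_1,\dots,T_4$ are the four ramification points of a single degree-$2$ map $C\to\PP^1$, then each $T_j-T_1$ is $2$-torsion, so $2T_1+2T_i\sim 4T_1$ for \emph{every} $i$; permuting the $T_j$ changes nothing. The condition you actually need is $\sum_l P_l\sim 4T_1$, which fails for an arbitrary hyperelliptic structure on $C$. What saves you is that on a genus-$1$ curve ``the order-$2$ Weierstrass points'' are not intrinsic: they depend on a choice of degree-$2$ map $C\to\PP^1$, and one must choose it compatibly with the $P_l$. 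Concretely, use divisibility of $\Pic^0(C)$ to pick $T_1$ with $4T_1\sim\sum_l P_l$, and take $T_2,T_3,T_4$ to be its nontrivial $2$-torsion translates; then $\sum_l P_l\sim 2T_1+2T_i$ for all $i$, the four classes $[T_1+T_i]$ are exactly the four square roots of $[\sum_l P_l]$ in $\Pic^2(C)$, and the desired $v$ exists. You should also note that for \eqref{eqn:genus1} to have nonzero $v^2$-coefficient you need $1,u,u^2,u^3,u^4,v,uv,u^2v$ linearly independent, which amounts to $v\notin k(u)$; this is clear whenever $\{P_l\}$ is not stable under the involution induced by $u$ (the generic case), and should at least be flagged. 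With these amendments the rest of (iii) --- passing to $k(C)[\sqrt{v}]$ ramified exactly at the $P_l$, obtaining \eqref{eqn:genus3} via $t^2=v$, $s=u$, and distinguishing the four covers by $\Div(v_i/v_j)=2(T_j-T_i)$ --- is correct.
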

\begin{proof}
See \cite[Paragraph 4.4]{hanselman-thesis}.
\end{proof}

Finally, we give an explicit expression for the map $i_Z$ from Theorem \ref{thm:embedding}.

\begin{proposition} \label{prop:explicit}
  For a non-hyperelliptic curve $Z$ of genus 3 that is a gluing of $X$ and $Y$ the code in \cite{s-prymmap} contains an explicit description of the map $i_Z: Z\dashrightarrow \Jac(Y)^{\dual}$ from Theorem \ref{thm:embedding}.
\end{proposition}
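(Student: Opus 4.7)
The plan is to unwind the commutative diagram of Theorem \ref{thm:embedding} using the explicit formulae from \S\ref{par:explicit} for both $\Kum(Y)^{\dual}$ and the cover $\pi : \Jac(Y)^{\dual} \to \Kum(Y)^{\dual}$. The construction of $i_Z$ proceeds in three stages, and it is the final one that is delicate.

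First I would produce an equation for the embedded genus-$1$ section $i_X(X) \subset \Kum(Y)^{\dual}$. Realize $\Kum(Y)^{\dual}$ as the quartic surface in $\PP^3_k$ of Proposition \ref{prop:Kumeq} (applied to $\Jac(Y)^{\dual}$ via Remark \ref{rem:kummer_dual}), and identify the two singular points $P_1, P_2$ that correspond to the order-two points $S_1, S_2$ of $\Jac(Y)^{\dual}$ picked out by the quadratic factor $q_Y$ of $p_Y$ featuring in Theorem \ref{thm:critres} and Proposition \ref{prop:gluingkernel}. Parametrize the pencil $H(\lambda)$ of planes through $P_1, P_2$ as in Lemma \ref{lem:jlambda}, and solve the equation $j(H(\lambda) \cap \Kum(Y)^{\dual}) = j(X)$ in $\lambda$. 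By Lemma \ref{lem:jlambda} and the proof of Proposition 4.25 there are at most $12$ roots, and exactly two of them realise the chosen isomorphism $\ell$ from Corollary \ref{cor:roottrans}; either one works.

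Second, I would lift the degree-$2$ cover $Z \to X$ using the Kummer generator. By Lemma \ref{lem:findingH} and its corollary, the extension $k(\Kum(Y)^{\dual}) \hookrightarrow k(\Jac(Y)^{\dual})$ is generated by $\sqrt{h}$ for an explicit rational function $h \in k(\Kum(Y)^{\dual})$. Pulling back along the inclusion $i_X : X \hookrightarrow \Kum(Y)^{\dual}$ found in stage one gives the double cover $\pi^{-1}(X) \to X$ with function field $k(X)[\sqrt{\,h|_X\,}]$. Matching this with the normal form
\begin{equation*}
t^2 + t \, h(s) + f(s) = 0
\end{equation*}
supplied by Proposition \ref{prop:4coversexist}(iii) (rewritten in $(s,t)$ from \eqref{eqn:genus3}) yields an explicit birational model for $Z$ as a curve over $X$, and hence an explicit rational map $Z \dashrightarrow \pi^{-1}(X) \hookrightarrow \Jac(Y)^{\dual}$. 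The Mumford coordinates $(a_1, a_2, b_1, b_2)$ of the image are recovered by inverting the formula of Corollary \ref{cor:Kummap}, using $b_1 = \sqrt{\,h|_X\,}$ and the relation \eqref{eqn:b1b2} to produce $b_2$ rationally in $a_1, a_2, b_1$.

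The hard part is pinning down the correct square-root class. The Kummer generator $h$ is only well-defined up to multiplication by squares in $k(\Kum(Y)^{\dual})^*$, and its restriction to $X$ may differ from the function cutting out the prescribed gluing $Z$ by a rational function whose square-class represents a non-trivial element of $\Jac(X)[2]$; this is exactly the ambiguity quantified in Proposition \ref{prop:4coversexist}(ii), which leaves four possible covers. The right one is singled out by matching the kernel of the gluing isogeny against the description in Proposition \ref{prop:gluingkernel}, or, operationally, by comparing the plane-quartic model of the resulting $Z$ against the input. Once the twist is fixed, the formulae from the first two stages determine $i_Z$ unambiguously, and this is what the implementation in \cite{s-prymmap} carries out. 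A posteriori, the output can be checked to realise $i_Z$ by verifying that the induced tangent representation agrees with the one forced by Theorem \ref{thm:embedding}, for which the machinery of \cite{cmsv-endos} suffices.
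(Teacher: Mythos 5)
Your proposal takes a genuinely different route from the paper, and it is worth spelling out the gap between the two.

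The paper's proof is \emph{direct}: it normalizes $Z$ to the form $v^4+v^2g(u)+uh(u)=0$ afforded by Lemma \ref{lem:gon} and Theorem \ref{thm:riro}, writes down closed rational formulas $\alpha(u,v)$, $\beta(u,v)$, $N(u,v)$ in the coefficients $g_i$, $h_j$, declares $i_Z(u,v)=(\alpha/N,\,0,\,\beta/N,\,\beta/(uN))$ in Mumford coordinates on the affine open $U\subset\Jac(Y)^{\dual}$, and then \emph{verifies} the required properties a posteriori: that the image generically lands in $U$, that $i_Z$ is generically injective (Lombardo's argument: otherwise $Z$ would be hyperelliptic or $\Jac(Y)^{\dual}$ would split), that $\pi(i_Z(Z))$ lies in the plane $\kappa_3=0$ and has genus $1$, hence that this plane meets $\Kum(Y)^{\dual}$ in exactly two nodes, and finally that $i_Z\circ\iota$ is multiplication by $-1$ so that diagram \eqref{eqn:commdig} commutes. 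No Kummer slicing, no $j$-invariant equation, no field extensions beyond the defining data of $Z$.

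You instead \emph{reconstruct} the map by running the forward Kummer machinery of Algorithm \ref{alg:AlgGl}: slice $\Kum(Y)^{\dual}$ through the two nodes $P_1,P_2$, solve $j(H(\lambda)\cap\Kum(Y)^{\dual})=j(X)$ in $\lambda$ (a degree-$\le 12$ equation, per Lemma \ref{lem:jlambda}), restrict the Kummer generator $h$ of Lemma \ref{lem:findingH} to the chosen section, adjoin $\sqrt{h|_X}$ to get the double cover, then recover the Mumford coordinates by inverting Corollary \ref{cor:Kummap}. This produces \emph{a} map into $\Jac(Y)^{\dual}$, but it has to overcome two ambiguities the paper's formulas never meet: the choice of $\lambda_0$ among up to $12$ roots, and the choice of square-root class among the four covers of Proposition \ref{prop:4coversexist}(ii). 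You flag both as ``the hard part'' and resolve them only a posteriori (matching against Proposition \ref{prop:gluingkernel} or comparing output quartics), which is legitimate as an algorithm but weaker as a proof that the resulting map \emph{is} the $i_Z$ of Theorem \ref{thm:embedding}. Your route also typically forces base-field extensions (as the worked example following Algorithm \ref{alg:AlgGl} shows), whereas the paper's formulas are rational in the data of $Z$. In short: the paper exhibits and verifies the formulas; you re-derive them the long way around via the Kummer surface, at the cost of extra choices and without establishing that your construction returns the same map the code implements.

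Two further remarks. First, you never check commutativity of the diagram, which in the paper's proof is an explicit calculation showing $i_Z\circ\iota = (-1)\circ i_Z$; your version inherits this only if the chosen square-root class was correct, so the check cannot be skipped. Second, the injectivity of $i_Z$ (needed for it to be a ``degree-$1$'' map in the sense of Theorem \ref{thm:embedding}) is not addressed at all in your proposal; the paper supplies Lombardo's genus-bookkeeping argument for exactly this point.
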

\begin{proof}
According to Proposition \ref{lem:gon} the gluing $Z$ gives rise to a cover as in Theorem \ref{thm:riro}.
The code in \cite{s-prymmap} contains an explicit rational map $Z\dashrightarrow\Jac(Y)^{\dual}$. After a change of coordinates we may assume that $Z$ has an affine open $V$ of the form
\begin{equation}
v^4 + v^2g(u) + uh(u)=0
\end{equation}
where $g(u)= g_2u^2 + g_1u + g_0$ and
$h(u) = h_2u^2 + h_1u + h_0$ . We calculate an equation for $Y$ using \ref{thm:riro} and use this equation to construct the affine open $U\subset k[a_1,a_2,b_1,b_2]$ of the Jacobian $\Jac(Y)^{\dual}$ given by the equations in Proposition \ref{prop:abJ}.

Let
\scriptsize
\begin{equation}
\begin{split}
\alpha(u,v) &= (g_2 h_0 - g_0 h_2)v^2 + (g_2^2h_0 - g_2g_0h_2)u^2 + (g_2g_1h_0 - g_2g_0h_1 - h_2h_0)u,\\
\beta(u,v) &= g_2^2h_0 - g_2g_0h_2v^3 + (g_2^3h_0 - g_2^2g_0h_2)u^2\\
 +& ((g_2^2g_1h_0 - g_2g_1g_0h_2 - g_2h_2h_0 + g_0h_2^2)u + g_2^2g_0h_0 - g_2g_0^2h_2)v\\
 N(u,v) &= (g_2^2h_1 - g_2g_1h_2 + h_2^2)u + g_2^2h_0 - g_2g_0h_2.
\end{split}
\end{equation}
\normalsize
Then the map $i_Z:V\to U$ is explicitly given by
\begin{equation}
(u,v)\mapsto (\alpha(u,v)/N(u,v),0,\beta(u,v)/(N(u,v)),\beta(u,v)/(uN(u,v))).
\end{equation}
In the code it is shown that the image of $i_Z$ is generically contained in $U$. A proof for the generic injectivity of the map $i_Z:Z\to \Jac(Y)^{\dual}$ due to Davide Lombardo is the following: Assume that $i_Z$ is not injective. If $i_Z(Z)$ is of genus 2, then $Z$ would be hyperelliptic by Proposition \ref{prop:hyp}, which gives us a contradiction, so $i_Z(Z)$ is either of genus 1 or of genus 0. It is impossible for $i_Z(Z)$ to be of genus 0, as then by the theory of abelian varieties the map $i_Z$ would be constant. On the other hand, if $i_Z(Z)$ is a curve of genus 1 then $\Jac(Y)^{\dual}$ would be isogenous to the product of two elliptic curves, which cannot be true generically.

Let $\pi: U\to \Kum(Y)^{\dual}$ be the map given in Corollary \ref{cor:Kummap}. As $i_Z(Z)$ is contained in the plane given by $a_2=0$, it follows that $\pi(i_Z(Z))$ is a curve  contained in the plane $H$ defined by $\kappa_3=0$. This yields a rational map $Z\dashrightarrow \pi(i_Z(Z))$ of degree 2 . We claim that the curve $\pi(i_Z(Z))$ is of genus 1. Indeed, if $\pi(i_Z(Z))$ is not of genus 1 then it will either be of genus 2 or of genus 0. But in both of these cases $Z$ would be a hyperelliptic curve. Indeed, if $\pi(i_Z(Z))$ has genus 2 then Proposition \ref{prop:hyp} tells us that $Z$ is a hyperelliptic curve. In the second case we have a degree 2 cover from $Z$ to a genus 0 curve, so the statement follows by definition. We therefore conclude that $\pi(i_Z(Z))$ is of genus 1. As any plane section of a quartic surface in $\mathbb{P}^3_k$ has arithmetic genus 3 this means that the plane $H$ has to intersect $\Kum(Y)^{\dual}$ in two singular points. Finally, it remains to be shown that the above diagram commutes. Let $i:Z\to Z$ be the involution $(u,v)\mapsto (u,-v)$ that corresponds to the degree 2 cover $Z\to X$. Then the explicit form of the maps at \cite{s-prymmap} shows that
\begin{equation}
\begin{split}
i_Z(i(u,v)) &= i_Z((u,-v))\\
 &= (\alpha(u,-v)/N(u,-v),0,\beta(u,-v)/(N(u,-v)),\beta(u,-v)/(-vN(u,-v)))\\
&=(\alpha(u,v)/N(u,v),0,-\beta(u,v)/(N(u,v)),-\beta(u,v)/(uN(u,v))).
\end{split}
\end{equation}

Now the map $(a_1,a_2,b_1,b_2)\mapsto (a_1,a_2,-b_1,-b_2)$ sends the divisor $P+Q$ corresponding to the equations $x^2+a_1x+a_2$ and $y = b_1x+b_2$ to the divisor $P'+Q'$ corresponding to the equations $x^2+a_1x+a_2$ and $y=-b_1x-b_2$. Therefore $i_Z\circ i$ is multiplication by $-1$ on $\Jac(Y)^{\dual}$ and we conclude that we have found a commutative diagram as in \eqref{eqn:commdig}.
\end{proof}

\subsection{The algorithm}
In this section we combine the previous results into an algorithm for computing all non-hyperelliptic gluings of a genus $1$ and a genus $2$ curve.

\begin{remark}
The following algorithms may require extensions of the base field in order to produce explicit equations.
\end{remark}

\begin{algorithm} \label{alg:AlgGl}
This algorithm constructs all gluings of $X$ and $Y$ that produce non-hyperelliptic curves.

\noindent
\emph{Input:} Curves $X:y^2 = p_X$ and $Y:y^2 = p_Y$ of genera 1 and 2, respectively, and a pair $(T_5-T_6,\ell)$ as in Proposition \ref{prop:class}.

\noindent
\emph{Output:} A degree 2 cover $p:Z\to X$ such that $Z$ is a gluing of $X$ and $Y$ with gluing datum $G$, where $G$ is the maximal isotropic subgroup corresponding to $(T_5-T_6,\ell)$.

\noindent
\emph{Steps:}
\begin{enumerate}[(1)]
\item Calculate an affine model for $\Jac(Y)^{\dual}$ as in Corollary \ref{prop:abJ}.
\item Compute $j(X)$.
\item Calculate a model for $\Kum(Y)^{\dual}$ and the projection map $\Jac(Y)^{\dual}\to \Kum(Y)^{\dual}$ as in Proposition \ref{prop:Kumeq}.
\item Calculate a function $h$ with the property that $k(\Kum(Y)^{\dual})[\sqrt{h}]\cong k(\Jac(Y)^{\dual})$ as in Lemma \ref{lem:findingH}.
\item Let $P_1$ be the singular point $(0:0:0:1)$ on $\Kum(Y)^{\dual}$ and let $\Theta_Y$ be a theta divisor such that $\pi(\Theta_Y)$ passes through $P_1$.
\item Determine $P_2$ such that $\pi(Y_5) = \pi(\Theta_Y)$ and $\pi(Y_6)$ intersect both $(0:0:0:1)$ and $P_2$.
\item Calculate the 1-dimensional family $H_{1,2}(\lambda)$ of planes that pass through $P_1$ and $P_2$.
\item Calculate the set $\Lambda(X)$ of all $\lambda$ such that $j(H_{1,2}(\lambda) \cap \Kum(Y)^{\dual}) = j(X)$.
	\item
	For each $\lambda_0\in \Lambda(X)$:
	\begin{enumerate}
	\item Calculate the points $R_j$ where $R_j \in \pi(Y_j)\cap X'$ and $R_j\neq P_1$.
      \item If $\ell(T_j-T_1)$ is linearly equivalent to the  divisor $R_j-R_1$ on $X'$ for all $j$.
      \begin{enumerate}
		\item Calculate the curve $Z$ with function field $k(\widetilde{X}(\lambda_0))[\sqrt{h}]$ using Algorithm \ref{alg:covers}; this gives us the desired gluing and a natural projection map to $X$.
		\item Return the calculated gluing $Z$.
	\end{enumerate}
\end{enumerate}
\end{enumerate}
\end{algorithm}

\begin{remark}
By the (16,6)-configuration we can uniquely identify the divisors $Y_j$ in the algorithm by choosing six 2-torsion points on $\Jac(Y)$.
\end{remark}

\begin{algorithm}
This algorithm calculates an equation for the curve $Z$ with function field $k(\widetilde{X}(\lambda_0))[\sqrt{h}]$.
\label{alg:covers}
\\
\emph{Input:}
\begin{itemize}
  \item A singular genus-1 curve $\widetilde{X}(\lambda_0)$ with exactly two nodes that is given by the intersection of a plane and a quartic surface.\\
\item A function $h\in k(\widetilde{X}(\lambda_0))$ whose divisor is of the form $P_1+P_2+P_3+P_4 - 2T$ for some divisor $T$.
\end{itemize}
\emph{Output:} A degree 2 cover $p:Z\to X(\lambda_0)$ such that $k(Z)\cong k(\widetilde{X}(\lambda_0))[\sqrt{h}]$ where $X(\lambda_0)$ is the normalization of $\widetilde{X}(\lambda_0)$.

\noindent
\emph{Steps:}
\begin{enumerate}[(1)]
\item Use Lemma \ref{lem:CRFuncadapt} to compute the branch points $\alpha_1,\ldots \alpha_4$ of the map $g:\widetilde{X}(\lambda_0)\to \mathbb{P}^1_k$ that maps a point $P$ to the slope of the line that connects $P$ with the singular point $(1:0:0:0)$.
\item Define the curve $C$ by the equation $y^2=(x-\alpha_1)(x-\alpha_2)(x-\alpha_3)(x-\alpha_4)$ and compute a birational map $\tau:C\to\widetilde{X}(\lambda_0)$.
\item Let $Q,R$ be the singular points on $\widetilde{X}(\lambda_0)$ and calculate $\tau^{-1}(Q) = \{Q_1,Q_2\}$ and $\tau^{-1}(R) = \{R_1,R_2\}$.
\item Calculate the divisor $D$ of the image of $h$ in $k(C)$.
\item Find divisors $D_1,D_2,D_3,D_4$ that correspond to the four distinct degree 2 covers with ramification points $Q_1,Q_2,R_1,R_2$ as in Proposition \ref{prop:4coversexist}.
\item For each {$i\in \{1,\ldots, 4\}$}:
\begin{enumerate}
\item If there exists a principal divisor $T$ such that $D_i-D =2T$, then:
\begin{enumerate}
\item Calculate the degree 2 cover $Z \to X$ corresponding to $D_i$ using Riemann-Roch spaces which is possible according to Proposition \ref{prop:4coversexist}(iii).
\item Return a quartic equation of $Z$ along with the map of degree 2 to $X$.
\end{enumerate}
\end{enumerate}
\end{enumerate}
\end{algorithm}

\begin{example}
  We illustrate the field extensions required by the second method. Let $X$ be the curve given by
\begin{equation}
y^2 = x^4 + 2x^3 - x^2 - 2x
\end{equation}
and let
$Y$ be the curve given by
\begin{equation}
y^2 = x^6 - 2x^5 - 10x^4 + 20x^3 + 9x^2 - 18x
\end{equation}
over $\QQ$. An affine open of $\Jac(Y)^{\dual}$ is given by the following system of equations in $\QQ[a_1,a_2,a_3,a_4]$.

\scriptsize
\begin{equation}
\begin{split}
-a_1^4a_2 &- 2a_1^3a_2 + 3a_1^2a_2^2 + 10a_1^2a_2 + 4a_1a_2^2 + 20a_1a_2 - a_2^3 -10a_2^2 + a_2b_1^2 - 9a_2 - b_2^2 =0,\\
-a_1^5 - 2a_1^4& + 4a_1^3a_2 + 10a_1^3 + 6a_1^2a_2 + 20a_1^2 - 3a_1a_2^2 - 20a_1a_2
+ a_1b_1^2 - 9a_1 - 2a_2^2 - 20a_2 - 2b_1b_2 - 18=0.
\end{split}
\end{equation}
\normalsize
The equation for $\Kum(Y)^{\dual}$ in $\PP_{\QQ}^3$ is
\scriptsize
\begin{equation}
\begin{split}
324x_1^4 &+ 720x_1^3x_3 - 720x_1^2x_2x_3 - 144x_1x_2^2x_3 + 72x_2^3x_3 +
    832x_1^2x_3^2 - 36x_2^2x_3^2
    + 80x_1x_3^3 - 80x_2x_3^3
    + 44x_3^4 +\\&
    36x_1^2x_2x_4 - 36x_1^2x_3x_4 - 40x_1x_2x_3x_4 + 40x_1x_3^2x_4 +
    4x_2x_3^2x_4 - 4x_3^3x_4 + x_2^2x_4^2 - 4x_1x_3x_4^2 =0
\end{split}
\end{equation}
\normalsize
and the morphism $\pi:\Jac(Y)^{\dual}\to\Kum(Y)^{\dual}$ is explicitly given by
\scriptsize
\begin{equation}
\begin{split}&\pi(a_1,a_2,b_1,b_2) = \\
&\left[1:-a_1:a_2:\frac{2a_1a_2^2 - 20a_1a_2 + 2a_1b_1b_2 + 18a_1 + 2a_2^3 - 20a_2^2 -
    2a_2b_1^2 + 18a_2 - 2b_2^2}{a_1^2 - 4a_2}\right].
    \end{split}
\end{equation}
\normalsize
We consider the family of planes $H_{1,2}(\lambda)$ passing through the singular points $P = (0:0:0:1)$ and $Q = (-1/6 : 1/3 : 1/2 : 1)$.
The $j$-invariant of $X$ is $35152/9$, and we seek to find the values of $\lambda$ such that $H_{1,2}(\lambda) \cap \Kum(Y)^{\dual}$ has the same $j$-invariant. We calculate the $j$-invariant $j(\lambda)$ of $H_{1,2}(\lambda) \cap \Kum(Y)$ and find that the numerator of $j(\lambda)-35152/9$ factors as
\scriptsize
\begin{equation} \label{eqn:lambda_roots}
\left(\lambda - \frac{9}{23}\right)
\left(\lambda - \frac{1}{11}\right)
\left(\lambda^2 - \frac{38}{67}\lambda - \frac{9}{67}\right)
\left(\lambda^2 - \frac{98}{193}\lambda - \frac{3}{193}\right)
\left(\lambda^2 - \frac{42}{85}\lambda + \frac{1}{85}\right)
\left(\lambda^2 - \frac{22}{47}\lambda + \frac{3}{47}\right)
\left(\lambda^2 - \frac{2}{5}\lambda + \frac{1}{5}\right).
\end{equation}
\normalsize

We will construct the degree 2 cover above $\widetilde{X}(9/23) = H_{P,Q}(9/23)\cap\Kum(Y)^{\dual}$. (The other covers can be computed in the same way by choosing other roots of \eqref{eqn:lambda_roots}.
To compute $k(\widetilde{X}_1(-3/2))/(t^2 - h)$ we proceed as in Algorithm \ref{alg:covers} and calculate the branch points of the degree 2 map $g:\widetilde{X}(9/23)\to \QQ$. We compute the Legendre model $y^2 = x(x-1)(x-1/4)$ for $\widetilde{X}(9/23)$, which we denote $\widetilde{X}_{\leg}$. Although $\widetilde{X}_{\leg}$ is a nontrivial twist of $\widetilde{X}(9/23)$ over $\QQ$, the curves become isomorphic upon extending scalars to $\QQ(\alpha)$, where $\alpha$ is a root of
$$
t^2 - 156026658225043557710221401/34308279913908709968852208000 \, .
$$
We explicitly compute the image of the function $h$ in the function field of $k(\widetilde{X}_{\leg}) \otimes \QQ(\alpha)$ and obtain a rational function of degree 14 with rather large coefficients, which we denote $h_{\leg}$.

Let $P_1, P_2$ (resp., $Q_1, Q_2$) be the two points obtained by desingularizing $P$ (resp., $Q$). We find that the divisor $P_1+P_2+Q_1+Q_2$ is defined over the field $\QQ(\beta,\gamma)$ where $\beta$ is a root of $t^2 + 3/32$ and $\gamma$ is a root of  $t^2 - 327/250t + 4761/10000$.
Let $T_1,\ldots T_4$ be the $2$-torsion points of $\widetilde{X}_{\leg}$. For each $i = 1, \ldots, 4$, we calculate functions $f_i$ with $\Div f_i = P_1+P_2+Q_1+Q_2 - 2T_i-2T_1$ for $i = 1, \ldots, 4$. We determine which $f_i$ corresponds to our covering $Z \to X$ by checking if $\Div (f_i/h_{\leg})$ is divisible by $2$ for each $i$. Applying Riemann-Roch as in Proposition \ref{prop:4coversexist} we compute the equation
\scriptsize
\begin{equation}\begin{split} u^4 &- \frac{244312307247680}{12491063134299}\alpha u^3 + \left(\frac{286830015625}{36438849216}\beta\gamma - \frac{250115773625}{48585132288}\beta\right)u^2v^2  + \frac{5876}{8855}u^2\\& +\left(\frac{-50500786167745625000}{1338579798660883737}\alpha\beta\gamma + \frac{11009171384568546250}{446193266220294579}\alpha\beta \right)uv^2 - \frac{83804221642880}{37473189402897}\alpha u \\ &-\frac{1044509681265625}{171408346712064}v^4 + \left(\frac{52518171875}{63767986128}\beta\gamma - \frac{45795845875}{85023981504}\beta \right)v^2 +\frac{1460}{111573}=0.
\end{split}
\end{equation}
\normalsize
for the gluing over $\QQ(\alpha,\beta,\gamma)$.
\normalsize
A simplified equation of this curve over $\QQ$, found by the methods in Section \ref{sec:interpolation}, is
\scriptsize
\begin{equation}
\begin{split}
12x^4 - 111x^2y^2 + 478x^2yz - 577x^2z^2 - 533y^4 + 948y^3z - 2574y^2z^2 + 2196yz^3 - 2277z^4=0.
\end{split}
\end{equation}
\normalsize
Over finite fields, the current Kummer method is less involved, since coefficient explosion is excluded and the required field extensions are less complicated.
\end{example}

\bibliographystyle{abbrv}
\bibliography{references}

\begin{thebibliography}{10}

\bibitem{barth}
W.~Barth.
\newblock Abelian surfaces with $(1, 2)$-polarization.
\newblock In {\em Algebraic Geometry, Sendai, 1985}, pages 41--84, Tokyo,
  Japan, 1987. Mathematical Society of Japan.

\bibitem{basson-thesis}
R.~Basson.
\newblock {\em Arithm\'{e}tique des espaces de modules des courbes
  hyperelliptiques de genre 3 en caract\'{e}ristique positive}.
\newblock PhD thesis, Universit\'{e} de Rennes 1, 2015.

\bibitem{beauville-prym}
A.~Beauville.
\newblock Prym varieties and the {S}chottky problem.
\newblock {\em Invent. Math.}, 41(2):149--196, 1977.

\bibitem{beauville-ritzenthaler}
A.~Beauville and C.~Ritzenthaler.
\newblock Jacobians among abelian threefolds: a geometric approach.
\newblock {\em Math. Ann.}, 350(4):793--799, 2011.

\bibitem{birkenhake-lange}
C.~Birkenhake and H.~Lange.
\newblock {\em Complex abelian varieties}, volume 302 of {\em Grundlehren der
  Mathematischen Wissenschaften}.
\newblock Springer-Verlag, Berlin, second edition, 2004.

\bibitem{potmod}
A.~Booker, J.~Sijsling, A.~V. Sutherland, J.~Voight, and D.~Yasaki.
\newblock Sato--{T}ate groups and modularity for atypical abelian surfaces.
\newblock In preparation, 2020.

\bibitem{bruin}
N.~Bruin.
\newblock The arithmetic of {P}rym varieties in genus 3.
\newblock {\em Compos. Math.}, 144(2):317--338, 2008.

\bibitem{bsz}
N.~Bruin, J.~Sijsling, and A.~Zotine.
\newblock Numerical computation of endomorphism rings of {J}acobians.
\newblock In {\em Proceedings of the {T}hirteenth {A}lgorithmic {N}umber
  {T}heory {S}ymposium}, volume~2 of {\em Open Book Ser.}, pages 155--171.
  Math. Sci. Publ., Berkeley, CA, 2019.

\bibitem{Cantor}
D.~G. Cantor.
\newblock Computing in the {J}acobian of a hyperelliptic curve.
\newblock {\em Math. Comp.}, 48(177):95--101, 1987.

\bibitem{cms}
A.~Clingher, A.~Malmendier, and T.~Shaska.
\newblock On isogenies among certain abelian varieties.
\newblock To appear in Michigan Math.; preprint available at
  \href{https://arxiv.org/abs/1901.09846}{arXiv:1901.09846}, 2019.

\bibitem{cms2}
A.~Clingher, A.~Malmendier, and T.~Shaska.
\newblock Geometry of {P}rym varieties for special bielliptic curves of genus
  three and five.
\newblock Preprint available at
  \href{https://arxiv.org/abs/2005.04839}{arXiv:2005.04839}, 2020.

\bibitem{cmsv-endos}
E.~Costa, N.~Mascot, J.~Sijsling, and J.~Voight.
\newblock Rigorous computation of the endomorphism ring of a {J}acobian.
\newblock {\em Math. Comp.}, 88(317):1303--1339, 2019.

\bibitem{cremona-fisher}
J.~E. Cremona and T.~A. Fisher.
\newblock On the equivalence of binary quartics.
\newblock {\em J. Symbolic Comput.}, 44(6):673--682, 2009.

\bibitem{donagi-livne}
R.~Donagi and R.~Livn\'{e}.
\newblock The arithmetic-geometric mean and isogenies for curves of higher
  genus.
\newblock {\em Ann. Scuola Norm. Sup. Pisa Cl. Sci. (4)}, 28(2):323--339, 1999.

\bibitem{enolski}
V.~Z. Enolski and Y.~N. Fedorov.
\newblock Algebraic description of {J}acobians isogeneous to certain {P}rym
  varieties with polarization {$(1,2)$}.
\newblock {\em Exp. Math.}, 27(2):147--178, 2018.

\bibitem{fks}
F.~Fité, K.~Kedlaya, and A.~V. Sutherland.
\newblock Sato--{T}ate groups of abelian threefolds: a preview of the
  classification.
\newblock Preprint available at
  \href{https://arxiv.org/abs/1911.02071}{arXiv:1911.02071}, 2019.

\bibitem{frey-kani}
G.~Frey and E.~Kani.
\newblock Curves of genus {$2$} covering elliptic curves and an arithmetical
  application.
\newblock In {\em Arithmetic algebraic geometry ({T}exel, 1989)}, volume~89 of
  {\em Progr. Math.}, pages 153--176. Birkh\"{a}user Boston, Boston, MA, 1991.

\bibitem{Gonzalez-Dorrego}
M.~R. Gonzalez-Dorrego.
\newblock {$(16,6)$} configurations and geometry of {K}ummer surfaces in {${\bf
  P}^3$}.
\newblock {\em Mem. Amer. Math. Soc.}, 107(512):vi+101, 1994.

\bibitem{Gonzalez-Dorrego2}
M.~R. Gonzalez-Dorrego.
\newblock Curves on a {Kummer} surface in {$\mathbb{P}^3$}.
\newblock {\em Mathematische Nachrichten}, 165:133--158, 1994.

\bibitem{guardia}
J.~Gu\`ardia.
\newblock Jacobian nullwerte and algebraic equations.
\newblock {\em J. Algebra}, 253(1):112--132, 2002.

\bibitem{hanselman-gluing}
J.~Hanselman.
\newblock Gluing curves along their torsion.
\newblock In preparation, 2020.

\bibitem{hanselman-thesis}
J.~Hanselman.
\newblock {\em Gluing curves of genus 2 and genus 1 along their 2-torsion}.
\newblock PhD thesis, Universität Ulm, 2020.

\bibitem{hanselman-hyp}
J.~Hanselman.
\newblock Hyperelliptic gluings.
\newblock In preparation, 2020.

\bibitem{hsv-git}
J.~Hanselman, S.~Schiavone, and J.~Sijsling.
\newblock {\tt gluing}, a {M}agma package for gluing curves of low genus along
  their torsion.
\newblock \href{https://github.com/JRSijsling/gluing}{\tt
  https://github.com/JRSijsling/gluing}, 2020.

\bibitem{hsv-zip}
J.~Hanselman, S.~Schiavone, and J.~Sijsling.
\newblock {\tt interpolation.zip} (a single zip file for illustrative
  purposes).
\newblock \href{https://jrsijsling.eu/algorithms/interpolation.zip}{\tt
  https://jrsijsling.eu/algorithms/interpolation.zip}, 2020.

\bibitem{howe-leprevost-poonen}
E.~W. Howe, F.~Lepr\'{e}vost, and B.~Poonen.
\newblock Large torsion subgroups of split {J}acobians of curves of genus two
  or three.
\newblock {\em Forum Math.}, 12(3):315--364, 2000.

\bibitem{kuhn}
R.~M. Kuhn.
\newblock Curves of genus {$2$} with split {J}acobian.
\newblock {\em Trans. Amer. Math. Soc.}, 307(1):41--49, 1988.

\bibitem{cmpaper}
P.~Kılıçer, H.~Labrande, R.~Lercier, C.~Ritzenthaler, J.~Sijsling, and
  M.~Streng.
\newblock Plane quartics over {$\mathbb{Q}$} with complex multiplication.
\newblock {\em Acta Arith.}, 185(2):127--156, 2018.

\bibitem{Lauter}
K.~Lauter.
\newblock Geometric methods for improving the upper bounds on the number of
  rational points on algebraic curves over finite fields.
\newblock {\em J. Algebraic Geom.}, 10(1):19--36, 2001.
\newblock With an appendix in French by J.-P. Serre.

\bibitem{Liu}
Q.~Liu.
\newblock {\em Algebraic geometry and arithmetic curves}, volume~6 of {\em
  Oxford Graduate Texts in Mathematics}.
\newblock Oxford University Press, Oxford, 2002.
\newblock Translated from the French by Reinie Ern\'{e}, Oxford Science
  Publications.

\bibitem{molin-neurohr}
P.~Molin and C.~Neurohr.
\newblock Computing period matrices and the {A}bel-{J}acobi map of
  superelliptic curves.
\newblock {\em Math. Comp.}, 88(316):847--888, 2019.

\bibitem{Mueller}
J.~S. M\"{u}ller.
\newblock Explicit {K}ummer surface formulas for arbitrary characteristic.
\newblock {\em LMS J. Comput. Math.}, 13:47--64, 2010.

\bibitem{mumford-tata}
D.~Mumford.
\newblock {\em Tata lectures on theta. {II}}.
\newblock Modern Birkh\"{a}user Classics. Birkh\"{a}user Boston, Inc., Boston,
  MA, 2007.
\newblock Jacobian theta functions and differential equations, With the
  collaboration of C. Musili, M. Nori, E. Previato, M. Stillman and H. Umemura,
  Reprint of the 1984 original.

\bibitem{neurohr}
C.~Neurohr.
\newblock {\em Efficient integration on Riemann surfaces and applications}.
\newblock PhD thesis, Carl von Ossietzky Universität Oldenburg, 2018.

\bibitem{ritzenthaler-romagny}
C.~Ritzenthaler and M.~Romagny.
\newblock On the {P}rym variety of genus 3 covers of genus 1 curves.
\newblock {\em \'{E}pijournal Geom. Alg\'{e}brique}, 2:Art. 2, 8, 2018.

\bibitem{s-prymmap}
J.~Sijsling.
\newblock {\tt gen3deg2prym}.
\newblock \href{https://github.com/JRSijsling/gen3deg2prym}{\tt
  https://github.com/JRSijsling/gen3deg2prym}, 2020.

\bibitem{lmfdb}
{The LMFDB Collaboration}.
\newblock The {L}-functions and modular forms database.
\newblock \url{http://www.lmfdb.org}, 2019.
\newblock [Online; accessed 30 October 2019].

\bibitem{moonen}
G.~van~der Geer and B.~Moonen.
\newblock Abelian varieties.
\newblock Unpublished manuscript available at
  \href{https://www.math.ru.nl/~bmoonen/research.html#bookabvar}{\texttt{https://www.math.ru.nl/\textasciitilde{}bmoonen/research.html\#bookabvar}},
  2020.

\end{thebibliography}

\end{document}